\documentclass[11pt]{amsart}

\usepackage {amsmath, amssymb, amscd, mathrsfs,  amsthm, stmaryrd,  bbm, diagbox, enumerate, slashed, graphicx, color, subfig, comment, enumitem}
\usepackage[all, cmtip]{xy}
\usepackage[text={6.1in,8.6in},centering,letterpaper,dvips]{geometry}
\usepackage{url, hyperref}
\usepackage{amscd}
\usepackage{mathtools} 
\usepackage{amstext} 
\usepackage{array}  
\usepackage{xspace}

\newtheorem{thm}{Theorem}[section]
\newtheorem{theorem}[thm]{Theorem}

\newtheorem{corollary}[thm]{Corollary}
\newtheorem{lem}[thm]{Lemma}
\newtheorem{lemma}[thm]{Lemma}

\newtheorem{proposition}[thm]{Proposition}

\theoremstyle{definition}

\newtheorem{definition}[thm]{Definition}

\theoremstyle{remark}

\newtheorem{convention}[thm]{Convention}
\newtheorem{remark}[thm]{Remark}

\newtheorem{example}[thm]{Example}

\newcommand{\R}{\ensuremath{\mathbb{R}}}
\newcommand{\C}{\ensuremath{\mathbb{C}}}

\newcommand{\Z}{\ensuremath{\mathbb{Z}}}
\newcommand{\s}{\mathfrak{s}}
\newcommand{\del}{\partial}
\def\gr{\operatorname{gr}}
\def\grHF{\operatorname{gr}_{\mathrm{HF}}}

\def\x{\mathbf x}
\def\y{\mathbf y}
\def\z{\mathbf z}
\newcommand{\Spinc}{\text{Spin}^{\text{c}}}
\newcommand{\CF}{\mathit{CF}}
\newcommand{\HF}{\mathit{HF}}
\def\prel{\operatorname{prel}}
\newcommand{\HFhat}{\widehat{\HF}}
\newcommand{\HFp}{{\HF}^+}
\newcommand{\HFm}{{\HF}^-}
\newcommand{\HFi}{{\HF}^\infty}
\newcommand{\CFhat}{\widehat{\CF}}

\newcommand{\CFm}{{\CF}^-}

\newcommand{\CFcirc}{{\CF}^\circ}
\newcommand{\HFhattw}{\widehat{\HFtw}}
\newcommand{\HFptw}{{\HFtw}^+}
\newcommand{\HFmtw}{{\HFtw}^-}
\newcommand{\HFitw}{{\HFtw}^\infty}
\newcommand{\HFhatprel}{\widehat{\HF}_{\!\!\prel}}
\newcommand{\HFpprel}{{\HF}^+_{\!\!\prel}}
\newcommand{\HFmprel}{{\HF}^-_{\!\!\prel}}
\newcommand{\HFiprel}{{\HF}^\infty_{\!\!\prel}}
\newcommand{\HFcircprel}{{\HF}^\circ_{\!\!\prel}}
\newcommand{\CFhatprel}{\widehat{\CF}_{\!\!\prel}}
\newcommand{\CFpprel}{{\CF}^+_{\!\!\prel}}
\newcommand{\CFmprel}{{\CF}^-_{\!\!\prel}}
\newcommand{\CFiprel}{{\CF}^\infty_{\!\!\prel}}
\newcommand{\CFcircprel}{{\CF}^\circ_{\!\!\prel}}
\newcommand{\bunderline}[1]{\underline{#1\mkern-2mu}\mkern2mu }

\def\HFtw{\bunderline{\HF}}
\def\CFtw{\bunderline{\CF}}
\def\CFtwcan{\bunderline{\CF}^{\operatorname{can}}}
\def\ftw{\bunderline{f}}
\def\Hs{\mathbb{H}}
\def\ccdot {\! \cdot \!}
\newcommand{\HFcirc}{{\HF}^\circ}

\newcommand{\HFLhat}{\widehat{\mathit{HFL}}}
\newcommand{\HFLm}{\mathit{HFL}^-}
\newcommand{\CFLcurved}{\mathcal{CF\!L}^-}
\def\M {\mathcal {M}}
\def\Mhat {\widehat{\M}}
\def\cN{\mathcal{N}}
\def\cNhat{\widehat{\cN}}

\newcommand{\Gr}{\mathit{Gr}}
\def\He{\mathcal{H}}
\def\T{\mathbb{T}}
\newcommand{\Ta}{\mathbb{T}_{\alpha}}
\newcommand{\Tb}{\mathbb{T}_{\beta}}
\newcommand{\Tg}{\mathbb{T}_{\gamma}}
\newcommand{\Td}{\mathbb{T}_{\delta}}
\newcommand{\Te}{\mathbb{T}_{\epsilon}}
\newcommand{\Tz}{\mathbb{T}_{\zeta}}
\newcommand{\Tap}{\mathbb{T}_{\alpha'}}
\newcommand{\Tbp}{\mathbb{T}_{\beta'}}
\newcommand{\Tgp}{\mathbb{T}_{\gamma'}}
\newcommand{\Tdp}{\mathbb{T}_{\delta'}}

\def\vbeta{\vec{\beta}}
\def\delbar{\bar{\partial}}
\def\alphas{\boldsymbol\alpha}
\def\betas{\boldsymbol\beta}
\def\gammas{\boldsymbol\gamma}
\def\deltas{\boldsymbol\delta}
\def\epsilons{\boldsymbol\epsilon}
\def\zetas{\boldsymbol\zeta}

\def\aa{\mathbf{a}}
\def\bb{\mathbf{b}}
\def\Path{\mathbf{P}}
\def\t{\mathfrak{t}}
\def\Pin{\operatorname{Pin}}
\def\Spin{\operatorname{Spin}}
\def\tSpin{\widetilde{\Spin}}
\def\On{\operatorname{O}(n)}

\def\G{\mathcal{G}}
\def\Gr{\operatorname{Gr}}
\def\Gstab{\mathcal{G}_{\operatorname{stab}}}
\def\Gdiff{\mathcal{G}_{\operatorname{diff}}}

\def\AA{\mathbb{A}}
\def\BB{\mathbb{B}}

\def\can{\operatorname{can}}
\def\Ps{P^\#}
\def\Pa{\Ps_{\alpha}}
\def\Pb{\Ps_{\beta}}
\def\Pg{\Ps_{\gamma}}
\def\Pd{\Ps_{\delta}}
\def\Rs{R^\#}
\def\Ra{\Rs_{\alpha}}
\def\Rb{\Rs_{\beta}}
\def\Cat{\mathcal{C}}
\def\Scan{S^\#_{\operatorname{can}}}
\def\H{\mathcal{H}}
\def\bH{\overline{\mathcal{H}}}
\def\Set{\mathbf{S}}
\def\bSet{\underline{\Set}}
\def\bs{\underline{\s}}
\def\new{\operatorname{new}}
\def\O{\operatorname{O}}
\def\GL{\operatorname{GL}}
\def\SO{\operatorname{SO}}

\def\B{\operatorname{B}\!}
\def\Grd{\operatorname{Gr}^{\dagger}}
\def\Grp{\operatorname{Gr}^{\#}}
\def\Lambdad{\Lambda^{\dagger}}
\def\Lambdap{\Lambda^{\#}}
\def\RP{\mathbb{RP}}
\def\SFH{\mathit{SFH}}
\def\HFIcirc {\mathit{HFI}^\circ}
\def\F{\mathbb{F}}
\def\Link{\mathbb{L}}
\def\Pathhat{\widehat{\Path}}
\def\index{\operatorname{index}}
\def\ltop{\lambda^{\operatorname{top}}}
\def\Cl{\mathcal{C}\ell}
\def\ws{\mathbf{w}}
\def\zs{\mathbf{z}}
\def\bs{\underline{\s}}
\def\Span{\operatorname{Span}}
\def\Int{\operatorname{Int}}
\DeclareMathOperator{\Sym}{Sym}
\DeclareMathOperator{\coker}{coker}

\DeclareMathOperator{\id}{id}
\DeclareMathOperator{\ind}{ind}

\DeclareMathOperator{\Aut}{Aut}
\DeclareMathOperator{\Hom}{Hom}

\DeclareMathOperator{\Fr}{Fr}

\DeclareMathOperator{\Map}{Map}

\DeclareFontFamily{U}{matha}{\hyphenchar\font45}
\DeclareFontShape{U}{matha}{m}{n}{
      <5> <6> <7> <8> <9> <10> gen * matha
      <10.95> matha10 <12> <14.4> <17.28> <20.74> <24.88> matha12
      }{}
\DeclareSymbolFont{matha}{U}{matha}{m}{n}
\DeclareFontSubstitution{U}{matha}{m}{n}
\DeclareMathSymbol{\abxcup}{\mathbin}{matha}{'131}

\title[Canonical orientations in Heegaard Floer theory]{Canonical orientations in Heegaard Floer theory}

\author[Mohammed Abouzaid]{Mohammed Abouzaid}
\address{Department of Mathematics, Stanford University\\
450 Jane Stanford Way, Building 380, Stanford, CA 94305-2125, USA.}
\email{abouzaid@stanford.edu}

\author[Ciprian Manolescu]{Ciprian Manolescu}
\address{Department of Mathematics, Stanford University\\
450 Jane Stanford Way, Building 380, Stanford, CA 94305-2125, USA.}
\email{cm5@stanford.edu}

\begin{document}

\begin{abstract}
We set up Heegaard Floer theory over the integers, using canonical orientations coming from coupled Spin structures on the Lagrangian tori. We prove naturality of Heegaard Floer homology, sutured Floer homology, and link Floer homology over $\Z$. We give a new proof of the surgery exact triangle in this context, as well as a definition of involutive Heegaard Floer homology over $\Z$.
\end{abstract}
\maketitle

\section{Introduction}
In a series of papers, such as \cite{HolDisk}, \cite{HolDiskTwo}, \cite{HolDiskFour}, \cite{OS-knots}, Ozsv\'ath and Szab\'o developed Heegaard Floer theory: a collection of invariants of $3$-manifolds, $4$-manifolds, and knots. Since then, the theory has become an important tool in low-dimensional topology, leading to numerous applications. 

In the original papers, the $3$-manifold and knot invariants were defined as homology groups with coefficients in $\Z$, and the $4$-manifold invariants were integers defined up to sign. However, at some point the community started ignoring signs, and it became customary to work over the coefficient field $\mathbb{F}_2$. Notably, naturality of the Heegaard Floer invariants \cite{JTZ} and their full functoriality properties under cobordisms \cite{ZemkeHF, ZemkeHFK} were only established over $\mathbb{F}_2$. This is sufficient for many applications, but it does constitute a limitation for others. By contrast, monopole Floer homology \cite{KMBook} was defined over $\Z$, and the Seiberg-Witten invariants \cite{Witten} are integers (with the sign determined by a homology orientation on the $4$-manifold). 

In Lagrangian Floer homology, there are several ways to choose signs for the counts of moduli spaces. One way is by {\em coherent orientations} \cite{FloerHofer}, where one trivializes the determinant line bundles over sufficiently many homotopy classes of disks arbitrarily, and then trivializes them over the rest of the homotopy classes in the unique compatible way. Another way, which is more popular in the recent literature, is through {\em canonical orientations} induced from Spin or Pin structures on the Lagrangians; see \cite{DeSilva, FOOO2, SeidelBook}. Yet another way is by using  twisted derived local systems on the Lagrangians \cite{Rezchikov}.

In their papers on Heegaard Floer homology, Ozsv\'ath and Szab\'o used coherent orientations to pin down the signs. The purpose of our paper is to offer a different perspective on Heegaard Floer theory over $\Z$, by using canonical orientations instead. These are better suited for questions about naturality and functoriality. We establish the naturality properties here and thus place the theory over $\Z$ on a solid footing. 

The canonical orientations on moduli spaces are constructed as follows. Ideally, we would like to fix Spin structures on the Lagrangian tori $\Ta$ and $\Tb$ that appear in Heegaard Floer homology. However, a Spin structure requires in particular an orientation, and in our case the Lagrangians do not have natural orientations. Rather, they have a {\em coupled orientation} (that is, an orientation on their product), which  has the same origin as the absolute $\Z/2$-grading in Heegaard Floer homology defined in \cite[Section 10.4]{HolDiskTwo}. 

Instead of Spin structures, one can settle for Pin structures on the Lagrangians; these do not need a background orientation. We choose Lie group Pin structures, that is, structures invariant under the torus action. Up to homotopy, there is only one Lie group Pin structure on the torus, and this suffices to determine the isomorphism class of the Heegaard Floer groups over $\Z$. Nevertheless, specifying the choice only up to homotopy is not enough for discussing naturality. The space of Lie group Pin structures has the homotopy type of $\RP^\infty$, with $\pi_1 =\Z/2$, and  the monodromy on Floer homology around a non-trivial loop in this space is multiplication by $-1$.

To rectify this problem, note that a Lie group Pin structure on the Lagrangian torus $\Ta$ is specified by the Pin structure in the tangent space at each point. In turn, this tangent space can be identified with $\AA =H^1(U_{\alpha}; \R)$, where $U_{\alpha}$ is the alpha handlebody in the Heegaard splitting. Similarly, the tangent space to $\Tb$ can be identified with $\BB=H^1(U_{\beta}; \R)$, where $U_{\beta}$ is the beta handlebody. While there are no canonical Pin structures on $\AA$ and $\BB$, it turns out that there is a canonical {\em coupled Spin  structure} on the pair $(\AA, \BB)$. Coupled Spin structures are a new concept that we introduce in this paper. They make sense for a pair of parallelizable Lagrangians (such as our tori), and make use of an underlying coupled orientation (which exists in our setting). We will see that coupled Spin structures produce canonical orientations on the moduli spaces of $J$-holomorphic strips, and thus give Lagrangian Floer groups that are well-defined up to canonical isomorphism. In our context, we use them to define Heegaard Floer complexes over $\Z$.

With these choices, we establish the invariance of Heegaard Floer homology under the usual Heegaard moves from \cite{HolDisk}, as well as its naturality, following the proof scheme in \cite{JTZ}:

\begin{theorem}
\label{thm:main}
Let $Y$ be a closed, oriented $3$-manifold equipped with a basepoint $z \in Y$ and a $\Spinc$ structure $\s$. The Heegaard Floer homologies $\HFhat$,  $\HFp$, $\HFm$ and $\HFi$ (defined using the canonical coupled Spin structure) are invariants of the triple $(Y, \s, z)$, well-defined up to natural isomorphism in the category of $\Z[U]$-modules. 
\end{theorem}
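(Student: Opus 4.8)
The plan is to adapt the proof scheme of \cite{JTZ}, replacing coherent orientations with the canonical orientations coming from the coupled Spin structure on $(\AA,\BB)$ at every stage. Recall that naturality in the sense of \cite{JTZ} means exhibiting a transitive system of groups: to each Heegaard diagram $\He$ (subordinate to the pointed $3$-manifold $(Y,z)$ and the $\Spinc$ structure $\s$) we associate the homology $\HF^\circ(\He)$ of the canonically oriented complex, and to each sequence of Heegaard moves connecting two diagrams $\He_1 \rightsquigarrow \He_2$ we associate an isomorphism, in such a way that (a) the isomorphism attached to a sequence of moves depends only on its endpoints, and (b) composites of isomorphisms agree with the isomorphism attached to the concatenated sequence. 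The point of using the canonical coupled Spin structure — as opposed to Spin or Pin structures chosen only up to homotopy — is precisely that it rigidifies the signs: since the coupled Spin structure on $(\AA,\BB)$ is \emph{canonical}, there is no residual $\Z/2$ monodromy to worry about, and the orientations on all moduli spaces of $J$-holomorphic strips (and triangles, rectangles, etc.) are determined once and for all.

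First I would verify that each of the elementary Heegaard moves — isotopies of the $\alpha$ and $\beta$ curves, handleslides, and (de)stabilizations — induces an isomorphism on $\HF^\circ$ over $\Z$. For isotopies this is the usual continuation-map argument, and one checks that the continuation maps respect the canonical orientations because the coupled Spin structure varies continuously (indeed canonically) with the Lagrangians. For handleslides one uses the triangle-counting maps associated to a suitable triple diagram, together with the computation of $\HF^\circ$ of the relevant connected sums of $S^1 \times S^2$'s and the identification of the top-dimensional generator; here one must check that the canonical orientation of the triangle moduli spaces is compatible with the gluing/associativity relations, which is where the coupled Spin structure does its work. For stabilization one checks that adding a cancelling pair of curves (supported near the basepoint) does not change the oriented complex up to the obvious canonical identification. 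Throughout, one also checks independence of the almost complex structure $J$ and of the symplectic/perturbation data, again via continuation maps that preserve canonical orientations.

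Next I would establish the two coherence properties. The key tool is the ``associativity'' of the triangle (and higher polygon) maps: any two sequences of moves with the same endpoints can be connected by a sequence of ``relations'' among moves (distant commutation, and a small explicit list of local relations coming from four- and five-sided polygon counts), and for each such relation one must verify that the two composite isomorphisms agree \emph{on the nose over $\Z$}, not merely up to sign. This is exactly the step where coherent orientations in \cite{JTZ} required care and where, in our setting, the canonical orientations from the coupled Spin structure must be shown to satisfy the corresponding sign-sharp polygon relations. Concretely, one needs: (i) the ``octagon'' or pentagon-type associativity for the canonically oriented polygon maps, and (ii) the invariance of the canonical orientation under the symmetries of the diagram used in the stabilization and handleslide relations. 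Granting these, the transitive system is well-defined, and passing to the colimit (equivalently, taking the common value of all the identified groups) yields the invariant $\HF^\circ(Y,\s,z)$ as a $\Z[U]$-module, well-defined up to canonical isomorphism. Functoriality of the $U$-action and of the exact sequences relating $\HFhat$, $\HFp$, $\HFm$, $\HFi$ is immediate since all the maps in sight (including the map $U$ and the maps in the short exact sequence $0 \to \CFm \to \CFi \to \CFp \to 0$) are chain maps over $\Z$ that commute with the continuation and polygon maps.

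The main obstacle, as in \cite{JTZ}, is the second coherence property: proving that the polygon-counting isomorphisms satisfy the requisite sign-precise associativity and symmetry relations over $\Z$. Over $\F_2$ this is purely combinatorial, but over $\Z$ one must track how the canonical orientations behave under gluing of moduli spaces and under the reordering symmetries that appear in the local relations among Heegaard moves. The heart of the argument is therefore a careful analysis — using the properties of coupled Spin structures established earlier in the paper — showing that the canonical orientation is functorial under these gluings with the correct signs; once this orientation bookkeeping is in place, the rest of the proof is a formal repetition of \cite{JTZ}.
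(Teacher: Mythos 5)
Your proposal follows the same overall strategy as the paper: set up the JTZ transitive-system framework, define maps for elementary Heegaard moves with signs governed by the canonical coupled Spin structure, and verify coherence of those maps over $\Z$. You also correctly identify the key role of the coupled Spin structure in killing the $\Z/2$ monodromy in the space of Pin structures, and you correctly locate the heart of the difficulty in verifying sign-precise polygon relations.

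However, there are two substantive gaps. First, you do not mention \textbf{handleswap invariance}, which is one of the four defining axioms of a strong Heegaard invariant in \cite{JTZ} and is \emph{not} a polygon-associativity relation: it is a triangular relation mixing an $\alpha$-handleslide, a $\beta$-handleslide, and a diffeomorphism, and its verification over $\Z$ requires a separate neck-stretching argument reducing to a genus-$2$ model computation (in particular a signed count such as $\#\M_{(\aa,\aa)}(d)=1$). Without handleswap invariance the transitive system fails to be well-defined, so this is not a detail one can wave at.

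Second, and this is the structural departure from \cite{JTZ} that the paper emphasizes explicitly, you work as if one can assign maps directly to $\alpha$- and $\beta$-\emph{equivalences} (arbitrary compositions of isotopies and handleslides), as JTZ do over $\F_2$. Over $\Z$ this breaks down: there are two candidate top-degree $\Theta$-generators differing by a sign, so there is no preferred triangle map attached to a general equivalence. The fix in the paper is to assign maps only to \emph{individual} handleslides (the finer graph $\G'$ of \cite[Appendix A]{JTZ}, extended by Qin), and then to verify commutativity along the six explicit types of handleslide loops plus the stabilization slide. Your passing reference to ``a small explicit list of local relations coming from four- and five-sided polygon counts'' gestures in this direction, but you would need to identify the precise set of loops (slide triangle, commuting slide square, the two non-commuting slide squares, the double-slide square, and the pentagon) and then, for each, carry out an orientation analysis of the relevant holomorphic triangle (or, in the case of the slide triangle, a hexagon, which requires a further neck-stretching reduction to a fiber product of a triangle and a bigon). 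That orientation bookkeeping is the actual content of the proof; it cannot be deferred to ``a careful analysis ... showing that the canonical orientation is functorial under these gluings'' without exhibiting the holomorphic curves and computing their signs.

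A smaller point worth flagging: you should address how the Floer groups are made \emph{canonical} rather than merely independent of choices up to isomorphism. The paper achieves this by tensoring the preliminary complex with a rank-one line $\ell(\Pa,\Pb)$ encoding the homotopy class of path from the chosen Pin structures to the canonical coupled Spin structure; the proof that different Pin choices give canonically isomorphic adjusted groups is what removes the residual indeterminacy you are implicitly relying on.
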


The canonical orientations on moduli spaces that we construct produce in particular a coherent orientation system as in \cite[Definition 3.11]{HolDisk}. Thus, our Heegaard Floer groups are isomorphic to those constructed by Ozsv\'ath-Szab\'o in \cite{HolDisk}, for some (unspecified) orientation system. Up to equivalence, there are $2^{b_1(Y)}$ orientation systems. In \cite[Theorem 10.12]{HolDiskTwo}, Ozsv\'ath and Szab\'o further select a special orientation system, using their calculation of a twisted version $\HFitw$. We expect that their system is the same as the one arising from our work, but do not prove this here.

We mention one important way in which the proof of naturality in Theorem~\ref{thm:main} departs from 
the arguments in \cite{JTZ}. Juh\'asz, Thurston, and Zemke used maps associated to equivalences between the curve systems on the Heegaard diagram. (An equivalence is replacing such a system with any other that represents the same handlebody. We can think of it as a composition of isotopies and handleslides.) Over $\mathbb{F}_2$, it is relatively easy to see that the maps induced by equivalences commute. Over $\Z$, it is harder; one can prove this up to a sign, as in \cite{Gartner}. What we do instead is to use maps associated to isotopies and handleslides. We then have to check certain relations associated to loops of handleslides, using the description of those loops in \cite[Appendix A]{JTZ}. 

With our framework in place, we give a new proof of the surgery exact triangle from \cite[Section 9]{HolDiskTwo}, with $\Z$ coefficients. (See Theorem~\ref{thm:exact}.) Interestingly, the natural maps induced by counting holomorphic triangles (using the coupled Spin structures) are not the ones that appear in the triangle. One needs to use twisted coefficients for the maps, and we show that one can do so without changing the three Heegaard Floer groups themselves.

Another application of Theorem~\ref{thm:main} is a definition of involutive Heegaard Floer homology over $\Z$. Over $\F_2$, this theory was defined in \cite{HFI}, by taking the homology of the mapping cone of an involution $\iota$ on Heegaard Floer complexes. The map $\iota$ involves moves on Heegaard diagrams and thus relies on naturality. Since we now have this property over $\Z$, the same construction can be done with integer coefficients. (See Section~\ref{sec:HFI} below.)

In a different direction, we prove analogues of Theorem~\ref{thm:main} for sutured Floer homology and for link Floer homology. Sutured Floer homology is an invariant of balanced sutured manifolds introduced by Juh\'asz \cite{Juhasz}. Roughly, a balanced sutured manifold is a compact $3$-manifold $M$ with boundary, whose boundary $\del M$ is split along a collection of sutures $\gamma$ into two parts $R_+$ and $R_-$. The balanced condition says that $R_+$ and $R_-$ have the same Euler characteristic. The construction of sutured Floer homology proceeds along similar lines to that of Heegaard Floer homology, but we no longer have a canonical coupled Spin structure. Instead, we are forced to choose one, and this is equivalent to choosing a coupled Spin structure $S$ on the pair of vector spaces $(H_1(M,R_-; \R), H_1(M,R_+; \R))$. We call this data a {\em homological coupled Spin structure}. From this we can define a sutured Floer homology group $\SFH(M,\gamma, \s, S)$ over $\Z$.

\begin{theorem}
\label{thm:main_sutured}
Let $(M, \gamma)$ be a sutured manifold equipped with a $\Spinc$ structure $\s$ and a homological coupled Spin structure $S$. Then, the sutured Floer homology $\SFH(M,\gamma, \s, S)$ is a natural invariant of the quadruple $(M, \gamma, \s, S)$.
\end{theorem}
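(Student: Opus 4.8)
The plan is to follow, essentially verbatim, the proof of Theorem~\ref{thm:main}, adapting each ingredient to the sutured setting. There are two external inputs. First, Juh\'asz's invariance theorem for $\SFH(M,\gamma,\s)$ over $\F_2$: it provides, in particular, the statement that any two admissible balanced sutured Heegaard diagrams for $(M,\gamma)$ are connected by a finite sequence of elementary moves — isotopies, handleslides, and (balanced) stabilizations and destabilizations. Second, the machinery developed earlier in the paper, by which a coupled Spin structure on the pair of Lagrangian tori in a Heegaard diagram produces canonical orientations on the moduli spaces of $J$-holomorphic strips and triangles, hence Floer-type complexes and chain maps over $\Z$. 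The genuinely new feature of the sutured case — the absence of a canonical coupled Spin structure on the tori — is exactly what the extra datum $S$ supplies.

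First I would record, for each admissible balanced sutured Heegaard diagram $\H=(\Sigma,\alphas,\betas)$ of $(M,\gamma)$, how the homological coupled Spin structure $S$ determines a coupled Spin structure $S_\H$ on $(T\Ta,T\Tb)$: the tangent space to $\Ta$ (resp. $\Tb$) is canonically identified with an appropriate $H^1$ of the $\alpha$- (resp. $\beta$-) compression body, and these spaces receive their orientation and Spin data from $S$ via the canonical homomorphisms relating them to $H_1(M,R_-;\R)$ and $H_1(M,R_+;\R)$. The coupled orientation underlying $S_\H$ is the one giving rise to the absolute $\Z/2$-grading, exactly as in the closed case. Feeding $S_\H$ into the orientation machinery produces the complex $\mathit{SFC}(\H,\s,S)$ over $\Z$, with homology $\SFH(\H,\s,S)$; and to each elementary move from $\H$ to $\H'$ one attaches an isomorphism over $\Z$ (continuation maps for isotopies, triangle maps for handleslides, the standard one-handle/two-handle maps for stabilizations), oriented using $S_\H$ and $S_{\H'}$, which are induced from the \emph{same} $S$ by the \emph{same} recipe, so that no spurious sign monodromy is introduced.

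The core of the argument is then to upgrade the collection $\{\SFH(\H,\s,S)\}$, equipped with these transition maps, to a transitive system of $\Z$-modules — that is, to verify strong naturality in the sense of \cite{JTZ}: the composite of a chain of moves must depend only on its endpoints, and diffeomorphisms of $(M,\gamma)$ must act compatibly. Following the scheme of Theorem~\ref{thm:main}, this reduces to a finite list of local relations — commutation of moves supported in disjoint regions, the relations coming from loops of handleslides (using the description in \cite[Appendix~A]{JTZ}), the relations involving stabilizations, and compatibility with the sutures — now to be checked over $\Z$ rather than over $\F_2$.

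The main obstacle is precisely this last step: establishing that each relation holds on the nose over $\Z$, not merely up to a sign. As in the closed case, the decisive point is that every orientation in sight is induced from the single coupled Spin structure $S$, so the $\pm 1$ ambiguity that afflicts Pin-structure conventions is absent, and the loop-of-handleslides relations are then verified by the same degeneration and gluing arguments used for Theorem~\ref{thm:main}. The one genuinely sutured subtlety requiring care is stabilization: a balanced stabilization alters $\Sigma$ and the compression bodies, and one must check that the recipe producing $S_{\H'}$ from $S$ agrees, under the natural identifications, with the one producing $S_\H$ from $S$ — i.e. that $S_\H$ and $S_{\H'}$ represent the same homological coupled Spin structure — so that the stabilization map is sign-canonical. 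Granting all the relations, the transitive system has a well-defined limit $\SFH(M,\gamma,\s,S)$, functorial in the quadruple $(M,\gamma,\s,S)$, which is the assertion of the theorem.
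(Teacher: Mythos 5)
Your approach is correct and matches the paper's, which at this point is itself essentially a two-sentence deferral to Sections~\ref{sec:invariance} and~\ref{sec:naturality}: one equips each sutured diagram with the coupled Spin structure induced by the homological coupled Spin structure $S$ (via the exact sequence \eqref{eq:00} and Lemma~\ref{lem:CK}, which you describe somewhat loosely but correctly in substance), runs the orientation machinery, and then verifies the conditions of Theorem~\ref{thm:shi2}.

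There is one concrete gap. You invoke ``the relations coming from loops of handleslides (using the description in \cite[Appendix~A]{JTZ})'' as if that classification applied directly. It does not: the results of \cite[Appendix~A]{JTZ} establishing that the six handleslide-loop types of Definition~\ref{def:hloops} (together with the stabilization slide) generate all relations were proved for the class $\mathcal{S}_{\operatorname{man}}$ of based closed $3$-manifolds, not for general balanced sutured manifolds. The corresponding generation statement in the sutured setting is precisely what Qin \cite{Qin} proves, and this is the one point the paper singles out in its proof of Theorem~\ref{thm:main_sutured}: ``the fact that the same loops of handleslides are sufficient in the sutured case follows from the work of Qin.'' Without that input, your reduction to a finite list of relations is not justified. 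Once Qin's result is in hand, the sign checks along each generating loop proceed exactly as in Section~\ref{sec:naturality}, and the rest of your argument goes through.
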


Finally, we consider link Floer homology, an invariant of links in three-manifolds introduced by Ozsv\'ath and Szab\'o in \cite{OS-knots}, \cite{Links}. The original construction was over $\Z$ for knots (see \cite{OS-knots}, \cite{RasmussenThesis}, but only over $\F_2$ for links, due to the presence of disk bubbles. For links in $S^3$, a refinement over $\Z$ was constructed using grid homology in \cite{MOST}. For links in arbitrary $3$-manifolds, Sarkar \cite{SarkarSigns} explained the existence of several possible theories over $\Z$ (depending on some choices). In our set-up, we find a canonical coupled Spin structure on the Lagrangian tori, and we use it to identify a single preferred theory over $\Z$. This theory comes in several flavors, such as $\HFLhat$ or $\HFLm$.

\begin{theorem}
\label{thm:main_links}
Let $\Link=(L, \ws, \zs)$ be a multi-based oriented link in a three-manifold $Y$, and $\bs$ a $\Spinc$ structure on $Y$ relative to $L$. Then, the link Floer homologies $\HFLhat(Y,\Link,\bs)$ and $\HFLm(Y, \Link,\bs)$ are natural invariants of the triple $(Y, \Link, \bs)$.
\end{theorem}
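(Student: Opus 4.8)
\textbf{Proof proposal for Theorem~\ref{thm:main_links}.}
The plan is to mirror the proof of Theorem~\ref{thm:main}, upgrading every construction to the multi-pointed setting. First I would set up link Floer homology over $\Z$ directly from coupled-Spin data. A multi-based oriented link $\Link=(L,\ws,\zs)$ in $Y$ is presented by a multi-pointed Heegaard diagram $(\Sigma,\alphas,\betas,\ws,\zs)$ with $|\ws|=|\zs|=n$; the associated Lagrangian tori $\Ta,\Tb\subset\Sym^{g+n-1}(\Sigma)$ are parallelizable, with tangent spaces identified with $\AA=H^1(U_\alpha;\R)$ and $\BB=H^1(U_\beta;\R)$ for the two compressionbodies of the diagram. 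As in the closed case, I would check that $(\AA,\BB)$ carries a canonical coupled orientation — arising from the same source as the $\Z/2$-grading on $\CFL$ — and hence, by the construction of coupled Spin structures developed earlier in the paper, a canonical coupled Spin structure $S$ on $(\AA,\BB)$. This $S$ produces canonical orientations on the moduli spaces of $J$-holomorphic polygons, and therefore well-defined complexes $\CFL(\Sigma,\alphas,\betas,\ws,\zs,\bs)$ over $\Z[U_1,\dots,U_n]$ (and $\CFLhat$ by setting the $U_i=0$), for a fixed diagram, up to canonical isomorphism. One then records the Maslov and Alexander gradings and the $\Spinc$ splitting as usual.

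Second, I would prove invariance under the link Heegaard moves: isotopies of $\alphas$ and $\betas$ not crossing the basepoints, handleslides among the $\alpha$-curves (resp.\ $\beta$-curves) avoiding basepoints, index $1$ and $2$ (de)stabilizations, and free (de)stabilizations that create or destroy a basepoint pair $(w_i,z_i)$ on a component. For the first two move types the induced maps are the small-triangle maps, and the key structural fact that transfers from the closed case is that triangle counts are compatible with the orientations coming from $S$, so these maps are chain homotopy equivalences over $\Z[U_1,\dots,U_n]$ respecting the bigrading. For free (de)stabilizations I would use the multi-pointed stabilization formula (cf.\ \cite{Links}, \cite{ZemkeHFK}) and verify that it gives a chain map over $\Z$ with an $S$-determined sign and is a filtered isomorphism onto its image; the basepoint-moving maps (Sarkar's point-pushing and quasi-stabilization maps, \cite{SarkarSigns}) are treated likewise.

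Third, naturality. Following the scheme of \cite{JTZ} and its link-theoretic version in \cite{ZemkeHFK}, I would organize diagrams and elementary moves into a graph and show the induced maps give a functor on its fundamental groupoid. Exactly as in Theorem~\ref{thm:main}, I would avoid the "equivalence" maps — for which commutativity over $\Z$ is only known up to sign, cf.\ \cite{Gartner} — and work only with isotopy, handleslide, (free) stabilization, and basepoint moves, checking the requisite relations. Loops of handleslides are handled via \cite[Appendix A]{JTZ} precisely as in the closed case; the genuinely new relations are those among basepoint-moving maps, and here the point is that the coupled Spin structure $S$ on $(\AA,\BB)$ is unchanged by moving a basepoint within a link component, which should supply the needed rigidity. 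The analogue for sutured Floer homology, Theorem~\ref{thm:main_sutured}, is obtained by the same argument, with the homological coupled Spin structure $S$ fixed as part of the data.

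I expect the main obstacle to be the sign bookkeeping in the presence of disk bubbles — precisely what obstructs a naïve $\Z$-refinement of link Floer homology for links as opposed to knots (\cite{OS-knots}, \cite{RasmussenThesis}, \cite{MOST}). In the multi-pointed setting the relevant compactified moduli spaces acquire boundary strata from boundary degenerations and constant-to-the-basepoint disks, and one must show that the canonical orientations from $S$ make the contributions of these strata cancel with the correct sign, so that (i) the free-stabilization and basepoint-move maps are honest chain maps over $\Z$ with their $S$-induced signs, and (ii) the loop relations for basepoint moves hold on the nose rather than up to $-1$. Establishing (i) and (ii) also pins down which of the theories enumerated in \cite{SarkarSigns} we obtain. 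Once these computations are in place, the naturality argument is formally identical to the one for $\HFhat,\HFp,\HFm,\HFi$, and Theorem~\ref{thm:main_links} follows.
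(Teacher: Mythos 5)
Your proposal mirrors the outline of Theorem~\ref{thm:main}, but it skips the one genuinely new construction that makes Theorem~\ref{thm:main_links} work, and it worries about the wrong difficulty.

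The gap: you assert that the coupled orientation on $(\AA,\BB)$, defined from the same source as the mod-$2$ grading, yields ``by the construction of coupled Spin structures developed earlier in the paper, a canonical coupled Spin structure $S$ on $(\AA,\BB)$.'' This does not follow. By Lemma~\ref{lemma:EF}, once a coupled orientation is fixed the coupled Spin structures on a pair of vector spaces still form an $\RP^\infty$-worth of choices, so a coupled orientation alone pins nothing down. In the closed $3$-manifold case the canonical choice came from the fact that $\AA$ and $\BB$ are Lagrangian in $H_1(\Sigma;\R)$, via Lemma~\ref{lem:pinLag}; but in the sutured/link setting $\AA$ and $\BB$ are \emph{not} Lagrangian, and the paper says so explicitly. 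The actual resolution is to associate to $(Y,\Link)$ the sutured manifold $(M,\gamma)$ in the standard way, to observe that the orientation of $L$ induces an isotopy of $\del M$ sliding each annulus from $R_-$ to the adjacent one in $R_+$, hence a canonical identification $H_1(M,R_-;\R)\cong H_1(M,R_+;\R)$, and to apply Lemma~\ref{lem:EE} to obtain the preferred homological coupled Spin structure. This is the crux, and it is absent from your proposal. Once it is in place, Theorem~\ref{thm:main_links} is essentially an instance of Theorem~\ref{thm:main_sutured} (plus the multi-pointed bookkeeping of Proposition~\ref{prop:multiple}).

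Two further points. First, you flag disk bubbles and boundary degenerations as ``the main obstacle,'' but for $\HFLhat$ and $\HFLm$ the differential only counts strips with $n_{z_i}(\phi)=0$ for all $i$, so boundary degenerations (whose domains necessarily cover every $z_i$) do not contribute; the cancellation you want to arrange is only an issue for the curved complex $\CFLcurved$, not for the two flavors in the statement. Second, basepoint-pushing and free (de)stabilization maps are not part of the naturality claim here: the triple $(Y,\Link,\bs)$ fixes the basepoints, and what is needed is naturality under Heegaard moves that preserve them. Independence of the basepoints is the separate, weaker (up-to-noncanonical-isomorphism) statement of Proposition~\ref{prop:noncan-links}.
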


In the proofs of the naturality statements in Theorems~\ref{thm:main_sutured} and \ref{thm:main_links}, we again need to consider loops of handleslides. We will make use of the work of Qin \cite{Qin}, who recently generalized the results of \cite[Appendix A]{JTZ} to these contexts. 

In future work, we will use canonical orientations to prove that Heegaard Floer homology and link Floer homology are functorial over $\Z$ under cobordisms, refining  the results of Ozsv\'ath-Szab\'o \cite{HolDiskFour} and Zemke \cite{ZemkeHF}, \cite{ZemkeHFK}. The cobordism maps are determined by homology orientations, in a manner reminiscent of what happens in monopole Floer homology \cite{KMBook}.

\medskip
\textbf{Organization of the paper.} In Section~\ref{sec:pinspin} we discuss Spin, Pin, and coupled Spin structures on vector bundles. In Section~\ref{sec:general} we explain the general construction of canonical orientations from Pin structures, in the context of Lagrangian Floer theory. In Section~\ref{sec:3m} we define Heegaard Floer homology over $\Z$ and prove that its isomorphism class is a $3$-manifold invariant. In Section~\ref{sec:naturality} we prove  Theorem~\ref{thm:main} in its full strength, by establishing naturality. In Section~\ref{sec:exact} we prove the surgery exact triangle. In Section~\ref{sec:other} we discuss signs in a few other settings, and in particular prove Theorems~\ref{thm:main_sutured} and Theorem~\ref{thm:main_links}.

\medskip
\textbf{Acknowledgements.} We thank Irving Dai, Andr\'as Juh\'asz, Tye Lidman, Robert Lipshitz, Maggie Miller, Tom Mrowka, Qianhe Qin, Sucharit Sarkar, Eha Srivastava, and Ian Zemke for helpful conversations. 
MA was partially supported by NSF award DMS-2103805. CM was partially supported by a Simons Investigator Award and the Simons Collaboration Grant on New Structures in Low-Dimensional Topology.


\section{Spin, Pin, and coupled Spin}
\label{sec:pinspin}

In this section we review some generalities about Spin and Pin structures, and then introduce the new notion of a coupled Spin structure.

\subsection{Pin structures}
\label{sec:pins}
We start by listing some relevant facts about Pin structures, comparing them to the better-known Spin structures. For more details, we refer to \cite{KirbyTaylorPin} and \cite{SeidelBook}.

Throughout the paper let $\Pin(n)$ denote the {\em positive} Pin group in dimension $n$. This is the central extension of $\On$ by $\Z/2$ determined (as a group extension) by the element $w_2 \in H^2(\B\On; \Z/2)$. It can be constructed explicitly as follows. Let $\Cl(n)$ be the real Clifford algebra on $n$ generators $e_1, \dots, e_n$, subject to the relations $e_i^2=1$ for all $i$, and $e_i e_j = - e_j e_i$ for all $i\neq j$. Then, $\Pin(n)$ is the subset of $\Cl(n)$ consisting of elements of the form $v_1v_2 \cdots v_k$ where $v_i$ are unit vectors in $\R^n = \Span(e_1, \dots, e_n).$ The subgroup $\Spin(n) \subset \Pin(n)$ consists of those $v_1v_2 \cdots v_k$ with $k$ even, and the double cover map $\Pin(n) \to O(n)$ is given by composing reflections across the hyperplanes $v_i^\perp$.

We obtain a commutative diagram of Lie group homomorphisms
$$\begin{xymatrix}{
1 \ar[r] & \Z/2 \ar[r] \ar[d]^{\id} & \Spin(n) \ar[d] \ar[r] & \operatorname{SO}(n) \ar[d] \ar[r] &1\\
1 \ar[r] & \Z/2 \ar[r] & \Pin(n) \ar[r] & \On \ar[r] &1}
\end{xymatrix}
$$

\begin{remark}
The group $\Pin(n)$ is denoted $\Pin^+_n$ in \cite{KirbyTaylorPin}. It is to be distinguished from the {\em negative} Pin group, denoted $\Pin^-_n$ in \cite{KirbyTaylorPin}, which is the central extension of $\On$ by $\Z/2$ classified by $w_2+w_1^2 \in H^2(\B\On; \Z/2)$. For example, for $n=1$ we have 
$$\Pin^+_1 \cong \Z/2 \times \Z/2, \ \ \ \Pin^-_1 \cong \Z/4,$$
so that the surjection $\Pin(1) \to \O(1)$ splits.

For $n=2$, we have $\Pin^+_2 \cong \operatorname{O}(2)$, whereas $\Pin^-_2$ is the group appearing in gauge theory from the symmetries of the Seiberg-Witten equations \cite{Furuta, beta}.
\end{remark}

From now on, whenever we talk about a base $B$ for a vector bundle, we will assume that $B$ is paracompact.

\begin{definition}
  Let $E \to B$ a real vector bundle of rank $n$ with an inner product, and $\Fr(E) \to B$ the principal $\On$-bundle of orthonormal frames in $E$. A {\em Pin structure} on $E$ is defined to be a lift of $\Fr(E)$ to a principal $\Pin(n)$-bundle. In particular, if $M$ is a smooth manifold, a Pin structure on $M$ is a Pin structure on the tangent bundle $TM$.
  \end{definition}
  
   \begin{definition}
An {\em isomorphism} between two Pin structures on a bundle $E \to B$ is an isomorphism of principal $\Pin(n)$-bundles commuting with the projections to $\Fr(E)$.
  \end{definition}
  
  If $E$ is oriented, then the choice of a Pin structure on $E$ is equivalent to that of a Spin structure. However, Pin structures can be defined without choosing orientations, and they can exist on non-orientable bundles.

\begin{proposition} $($\cite[p.185-186]{KirbyTaylorPin}$) $ 
A vector bundle $E \to B$ admits a Pin structure if and only if $w_2(E)=0$. If one exists, the set of Pin structures on $E$ up to isomorphism forms an affine space over $H^1(B; \Z/2)$.
\end{proposition}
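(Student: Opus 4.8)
The plan is to run the standard obstruction-theory argument in its \v{C}ech-cohomological form, which is available because $B$ is assumed paracompact. Fix a good open cover $\{U_i\}$ of $B$ over which $E$ (hence $\Fr(E)$) is trivialized, with transition functions $g_{ij}\colon U_{ij}\to\On$ satisfying $g_{ij}g_{jk}g_{ki}=1$ on triple overlaps. With the cover and these transition functions fixed, a Pin structure on $E$ is the same data as a collection of lifts $\tilde g_{ij}\colon U_{ij}\to\Pin(n)$ of the $g_{ij}$ along $\Pin(n)\to\On$ which again satisfy $\tilde g_{ij}\tilde g_{jk}\tilde g_{ki}=1$; this is just the cocycle description of a principal $\Pin(n)$-bundle equipped with the bundle map to $\Fr(E)$. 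Contractibility of the $U_{ij}$ ensures that lifts $\tilde g_{ij}$ always exist (and may be normalized with $\tilde g_{ii}=1$, $\tilde g_{ji}=\tilde g_{ij}^{-1}$); the question is whether they can be chosen to form a cocycle.

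\textbf{Existence.} Given arbitrary lifts $\tilde g_{ij}$, set $c_{ijk}:=\tilde g_{ij}\tilde g_{jk}\tilde g_{ki}$ on $U_{ijk}$. Since this maps to $g_{ij}g_{jk}g_{ki}=1$ in $\On$, it lies in $\ker(\Pin(n)\to\On)=\Z/2$. A direct computation, using associativity and the cocycle identity for the $g_{ij}$ together with the centrality of $\Z/2$ in $\Pin(n)$, shows that $c$ is a \v{C}ech $2$-cocycle with values in $\Z/2$, and that replacing $\tilde g_{ij}$ by $\epsilon_{ij}\tilde g_{ij}$ with $\epsilon_{ij}\colon U_{ij}\to\Z/2$ changes $c$ by the coboundary $\delta\epsilon$. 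Thus $[c]\in \check H^2(B;\Z/2)\cong H^2(B;\Z/2)$ is well defined, and it vanishes exactly when the $\tilde g_{ij}$ can be corrected to an honest cocycle, i.e.\ exactly when a Pin structure exists. It remains to identify $[c]$ with $w_2(E)$: this is precisely the assertion that the central extension $1\to\Z/2\to\Pin(n)\to\On\to1$ is classified by $w_2\in H^2(\B\On;\Z/2)$, which is how $\Pin(n)$ was defined in this section; pulling this class back along a classifying map $B\to\B\On$ for $\Fr(E)$ and unwinding the \v{C}ech description of the classifying class of a central extension gives $[c]=w_2(E)$.

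\textbf{Classification.} Now suppose $\tilde g_{ij}$ is a Pin structure (a genuine $\Pin(n)$-cocycle lifting $g_{ij}$). Since any other lift $\tilde g'_{ij}$ of $g_{ij}$ satisfies $\tilde g'_{ij}(\tilde g_{ij})^{-1}\in\Z/2$, we can write $\tilde g'_{ij}=\lambda_{ij}\tilde g_{ij}$ for a unique $\lambda_{ij}\colon U_{ij}\to\Z/2$, and $\tilde g'_{ij}$ is again a cocycle iff $\lambda_{ij}\lambda_{jk}\lambda_{ki}=1$, i.e.\ iff $\lambda$ is a \v{C}ech $1$-cocycle. Two Pin structures built from cocycles $\lambda,\lambda'$ are isomorphic via an isomorphism covering $\id_{\Fr(E)}$ precisely when there exist $\mu_i\colon U_i\to\Z/2$ with $\lambda'_{ij}\lambda_{ij}^{-1}=\mu_i\mu_j^{-1}$ (an isomorphism over the identity must be fiberwise multiplication by elements of the kernel $\Z/2$), i.e.\ precisely when $[\lambda]=[\lambda']$ in $\check H^1(B;\Z/2)\cong H^1(B;\Z/2)$. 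Hence $\tilde g'_{ij}\mapsto[\lambda_{ij}]$ identifies the set of Pin structures on $E$ up to isomorphism with a torsor over $H^1(B;\Z/2)$: the action is transitive because every lift arises this way, and free by the isomorphism criterion just stated. Independence of the chosen cover follows, as usual, by passing to common refinements.

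The only step I expect to require genuine care is the identification $[c]=w_2(E)$ — that is, making sure the obstruction is $w_2$ and not $w_2+w_1^2$, which is the obstruction for the negative Pin group $\Pin^-_n$. In the present setup this is essentially built into the definition of $\Pin(n)$ adopted above, but to be fully rigorous one should check that the \v{C}ech cocycle $c$ produced from the explicit Clifford-algebra model of $\Pin(n)\to\On$ represents the universal second Stiefel--Whitney class; alternatively one simply invokes \cite[p.185-186]{KirbyTaylorPin} for this computation. The remaining steps are routine diagram chasing with \v{C}ech cochains valued in the central $\Z/2$.
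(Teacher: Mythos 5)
The paper does not prove this proposition; it simply cites \cite[p.185--186]{KirbyTaylorPin}, so there is no in-paper argument to compare against. Your \v{C}ech-cohomological proof is the standard one and is correct: the defect $c_{ijk}=\tilde g_{ij}\tilde g_{jk}\tilde g_{ki}\in\Z/2$ of a choice of lifts is a $2$-cocycle (one sees $(\delta c)_{ijkl}=1$ directly using centrality, associativity, and the normalization $\tilde g_{ji}=\tilde g_{ij}^{-1}$), whose class is the obstruction; once it vanishes the ambiguity in the lifted cocycle, modulo isomorphisms covering $\id_{\Fr(E)}$ (which are precisely fiberwise multiplications by the central $\Z/2$), is parametrized by $\check H^1(B;\Z/2)\cong H^1(B;\Z/2)$. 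You also correctly identify the one genuinely delicate point, namely that the obstruction class is $w_2$ rather than $w_2+w_1^2$: this is built into the definition of $\Pin(n)$ as the central extension of $\On$ classified by $w_2\in H^2(\B\On;\Z/2)$ adopted in the paper (the opposite choice gives $\Pin^-$, for which the obstruction is $w_2+w_1^2$, as the paper's subsequent remark points out), and for the full computation one may defer to \cite{KirbyTaylorPin} as you and the paper both do.
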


Thus, given a Pin structure $\Ps$ on $E$ and an element $\eta \in H^1(B; \Z/2)$, we can twist $\Ps$ by $\ell$ and obtain a new Pin structure on $E$ (up to isomorphism), denoted $\Ps \otimes \eta$.

A  concept closely related to isomorphism is that of homotopy. 
   
  \begin{definition}
 A {\em homotopy} between two Pin structures $P_0^\#$, $P_1^\#$ on a bundle $E \to B$ consists of a Pin structure on the pullback of $E$ to $B \times [0,1]$, whose restriction to the two ends agrees with $P_0^\#$ and $P_1^\#$, respectively. Two homotopies are {\em equivalent} if the corresponding Pin structure over the product of $B$ with the boundary of $[0,1] \times [0,1]$ extends to a Pin structure on the interior.
\end{definition}

\begin{remark}
\label{rem:classifyingPin}
 Alternatively, one may formulate these notions as follows:  A Pin structure on a vector bundle $E$ over $B$ is a lift of a representative classifying map $B \to \B\O(n)$ to $ \B\Pin(n)$. A homotopy of Pin structures is given by a homotopy between the lifts (through other such lifts), and the notion of equivalence corresponds to a homotopy of homotopies. 
 \end{remark}
 
 The classification of Pin structures up to homotopy is the same as up to isomorphism. 
 \begin{lemma}
 \label{lem:homotopic}
 Two Pin structures on a vector bundle $E \to B$ are isomorphic if and only if they are homotopic.
 \end{lemma}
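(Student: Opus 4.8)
The plan is to move freely between the three models for Pin structures offered in the excerpt --- lifts of the frame bundle to a principal $\Pin(n)$-bundle, lifts of a classifying map $B \to \B\O(n)$ through $\B\Pin(n)$, and the affine-space-over-$H^1(B;\Z/2)$ description --- and to reduce both notions (isomorphism and homotopy) to the same obstruction-theoretic invariant in $H^1(B;\Z/2)$. Since the lemma is symmetric in the two structures and the set of Pin structures up to isomorphism is a torsor over $H^1(B;\Z/2)$ (by the cited Proposition), it suffices to show: (i) two homotopic Pin structures are isomorphic, and (ii) the homotopy class of a Pin structure determines, and is determined by, the same $H^1(B;\Z/2)$-difference as its isomorphism class. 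Concretely, given two Pin structures $P_0^\#, P_1^\#$ on $E$, their ``difference'' is a class $\delta(P_0^\#,P_1^\#) \in H^1(B;\Z/2)$ which vanishes iff they are isomorphic; I will show the homotopy-theoretic difference is the same class.

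First I would prove (i). A homotopy is a Pin structure $\widetilde{P}^\#$ on $p^*E$ over $B \times [0,1]$, where $p \colon B\times[0,1]\to B$ is the projection. Restricting to $B \times \{t\}$ for any $t$ gives a Pin structure isomorphic to $\widetilde{P}^\#$ pulled back along the inclusion; since $B\times\{0\} \hookrightarrow B\times[0,1]$ is a homotopy equivalence (a deformation retract), the two end restrictions are pulled back from isomorphic structures on $B$, hence $P_0^\#$ and $P_1^\#$ are isomorphic. (Equivalently, in the classifying-map picture: a homotopy of lifts over $B\times[0,1]$ restricts at the two ends to lifts that are fiberwise-homotopic, and a fiberwise homotopy of lifts of a fixed map is exactly an isomorphism of Pin structures.) This direction is essentially formal given paracompactness of $B$, which we have assumed.

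For the converse, suppose $P_0^\#$ and $P_1^\#$ are isomorphic. I would work in the classifying-map model of Remark~\ref{rem:classifyingPin}: fix a classifying map $f\colon B \to \B\O(n)$ for $E$ and regard both Pin structures as lifts $\widetilde f_0, \widetilde f_1 \colon B \to \B\Pin(n)$ of $f$. The fiber of $\B\Pin(n) \to \B\O(n)$ is $\B\Z/2 = K(\Z/2,1)$, so the space of lifts of $f$ is a torsor over $[B, K(\Z/2,1)] = H^1(B;\Z/2)$, and the difference of $\widetilde f_0$ and $\widetilde f_1$ in this torsor is precisely the class $\delta(P_0^\#,P_1^\#)$ from the affine-space description; an isomorphism of Pin structures means exactly that this class is $0$, i.e. $\widetilde f_0 \simeq \widetilde f_1$ through lifts of $f$. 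Such a homotopy of lifts is, tautologically, a lift over $B\times[0,1]$ of $f\circ p$ restricting to $\widetilde f_0$ and $\widetilde f_1$ at the ends --- that is, a homotopy of Pin structures in the sense of the Definition. Hence isomorphic implies homotopic, completing the proof.

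The main obstacle --- really the only nontrivial point --- is matching the fibration-theoretic notion of ``difference of two lifts of a fixed map into $H^1(B;\Z/2)$'' with the geometric ``affine action of $H^1(B;\Z/2)$ on the set of Pin structures'' from the cited Proposition, and checking that an isomorphism of principal $\Pin(n)$-bundles over $\Fr(E)$ corresponds on classifying spaces to a vertical homotopy of lifts (rather than, say, merely a homotopy that moves the underlying $\O(n)$-map). This is where one has to be slightly careful: one should note that $\B\Pin(n)\to\B\O(n)$ is a fibration with fiber $B\Z/2$ and invoke the standard obstruction-theory identification of the set of vertical-homotopy classes of sections with $H^1(B;\Z/2)$, then observe that both the ``twist by $\eta$'' operation and the homotopy-difference operation are the unique such torsor structures, hence agree. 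Everything else is bookkeeping with the equivalences between the three descriptions of Pin structures.
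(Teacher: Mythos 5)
Your proof is correct, but it takes a genuinely different and substantially heavier route than the paper's. The paper's argument is short and purely geometric. For homotopic $\Rightarrow$ isomorphic, it chooses a Pin connection on the Pin structure over $B\times[0,1]$ and produces the isomorphism by parallel transport along $\{b\}\times[0,1]$; your deformation-retract argument is an unpacking of the same homotopy-invariance principle and is essentially equivalent. For isomorphic $\Rightarrow$ homotopic, the paper simply glues: pull back $P_0^\#$ over $B\times[0,2/3)$, pull back $P_1^\#$ over $B\times(1/3,1]$, and patch over the overlap using the given isomorphism; the resulting Pin structure on $p^*E$ restricts to $P_0^\#$ and $P_1^\#$ at the two ends, which is a homotopy by definition. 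Your version instead appeals to the identification of isomorphism classes of Pin structures with \emph{vertical} homotopy classes of lifts $B\to\B\Pin(n)$. That identification is true, but it is precisely the nontrivial content being proved (translated into classifying-space language), and you correctly flag it as the crux while leaving it to ``standard obstruction theory.'' The paper's gluing construction sidesteps this entirely and is logically self-contained. One small imprecision in your sketch: the remark that ``both the `twist by $\eta$' operation and the homotopy-difference operation are the unique such torsor structures, hence agree'' is not a valid deduction as stated (a nonempty set over a group $G$ admits many $G$-torsor structures); what is actually needed is that the natural comparison map between the two sets is $H^1(B;\Z/2)$-equivariant, from which bijectivity follows once one fixed point matches.
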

 \begin{proof}
 If the two Pin structures are homotopic, then after choosing a Pin connection in the bundle over $B \times [0,1]$ we get an isomorphism by parallel transport. Conversely, if we have an isomorphism, we can construct a Pin bundle over $B \times [0,1]$ by gluing two trivial bundles over $B \times [0,2/3)$ and $B \times (1/3, 1]$ using that isomorphism.
\end{proof}

The advantage of talking about homotopies is that we can iterate them. We can talk about homotopies of homotopies (what we called equivalences), and also about higher homotopies. Using this data we can construct a simplicial set whose vertices are Pin structures on $E$, and whose $n$-simplices correspond to $n$-homotopies. We call this the {\em space of Pin structures on $E$}.
 
\begin{remark}
\label{rem:noinner}
Up to homotopy equivalence, the space of Pin structures on a vector bundle $E$ does not depend on the inner product on $E$. For simplicity, we will suppress the inner products from our future discussions.
\end{remark}

\begin{proposition}
\label{prop:pinspace}
Let $E \to B$ be a vector bundle that admits a Pin structure. Then the space of such structures is (non-canonically) homotopy equivalent to the mapping space $\Map(B, \RP^\infty)$.
\end{proposition}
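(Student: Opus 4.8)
The plan is to exhibit the space of Pin structures on $E$ as the fiber of the restriction map between classifying-space mapping spaces, and then identify that fiber with $\Map(B, \RP^\infty)$ using the fibration $\B\Pin(n) \to \B\O(n)$. By Remark~\ref{rem:classifyingPin}, a Pin structure on $E$ is a lift to $\B\Pin(n)$ of a fixed classifying map $f\colon B \to \B\O(n)$ for $E$, and higher homotopies of Pin structures correspond to higher homotopies of lifts; so by definition the space of Pin structures is (the simplicial set modeling) the homotopy fiber of the map $\Map(B, \B\Pin(n)) \to \Map(B, \B\O(n))$ over the point $f$. (One should remark that the homotopy type of this fiber is independent of the choice of $f$ within its path component, which is where the word "non-canonically" in the statement comes from, together with the choice of basepoint/section below.)

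Next I would analyze the fibration sequence. The central extension $1 \to \Z/2 \to \Pin(n) \to \O(n) \to 1$ gives a fibration $\B\Z/2 \to \B\Pin(n) \to \B\O(n)$, i.e. a fibration with fiber $\B\Z/2 = \RP^\infty = K(\Z/2, 1)$, classified by the map $\B\O(n) \to \B(\B\Z/2) = K(\Z/2, 2)$ representing $w_2 \in H^2(\B\O(n); \Z/2)$. Applying $\Map(B, -)$ preserves fibrations, so $\Map(B, \B\Pin(n)) \to \Map(B, \B\O(n))$ is a fibration with fiber $\Map(B, \RP^\infty)$ over the constant map; more precisely, its homotopy fiber over an arbitrary map $f$ is the homotopy fiber of $\Map(B,\B\Pin(n)) \to \Map(B, \B\O(n))$, which we want to compare to $\Map(B, \RP^\infty)$.

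The key point, and the one requiring the hypothesis, is the following. Since $E$ admits a Pin structure, $w_2(E) = 0$, so $f\colon B \to \B\O(n)$ lifts to $\tilde f\colon B \to \B\Pin(n)$; fix such a lift. This $\tilde f$ is exactly a basepoint in the fiber we are studying, so the fiber is non-empty, and — this is the crucial step — translation by $\tilde f$ trivializes the fibration over the path component of $f$. Concretely, the choice of lift $\tilde f$ lets us split off the fiber $\RP^\infty = \B\Z/2$: using the principal $\B\Z/2$-bundle structure of $\B\Pin(n) \to \B\O(n)$ (which holds because $\B\Z/2$ is a topological group and the extension is central), the lift $\tilde f$ induces a trivialization of the pullback bundle $f^*(\B\Pin(n)) \cong B \times \B\Z/2$, and hence an identification of the space of lifts of $f$ with $\Map(B, \B\Z/2) = \Map(B, \RP^\infty)$. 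I expect the main obstacle to be setting this up cleanly at the level of simplicial sets/spaces rather than just on homotopy groups: one must check that the simplicial set of Pin structures (vertices = Pin structures, $n$-simplices = $n$-homotopies, as defined just before Remark~\ref{rem:noinner}) is a model for this homotopy fiber, and that the $\B\Z/2$-action on it — coming from twisting a Pin structure by a class in $H^1(B;\Z/2)$, which by the Proposition quoted from \cite{KirbyTaylorPin} acts simply transitively on isomorphism classes, i.e. on $\pi_0$ — is free and transitive on components with the right higher homotopy, so that the torsor structure upgrades to a homotopy equivalence with $\Map(B, \RP^\infty)$. Alternatively, and perhaps more cleanly, one can avoid simplicial technalities by computing homotopy groups: for any basepoint (= Pin structure) $\Ps$, the preceding Lemma~\ref{lem:homotopic} and iterated applications of the same twisting/obstruction argument show $\pi_k$ of the space of Pin structures at $\Ps$ is $H^{1-k}(B; \Z/2)$ extended appropriately — matching $\pi_k \Map(B, \RP^\infty) = \pi_k \Map(B, K(\Z/2,1))$, which (by the standard computation, e.g. via the Federer spectral sequence or directly) has the same description — and then invoke that both spaces are (generalized Eilenberg–MacLane) infinite loop-like spaces built from the single nonzero $k$-invariant-free cohomology, so that an isomorphism on all homotopy groups compatible with the structure is realized by a map. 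I would present the fibration argument as the main line, since it is the most conceptual and makes the role of $w_2(E) = 0$ transparent.
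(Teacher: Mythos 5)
Your proposal is correct and follows essentially the same approach as the paper: using the fibration $\Map(B, \mathrm{B}\Z/2) \to \Map(B, \B\Pin(n)) \to \Map(B, \B\O(n))$ induced by the central extension $\Z/2 \to \Pin(n) \to \O(n)$. The paper's proof is quite terse; you have filled in the details—identifying the space of Pin structures with the homotopy fiber over a classifying map, noting that the hypothesis $w_2(E)=0$ provides a lift serving as basepoint, and using the principal $\mathrm{B}\Z/2$-bundle structure to trivialize—all of which are correct and implicit in the paper's argument.
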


\begin{proof}
We think of Pin structures in terms of maps to the classifying space, as in Remark~\ref{rem:classifyingPin}. The fibration
$$ \Map(B, \text{B}\Z/2) \to \Map(B, \B\Pin(n)) \to \Map(B, \B\O(n))$$
gives the desired result, using the fact that $\text{B}\Z/2 = \RP^{\infty}$.
\end{proof}

 The above result has various generalizations one of which we will use: given a point $b \in B$, the space of Pin structures on $E$ which are fixed at $b$ is homotopy equivalent to the space of based maps from $B$ to $\B\O(n)$.

\begin{example}
\label{ex:S1Pin}
Let $M$ be the circle $S^1$, equipped with either orientation. It is well-known that $M$ admits two isomorphism classes of Spin structures: 
\begin{itemize}
\item the {\em bounding} one, which is obtained by restricting a Spin structure on $D^2$, and hence represents the zero element in the Spin bordism group $\Omega^1_{\Spin}\cong \Z/2$. In this case the oriented frame bundle of $TS^1$ is just $S^1$ itself, and this Spin structure corresponds to its nontrivial (connected) double cover;
\item the {\em Lie group} one, which is obtained by choosing a Spin structure on the tangent space to $S^1$ at a single point, and extending it to the whole circle in an $S^1$-equivariant way. This represents the nontrivial element in $\Omega^1_{\Spin}$, and corresponds to the trivial double cover of $S^1$.
\end{itemize}
See for example \cite[p.35-36]{KirbyBook}.

We will use the same terminology ({\em bounding} and {\em Lie group}) for the corresponding isomorphism classes of Pin structures on the circle, which are independent of the orientation. Note, however, that this is a slight misnomer: the Pin bordism group $\Omega^1_{\Pin}$ is trivial, because a Lie group Pin structure also bounds (a M\"obius band equipped with its own Pin structure).
\end{example}

\begin{definition}
\label{def:S1Pin}
Fix a splitting of the extension $\Pin(1) \to \O(1)$. This determines a canonical Pin structure on any one-dimensional vector space, and hence (by equivariant extension) a canonical representative of the Lie group Pin structures on the circle. When we are interested in a Pin structure on $S^1$ on the nose (not just up to isomorphism), this is what we call the {\em Lie group Pin structure}.
\end{definition}

By contrast, one can show that the action of rotation on the space of Pin structures on $D^2$ generates a non-trivial loop, so that there is no natural choice of bounding Pin structure on $S^1$.

\begin{remark}
In Seidel's book \cite{SeidelBook}, a Lie group Pin structure on the circle is called trivial, and a bounding one nontrivial. We will not use this terminology here, to prevent confusion---since either kind of structure can be considered trivial from a different perspective: that of bordism, or that of equivariant trivializations.
\end{remark}

\begin{example} \label{rem:non-equivalent_iso}
 Let $V$ be a vector space, viewed as a vector bundle over a point.  Then, there are non-equivalent homotopies between Pin structures on $V$. The simplest case to consider is when the two Pin structures that we are comparing are the same; in that case, such isomorphisms correspond bijectively to Pin structures on the circle which are fixed at a point, as can be seen by gluing the endpoints of the interval. In fact, by Proposition~\ref{prop:pinspace}, the space of Pin structures on $V$ is (non-canonically) homotopy equivalent to $\text{B}\Z/2 = \RP^{\infty}$, and $\pi_1(\RP^\infty)=\Z/2$.
\end{example}

\begin{example}
\label{ex:TnPin}
On a torus $T^n$ we will be interested in the {\em Lie group Pin structures}, those that are equivariant with respect to the Lie group multiplication on $T^n$. The space of such structures is the same as that of Pin structures on the tangent space at any point, and thus homotopy equivalent to $\RP^{\infty}$. Unlike in the case $n=1$ considered in Example~\ref{ex:S1Pin} and Definition~\ref{def:S1Pin}, for $n \geq 2$ we do not have a canonical representative anymore. 
\end{example}

\begin{remark}
\label{rem:productpin}
One disadvantage of Pin (as opposed to Spin) structures is that they do not behave well with respect to direct sums. Indeed, suppose that $E$ and $E'$ are two vector bundles over the same base $B$. Then
$$ w_2(E \oplus E') = w_2(E) + w_2(E') + w_1(E)w_1(E').$$
Thus, if $E$ and $E'$ are Spin (have $w_1=w_2=0$) then so is $E \oplus E'$, and in fact we get an induced Spin structure on $E \oplus E'$. By contrast, if $E$ and $E'$ are Pin (have $w_2=0$), this does not guarantee that $E \oplus E'$ is Pin.

The source of this issue is the lack of a natural multiplication map $\Pin(n) \times \Pin(n') \to \Pin(n+n')$. If we use the explicit description of the Pin groups as subsets of the Clifford algebras, we could try to define the map by 
$$(v_1 \dots v_k,  v'_1 \dots v'_{k'}) \mapsto v_1 \dots v_k v'_1 \dots v'_{k'}.$$ 
However, the left hand side is equal to
$$(1, v'_1 \dots v'_{k'}) \cdot (v_1 \dots v_k, 1) \in \Pin(n) \times \Pin(n')$$
which should be mapped to $v'_1 \dots v'_{k'}v_1 \dots v_k \in \Pin(n+n')$. Thus, we run into the problem that the elements $v_1 \dots v_k$ and $v'_1 \dots v'_{k'}$ commute in $\Pin(n+n')$ only up to a sign $(-1)^{kk'}$. On the other hand, there is a natural multiplication $ \Spin(n) \times \Spin(n') \to \Spin(n+n')$, since in that case we only work with even values of $k$ and $k'$.
\end{remark}

\subsection{Coupled orientations}
\label{sec:co}
The notion of coupled orientation that we introduce here will play a role in our take on Lagrangian Floer homology in Section~\ref{sec:LFH}. We will then come back to it in Section~\ref{sec:definition}.

\begin{definition}
For $n, m \geq 0$, we define the {\em coupled special orthogonal group}\footnote{Warning: the similar notation $SO(n, m)$ is  often used in the literature to denote a different group, that of transformations that preserve an indefinite bilinear form of signature $(n, m)$ and have determinant one.} $\SO(n; m)$ to be the group of pairs of orthogonal matrices with the same determinant:
$$ \SO(n; m) = \{(A, B) \in \O(n) \times \O(m) \mid \det(A) = \det(B) \}.$$
\end{definition}

\begin{definition}
\label{def:coupled-o}
Let $E$, $F$ be two real vector bundles (with inner products) over the same base $B$, of ranks $n$ and $m$ respectively.  A {\em coupled orientation} on $(E, F)$ is a lift of $\Fr(E) \times \Fr(F) \to B$ from a principal $\O(n) \times \O(m)$ bundle to a principal $\SO(n, m)$ bundle. 
\end{definition}

\begin{lemma}
\label{lem:coupled-o}
A coupled orientation on $(E, F)$ is equivalent to an orientation on the direct sum $E \oplus F$. It exists if and only if $w_1(E) = w_1(F)$.
\end{lemma}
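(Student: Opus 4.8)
The plan is to produce the equivalence between coupled orientations and orientations of $E \oplus F$ by a direct comparison of structure groups, and then to obtain the existence criterion from the characteristic class computation. First I would recall that an orientation on the rank-$(n+m)$ bundle $E \oplus F$ is a reduction of the structure group of $\Fr(E \oplus F)$ from $\O(n+m)$ to $\SO(n+m)$, or equivalently a lift of the classifying map along $\B\SO(n+m) \to \B\O(n+m)$. On the other hand, the direct sum operation gives a natural block-diagonal inclusion $\O(n) \times \O(m) \hookrightarrow \O(n+m)$, under which the frame bundle $\Fr(E) \times_B \Fr(F)$ maps to $\Fr(E \oplus F)$ (this is the standard fact that an orthonormal frame of $E$ together with one of $F$ gives one of $E \oplus F$). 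The key algebraic observation is that the preimage of $\SO(n+m)$ under this inclusion is exactly $\SO(n;m)$: a pair $(A,B)$ lands in $\SO(n+m)$ precisely when $\det\begin{pmatrix} A & 0 \\ 0 & B\end{pmatrix} = \det(A)\det(B) = 1$, i.e.\ when $\det(A) = \det(B) = \pm 1$ with equal signs, which is the defining condition of $\SO(n;m)$.

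Next I would assemble this into the bundle-theoretic statement. Given a coupled orientation, i.e.\ a principal $\SO(n;m)$-bundle $Q \to B$ lifting $\Fr(E) \times_B \Fr(F)$, one forms the associated bundle $Q \times_{\SO(n;m)} \SO(n+m)$ along the homomorphism $\SO(n;m) \to \SO(n+m)$; pushing forward further to $\O(n+m)$ recovers $\Fr(E \oplus F)$, so this is an $\SO(n+m)$-reduction, hence an orientation of $E \oplus F$. Conversely, given an orientation of $E \oplus F$ — a sub-$\SO(n+m)$-bundle $P \subset \Fr(E \oplus F)$ — I would intersect it with the image of $\Fr(E) \times_B \Fr(F)$ inside $\Fr(E \oplus F)$; fiberwise this intersection is the $\SO(n;m)$-torsor sitting inside the $\O(n) \times \O(m)$-torsor, and it is straightforward to check this is a principal $\SO(n;m)$-subbundle mapping down to $\Fr(E) \times_B \Fr(F)$, i.e.\ a coupled orientation. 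Checking that these two constructions are mutually inverse (and respect isomorphisms) is routine and I would state it without belaboring the diagram chase.

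For the existence clause, I would invoke the obstruction-theoretic / characteristic-class description. The block inclusion $\O(n) \times \O(m) \to \O(n+m)$ followed by $\det$ is $(A,B) \mapsto \det(A)\det(B)$, so on classifying spaces $w_1(E \oplus F) = w_1(E) + w_1(F)$. The bundle $E \oplus F$ admits an orientation if and only if $w_1(E \oplus F) = 0$, i.e.\ $w_1(E) + w_1(F) = 0$ in $H^1(B;\Z/2)$, which since we are in characteristic $2$ is exactly the condition $w_1(E) = w_1(F)$. Combining with the equivalence just established, a coupled orientation exists iff $w_1(E) = w_1(F)$.

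I do not expect any genuine obstacle here: the statement is essentially a definitional unwinding plus the additivity of $w_1$ under direct sums. The only point that requires a little care is the fiberwise identification of $\SO(n;m)$ as the correct intersection — one must make sure the block-diagonal embedding is used consistently on both sides and that no orientation-of-the-splitting ambiguity creeps in — but this is bookkeeping rather than a real difficulty. If one wanted to be fully rigorous about naturality (e.g.\ for later use in discussing homotopies or the space of coupled orientations), I would phrase the whole argument at the level of classifying spaces: a coupled orientation is a lift along $\B\SO(n;m) \to \B(\O(n)\times\O(m))$, an orientation of $E\oplus F$ is a lift along $\B\SO(n+m) \to \B\O(n+m)$, and the square
\[
\begin{xymatrix}{
\B\SO(n;m) \ar[r] \ar[d] & \B\SO(n+m) \ar[d] \\
\B(\O(n)\times\O(m)) \ar[r] & \B\O(n+m)
}\end{xymatrix}
\]
is a homotopy pullback (both vertical maps have fiber $\B\Z/2$ and the map on fibers is the identity), which gives the equivalence of lifts formally.
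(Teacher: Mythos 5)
Your proof is correct and takes essentially the same route as the paper: both rest on the observation that $\SO(n;m)$ is the pullback of $\SO(n+m) \to \O(n+m) \leftarrow \O(n) \times \O(m)$, so lifts to $\SO(n;m)$ correspond to lifts to $\SO(n+m)$, together with the additivity $w_1(E \oplus F) = w_1(E) + w_1(F)$. You simply spell out the associated-bundle and classifying-space details that the paper leaves implicit.
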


\begin{proof}
For the first part, observe that there is a pull-back diagram
\[
\xymatrix{
\SO(n; m) \, \ar[d] \ar@{^{(}->}[r] &\SO(n+m)\ar[d] \\
\O(n) \times \O(m) \, \ar@{^{(}->}[r] &\O(n+m)
}
\]
Thus, lifting a principal $\O(n) \times \O(m)$ bundle to $\SO(n; m)$ is the same as lifting the induced $\O(n+m)$ bundle to $\SO(n+m)$. The latter is the data of an orientation on the direct sum.

The second part follows from the relation $w_1(E \oplus F) = w_1(E) + w_1(F)$.
\end{proof}

\begin{definition}
\label{lem:EFEFo}
Let $E, F, E', F'$ be four real vector bundles over a base $B$. Given coupled orientations on $(E, F)$ and $(E', F')$, we can form their {\em direct sum}. This is the coupled orientation on $(E \oplus E', F \oplus F')$  induced by the map
\begin{align}
\label{eq:dsum}
 \SO(n; m) \times \SO(n', m') &\to \SO(n+n'; m+m'), \\ ((A, B), (C, D)) &\mapsto \left( \left(\begin{array}{c|c} A & 0 \\ \hline 0 & C \end{array} \right), \left(\begin{array}{c|c} B & 0 \\ \hline 0 & D \end{array} \right) \right).\notag
 \end{align}
\end{definition}


\begin{definition}
\label{def:coupledoo}
For any vector bundle $E$, there is a {\em canonical coupled orientation} on $(E, E)$, induced from the natural map 
\begin{equation}
\label{eq:ee}
\O(n) \to \SO(n; n), \ \ A \mapsto (A, A).
\end{equation}
\end{definition}

\begin{lemma}
Let $E$ and $E'$ be two vector bundles over the same base $B$. Equip $(E, E)$ and $(E', E')$ with their canonical coupled orientations, then take their direct sum. The result is the canonical coupled orientation on $(E \oplus E', E \oplus E')$.
\end{lemma}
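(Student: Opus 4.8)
The plan is to unwind both constructions and check that the two maps $\O(n)\times\O(n')\to\SO(n+n';n+n')$ one obtains agree. On the one hand, the canonical coupled orientation on $(E\oplus E',E\oplus E')$ is, by Definition~\ref{def:coupledoo}, the lift induced by the diagonal map $\O(n+n')\to\SO(n+n';n+n')$, $M\mapsto(M,M)$, applied to the principal $\O(n+n')$-bundle $\Fr(E\oplus E')$ (or rather the reduction along $\O(n)\times\O(n')\hookrightarrow\O(n+n')$ coming from the splitting $E\oplus E'$). On the other hand, the direct sum of the canonical coupled orientations on $(E,E)$ and $(E',E')$ is, by Definition~\ref{lem:EFEFo}, obtained from the composite
\[
\O(n)\times\O(n')\xrightarrow{(\text{\eqref{eq:ee}},\text{\eqref{eq:ee}})}\SO(n;n)\times\SO(n';n')\xrightarrow{\eqref{eq:dsum}}\SO(n+n';n+n').
\]

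So the whole statement reduces to the commutativity of the diagram of Lie group homomorphisms
\[
\xymatrix{
\O(n)\times\O(n') \ar[r] \ar@{^{(}->}[d] & \SO(n;n)\times\SO(n';n') \ar[d]^{\eqref{eq:dsum}} \\
\O(n+n') \ar[r] & \SO(n+n';n+n')
}
\]
where the top arrow is $(A,C)\mapsto((A,A),(C,C))$, the left arrow is the block-diagonal inclusion, the bottom arrow is the diagonal $M\mapsto(M,M)$. Chasing a pair $(A,C)$: going right then down gives the pair of block-diagonal matrices $\bigl(\left(\begin{smallmatrix}A&0\\0&C\end{smallmatrix}\right),\left(\begin{smallmatrix}A&0\\0&C\end{smallmatrix}\right)\bigr)$; going down then right gives $M\mapsto(M,M)$ with $M=\left(\begin{smallmatrix}A&0\\0&C\end{smallmatrix}\right)$, i.e.\ the same pair. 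Hence the diagram commutes on the nose.

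It then remains to observe that forming the reduction of structure group / lift of a principal bundle is functorial in the group: a commuting square of group homomorphisms as above induces, for any principal $\O(n)\times\O(n')$-bundle $Q$ (here $Q=\Fr(E)\times\Fr(E')$, with the product frame bundle mapping to $\Fr(E\oplus E')$ in the standard way), a canonical identification between the two associated $\SO(n+n';n+n')$-bundles obtained by the two routes, compatible with the maps down to $\Fr(E\oplus E')$. This is exactly the kind of bookkeeping already used in the proof of Lemma~\ref{lem:coupled-o}, where lifts along a pullback square were identified, so no new idea is needed. The only mild subtlety — the step I would be most careful about — is checking that the identification $\Fr(E)\times\Fr(E')\to\Fr(E\oplus E')$ used to define the direct sum of coupled orientations is the same reduction of $\Fr(E\oplus E')$ along $\O(n)\times\O(n')\hookrightarrow\O(n+n')$ that is implicitly used when one writes ``the canonical coupled orientation on $(E\oplus E',E\oplus E')$'', i.e.\ that the splitting is treated consistently on both sides; once that is pinned down, the statement follows formally from the commutative square above. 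I expect the entire argument to be short.
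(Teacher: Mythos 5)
Your proof is correct and follows essentially the same route as the paper's: the paper also reduces the statement to the commutativity of exactly that square of Lie group homomorphisms built from \eqref{eq:ee} and \eqref{eq:dsum}, which you verify by an explicit element chase. The paper states the commutativity without the chase and omits the functoriality bookkeeping you spell out, but the content is identical.
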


\begin{proof}
This is a consequence of the commutativity of the diagram
\[
\xymatrix{
\O(n) \times \O(n') \, \ar[d] \ar[r] &\SO(n; n) \times \SO(n'; n')\ar[d] \\
\O(n+n') \, \ar[r] &\SO(n+n'; n+n')
}
\]
which involves maps of the form \eqref{eq:dsum} and \eqref{eq:ee}.
\end{proof}

Observe that the equivalence in Lemma~\ref{lem:coupled-o} is based on the map
\begin{equation}
\label{eq:OO}
\O(n) \times \O(m) \to \O(n+m), \ \ (A, B) \mapsto \left(\begin{array}{c|c} A & 0 \\ \hline 0 & B \end{array} \right).
\end{equation}
Under this equivalence, the canonical coupled orientation on $(E, E)$ corresponds to the orientation on $E \oplus E$ given by an ordered basis of the form 
$$((v_1,0), \dots, (v_n,0), (0, v_1), \dots, (0,v_n)),$$ where $\{v_1, \dots, v_n\}$ is any basis of $E$. Let us call this the {\em concatenated orientation} on $E \oplus E$. 

There is another natural choice of orientation on $E \oplus E$, from bases of the form 
$$((v_1,0), (0, v_1), \dots, (v_n, 0), (0, v_n)).$$
We call this the {\em shuffled orientation}. It differs from the concatenated one by $(-1)^{n(n-1)/2}$. The shuffled orientation behaves better with respect to direct sums, in the following sense: Given shuffled orientations on $E \oplus E$ and $E' \oplus E'$, their direct sum is an orientation on $(E \oplus E) \oplus (E' \oplus E')$ which corresponds to the shuffled orientation on $(E \oplus E') \oplus (E \oplus E')$ under the isomorphism that interchanges the second and third summands. 

To make everything be compatible with direct sums, it is desirable to have the canonical coupled orientation on $(E, E)$ correspond to the shuffled orientation on $E \oplus E$. We can achieve this by adjusting the map \eqref{eq:OO} in the particular case where $n = m$. In that case we consider the map
\begin{equation}
\label{eq:OOC}
\O(n) \times \O(n) \to \O(2n), \ \ (A, B) \mapsto C^{-1}\left(\begin{array}{c|c} A & 0 \\ \hline 0 & B \end{array} \right) C,
\end{equation}
where $C$ is the permutation matrix taking the standard basis $(e_1, e_2, \dots, e_{2n-1}, e_{2n})$ to the basis $(e_1, e_{n+1},  e_2, e_{n+2}, \dots, e_{n}, e_{2n})$.

\begin{convention}
\label{conv0}
In this paper we will only use coupled orientations on pairs $(E, F)$, where $E$ and $F$ have the same rank. When doing so, to go from a coupled orientation on $(E, F)$ to an orientation on $E\oplus F$ we will use the map \eqref{eq:OOC} instead of \eqref{eq:OO}. Therefore, the canonical coupled orientation on a pair $(E, E)$ will correspond to the shuffled orientation on $E \oplus E$.
\end{convention}

So far we have discussed coupled orientations on vector bundles. Let us turn to the analogous notion for manifolds.

\begin{definition}
\label{def:co-manifolds}
Let $M_0$ and $M_1$ be two smooth manifolds. For $i=0,1$, let $\pi_i: M_0 \times M_1 \to M_i$ be the projection. A {\em coupled orientation} on $(M_0, M_1)$ is a coupled orientation on the pair of vector bundles $(\pi_0^*TM_0, \pi_1^* TM_1)$ on $M_0 \times M_1$.
\end{definition}

\begin{remark}
\label{rem:coco}
By Lemma~\ref{lem:coupled-o}, a coupled orientation on $(M_0, M_1)$ corresponds to an orientation on the bundle $$T(M_0 \times M_1) = \pi_0^*TM_0 \oplus \pi_1^* TM_1.$$
In other words, we can think of a coupled orientation on $(M_0, M_1)$ as an orientation of the product $M_0 \times M_1$. We will only use this notion when $\dim M_0 = \dim M_1$; then, the correspondence to the orientation on $M_0 \times M_1$  is defined using the map \eqref{eq:OOC}, as in Convention~\ref{conv0}.
\end{remark}

\subsection{Coupled Spin structures}
\label{sec:coupledspin}
We now define some notions that will be used in Section~\ref{sec:3m} and later. 

\begin{definition}
\label{def:coupled-tspin}
We introduce the group $\tSpin(n; m)$ to be the $(\Z/2 \times \Z/2)$-cover of $\SO(n; m)$ given as the pull-back
\begin{equation}
\label{eq:coupledspin1}
\xymatrix{
\tSpin(n; m) \, \ar[d] \ar@{^{(}->}[r] &\Pin(n) \times \Pin(m)\ar[d] \\
\SO(n, m) \, \ar@{^{(}->}[r] &\O(n) \times \O(m)
}
\end{equation}
We let the {\em coupled Spin group} $\Spin(n; m)$ be the quotient of $\tSpin(n; m)$ by the diagonal subgroup $\Z/2 \subset \Z/2 \times \Z/2$. 
\end{definition}

Explicitly, $\tSpin(n; m)$ is the subgroup of $\Pin(n) \times \Pin(m)$ consisting of pairs of the form $(v_1 \dots v_k, u_1 \dots u_l)$ with $k+l$ even, and $\Spin(n; m)$ is its quotient by the equivalence relation
$$(v_1 \dots v_k, u_1 \dots u_l) \sim (-v_1 \dots v_k, -u_1 \dots u_l).$$

\begin{definition}
\label{def:coupled-spin}
Let $E$, $F$ be two real vector bundles (with inner products) over the same base $B$, of ranks $n$ and $m$ respectively.  A {\em $\widetilde{\mathit{Spin}}$ structure} on $(E, F)$ is a lift of $\Fr(E) \times \Fr(F) \to B$ to a principal $\tSpin(n, m)$ bundle. A {\em coupled Spin structure} on $(E, F)$ is a lift of $\Fr(E) \times \Fr(F) \to B$ to principal $\Spin(n, m)$ bundle.
\end{definition}

\begin{lemma}
\label{lemma:EF}
A coupled Spin structure on a pair $(E, F)$ exists if and only if $w_1(E) = w_1(F)$ and $w_2(E) = w_2(F)$. If one exists, then the space of such coupled Spin structures is (non-canonically) homotopy equivalent to $\Map(B, \RP^\infty)$. In particular, isomorphism classes of coupled Spin structures form a torsor over $H^1(B; \Z/2)$.
\end{lemma}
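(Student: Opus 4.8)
The plan is to analyze the fibration associated to the pull-back square \eqref{eq:coupledspin1}, after passing from bundles and their classifying maps, just as in the proofs of Proposition~\ref{prop:pinspace} and Lemma~\ref{lem:coupled-o}. First I would set up the obstruction-theoretic existence statement. A coupled Spin structure on $(E,F)$ is a lift of the classifying map $B \to \B(\O(n) \times \O(m))$ to $\B\Spin(n;m)$. Chasing the definitions, $\Spin(n;m) \to \SO(n;m)$ is a $\Z/2$-cover, and $\SO(n;m) \to \O(n)\times\O(m)$ sits in the pull-back square of Lemma~\ref{lem:coupled-o}; composing, the map $\B\Spin(n;m) \to \B\O(n) \times \B\O(m)$ has homotopy fibre a two-stage Postnikov piece. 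Concretely, I would argue that lifting first to $\B\SO(n;m)$ is possible iff $w_1(E)=w_1(F)$ (this is exactly Lemma~\ref{lem:coupled-o}, via the identification of an $\SO(n;m)$-lift with an orientation of $E\oplus F$ together with the factorization through $\O(n)\times\O(m)$; equivalently the single obstruction $w_1(E)-w_1(F) \in H^1(B;\Z/2)$ must vanish), and then, having fixed such a lift, the further lift to $\B\Spin(n;m)$ over $\B\SO(n;m)$ is possible iff a class in $H^2(B;\Z/2)$ vanishes. That class is the pull-back of the extension class of $1 \to \Z/2 \to \Spin(n;m) \to \SO(n;m) \to 1$; using the explicit Clifford-algebra description of $\tSpin(n;m) \subset \Pin(n)\times\Pin(m)$ and the quotient by the diagonal $\Z/2$, one identifies this extension class with $w_2$ of one factor pulled back along $\SO(n;m)\to \O(n)$, which on $B$ becomes $w_2(E)$ — and, because we are on the subgroup where determinants agree and we have quotiented by the diagonal, equivalently $w_2(F)$, so the obstruction is $w_2(E)-w_2(F)$. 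Hence existence is equivalent to $w_1(E)=w_1(F)$ and $w_2(E)=w_2(F)$.

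Next, for the homotopy type of the space of coupled Spin structures, I would use the same mapping-space fibration technique as in Proposition~\ref{prop:pinspace}. Pulling back $\B(\Z/2\times\Z/2) \to \B\Pin(n)\times\B\Pin(m)$ along $\B\SO(n;m) \hookrightarrow \B\O(n)\times\B\O(m)$, and then taking the quotient by the diagonal $\Z/2$, gives a fibration sequence in which the space of coupled Spin structures on $(E,F)$ (with the $\SO(n;m)$-data, i.e.\ the coupled orientation, allowed to vary) is the homotopy fibre of a map between mapping spaces, and the relevant fibre of the classifying-space fibration is $\B(\Z/2) = \RP^\infty$. More precisely, once a coupled orientation is chosen the remaining choices form a torsor-like space fibred over $\Map(B,\B(\Z/2)) = \Map(B,\RP^\infty)$; a careful comparison of $\B\tSpin(n;m)/\B(\text{diagonal }\Z/2)$ shows the fibre over the base point is exactly $\Map(B,\RP^\infty)$, giving the (non-canonical, depending on a chosen base coupled Spin structure) homotopy equivalence. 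Taking $\pi_0$ then yields that isomorphism classes form a torsor over $\pi_0\Map(B,\RP^\infty) = [B,\RP^\infty] = H^1(B;\Z/2)$, with the action being the twisting operation; this last identification also follows directly from the general principle, already invoked for Pin structures, that the set of lifts up to homotopy through a principal-$\Z/2$-bundle worth of choices is a torsor over $H^1(B;\Z/2)$ whenever it is nonempty.

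The main obstacle, and the step deserving the most care, is the middle one: correctly identifying the two obstruction classes as $w_1(E)-w_1(F)$ and $w_2(E)-w_2(F)$ rather than, say, $w_i(E)$ alone or $w_i(E\oplus F)$. This requires being precise about how the diagonal $\Z/2$ quotient interacts with the two $\Z/2$-central extensions $\Pin(n)\to\O(n)$ and $\Pin(m)\to\O(m)$: the $(\Z/2\times\Z/2)$-extension $\tSpin(n;m)\to\SO(n;m)$ has characteristic class the pair $(w_2\circ\mathrm{pr}_1,\, w_2\circ\mathrm{pr}_2)$ in $H^2(\B\SO(n;m);\Z/2)^{\oplus 2}$ (restricted from $H^2(\B\O(n))\oplus H^2(\B\O(m))$), and quotienting by the diagonal replaces this with its image under addition, i.e.\ the sum; but on $\B\SO(n;m)$ the relation $w_1\circ\mathrm{pr}_1 = w_1\circ\mathrm{pr}_2$ does \emph{not} by itself force $w_2\circ\mathrm{pr}_1 = w_2\circ\mathrm{pr}_2$, so one genuinely needs the $\SO(n;m)$-lift first (encoding the coupled orientation) to make sense of comparing $w_2$'s, and then the single remaining obstruction in $H^2$ pulls back to $w_2(E)+w_2(F)$ — which, given a coupled Spin structure exists, one re-packages as the condition $w_2(E)=w_2(F)$. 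I would handle this by a clean diagram chase through \eqref{eq:coupledspin1} at the level of classifying spaces, using Lemma~\ref{lem:coupled-o} to dispose of the $w_1$ part first and then running ordinary obstruction theory for the residual $\Z/2$-lift, being explicit that the relevant $k$-invariant is the transgression of the central-extension class and computing it via the Clifford description.
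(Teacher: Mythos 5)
Your proposal is correct and follows essentially the same route as the paper: handle the $w_1$ obstruction via Lemma~\ref{lem:coupled-o}, identify $\Spin(n;m) \to \SO(n;m)$ as the central $\Z/2$-extension whose characteristic class is the restriction of $\pi_1^*w_2 + \pi_2^*w_2$ to $H^2(\B\SO(n;m);\Z/2)$ (obtained by pushing the $\Z/2\times\Z/2$ extension class of $\tSpin(n;m)$ through the addition map $(a,b)\mapsto a+b$), so the remaining obstruction on $B$ is $w_2(E)+w_2(F)$, and then run the same mapping-space fibration argument as Proposition~\ref{prop:pinspace}. One small caution: your first identification of the extension class with ``$w_2$ of a single factor'' is incorrect as written — the pushforward along the diagonal quotient gives the sum, not one projection — but your final ``main obstacle'' paragraph self-corrects to $w_2(E)+w_2(F)$, which is what the paper's proof records.
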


\begin{proof}
The pull-back diagram \eqref{eq:coupledspin1} shows that a $\tSpin$ structure is the same as the data of a coupled orientation together with Pin structures on the two bundles. From Lemma~\ref{lem:coupled-o} and the definition of the Pin group, a $\tSpin$ structure exists if and only if $w_1(E)=w_1(F)$ and $w_2(E)=w_2(F)=0$. 

In fact, the same diagram shows that we can characterize $\tSpin(n; m)$ as the central extension of $\SO(n; m)$ by $\Z/2 \times \Z/2$ determined by the pull-back of the element $\pi_1^*w_2 \oplus \pi_2^* w_2$ from  $$H^2(\B\O(n) \times \B\O(m); \Z/2 \times \Z/2) \cong H^2(\B\O(n) \times \B\O(m); \Z/2) \oplus H^2(\B\O(n) \times \B\O(m); \Z/2)$$ to $H^2(\B \SO(n; m); \Z/2 \times \Z/2)$, where $\pi_1$ and $ \pi_2$ are the projections from $\B\O(n) \times \B\O(m)$ to the two factors. Dividing $\tSpin(n;m)$ by the diagonal $\Z/2$ action gives $\Spin(n;m)$ as the central extension of $\SO(n; m)$ by $\Z/2$ determined by the pull-back of
$$ \pi_1^*w_2 + \pi_2^* w_2 \in H^2(\B\O(n) \times \B\O(m); \Z/2)$$
to $H^2(\B \SO(n; m); \Z/2)$. This says that, once we have a coupled orientation (which exists iff $w_1(E)=w_1(F)$), the remaining obstruction to the existence of a coupled Spin structure is $w_2(E) + w_2(F)$.

The classification statement about $\Map(B, \RP^\infty)$ follows from the same argument as in the proof of Proposition~\ref{prop:pinspace}. Furthermore, isomorphism is equivalent to homotopy (by the same argument as in Lemma~\ref{lem:homotopic}), and we have $\pi_0(\Map(B, \RP^\infty)) = H^1(B; \Z/2)$.
\end{proof}

\begin{remark}
\label{rem:EF}
We will mostly be interested in coupled Spin structures on a pair $(V, W)$ of vector spaces, viewed as vector bundles over a point. Suppose that the pair $(V, W)$ is equipped with a coupled orientation. Then, Pin structures on the two vector spaces determine a $\tSpin$ structure and hence (in a non-injective way) a coupled Spin structure on $(V, W)$. If we fix the isomorphism class of the Pin structure on $V$, the space of its representatives is homotopy equivalent to $\RP^\infty$; the same goes for $W$. The map to coupled Spin structures is modeled on the  multiplication $\RP^\infty \times \RP^\infty \to \RP^\infty$ which comes from viewing $\RP^\infty$ as $\B\Z/2$. In particular, suppose we have a loop of Pin structures on $(V, W)$ that gives the standard generator of $\pi_1(\RP^\infty) \cong \Z/2$ for both $V$ and $W$. Then, the induced loop in the space of coupled Spin structures is trivial, representing $1+1 =0$ in $\pi_1(\RP^\infty) \cong \Z/2$.
\end{remark}

\begin{remark}
We also have a commutative diagram
\begin{equation}
\label{eq:coupledspin2}
\xymatrix{
\Spin(n) \times \Spin(m) \, \ar[d] \ar[r] &\tSpin(n; m) \ar[d] \\
\SO(n) \times \SO(m) \, \ar[r] &\SO(n, m).
}
\end{equation}
This implies that Spin structures on two bundles $E$ and $F$ determine a $\tSpin$ and hence a coupled Spin structure of $(E, F)$. 
\end{remark}

Given a coupled orientation, Lemma~\ref{lemma:EF} and Remark~\ref{rem:EF} show that coupled Spin structures are a weaker condition that having Pin structures on each bundle. However, one advantage they have over Pin structures is that they behave well with respect to direct sums:
\begin{lemma}
\label{lem:EFEF}
Let $E, F, E', F'$ be four real vector bundles over a base $B$. Given coupled Spin structures on $(E, F)$ and $(E', F')$, there is an induced coupled Spin structure on $(E \oplus E', F \oplus F').$
\end{lemma}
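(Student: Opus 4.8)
The plan is to mimic the direct-sum construction for coupled orientations in Definition~\ref{lem:EFEFo}, lifting it one level to the coupled Spin groups. Concretely, I would first produce a group homomorphism
\begin{equation*}
\Spin(n; m) \times \Spin(n'; m') \to \Spin(n+n'; m+m').
\end{equation*}
To build this, I would work first at the level of $\tSpin$. Recall that $\tSpin(k; \ell) \subset \Pin(k) \times \Pin(\ell)$ consists of pairs $(v_1\cdots v_a, u_1\cdots u_b)$ with $a+b$ even. I cannot use a naive "concatenation" map on the Pin factors, because of the sign ambiguity in $\Pin(k) \times \Pin(k') \to \Pin(k+k')$ discussed in Remark~\ref{rem:productpin}; but that is exactly the point of working with \emph{coupled} Spin rather than Pin. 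The key observation is that the sign ambiguity in forming $\Pin(n)\times\Pin(n')\to\Pin(n+n')$ out of a pair $(v_1\cdots v_a,\ v'_1\cdots v'_{a'})$ is $(-1)^{aa'}$, and the simultaneous ambiguity on the $F$-side from $(u_1\cdots u_b,\ u'_1\cdots u'_{b'})$ is $(-1)^{bb'}$; since we are on $\tSpin$ we have $a+b$ even and $a'+b'$ even, so $aa' + bb' \equiv ab' + a'b \equiv \cdots$ — more to the point, $aa'+bb' \equiv (a+b)(a'+b') + a b' + a' b \equiv ab'+a'b \pmod 2$, and this need not vanish, so I should instead observe that the \emph{combined} sign ambiguity, once we pass to the quotient $\tSpin \twoheadrightarrow \Spin$ by the simultaneous sign change $(x,y)\sim(-x,-y)$, becomes well-defined: changing the representative of the first pair by $(-1)^{aa'}$ on $E$ forces the compatible change on $F$, and on the quotient $\Spin$ these cancel. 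So the map is well-defined on $\Spin(n;m)\times\Spin(n';m') \to \Spin(n+n';m+m')$, even though it is not on the nose at the $\tSpin$ or Pin level.

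Once that homomorphism is in hand, the lemma follows formally: given coupled Spin structures on $(E,F)$ and $(E',F')$, i.e.\ lifts of $\Fr(E)\times\Fr(F)$ and $\Fr(E')\times\Fr(F')$ to $\Spin(n;m)$- and $\Spin(n';m')$-bundles, I take the fiber product of the two principal bundles over $B$ and push forward along the homomorphism above to get a principal $\Spin(n+n';m+m')$-bundle. I then check that this lives over $\Fr(E\oplus E')\times\Fr(F\oplus F')$ via the block-diagonal inclusion $\SO(n;m)\times\SO(n';m')\hookrightarrow\SO(n+n';m+m')$ of \eqref{eq:dsum}, which amounts to the commutativity of a square relating \eqref{eq:dsum} to the homomorphism of coupled Spin groups — this is the analogue of the diagram in the proof of the lemma following Definition~\ref{def:coupledoo}, now one central extension up. One should also note (per Convention~\ref{conv0}) that since all the pairs here have equal ranks in the cases of interest, one may want to use the shuffled variant \eqref{eq:OOC}, but this only changes things by an overall sign of orientations and does not affect the existence statement being proved.

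The main obstacle is precisely the well-definedness of the coupled-Spin direct-sum homomorphism — verifying that the two sources of sign ambiguity (from reordering the Clifford products on the $E$-side and on the $F$-side) cancel in the quotient $\Spin(n;m) = \tSpin(n;m)/(\Z/2)_{\mathrm{diag}}$. This is a short $\Z/2$-bookkeeping argument but it is the whole content of why coupled Spin structures, unlike Pin structures, are closed under direct sums (cf.\ Remark~\ref{rem:productpin}), so it deserves to be spelled out carefully. A cleaner, essentially equivalent route — which I would use if the explicit Clifford computation gets unwieldy — is cohomological: by the characterization in the proof of Lemma~\ref{lemma:EF}, $\Spin(k;\ell)$ is the central $\Z/2$-extension of $\SO(k;\ell)$ classified by $\pi_1^* w_2 + \pi_2^* w_2$; the block inclusion $\SO(n;m)\times\SO(n';m')\hookrightarrow\SO(n+n';m+m')$ pulls $\pi_1^*w_2+\pi_2^*w_2$ back to $(\pi_1^*w_2+\pi_2^*w_2)\boxplus(\pi_1^*w_2+\pi_2^*w_2)$ up to the Whitney cross-terms $w_1(E)w_1(E')$ and $w_1(F)w_1(F')$, which are \emph{equal} because a coupled orientation forces $w_1(E)=w_1(F)$ and $w_1(E')=w_1(F')$, hence cancel in the sum — so the extension pulls back as required and the homomorphism exists. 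Either way, the rest is a routine diagram chase, and I would present the construction of the homomorphism followed by the push-forward of principal bundles, leaving the (strictly formal) compatibility square to the reader or to a one-line reference back to the earlier coupled-orientation lemma.
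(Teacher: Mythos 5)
Your overall strategy — build a homomorphism $\Spin(n;m)\times\Spin(n';m')\to\Spin(n+n';m+m')$ covering the block inclusion of the coupled $\SO$ groups, then push forward the fiber product of principal bundles — is the paper's, but the explicit Clifford computation in your first route contains an arithmetic error. You assert that, with $a+b$ and $a'+b'$ both even, the quantity $aa'+bb'\equiv ab'+a'b\pmod 2$ ``need not vanish.'' In fact it always vanishes: $a+b$ even gives $a\equiv b$, and $a'+b'$ even gives $a'\equiv b'$, hence $aa'\equiv ba'\equiv bb'\pmod 2$, so $aa'+bb'\equiv 0$. This is precisely the content of the lemma: when you commute $(g,1)$ past $(1,h)$ in the concatenated Clifford product, the $E$-side picks up $(-1)^{aa'}$ and the $F$-side $(-1)^{bb'}$, and these two signs are \emph{equal}, hence simultaneously absorbed by the diagonal $\Z/2$ quotient $\tSpin\twoheadrightarrow\Spin$. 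Having convinced yourself (wrongly) that $aa'+bb'$ can be odd, your ``fix'' — that ``changing the representative of the first pair by $(-1)^{aa'}$ on $E$ forces the compatible change on $F$'' — does not cohere: the signs produced by commutation are determined, not chosen, and if $(-1)^{aa'}\ne(-1)^{bb'}$ the map genuinely would fail to be a homomorphism in $\Spin(n+n';m+m')$. So there is a real gap in the Clifford route as you wrote it; the argument is salvaged only by redoing the mod--$2$ bookkeeping correctly.

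Your cohomological alternative at the end is, by contrast, correct and is a genuinely different route. The class $\pi_1^*w_2+\pi_2^*w_2$ on $\B\SO(n+n';m+m')$ pulls back under block inclusion to $(w_2(E)+w_2(F))+(w_2(E')+w_2(F'))$ plus the Whitney cross terms $w_1(E)w_1(E')+w_1(F)w_1(F')$, and since $w_1(E)=w_1(F)$ and $w_1(E')=w_1(F')$ hold tautologically on $\B\SO(n;m)\times\B\SO(n';m')$, those cross terms cancel mod $2$. This matches the class classifying the quotient of $\Spin(n;m)\times\Spin(n';m')$ by the diagonal $\Z/2$, so the extension problem is solvable. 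The trade-off worth noting: the cohomological argument establishes that \emph{some} covering homomorphism exists (well-defined only up to twisting by $H^1$ of the source), whereas the explicit Clifford formula used in the paper singles out a \emph{canonical} multiplication. That canonicity is used later — e.g.\ in Remark~\ref{rem:cansum}, where one needs the direct sum of the canonical coupled Spin structures on $(L_0,L_1)$ and $(L_0',L_1')$ to be the canonical coupled Spin structure on the sum — so the explicit construction (with the sign bookkeeping corrected) is the one you should ultimately carry.
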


\begin{proof}
This follows from the existence of a natural multiplication map
$$ \Spin(n, m) \times \Spin(n', m') \to \Spin(n+n', m + m')$$
as follows. If we denote the elements of $\Pin(n)$ by $v_1 \dots v_k$, and those of $\Pin(m)$ by $u_1\dots u_l$, then the elements of $\Spin(n, m)$ are equivalence classes of pairs $[(v_1 \dots v_k, u_1\dots u_l)]$ with $k+l$ even. Similarly, the elements of $\Spin(n', m')$ are equivalence classes $[(v'_1 \dots v'_{k'}, u'_1\dots u'_{l'})]$ with $k'+l'$ even. We let the multiplication  be
$$ \bigl( [(v_1 \dots v_k, u_1\dots u_l)],  [(v'_1 \dots v'_{k'}, u'_1\dots u'_{l'})] \bigr)  \mapsto [(v_1 \dots v_kv'_1 \dots v'_{k'}, u_1\dots u_lu'_1\dots u'_{l'})].$$
This lands in the right place because $k+k'+l+l'$ is even. Furthermore, we do not encounter the same commutativity problem as the one for Pin discussed at the end of Section~\ref{sec:pins}. In the current setting, when we commute the two factors, the result is
$$
 [(v'_1 \dots v'_{k'}v_1 \dots v_k, u'_1\dots u'_{l'}u_1\dots u_l)]= [((-1)^{kk'} v_1 \dots v_kv'_1 \dots v'_{k'}, (-1)^{ll'} u_1\dots u_lu'_1\dots u'_{l'})].
$$
The right hand side is same equivalence class as $[(v_1 \dots v_kv'_1 \dots v'_{k'}, u_1\dots u_lu'_1\dots u'_{l'})]$. 
Indeed, we have $(-1)^{kk'} = (-1)^{l l'}$ because $k \equiv k' \!\pmod{2}$ and $l \equiv l' \!\pmod{2}$. 
\end{proof}

We end with a few more properties of coupled Spin structures.

\begin{lemma}
\label{lem:EE}
Given any vector bundle $E$, the pair $(E, E)$ has a canonical coupled Spin structure.
\end{lemma}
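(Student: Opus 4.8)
The plan is to lift the diagonal homomorphism $\O(n)\to\SO(n;n)$ of \eqref{eq:ee} --- the one that underlies the canonical coupled orientation of Definition~\ref{def:coupledoo} --- all the way up to $\Spin(n;n)$, and then apply the associated bundle construction to the frame bundle $\Fr(E)$.

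First I would build a Lie group homomorphism $\delta\colon\O(n)\to\Spin(n;n)$ covering $A\mapsto(A,A)$. I would start from the diagonal map $\Pin(n)\to\tSpin(n;n)$, $x\mapsto(x,x)$; this lands in $\tSpin(n;n)$ because if $x=v_1\cdots v_k$ then $(x,x)=(v_1\cdots v_k, v_1\cdots v_k)$ has $k+k$ even. Composing with the quotient $\tSpin(n;n)\to\Spin(n;n)$ from Definition~\ref{def:coupled-tspin} yields $x\mapsto[(x,x)]$, and this sends $-x$ to $[(-x,-x)]=[(x,x)]$ by the defining equivalence relation on $\Spin(n;n)$. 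Since $\O(n)=\Pin(n)/\{\pm1\}$, the map descends to a smooth homomorphism $\delta$, explicitly $\delta(A)=[(\widetilde A,\widetilde A)]$ for either lift $\widetilde A\in\Pin(n)$ of $A$. I would then check (routinely) that $\delta$ is multiplicative --- immediate from the fact that $\widetilde A\widetilde B$ lifts $AB$ --- and that the composite $\O(n)\xrightarrow{\delta}\Spin(n;n)\to\SO(n;n)$ is exactly $A\mapsto(A,A)$, so that $\delta$ really lifts \eqref{eq:ee}.

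Next I would form the associated principal $\Spin(n;n)$-bundle $P:=\Fr(E)\times_{\O(n)}\Spin(n;n)$ using $\delta$. The projection $\Spin(n;n)\to\O(n)\times\O(n)$ induces a map $P\to\Fr(E)\times_B\Fr(E)$ of principal bundles over $\id_B$, compatible with $\Spin(n;n)\to\O(n)\times\O(n)$; this exhibits $P$ as a lift of $\Fr(E)\times\Fr(E)$ to a $\Spin(n;n)$-bundle, i.e. a coupled Spin structure on $(E,E)$ in the sense of Definition~\ref{def:coupled-spin}. Pushing $P$ forward along $\Spin(n;n)\to\SO(n;n)$ recovers $\Fr(E)\times_{\O(n)}\SO(n;n)$ with $\O(n)$ acting diagonally, which is the canonical coupled orientation, so this coupled Spin structure refines it. Since the entire construction is manifestly functorial in $E$ under pullback, the coupled Spin structure obtained this way is canonical.

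The step I expect to be the only real content --- the reason the lemma is not a triviality --- is the construction of $\delta$, or rather the observation that such a lift exists at all: although $\O(n)$ admits no homomorphic lift to $\Pin(n)$ for $n\ge2$ (so $E$ has no canonical Pin structure, and need not even be Pin when $w_2(E)\ne0$), the $\pm$ ambiguity of a local $\Pin(n)$-lift of $A$ is precisely the relation quotiented out in passing from $\tSpin(n;n)$ to $\Spin(n;n)$, hence disappears in $\Spin(n;n)$. This is the same cancellation already recorded in Remark~\ref{rem:EF}. Everything else --- smoothness of $\delta$, and well-definedness and $\O(n)$-equivariance of the bundle map $P\to\Fr(E)\times_B\Fr(E)$ --- is routine bookkeeping.
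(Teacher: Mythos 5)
Your proof is correct and follows essentially the same route as the paper: both construct the natural Lie group homomorphism $\O(n)\to\Spin(n;n)$ by lifting to the diagonal $\Pin(n)\to\Pin(n)\times\Pin(n)$ and observing that the sign ambiguity in $\O(n)=\Pin(n)/\{\pm1\}$ is killed by the defining equivalence relation on $\Spin(n;n)$. You spell out the associated-bundle bookkeeping more explicitly than the paper, which leaves it implicit, but the mathematical content is identical.
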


\begin{proof}
This comes from the existence of a natural map $\O(n) \to \Spin(n, n)$ defined as follows. We view $\O(n)$ as $\Pin(n)/\pm 1.$ Given an element $v_1 \dots v_k \in \Pin(n)$, we map its image in $\O(n)$ to  
$$ [(v_1 \dots v_k, v_1 \dots v_k)] \in \Spin(n, n).$$
Observe that changing $v_1 \dots v_k$ by a sign produces the same equivalence class in $\Spin(n, n)$.
\end{proof}

\begin{lemma}
\label{lem:EFS}
Given vector bundles $E$, $F$ and $S$ over the same base $B$, a coupled Spin structure on $(E, F)$ naturally induces one on $(E \oplus S, F \oplus S).$
\end{lemma}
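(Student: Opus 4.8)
\textbf{Proof proposal for Lemma~\ref{lem:EFS}.}

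The plan is to combine the two structural results we already have: the canonical coupled Spin structure on a pair $(S,S)$ from Lemma~\ref{lem:EE}, and the compatibility of coupled Spin structures with direct sums from Lemma~\ref{lem:EFEF}. First I would invoke Lemma~\ref{lem:EE} to equip $(S,S)$ with its canonical coupled Spin structure. Then, given the assumed coupled Spin structure on $(E,F)$, I would apply Lemma~\ref{lem:EFEF} with the four bundles $E$, $F$, $E'=S$, $F'=S$ to produce an induced coupled Spin structure on $(E\oplus S, F\oplus S)$. This is the desired output, so the existence part is essentially immediate.

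The one point that deserves a sentence of care is that the construction should be \emph{natural}, i.e.\ it should not depend on auxiliary choices and should be compatible with the obvious identifications. Concretely, the multiplication map $\Spin(n,m)\times\Spin(k,k)\to\Spin(n+k,m+k)$ used in the proof of Lemma~\ref{lem:EFEF}, together with the canonical homomorphism $\O(k)\to\Spin(k,k)$ from the proof of Lemma~\ref{lem:EE}, fit into a commutative diagram with the inclusions of frame bundles, so the lift of $\Fr(E\oplus S)\times\Fr(F\oplus S)$ that we obtain genuinely refines the given one. I would state this by noting that the composite
$$
\SO(n;m)\times\O(k)\longrightarrow \Spin(n;m)\times\Spin(k;k)\longrightarrow \Spin(n+k;m+k)
$$
covers the standard block-inclusion $\SO(n;m)\times\O(k)\hookrightarrow\SO(n+k;m+k)$ (using the conventions of Section~\ref{sec:co}), which is exactly what is needed for the lift of the frame bundle to be well-defined.

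I do not anticipate a genuine obstacle here; the lemma is a formal consequence of Lemmas~\ref{lem:EE} and~\ref{lem:EFEF}. If anything, the only thing to watch is bookkeeping with the shuffled-versus-concatenated orientation conventions from Convention~\ref{conv0} when $S$ has the same rank considerations in play, but since the canonical coupled orientation on $(S,S)$ corresponds to the shuffled orientation precisely so as to be compatible with direct sums (as remarked after Convention~\ref{conv0}), this causes no trouble. So the proof will be short: cite Lemma~\ref{lem:EE} for $(S,S)$, cite Lemma~\ref{lem:EFEF} for the direct sum, and note the compatibility of the relevant group homomorphisms with the frame-bundle inclusions.
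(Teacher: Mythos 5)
Your proof is correct and takes the same approach the paper intended: apply Lemma~\ref{lem:EE} to equip $(S,S)$ with its canonical coupled Spin structure, then combine with the given structure on $(E,F)$ via Lemma~\ref{lem:EFEF}. (Incidentally, the paper's printed proof cites Lemma~\ref{lem:EFS} itself in place of Lemma~\ref{lem:EE} — a typo you have implicitly corrected — and your extra paragraph verifying that the relevant group homomorphisms cover the block inclusion $\SO(n;m)\times\O(k)\hookrightarrow\SO(n+k;m+k)$ is a reasonable bit of added care, though the paper omits it.)
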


\begin{proof}
This follows from Lemmas~\ref{lem:EFEF} and \ref{lem:EFS}.
\end{proof}

\begin{lemma}
\label{lem:CK}
Let $f: E \to F$ be a bundle map between vector bundles over the same base $B$ (covering the identity on $B$). Suppose $f$ has fixed rank, so that $K=\ker(f)$ and $C=\coker(f)$ form vector bundles over $B$. Then, a coupled $\Spin$ structure on $(C, K)$ induces one on $(F, E)$.
\end{lemma}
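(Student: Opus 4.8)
The plan is to reduce the statement to the direct-sum compatibility already established in Lemma~\ref{lem:EFEF}, together with Lemma~\ref{lem:EE} and Lemma~\ref{lem:EFS}. Since $f$ has fixed rank, we have a short exact sequence of vector bundles
\[
0 \to K \to E \xrightarrow{f} \im(f) \to 0,
\]
and since the base $B$ is paracompact, choosing an inner product on $E$ splits this sequence, giving a bundle isomorphism $E \cong K \oplus \im(f)$. Likewise, choosing an inner product on $F$ gives $F \cong C \oplus \im(f)$, where we identify $\im(f) \subset F$ with its orthogonal complement to $C$. So up to isomorphism, the pair $(F, E)$ is $(C \oplus \im(f),\, K \oplus \im(f))$.

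Now I would proceed as follows. First, $\im(f)$ is a vector bundle over $B$, so by Lemma~\ref{lem:EE} the pair $(\im(f), \im(f))$ carries a canonical coupled Spin structure. Next, starting from the given coupled Spin structure on $(C, K)$, apply Lemma~\ref{lem:EFEF} to combine it with the canonical coupled Spin structure on $(\im(f), \im(f))$: this produces a coupled Spin structure on $(C \oplus \im(f),\, K \oplus \im(f))$. Transporting along the isomorphisms $F \cong C \oplus \im(f)$ and $E \cong K \oplus \im(f)$ yields the desired coupled Spin structure on $(F, E)$. (One could also phrase the last two steps as a single invocation of Lemma~\ref{lem:EFS} with $S = \im(f)$, since that lemma is itself a consequence of Lemmas~\ref{lem:EFEF} and~\ref{lem:EE}; indeed, the statement $(E, F) \rightsquigarrow (E \oplus S, F \oplus S)$ applied to $(C, K)$ with $S = \im(f)$ is exactly what is needed.)

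The only genuine subtlety, and the point deserving a sentence of care, is that the construction should not depend on the auxiliary choices of inner products on $E$ and $F$ used to split the sequences. Since the space of inner products on a bundle over a paracompact base is convex, hence contractible, any two choices are connected by a path, and the resulting coupled Spin structures are homotopic, hence isomorphic by the analogue of Lemma~\ref{lem:homotopic} for coupled Spin structures (established within the proof of Lemma~\ref{lemma:EF}). This is the main — and only — obstacle; everything else is bookkeeping with the lemmas already in hand. I would note in passing that the \texttt{\textbackslash ref} in the displayed proof sketch is circular as written in the excerpt (it cites Lemma~\ref{lem:EFS} itself), so the intended citation is surely Lemma~\ref{lem:EE} together with Lemma~\ref{lem:EFEF}, as above.
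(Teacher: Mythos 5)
Your proof is correct and takes essentially the same route as the paper's: decompose $E \cong K \oplus \im(f)$ and $F \cong C \oplus \im(f)$ using the (implicit) inner products, then apply Lemma~\ref{lem:EFS} — which, as you correctly observe, is a consequence of Lemmas~\ref{lem:EE} and~\ref{lem:EFEF}, the self-citation in the paper's proof of that lemma being a typo for Lemma~\ref{lem:EE}. Your explicit verification that the result is independent of the auxiliary inner products, via convexity (hence contractibility) of the space of inner products, is a reasonable thing to make precise; the paper leaves this to the convention of Remark~\ref{rem:noinner}.
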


\begin{proof}
Recall that our vector bundles are implicitly equipped with inner products; see Remark~\ref{rem:noinner}. Therefore, if we let $I = \text{im}(f)$ then by taking orthogonal complements we get isomorphisms $E \cong K \oplus I$ and $F \cong C \oplus I$. The conclusion follows from Lemma~\ref{lem:EFS}.
\end{proof}

\begin{lemma}
\label{lem:pinLag}
Let $(E, \omega)$ be a symplectic vector bundle, and $L_0, L_1 \subset E$ be Lagrangian sub-bundles such that $L_0 \cap L_1$ is also a sub-bundle (i.e., has fixed rank). Then:
\begin{enumerate}
\item An inner product on $E$ induces a canonical bundle isomorphism $\tau_{L_0, L_1} : L_0 \to L_1$;
\item There is a canonical coupled Spin structure on $(L_0, L_1)$.
 \end{enumerate}
\end{lemma}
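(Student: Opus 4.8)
The plan is to prove both parts by the standard trick in Floer theory of linearly interpolating between $L_0$ and $L_1$ along a path in the Lagrangian Grassmannian of $E$, and then invoking the algebraic lemmas about coupled Spin structures established above.

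First I would address part (1). Fix a compatible almost complex structure $J$ on $E$ subordinate to $\omega$ and the given inner product, so that $E$ becomes a Hermitian vector bundle. Over the locus where $L_0$ and $L_1$ intersect in a fixed-rank sub-bundle $K = L_0 \cap L_1$, decompose $L_0 = K \oplus K^{\perp_{L_0}}$ and $L_1 = K \oplus K^{\perp_{L_1}}$ using the inner product; on the $K$ factor let $\tau_{L_0,L_1}$ be the identity. On the complements, the point is that $K^{\perp_{L_0}}$ and $K^{\perp_{L_1}}$ are both Lagrangian sub-bundles of the symplectic sub-bundle $(K \oplus JK)^{\perp} \cap (\text{span of } K^{\perp_{L_0}}, K^{\perp_{L_1}})$ — more precisely, of the symplectic orthogonal complement of $K \oplus JK$ — which now intersect trivially. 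For two transverse Lagrangians $\Lambda_0, \Lambda_1$ in a Hermitian space, there is a canonical isomorphism $\Lambda_0 \cong \Lambda_1$: project $\Lambda_0$ to $\Lambda_1$ along $J\Lambda_1$ (or, equivalently, use that $\Lambda_1 = A\Lambda_0$ for a unique $A$ in the subgroup of $U(E)$ fixing nothing, but more cleanly, the graph description $\Lambda_1 = \{x + Sx : x \in \Lambda_0\}$ for a unique self-adjoint $S: \Lambda_0 \to J\Lambda_0$, composed with $J^{-1}$ to land back in $\Lambda_0$, is not quite an isomorphism to $\Lambda_1$). The cleanest canonical choice: the orthogonal projection $E \to \Lambda_1$ restricts to an isomorphism $\Lambda_0 \to \Lambda_1$ precisely when $\Lambda_0 \cap (\Lambda_1)^{\perp} = 0$; since $(\Lambda_1)^{\perp} = J\Lambda_1$ and $\Lambda_0 \cap J\Lambda_1 = 0$ for transverse Lagrangians (as $J\Lambda_1$ is also Lagrangian and transverse to $\Lambda_1$, hence... this needs the transversality hypothesis), this works. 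Assemble $\tau_{L_0,L_1}$ from the identity on $K$ and this projection isomorphism on the complements; it is canonical given the inner product and $J$, and one checks it does not depend on the auxiliary $J$ by connectivity of the space of compatible $J$'s.

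For part (2), the idea is to reduce to the canonical coupled Spin structure on a diagonal pair $(E', E')$ provided by Lemma~\ref{lem:EE}. Choosing a compatible $J$, the sub-bundle $K = L_0 \cap L_1 = L_1 \cap L_0$ appears symmetrically, so by Lemma~\ref{lem:EFS} it suffices to produce a canonical coupled Spin structure on the pair of transverse Lagrangian complements $(\Lambda_0, \Lambda_1)$ inside the symplectic orthogonal complement $E^\circ$ of $K \oplus JK$. Now $\Lambda_0$ is a Lagrangian in the Hermitian bundle $E^\circ$, so $E^\circ \cong \Lambda_0 \otimes_{\R} \C$ as a complex bundle, and there is a canonical isomorphism $\Lambda_1 \cong \Lambda_0$ from part (1); under these identifications the pair $(\Lambda_0, \Lambda_1)$ is isomorphic to the pair $(\Lambda_0, \Lambda_0)$, which carries its canonical coupled Spin structure by Lemma~\ref{lem:EE}. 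The one subtlety is that $\tau_{L_0,L_1}$ and the complex structure together pin down not just an abstract isomorphism but the specific coupled orientation underlying the coupled Spin structure; I would note that the coupled orientation on $(\Lambda_0,\Lambda_1)$ induced this way is the canonical one, matching the coupled orientation discussed (this is where Convention~\ref{conv0} and the shuffled-orientation normalization enter, ensuring compatibility). Finally, transport back: Lemma~\ref{lem:EFS} applied with $S = K$ upgrades the coupled Spin structure on $(\Lambda_0,\Lambda_1)$ to one on $(\Lambda_0 \oplus K, \Lambda_1 \oplus K) = (L_0, L_1)$.

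The main obstacle I anticipate is not the topology but the \emph{canonicity} bookkeeping: showing that the resulting coupled Spin structure is genuinely independent of the auxiliary choice of compatible $J$ (and of the inner product, up to the contractibility statements in Remark~\ref{rem:noinner}). The space of compatible $J$'s is contractible, so any two choices are joined by a path, giving a homotopy of coupled Spin structures and hence — by the homotopy-equals-isomorphism principle proved for Pin structures in Lemma~\ref{lem:homotopic} and noted for coupled Spin structures in the proof of Lemma~\ref{lemma:EF} — a canonical isomorphism; but one must check this isomorphism is itself canonical (independent of the path), which follows from simple-connectivity of the space of $J$'s. I would phrase this as: the assignment is a functor from the contractible groupoid of auxiliary data to the groupoid of coupled Spin structures on $(L_0,L_1)$, hence lands canonically in a single isomorphism class with no monodromy.
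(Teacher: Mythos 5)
There is a genuine gap in your construction of $\tau_{L_0,L_1}$ in part (1). You restrict orthogonal projection $E \to \Lambda_1$ to $\Lambda_0$ and claim this is an isomorphism because $\Lambda_0 \cap J\Lambda_1 = 0$ for transverse Lagrangians; but transversality of $\Lambda_0$ and $\Lambda_1$ does \emph{not} imply $\Lambda_0 \cap J\Lambda_1 = 0$. Indeed $\Lambda_0 = J\Lambda_1$ is transverse to $\Lambda_1$ (since $J\Lambda_1 \cap \Lambda_1 = 0$), and then the orthogonal projection of $\Lambda_0$ onto $\Lambda_1$ is identically zero. Concretely, in $\R^2$ with the standard structures, $\Lambda_0 = \R e_1$ and $\Lambda_1 = \R e_2$ are transverse, but the orthogonal projection of $\Lambda_0$ onto $\Lambda_1$ vanishes. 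The trailing ``hence\dots'' in your parenthetical is precisely the point where the argument cannot be completed.

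The paper's isomorphism is built differently and avoids this failure mode (and avoids the auxiliary $J$ altogether, sidestepping the independence-of-$J$ bookkeeping you flag at the end). Pass to the symplectic reduction $E' = (L_0\cap L_1)^\omega/(L_0 \cap L_1)$ and the induced transverse Lagrangian quotients $L_0', L_1'$, identified with orthogonal complements via the inner product. On the transverse piece, the key identification is $L_0' \xrightarrow{\;\cong\;} (L_1')^{*}$, $v \mapsto \omega(v,\cdot)|_{L_1'}$, which is an isomorphism for \emph{any} transverse pair: its kernel sits in $(L_1')^\omega \cap L_0' = L_1' \cap L_0' = 0$. Composing with the inner-product identification $(L_1')^* \cong L_1'$ and direct-summing with the identity on $L_0 \cap L_1$ gives $\tau_{L_0,L_1}$, depending only on the inner product. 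In the example above this produces $e_1 \mapsto e_2$, a genuine isomorphism, where orthogonal projection gave zero. Your part (2) outline (reduce to a diagonal pair via part (1), invoke Lemma~\ref{lem:EE}, and if you split off $K$ use Lemma~\ref{lem:EFS} to restore it) is along the right lines and matches the paper's one-line proof, but it inherits the gap from part (1); with the corrected $\tau$, that part goes through.
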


\begin{proof}
$(1)$ Let $(L_0 \cap L_1)^\omega$ be the symplectic complement to $L_0 \cap L_1$ in $E$. Then $E'=(L_0 \cap L_1)^\omega/(L_0 \cap L_1)$ is symplectic, and inside it we have Lagrangian sub-bundles $L_0' = L_0/(L_0 \cap L_1)$ and $L_1' = L_1/(L_0 \cap L_1)$. Using the inner product we can identify each quotient bundle with the corresponding orthogonal complement. Furthermore, since $L_0'$ and $L_1'$ intersect in the zero section,  the symplectic form gives an identification between $L_0'$ and the dual to $L_1'$. The inner product then gives an identification between $L_0'$ and $L_1'$. Taking the direct sum of this with the identity on $L_0 \cap L_1$ we get the desired identification between $L_0$ and $L_1$. 

$(2)$ Apply part (a) and Lemma~\ref{lem:EE}.
\end{proof}

\begin{remark}
In the situation of Lemma~\ref{lem:pinLag} (1), note that $\tau_{L_1, L_0}$ is not the inverse to $\tau_{L_0, L_1}$. For example, when $L_0$ and $L_1$ are transverse lines in $\R^2$, the isomorphism $\tau_{L_0, L_1}$ is counterclockwise rotation, and its composition with $\tau_{L_1, L_0}$ is $-\id$ on $L_0$. 
\end{remark}

\begin{remark}
\label{rem:cansum}
On the other hand,  the isomorphisms from Lemma~\ref{lem:pinLag} (1) behave well with respect to direct sums. If $L_0, L_1 \subset E$ are as in the lemma, and $L_0', L_1' \subset E'$ is another pair of the same kind, then
$$ \tau_{L_0 \oplus L_0', L_1 \oplus L_1'} = \tau_{L_0, L_1} \oplus \tau_{L_0', L_1'}.$$
This implies that if we take the direct sum of the canonical coupled Spin structures on $(L_0, L_1)$ and $(L_0', L_1')$, we get the canonical coupled Spin structure on $(L_0 \oplus L_0', L_1 \oplus L_1')$. 
\end{remark}

\section{Canonical orientations in Lagrangian Floer homology} 
\label{sec:general}
In this section we explain how Pin structures produce canonical orientations on the moduli spaces appearing in Lagrangian Floer homology.

\begin{convention}
\label{conv}
Our exposition is inspired from Seidel's book \cite{SeidelBook}, but we will use the usual conventions in Heegaard Floer theory \cite{HolDisk}. Specifically, we work with Floer homology instead of Floer cohomology, and when we talk about polygon maps, the Lagrangian boundary conditions are ordered clockwise rather than counterclockwise. Our Floer chain groups $\CF_*(L_0, L_1)$ correspond to Seidel's Floer cochain groups $\CF^{n-*}(L_1, L_0)$. 
\end{convention}

\subsection{Orientation spaces} 
\label{sec:orspaces}
We review here the definition of orientation spaces associated to linear Lagrangian branes, following \cite[Section 11]{SeidelBook}. We describe a slight modification that is convenient for our purposes.

Given a $(2n)$-dimensional symplectic vector space $V$, we let $\Gr(V)$ be the Lagrangian Grassmannian, whose points are the Lagrangian subspaces of $V$. In \cite[Section 11h] {SeidelBook}, Seidel considers a natural map to a product of Eilenberg-MacLane spaces
\begin{equation}
\label{eq:muw}
(\mu, w_2): \Gr(V) \to K(\Z, 1) \times K(\Z/2, 2),
\end{equation}
and pulls back the product of the universal fibrations to obtain a $\Z \times \RP^{\infty}$ bundle over $\Gr(V)$, denoted $\Grp(V).$ The points $\Lambdap \in \Grp(V)$ are called {\em abstract linear Lagrangian branes}.

The first component of \eqref{eq:muw}, $$\mu: \Gr(V) \to K(\Z, 1) \cong S^1$$ can be described explicitly as the squared phase map associated to a compatible complex structure $I_V$ and a quadratic complex volume form on $(V, I_V)$; see \cite[Section 11j]{SeidelBook}. 

In our setting we shall mostly focus on the second component 
$$ w_2: \Gr(V) \to K(\Z/2, 2).$$
We let $\Grd(V)$ be the pullback of the universal fibration, which is an $\RP^{\infty}$ bundle over $\Gr(V)$. Furthermore, $\Grp(V)$ is a $\Z$-cover of $\Grd(V)$.

An alternate description of $\Grd(V)$ is as follows. Let 
$$ i_V : \Gr(V) \to \B\O(n)$$
be the composition of the forgetful map from $\Gr(V)$ to the ordinary Grassmannian of $n$-planes in $V$, with the map to $\B\O(n)$ induced by some inclusion $V \hookrightarrow \R^{\infty}$. Then, $\Grd(V)$ is the pullback of the bundle $\B\Pin(n) \to \B\O(n)$ under $i_V$; compare \cite[p.60]{SeidelBook}. Thus, we can think of a point $\Lambdad \in \Grd(V)$ as a Pin structure on the underlying Lagrangian subspace $\Lambda \in \Gr(V)$; i.e., a principal homogeneous $\Pin(n)$-space equipped with an isomorphism $P^\# \times_{\Pin(n)} \R^n \cong \Lambda$. This interpretation is  particularly useful in families: a family of Lagrangian subspaces in $V$ parametrized by a base space $B$ (i.e., a Lagrangian bundle over $B$) is described by a map $B \to \Gr(V)$, and a lift of this map to $\Grd(V)$ is the same as a Pin structure on the Lagrangian bundle. 

In the notation of \cite[Section 11j]{SeidelBook}, an element of $\Grp(V)$ is a triple 
$$\Lambdap = (\Lambda, \alpha, P^\#),$$
where $\Lambda \in \Gr(V)$, $\alpha \in \R$ is such that $e^{2\pi i \alpha}=\mu(\Lambda) \in S^1$, and $P^\#$ is a Pin structure on $\Lambda$. The pair $\Lambdad=(\Lambda, P^\#)$ is the projection of $\Lambdap$ to $\Grd(V)$.

Next, consider two points $\Lambdap_0, \Lambdap_1 \in \Grp(V)$, such that the underlying Lagrangians $\Lambda_0, \Lambda_1 \subset V$ intersect transversely. Choose a path $$\rho^\#=(\Lambdap_t)_{t \in [0,1]}$$ connecting the two points, and let $\rho=(\Lambda_t)_{t \in [0,1]}$ be its projection to $\Gr(V)$. There is a technical condition we need to impose on $\rho$, that it has negative definite crossing form at $t=1$ with the constant path $\Lambda_1$; see \cite[Section 11g]{SeidelBook}. Then, the Maslov index of the path $\rho$ depends only on $\Lambdap_0$ and $\Lambdap_1$, and is denoted
$$i(\Lambdap_0, \Lambdap_1) \in \Z.$$
Furthermore, one can consider a Fredholm problem on a capped half-infinite strip $H$, with moving Lagrangian boundary conditions $\{\Lambda_t\}$ that are constant near the infinite end,  as in \cite[Section 11g]{SeidelBook} or \cite[p.143]{FOOO}, but reflected to be in line with our Conventions~\ref{conv}.:
$$ {
\fontsize{10pt}{11pt}\selectfont
   \def\svgwidth{2.2in} 
\begingroup%
  \makeatletter%
  \providecommand\color[2][]{%
    \errmessage{(Inkscape) Color is used for the text in Inkscape, but the package 'color.sty' is not loaded}%
    \renewcommand\color[2][]{}%
  }%
  \providecommand\transparent[1]{%
    \errmessage{(Inkscape) Transparency is used (non-zero) for the text in Inkscape, but the package 'transparent.sty' is not loaded}%
    \renewcommand\transparent[1]{}%
  }%
  \providecommand\rotatebox[2]{#2}%
  \newcommand*\fsize{\dimexpr\f@size pt\relax}%
  \newcommand*\lineheight[1]{\fontsize{\fsize}{#1\fsize}\selectfont}%
  \ifx\svgwidth\undefined%
    \setlength{\unitlength}{201.61538432bp}%
    \ifx\svgscale\undefined%
      \relax%
    \else%
      \setlength{\unitlength}{\unitlength * \real{\svgscale}}%
    \fi%
  \else%
    \setlength{\unitlength}{\svgwidth}%
  \fi%
  \global\let\svgwidth\undefined%
  \global\let\svgscale\undefined%
  \makeatother%
  \begin{picture}(1,0.48848057)%
    \lineheight{1}%
    \setlength\tabcolsep{0pt}%
    \put(0,0){\includegraphics[width=\unitlength,page=1]{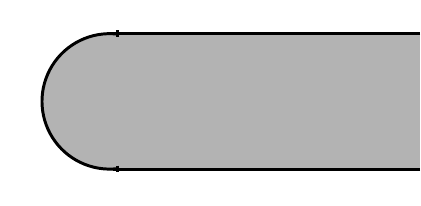}}%
    \put(0.50022782,0.0113657){\makebox(0,0)[lt]{\lineheight{1.25}\smash{\begin{tabular}[t]{l}$\Lambda_0$\end{tabular}}}}%
    \put(0.50022782,0.43886192){\makebox(0,0)[lt]{\lineheight{1.25}\smash{\begin{tabular}[t]{l}$\Lambda_1$\end{tabular}}}}%
    \put(0.00657083,0.29572711){\makebox(0,0)[lt]{\lineheight{1.25}\smash{\begin{tabular}[t]{l}$\Lambda_t$\end{tabular}}}}%
  \end{picture}%
\endgroup%

}$$
We call $H$ a {\em cap}. The $\delbar$ operator on $H$ associated to this set-up is denoted $\delbar_\rho$. Consider its determinant index bundle 
$$\det(\delbar_{\rho})=  \ltop ((\coker \delbar_\rho)^*)\otimes \ltop (\ker \delbar_\rho).$$

The {\em orientation space} $$o(\Lambdap_0, \Lambdap_1)$$ is the abelian group generated by the two orientations $\omega, \bar \omega$ on $\det(\delbar_{\rho})$, modulo the relation $\bar \omega = - \omega$. Since this is a free abelian group of rank-$1$,  it is non-canonically isomorphic to $\Z$.
\begin{remark}
\label{rem:conv}
Rotating our cap by $180^\circ$, we see that it is equivalent to the cap in Seidel's book with boundary conditions given by the reflected path $\Lambda_{1-t}$ from $\Lambda_1$ to $\Lambda_0$. By Equation (11.27) in \cite{SeidelBook}, we have 
\begin{equation}
\label{eq:inddetrho}
\index(\det(\delbar_{\rho})) = i(\Lambdap_1, \Lambdap_0) = n-i(\Lambdap_0, \Lambdap_1).
\end{equation}
Also, the orientation space $o(\Lambdap_0, \Lambdap_1)$ is canonically isomorphic to the one in Seidel's book tensored with $\ltop \Lambda_t$ (for any $t$). Note that, compared with \cite{SeidelBook}, we have not changed the definition of the Maslov index $i(\Lambdap_0, \Lambdap_1)$; but we have changed the definition of $o(\Lambdap_0, \Lambdap_1)$. We have also changed the meaning of the notation $H$: in \cite{SeidelBook}, our cap is denoted $\bar H$, and its reflection across a vertical axis is called $H$.
\end{remark}

\begin{lem}
\label{lem:nodep}
  The orientation space $$o(\Lambdap_0, \Lambdap_1)$$ depends, up to canonical isomorphism, only on $\Lambdap_0$ and $ \Lambdap_1$ (in particular, not on the choice of path $\rho^\#$ with these endpoints).
\end{lem}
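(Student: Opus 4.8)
The plan is to show that two choices of path $\rho^\#$, $\rho'^\#$ from $\Lambdap_0$ to $\Lambdap_1$ produce orientation spaces $o(\Lambdap_0,\Lambdap_1)$ and $o'(\Lambdap_0,\Lambdap_1)$ that are \emph{canonically} identified. The standard mechanism, following \cite[Section 11g, 11h]{SeidelBook} and \cite{FOOO}, is a gluing argument: given the two paths, form the concatenation $\rho \ast \bar{\rho}'$, a loop in $\Gr(V)$ based at $\Lambda_0$ (through $\Lambda_1$), and use the excision/gluing principle for determinant lines of $\delbar$-operators. Concretely, gluing the cap $H$ for $\rho'$ (read backwards, contributing $\det(\delbar_{\rho'})^*$) to the cap $H$ for $\rho$ along their straight boundary produces the $\delbar$-operator on a disk with Lagrangian boundary condition the loop $\rho \ast \bar\rho'$, and there is a canonical isomorphism
\begin{equation*}
\det(\delbar_\rho) \otimes \det(\delbar_{\rho'})^* \;\cong\; \det(\delbar_{\rho \ast \bar\rho'}).
\end{equation*}
So it suffices to produce a canonical orientation of $\det(\delbar_{\rho\ast\bar\rho'})$ for any such loop.

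The key point is that the loop $\rho\ast\bar\rho'$ lifts, via $\rho^\#$ and $\rho'^\#$, to a loop in $\Grp(V)$ (or at least in $\Grd(V)$) based at $\Lambdap_0$; equivalently, it corresponds to a Pin structure on the Lagrangian bundle over $S^1$ obtained from the loop of Lagrangians. I would invoke the interpretation recorded in Section~\ref{sec:orspaces}: a lift to $\Grd(V)$ is exactly a Pin structure on the tautological Lagrangian bundle. A $\delbar$-operator over the disk with a Lagrangian boundary condition that is a \emph{trivial} loop (constant, or the lift factoring through a point) has a canonically oriented determinant line — the constant-path cap glued to its reflection is the standard disk operator, whose kernel and cokernel are canonically the complexification data and hence complex-oriented. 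Then I would reduce the general lifted loop to the trivial one: two lifts of the same loop of Lagrangians differ by an element of $H^1(S^1;\Z/2) = \Z/2$, but since our loop genuinely bounds the caps on both sides and the lifts $\rho^\#,\rho'^\#$ agree at both endpoints $\Lambdap_0,\Lambdap_1$, the glued lift is null-homotopic in $\Grp(V)$, so the determinant line is canonically trivialized by contracting this null-homotopy and using the complex orientation at the basepoint. This gives the canonical isomorphism $o(\Lambdap_0,\Lambdap_1)\cong o'(\Lambdap_0,\Lambdap_1)$.

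Finally I would check the two coherence requirements that make this deserve the name ``canonical'': first, that the resulting isomorphism is independent of the auxiliary homotopy of lifts used (this is where the notion of equivalence of homotopies, i.e. the $2$-cells in the space of Pin structures from Section~\ref{sec:pins}, enters — a different null-homotopy differs by an element of $\pi_2$ or by a $2$-homotopy, which acts trivially on the determinant line since that action is pinned down by $\pi_1(\RP^\infty)=\Z/2$ being exhausted already); and second, transitivity, i.e. that for three paths $\rho,\rho',\rho''$ the composite of the isomorphisms $\rho\to\rho'$ and $\rho'\to\rho''$ equals the isomorphism $\rho\to\rho''$, which follows from associativity of gluing of $\delbar$-operators.

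The main obstacle I expect is keeping the sign bookkeeping in the gluing isomorphism honest — in particular making sure the orientation of $\det(\delbar_{\rho\ast\bar\rho'})$ extracted from the null-homotopy of lifts is \emph{independent} of the chosen null-homotopy, not just well-defined up to sign. This is exactly the subtle part of canonical orientations from Pin structures (cf. the discussion of non-equivalent homotopies in Example~\ref{rem:non-equivalent_iso}): one must verify that the monodromy of the determinant line around loops in the space of lifts is detected faithfully by $\pi_1$ of the space of Pin structures, so that two null-homotopies — differing by such a loop — induce the same trivialization. I would handle this by citing the precise form of Seidel's index/orientation computations \cite[Section 11]{SeidelBook} rather than rederiving them, and noting that the path-independence is precisely \cite[Lemma 11.16]{SeidelBook} transported through our modified conventions in Remark~\ref{rem:conv}.
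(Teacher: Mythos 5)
Your gluing argument is a genuinely different route from the paper's. The paper works directly with the path space: it observes that the space of paths in $\Gr(V)$ from $\Lambda_0$ to $\Lambda_1$ has $\Z$-many components (indexed by Maslov index) each with $\pi_1 = \Z/2$, invokes de Silva's result that the orientation local system is nontrivial over this space, and concludes that passing to paths of Pin-decorated Lagrangians (i.e.\ paths in $\Grp(V)$) gives the \emph{connected} double cover of a fixed component, hence a simply connected path space over which parallel transport of $\det(\delbar_\rho)$ is canonical. You instead glue the two caps along their strip ends to form a disk operator for the loop $\rho\ast\bar\rho'$ and try to trivialize its determinant line by a null-homotopy to the constant loop. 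These are related --- parallel transport along a path of paths is a family of gluings --- but yours is more operator-theoretic and the paper's more purely topological.

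There is, however, a genuine gap in your central step. You assert that the glued loop of decorated Lagrangians is null-homotopic in $\Grp(V)$ ``since our loop genuinely bounds the caps on both sides and the lifts $\rho^\#,\rho'^\#$ agree at both endpoints.'' This reasoning conflates two distinct things: the caps are \emph{domains of Cauchy--Riemann operators}, not null-homotopies of the boundary condition, and the fact that $\rho^\#$ and $\rho'^\#$ share endpoints merely makes $\rho^\# \ast \overline{\rho'^\#}$ a loop --- it says nothing about whether that loop is null-homotopic. The danger is precisely that $\rho^\#$ and $\rho'^\#$ could be non-homotopic lifts of homotopic underlying paths: the set of homotopy classes of Pin-lifts of a fixed $\rho$ rel endpoints is a torsor over $H^1([0,1],\partial;\Z/2)=\Z/2$, so \emph{a priori} there are two, and if they were in different components of the lift-space your glued loop would represent the nontrivial class in $\pi_1$ of the $\RP^\infty$-fiber and the argument would break. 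What rules this out is exactly the content the paper extracts from de Silva: the monodromy of $\pi_1(\mathcal P)=\Z/2$ on the two Pin-lifts is nontrivial, so the double cover is connected and the two lifts of a fixed $\rho$ \emph{are} joined by a homotopy that moves $\rho$. You need to say this explicitly; without it your ``null-homotopic'' claim is an assertion with a wrong justification attached to it. A second, smaller gap is the independence of the trivialization from the chosen null-homotopy; you correctly flag this and defer to \cite[Section 11]{SeidelBook}, which is acceptable, but it would be cleaner to say that this requires $\pi_2(\Grp(V))$ to act trivially (indeed it vanishes in stable rank), rather than the vaguer ``$\pi_1(\RP^\infty)$ is exhausted already.''
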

\begin{proof}  
This is discussed in \cite[p.164]{SeidelBook}. The essential point in the following:  the space of paths in $\Gr(V)$, with endpoints $\Lambda_0$ and $\Lambda_1$, has $\Z$-many components, each of which has fundamental group $\Z/2$. The orientation space of the operator $\det(\delbar_{\rho})$  associated to paths $\rho$ defines a non-trivial local system over this space, as shown by de Silva in \cite{DeSilva}. Imposing the grading condition distinguishes a homotopy class of paths. More importantly, considering paths of Lagrangians equipped with Pin structures yields a simply connected space of paths, which can be thought of as  a $2$-fold cover of the space of paths in $\Gr(V)$.  Its components over a fixed path in $\Gr(V)$ are distinguished by the homotopy class of the path of Pin structures relative the endpoints. Thus, parallel transport through paths of paths in $\Gr^{\#}(V)$ yields isomorphisms of determinant lines that are independent of all choices. 
\end{proof}

\begin{lemma}
\label{lem:nontrivial} 
The local system formed by the orientation spaces $o(\Lambdap_0, \Lambdap_1)$ over the space of all Pin structures on $\Lambda_1$ is non-trivial (and similarly for $\Lambda_0$). 
\end{lemma}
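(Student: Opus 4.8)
The plan is to reduce this to the non-triviality result of de Silva \cite{DeSilva} that was already invoked in the proof of Lemma~\ref{lem:nodep}. Write $\mathcal{B}_1$ for the space of Pin structures on $\Lambda_1$; by Proposition~\ref{prop:pinspace} it is homotopy equivalent to $\RP^\infty$, so $\pi_1(\mathcal{B}_1)\cong\Z/2$, and what must be shown is that the monodromy of the local system $\Lambdad_1\mapsto o(\Lambdap_0,\Lambdap_1)$ around the generator is $-1$.

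First I would assemble the orientation spaces into a single $\Z/2$-local system on a path space. Fixing $\Lambdap_0$, let $\mathcal{Q}$ be the space of paths $\rho^\#$ in $\Grp(V)$ that start at $\Lambdap_0$, obey the negative-definite crossing condition at the right end, and terminate at an arbitrary point lying over $\Lambda_1$ with a fixed grading (so only the Pin structure at the terminal end varies); the terminal-point map is a fibration $\mathrm{ev}_1\colon \mathcal{Q}\to\mathcal{B}_1$. Over $\mathcal{Q}$ the determinant lines $\det(\delbar_{\rho})$ of the cap operators form a real line bundle, whose two orientations give a $\Z/2$-local system $o_{\mathcal{Q}}$. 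By the discussion in the proof of Lemma~\ref{lem:nodep}, each fiber of $\mathrm{ev}_1$ — a space of paths of Lagrangians equipped with Pin structures between fixed endpoints — is simply connected, so $o_{\mathcal{Q}}$ is canonically trivial on fibers; hence it is pulled back along $\mathrm{ev}_1$ from a local system on $\mathcal{B}_1$, and by construction (parallel transport in the fibers being exactly the canonical isomorphism of Lemma~\ref{lem:nodep}) that local system is the one in the statement. So it suffices to prove $o_{\mathcal{Q}}$ is non-trivial.

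Second, I would compare $\mathcal{Q}$ with the analogous space $\mathcal{Q}_{\Gr}$ of paths in $\Gr(V)$ from $\Lambda_0$ to $\Lambda_1$ obeying the crossing condition — a single connected component, with $\pi_1\cong\Z/2$. Forgetting all Pin data defines a map $\Phi\colon \mathcal{Q}\to\mathcal{Q}_{\Gr}$. Its fiber over a path $\rho$ is the space of lifts of $\rho$ to $\Grp(V)$ with the value at the left end pinned to $\Lambdap_0$; this is contractible (up to homotopy it is a based path space of the fiber $\RP^\infty$), so $\Phi$ is a homotopy equivalence. Since $\det(\delbar_{\rho})$, and thus $o_{\mathcal{Q}}$, depends only on the underlying path of Lagrangians, we have $o_{\mathcal{Q}}=\Phi^*o_{\mathcal{Q}_{\Gr}}$; and de Silva \cite{DeSilva} proves precisely that $o_{\mathcal{Q}_{\Gr}}$ is non-trivial, with monodromy $-1$ around the generator of $\pi_1(\mathcal{Q}_{\Gr})$. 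Therefore $o_{\mathcal{Q}}$, and with it the local system on $\mathcal{B}_1$, is non-trivial; the case of $\Lambda_0$ is symmetric.

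The step I expect to require the most care is the first one: correctly matching the canonical isomorphisms of Lemma~\ref{lem:nodep} with the naive orientation local system on $\mathcal{Q}$, and checking that the crossing condition keeps us on the single connected component of $\mathcal{Q}_{\Gr}$ on which de Silva's identification of $\pi_1$ and of the monodromy applies. The remaining manipulations of fibrations and homotopy fibers are formal.
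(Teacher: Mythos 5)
Your argument is correct and is in essence the paper's own proof, spelled out in fibration language. The paper concatenates a nontrivial loop of Pin structures at $\Lambda_1$ with the fixed path $(\Lambda_t)$ and invokes de Silva's non-triviality over the path space in $\Gr(V)$; your maps $\mathrm{ev}_1$ and $\Phi$ and the verification that each is a $\pi_1$-isomorphism make explicit what the paper leaves implicit, namely that this loop of boundary conditions projects to the generator of $\pi_1$ of the relevant component of the path space in $\Gr(V)$.
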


\begin{proof}
Recall that the space of Pin structures on $\Lambda_1$ has the homotopy type of $\RP^\infty$, and thus $\pi_1=\Z/2$; see Example~\ref{rem:non-equivalent_iso}. Consider a homotopically non-trivial loop $\{P_{1,s}^{\#}\}_{s \in [0,1]}$ of Pin structures on $\Lambda_1$, restricting to $P^{\#}_1$ at $t \in \{0,1\}$. Concatenating this with the path $(\Lambda_t)$ gives a family of boundary conditions for the cap, parametrized by $s \in [0,1]$. There is a corresponding family of Cauchy-Riemann operators. Appealing once again to the non-triviality of $\det(\delbar_{\rho})$ over the space of paths in $\Gr(V)$ \cite{DeSilva}, we conclude that the monodromy on the orientation space is $-1$.  Compare \cite[Remark 11.19]{SeidelBook}. 
\end{proof}

The following result is ultimately derived from the fact that every even index loop in the Grassmanian of Lagrangians is homotopic to a loop in the image of the action by the unitary group; that equips the orientation operator with a trivialization coming from deforming the corresponding Fredholm problem to one which is complex linear. This will not be apparent in our proof, which uses instead formal properties established in \cite{SeidelBook}.

\begin{proposition}
\label{prop:epsL}
Suppose  we are given $\Lambdad_0, \Lambdad_1 \in \Grd(V)$, and a value $\epsilon \in \Z/2$. Pick lifts $\Lambdap_0, \Lambdap_1 \in \Grp(V)$ of $\Lambdad_0, \Lambdad_1$ so that 
\begin{equation}
\label{eq:epsL}
\epsilon = i(\Lambdap_0, \Lambdap_1) \mod{2}.
\end{equation}
Then, the orientation spaces $o(\Lambdap_0, \Lambdap_1)$ are canonically isomorphic, for all choices of $\Lambdap_0, \Lambdap_1$ satisfying \eqref{eq:epsL}.
\end{proposition}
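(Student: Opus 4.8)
The plan is to reduce the statement to the already-established independence of $o(\Lambdap_0,\Lambdap_1)$ on the choice of path (Lemma~\ref{lem:nodep}) together with a careful analysis of how $o(\Lambdap_0,\Lambdap_1)$ changes when we modify the lifts $\Lambdap_0,\Lambdap_1$ while keeping their projections $\Lambdad_0,\Lambdad_1$ fixed. Recall that a point of $\Grp(V)$ is a triple $(\Lambda,\alpha,P^\#)$ and the projection to $\Grd(V)$ forgets the grading $\alpha$; the fiber of $\Grp(V)\to\Grd(V)$ is a $\Z$-torsor, acting by shifting $\alpha$ by integers. So the lifts of $\Lambdad_0$ form a $\Z$-torsor, and likewise for $\Lambdad_1$; changing the lift of $\Lambdap_i$ by $k_i\in\Z$ changes the Maslov index by $i(\Lambdap_0,\Lambdap_1)\mapsto i(\Lambdap_0,\Lambdap_1)+k_1-k_0$ (the grading of the second entry raises the index, that of the first lowers it; this is the standard shift formula, e.g.\ from \cite[Section 11j]{SeidelBook}). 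Hence the condition \eqref{eq:epsL} picks out exactly those pairs of lifts for which $k_1-k_0$ is even, i.e.\ a single $\Z$-torsor's worth of choices once we fix $\epsilon$.

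First I would treat the case of a ``unit shift'' in both entries simultaneously: replace $(\Lambdap_0,\Lambdap_1)$ by $(\Lambdap_0+1,\Lambdap_1+1)$, which keeps $i(\Lambdap_0,\Lambdap_1)$ unchanged. I claim $o(\Lambdap_0+1,\Lambdap_1+1)$ is canonically isomorphic to $o(\Lambdap_0,\Lambdap_1)$. The point is that a grading shift on both ends corresponds to inserting a full loop of Maslov index $2$ in the path $\rho$, and by Lemma~\ref{lem:nodep} (applied after re-choosing a graded path with the new endpoints) the orientation space only depends on the endpoints; concretely, one deforms the Cauchy--Riemann operator by adding a trivial complex-linear $\delbar$-problem on the inserted loop, which carries a canonical complex orientation and therefore induces a canonical isomorphism of determinant lines. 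This is the formal content of the parenthetical remark before the Proposition, but I would phrase it purely through Seidel's formal properties: the gluing isomorphism for determinant lines over a concatenated cap, combined with the canonical trivialization of the index of a complex-linear operator.

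Next I would handle the two ``diagonal'' generators of the relevant torsor, namely shifting $\Lambdap_0$ alone by $2$ (which does not change $\epsilon$) and — the key move — simultaneously shifting $\Lambdap_0$ and $\Lambdap_1$ each by $1$, already done above. Shifting $\Lambdap_0$ by $2$ changes the index by $-2$ but preserves its parity; by the same argument (insert a complex-linear index-$2$ loop at the $\Lambda_0$ end, or equivalently reindex the graded path) this produces a canonical isomorphism. Since the set of pairs $(\Lambdap_0,\Lambdap_1)$ over $(\Lambdad_0,\Lambdad_1)$ satisfying \eqref{eq:epsL} is a torsor over $\{(k_0,k_1)\in\Z^2 : k_1-k_0 \text{ even}\}$, and this group is generated by $(1,1)$ and $(2,0)$, the two isomorphisms above suffice; transitivity and the cocycle property (that the composite isomorphism around a loop is the identity) follow because each elementary isomorphism is built from the canonical complex orientation of a linear $\delbar$-operator, and these compose canonically.

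The main obstacle I anticipate is verifying \emph{coherence}: that the canonical isomorphism obtained between two fixed choices $(\Lambdap_0,\Lambdap_1)$ and $(\Lambdap_0',\Lambdap_1')$ does not depend on the sequence of elementary shifts used to connect them. Equivalently, one must check that the monodromy around a loop of shifts — e.g.\ $(1,1)$ then $(1,1)^{-1}$, or $(2,0)$ then $(0,2)$ then $(-2,0)$ then $(0,-2)$ — is trivial. This is where Lemma~\ref{lem:nodep} does the real work: all these operations amount to choosing different graded paths $\rho^\#$ with the same endpoints in $\Grp(V)$, and Lemma~\ref{lem:nodep} guarantees parallel transport over the (simply connected) space of such paths is canonical. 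So the proof really is: use the grading-shift formula to identify which lifts are allowed, observe all allowed lifts are connected by index-preserving or parity-preserving moves, and invoke Lemma~\ref{lem:nodep} to make the resulting identifications canonical and coherent.
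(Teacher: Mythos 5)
Your overall strategy is in the same spirit as the paper's: identify the group of admissible shifts $(k_0,k_1)$ with $k_1-k_0$ even, decompose it into generators, and produce a canonical isomorphism for each generator. The paper does this by introducing the grading-only shift $\Sigma\Lambdap=(\Lambda,\alpha-1,P^\#)$, observing $\Sigma^2=S^2$ where $S$ is Seidel's shift, and invoking the formal identities from \cite[Lemma~11.21]{SeidelBook} to handle $(0,2)$ and $(2,0)$; the $(1,1)$ case is then immediate. However, there are two concrete problems with your version.

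First, your argument for the simultaneous shift $(1,1)$ is wrong in its mechanism. You write that ``a grading shift on both ends corresponds to inserting a full loop of Maslov index 2 in the path $\rho$,'' and deform by a complex-linear problem on the inserted loop. But no loop is inserted at all: shifting both gradings $\alpha_0\mapsto\alpha_0-1$, $\alpha_1\mapsto\alpha_1-1$ while keeping the Pin structures fixed leaves the underlying path $\rho$ in $\Gr(V)$, its homotopy class, and the path of Pin structures all unchanged — one simply replaces the grading function $\alpha_t$ along the path by $\alpha_t-1$. The Cauchy--Riemann operator $\delbar_\rho$ sees only $\rho$ and the Pin data, so the Fredholm problem is \emph{literally} unchanged, and $o(\Sigma\Lambdap_0,\Sigma\Lambdap_1)$ equals $o(\Lambdap_0,\Lambdap_1)$ on the nose. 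That is exactly what the paper says, and it is the crux of why the $(1,1)$ generator is harmless; an ``insert a loop'' picture, by contrast, would give an isomorphism only up to something that needs to be trivialized, and you would then have to separately argue that the trivialization is the canonical one.

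Second, your coherence argument misappropriates Lemma~\ref{lem:nodep}. That lemma says the orientation space depends only on the pair $(\Lambdap_0,\Lambdap_1)\in\Grp(V)^2$ and not on the connecting path $\rho^\#$; it is a path-independence statement for \emph{fixed endpoints}. The coherence you want to prove concerns the compatibility of the isomorphisms between orientation spaces at \emph{different} pairs of endpoints (different lifts). Those are not the same thing, and ``all these operations amount to choosing different graded paths with the same endpoints'' is not accurate, precisely because the endpoints do change under your elementary moves. If you want to address coherence head-on, the more honest route is the one implicit in the paper: the $(1,1)$ move is the literal identity of operators, and the $(0,2)$, $(2,0)$ moves are defined via Seidel's shift isomorphisms \eqref{eq:orshift}, \eqref{eq:orshift2}, which are engineered (via compatibility with direct sums) so that composing them is unambiguous. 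It would also be worth noting, if you pursue this, that the key algebraic identity making the generators mesh is $\Sigma\circ\Sigma=S\circ S$ — this is what lets you trade a grading-only double shift for a double Seidel shift and hence use \cite[Lemma~11.21]{SeidelBook} at all, and it is absent from your write-up.
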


\begin{proof}
A shift operation $S$ on the elements $\Lambdap=(\Lambda, \alpha, P^\#) \in \Grp(V)$ is introduced in \cite[Section 11k]{SeidelBook}:
$$ S\Lambdap=(\Lambda, \alpha-1, P^\# \otimes \ltop (\Lambda)).$$
Lemma 11.21 in \cite{SeidelBook} says that
\begin{equation}
\label{eq:indexshift}
 i(\Lambdap_0, S\Lambdap_1)= i( \Lambdap_0, \Lambdap_1)-1
 \end{equation}
and 
\begin{equation}
\label{eq:orshift}
 o(\Lambdap_0, S\Lambdap_1) \cong o(\Lambdap_0, \Lambdap_1).
\end{equation}
For $n=1$, the isomorphism \eqref{eq:orshift} is determined by clockwise rotation by $\pi$ in the plane. In higher dimensions, it is determined by asking for it to be compatible with direct sums. 

Similar arguments to those in the proof of Lemma 11.21  in \cite{SeidelBook} show that
 \begin{equation}
\label{eq:indexshift2}
 i(S\Lambdap_0, \Lambdap_1)= i( \Lambdap_0, \Lambdap_1)+1
 \end{equation}
and 
\begin{equation}
\label{eq:orshift2}
 o(S\Lambdap_0, \Lambdap_1) \cong o(\Lambdap_0, \Lambdap_1).
 \end{equation}
 
 For us, a more relevant shift operation $\Sigma$ is the one given by
 $$ \Sigma \Lambdap = (\Lambda, \alpha-1, P^\#),$$
since this preserves the underlying $\Lambdad \in \Grd(V)$. Observe that the corresponding double shift is the same: 
\begin{equation}
\label{eq:doubleshift}
\Sigma \circ \Sigma=S \circ S.
\end{equation}

The index does not depend on the Pin structure, so from \eqref{eq:indexshift} and \eqref{eq:indexshift2} we deduce that
$$ i( \Lambdap_0, \Lambdap_1)=i(\Lambdap_0, \Sigma \Lambdap_1)+1= i(\Sigma\Lambdap_0, \Lambdap_1)-1.$$

Therefore, to establish what we need for the proposition, it suffices to find canonical isomorphisms
$$ o( \Lambdap_0, \Lambdap_1)\cong o(\Lambdap_0, \Sigma^2 \Lambdap_1) \cong o(\Sigma^2\Lambdap_0, \Lambdap_1) \cong  o(\Sigma\Lambdap_0, \Sigma \Lambdap_1) .$$

The first two isomorphisms follow from \eqref{eq:orshift} and \eqref{eq:orshift2}, together with \eqref{eq:doubleshift}. The final isomorphism, $o( \Lambdap_0, \Lambdap_1)\cong o(\Sigma\Lambdap_0, \Sigma \Lambdap_1)$, is clear because by using the family $\{\Sigma \Lambdap_t\}$ for the index problem on the cap $H$, the index problem does not change: the Lagrangian paths $\{\Lambda_t\}$ are the same. 
\end{proof}

In view of Proposition~\ref{prop:epsL}, we introduce the notation
\begin{equation}
\label{eq:newo}
o(\Lambdad_0, \Lambdad_1, \epsilon) := o(\Lambdap_0, \Lambdap_1)
\end{equation}
for $\Lambdad_0, \Lambdad_1 \in \Grd(V)$, and  $\epsilon \in \Z/2$. Here, $\Lambdap_0$ and $ \Lambdap_1$ are any lifts satisfying \eqref{eq:epsL}.

\subsection{Lagrangian Floer homology}
\label{sec:LFH}
Lagrangian Floer homology over $\Z$ is developed in \cite{FOOO}, \cite{FOOO2} using Spin structures on the Lagrangians. In Seidel's book \cite{SeidelBook}, it was noted that only Pin structures are necessary, but the exposition there was under the assumption that the symplectic manifold $M$ satisfies $2c_1(TM)=0$ (so that the Floer complex is $\Z$-graded). In the context of Heegaard Floer theory, it will be convenient to use Pin structures, but the manifold does not have $2c_1(TM)=0$.  Nevertheless, since the only Lagrangians considered in Heegaard Floer theory are orientable, this setting admits a canonical relative $\Z/2$ grading. In fact, one can specify an absolute $\Z/2$ grading, which then suffices for the sign discussion in \cite{SeidelBook} to go through, using what we set up in Section~\ref{sec:orspaces}.

  We present here an adaptation of the techniques in \cite[Chapter 8]{FOOO2} and \cite[Section 11]{SeidelBook} to the setting of interest to us.

Let $(M, \omega)$ be a closed symplectic manifold of dimension $2n$. Let $L_0, L_1 \subset M$ be two connected, transversely intersecting Lagrangians, such that $L_0$ and $L_1$ are orientable, and equipped with Pin structures $P_0^\#$ and $P_1^\#$. We do not choose orientations on the Lagrangians, but we will assume that they are coupled oriented, in the sense of Definition~\ref{def:co-manifolds}; i.e., that we have an orientation on $L_0 \times L_1$. (See Remark~\ref{rem:coco}.)

Given a coupled orientation on $(L_0, L_1)$, every intersection point $\x \in L_0 \cap L_1$ admits an absolute mod $2$ grading
$$ \gr(\x) \in \Z/2,$$
obtained as follows. We compare the orientation of $T_{\x}L_0 \oplus T_{\x}L_1 \cong T_{\x, \x}(L_0 \times L_1)$ with the orientation of $T_{\x}M$ coming from $\omega^n$. If they are the same, we set $\gr(\x) = n \pmod{2}$. If they are different, we set $\gr(\x) = n+1 \pmod{2}.$


Given $\x \in L_0 \cap L_1$, in the notation of Section~\ref{sec:orspaces}, we have elements
 $$\Lambdad_i = (T_{\x}L_i, P_i^\#|_{T_{\x} L_i}) \in \Grd(T_{\x}M), \ \ i=0, 1.$$ 
We consider the orientation space
 \begin{equation}
 \label{eq:ox}
  o(\x) \coloneqq o(\Lambdad_0, \Lambdad_1, \gr(\x)).
  \end{equation}
We denote a generator of $o(\x)$, i.e. an orientation of the corresponding determinant line, by $\omega_\x$.

The Lagrangian Floer complex is 
$$ \CF_*(L_0, L_1) = \bigoplus_{\x \in L_0 \cap L_1} o(\x),$$
 with the differential 
\begin{equation}
\label{eq:del}
\del \omega_\x = \sum_{\y \in L_0 \cap L_1} \sum_{\substack{\phi \in \pi_2(\x, \y)\\ \mu(\phi)=1}} \bigl( \# \Mhat(\phi) \bigr) \cdot \omega_\y.
\end{equation}
Here, $\pi_2(\x, \y)$ denotes the space of relative homotopy classes between $\x$ and $\y$ (with boundary on the two Lagrangians), and $\mu(\phi)$ denotes the Maslov index of such a class. Furthermore, $\Mhat(\phi)=\M(\phi)/\R$ is the count of $J$-holomorphic strips (solutions to Floer's equation) in the class $\phi$, after dividing by translation by $\R$. It remains to explain what we mean by the signed count $ \# \M(\phi) \in \Z$, which depends on the  orientations $\omega_\x\in o(\y)$ and $\omega_\y \in o(\y)$.

\begin{remark}
In Heegaard Floer theory, one considers several variants of the Floer complex, keeping track of the intersections of the holomorphic disks with a (real codimension two) symplectic hypersurface $D \subset M$ which is disjoint from the Lagrangians. This additional complication does not affect our discussion of signs. 
\end{remark}

\begin{remark}
The coupled orientation on $(L_0, L_1)$ makes the Lagrangian Floer complex absolutely $\Z/2$-graded. In certain situations, the grading can be lifted to a relative or absolute $\Z/2N$- or $\Z$-grading. Again, this issue is independent from our discussion of signs.
\end{remark}

To define the signed count $ \# \Mhat(\phi)$, we need to orient the moduli space. Let
$$\Path(L_0, L_1)=\{\gamma\in C^{\infty}([0,1], M) \mid \gamma(0) \in L_0, \gamma(1) \in L_1\}$$
and let $\Omega(\phi)$ be the space of (non necessarily holomorphic) Whitney disks from $\x$ to $\y$ in the class $\phi \in \pi_2(\x, \y)$; that is, the space of smooth paths in $\Path(L_0, L_1)$ from the constant path $c_\x$ at $\x$ to the constant path $c_\y$ at $\y$, in the class $\phi$. By picking a homeomorphism from $[0,1]$ to $\R$, we can view the moduli space $\M(\phi)$ as a subset of $\Omega(\phi)$.

The orientation on $\M(\phi)$ is governed by orienting the determinant index bundle $\det(D\delbar)$ over $\Omega(\phi)$. Recall that the orientation spaces $o(\x)$ and $o(\y)$ also come from determinant index bundles, for operators on the cap $H$ associated to paths of linear Lagrangian branes. We will denote these by $\delbar_\x$ and $\delbar_\y$, for convenience (instead of using the path of branes in the subscript, as we did in Section~\ref{sec:orspaces}). Once we have an orientation on $\M(\phi)$, we get one on $\Mhat(\phi)=\M(\phi)/\R$ using the ordered convention $\M(\phi) \cong \R \times \Mhat(\phi).$

To see that orientations $\omega_\x\in o(\x)$ and $\omega_\y \in o(\y)$ induce one on $\det(D\delbar)$, it suffices to prove the following lemma.

\begin{lemma}
\label{lemma:Dd}
The Pin structures and coupled orientation on the Lagrangians $(L_0, L_1)$ induce a canonical isomorphism: 
\begin{equation}
\label{eq:Dd}
 \det(\delbar_{\x}) \otimes \det(D\delbar) \cong  \det(\delbar_{\y}).
\end{equation}
\end{lemma}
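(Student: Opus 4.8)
The plan is to realize both orientation spaces $o(\x)$ and $o(\y)$ as determinant lines of $\delbar$-operators on capped strips, and to exhibit a gluing (linear concatenation) of Fredholm problems that produces the isomorphism \eqref{eq:Dd}. First, I would recall that for a Fredholm operator, gluing along a cylindrical end induces a canonical isomorphism of determinant lines $\det(D_1) \otimes \det(D_2) \cong \det(D_1 \#_t D_2)$, functorial in families; this is the standard additivity of the index and its refinement to determinant lines (see \cite[Section 11]{SeidelBook}, \cite[Chapter 8]{FOOO2}). I would then describe $\det(\delbar_\x)$ as the determinant of the operator on the cap $H$ with boundary path $\rho_\x$ from the constant brane at $\Lambdap_0(\x)$ to the constant brane at $\Lambdap_1(\x)$ (with the grading condition that pins down the homotopy class, using Lemma~\ref{lem:nodep} and Proposition~\ref{prop:epsL} so the choice of path is irrelevant), and likewise $\det(\delbar_\y)$ for the cap with boundary path $\rho_\y$.

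Next, the key geometric step: the class $\phi \in \pi_2(\x, \y)$ together with its linearization gives a Fredholm operator $D\delbar$ on the strip $\R \times [0,1]$ whose boundary conditions near $-\infty$ are asymptotic to $T_\x L_0, T_\x L_1$ and near $+\infty$ to $T_\y L_0, T_\y L_1$. Pre-composing (gluing at the $-\infty$ end of the strip) with the cap $\delbar_\x$ and noting that the resulting capped operator on $H$ can be deformed, through the boundary-condition paths, to the cap operator $\delbar_\y$ — because the glued boundary path $\rho_\x \# (\text{boundary of }\phi)$ is homotopic rel endpoints to $\rho_\y$ once the Pin structures are carried along (this is exactly the content of Lemma~\ref{lem:nodep}: paths of Lagrangians-with-Pin-structures form a simply connected space over each component, so the homotopy class is determined). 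The grading bookkeeping is consistent: the Maslov index of the strip is $\mu(\phi) = 1$, and $\grHF(\y) = \grHF(\x) + 1 \bmod 2$ (from \eqref{eq:inddetrho} and the definition of $\gr$), so the indices add up correctly, $\index(\delbar_\x) + \index(D\delbar) = \index(\delbar_\y)$. Invoking additivity of determinant lines under this gluing gives \eqref{eq:Dd}; the Pin structures and coupled orientation enter precisely in trivializing the ambiguity in the homotopy of boundary-condition paths, as in Lemma~\ref{lem:nodep} and Lemma~\ref{lem:nontrivial}.

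Finally, I would check that the isomorphism is independent of the auxiliary choices: the homeomorphism $[0,1] \cong \R$ used to view strips inside path space, the gluing parameter, the compatible complex structure $I_V$, and the representative of the path $\rho_\x$ (resp.\ $\rho_\y$); each of these varies in a contractible or path-connected-with-trivial-monodromy family once Pin structures are present, by the arguments already assembled in Section~\ref{sec:orspaces}.

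The main obstacle is verifying that the gluing isomorphism is genuinely canonical — i.e.\ that the monodromy of $\det(D\delbar)$ and the cap operators, as the boundary-condition paths are homotoped, is trivial on the nose rather than up to sign. This is where the full force of working with $\Grd(V)$ (Lagrangians equipped with Pin structures) rather than $\Gr(V)$ is needed: over $\Gr(V)$ alone the relevant local system is non-trivial (de Silva, \cite{DeSilva}), and the $\Z/2$ monodromy is killed exactly by passing to the Pin-structure cover, as recorded in the proof of Lemma~\ref{lem:nodep}. I expect the remaining work to be the careful identification of the glued cap operator's boundary path with $\rho_\y$ in $\Grd(T_\y M)$, tracking the Pin structures through the concatenation, and confirming the sign conventions match those fixed in Convention~\ref{conv}.
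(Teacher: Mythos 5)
Your approach is essentially the paper's: glue the cap operator $\delbar_{\x}$ to the strip operator $D\delbar$ over $\Omega(\phi)$, note that determinant lines multiply under gluing, and use the grading bookkeeping together with the Pin-structure machinery from Section~\ref{sec:orspaces} to identify the glued operator with one defining $o(\y)$. Two points need tightening, however.

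First, the lemma is stated for an arbitrary class $\phi \in \pi_2(\x,\y)$, not only for $\mu(\phi)=1$; it is used later for $\mu(\phi)=2$ (disk bubbles, Proposition~\ref{prop:bubbly}) and for general classes (Proposition~\ref{prop:changePin}). The grading relation you want is $\gr(\y) \equiv \gr(\x) + \mu(\phi) \pmod 2$, which combined with $\index(\delbar_\x) \equiv n - \gr(\x) \pmod 2$ gives $\index(\delbar_\y^{\new}) \equiv n - \gr(\y) \pmod 2$ — no restriction on $\mu(\phi)$ is needed. (Also, in this general Lagrangian setting the paper uses $\gr$, not $\grHF$.)

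Second, and more substantively: the glued boundary path $\rho_\x \#(\partial \phi)$ need \emph{not} be homotopic rel endpoints to a pre-chosen $\rho_\y$. Within $\Gr(V)$ the space of paths with fixed endpoints has $\pi_0 \cong \Z$ indexed by Maslov index, and the glued path's index can differ from that of $\rho_\y$ by an arbitrary even integer. Lemma~\ref{lem:nodep} only identifies determinant lines of paths in the \emph{same} component; it does not bridge different components. What is actually used is Proposition~\ref{prop:epsL}, which says that paths of pinned Lagrangian branes whose indices agree \emph{modulo 2} produce canonically isomorphic orientation spaces, via the shift operators $S$ and $\Sigma$. So the correct conclusion is not that the glued path can be homotoped to $\rho_\y$, but that the glued operator's boundary conditions satisfy the index-parity hypothesis of Proposition~\ref{prop:epsL}, hence define the orientation space $o(\y)$ directly. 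You cite Proposition~\ref{prop:epsL}, but treating its role as merely ``pinning down the homotopy class'' misplaces where its content enters: it is precisely what replaces the homotopy-of-paths argument across even index jumps.
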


\begin{proof}
The arguments are as in Proposition 11.13 and Section (12b) in \cite{SeidelBook}, with minor modifications. Given  $u \in \Omega(\phi)$, we view it as a strip $u: [0,1] \times \R \to M$, and trivialize $u^*TM$ over its domain. Over each of the boundaries $\{0\} \times \R$ and $\{1\} \times \R$ we get a family of linear Lagrangians of the same symplectic vector space $V=u^*TM$. These families interpolate between $T_{\x} L_i$ and $T_{\y}L_i$, and are equipped with Pin structures. 

After some deformations of Fredholm operators, we can glue together the operators $\delbar_\x$ and $D\delbar_u$ to obtain a new operator $\delbar_{\y}^{\new}$ on the cap $H$, with boundary conditions interpolating between $T_{\y} L_0$ and $T_{\y} L_1$:
$$ {
\fontsize{10pt}{11pt}\selectfont
   \def\svgwidth{4.2in} 
\begingroup%
  \makeatletter%
  \providecommand\color[2][]{%
    \errmessage{(Inkscape) Color is used for the text in Inkscape, but the package 'color.sty' is not loaded}%
    \renewcommand\color[2][]{}%
  }%
  \providecommand\transparent[1]{%
    \errmessage{(Inkscape) Transparency is used (non-zero) for the text in Inkscape, but the package 'transparent.sty' is not loaded}%
    \renewcommand\transparent[1]{}%
  }%
  \providecommand\rotatebox[2]{#2}%
  \newcommand*\fsize{\dimexpr\f@size pt\relax}%
  \newcommand*\lineheight[1]{\fontsize{\fsize}{#1\fsize}\selectfont}%
  \ifx\svgwidth\undefined%
    \setlength{\unitlength}{393.18922148bp}%
    \ifx\svgscale\undefined%
      \relax%
    \else%
      \setlength{\unitlength}{\unitlength * \real{\svgscale}}%
    \fi%
  \else%
    \setlength{\unitlength}{\svgwidth}%
  \fi%
  \global\let\svgwidth\undefined%
  \global\let\svgscale\undefined%
  \makeatother%
  \begin{picture}(1,0.27721273)%
    \lineheight{1}%
    \setlength\tabcolsep{0pt}%
    \put(0,0){\includegraphics[width=\unitlength,page=1]{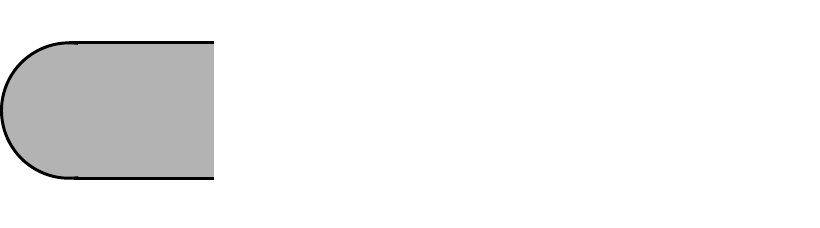}}%
    \put(0.18692564,0.00665131){\makebox(0,0)[lt]{\lineheight{1.25}\smash{\begin{tabular}[t]{l}$T_{\x}L_0$\end{tabular}}}}%
    \put(0.35976379,0.00665131){\makebox(0,0)[lt]{\lineheight{1.25}\smash{\begin{tabular}[t]{l}$T_{\x}L_0$\end{tabular}}}}%
    \put(0.77779898,0.00665131){\makebox(0,0)[lt]{\lineheight{1.25}\smash{\begin{tabular}[t]{l}$T_{\y}L_0$\end{tabular}}}}%
    \put(0.185952,0.25176981){\makebox(0,0)[lt]{\lineheight{1.25}\smash{\begin{tabular}[t]{l}$T_{\x}L_1$\end{tabular}}}}%
    \put(0.36189378,0.25176981){\makebox(0,0)[lt]{\lineheight{1.25}\smash{\begin{tabular}[t]{l}$T_{\x}L_1$\end{tabular}}}}%
    \put(0.78186239,0.25176981){\makebox(0,0)[lt]{\lineheight{1.25}\smash{\begin{tabular}[t]{l}$T_{\y}L_1$\end{tabular}}}}%
    \put(0,0){\includegraphics[width=\unitlength,page=2]{disk.pdf}}%
  \end{picture}%
\endgroup%

}$$
Under the gluing operation, the indices are added and the determinant lines are tensored:
\begin{align}
\label{eq:indexdet}
\index(\delbar_{\x}) + \index(D\delbar)  &=  \index (\delbar_{\y}^{\new}),\\
\det(\delbar_{\x}) \otimes \det(D\delbar) &\cong \det(\delbar_{\y}^{\new}).\label{eq:indexdetdet}
\end{align}

From the construction of the orientation spaces in Proposition~\ref{prop:epsL}, together with Equation~\eqref{eq:inddetrho}, we know we must have
$$ \index (\delbar_\x) \equiv n- \gr(\x) \!\!\!\!\pmod{2}.$$
Equation~\eqref{eq:indexdet} implies that
$$ \index (\delbar_\y^{\new}) \equiv (n - \gr(\x)) + (\gr(\x) - \gr(\y)) \equiv n-\gr(\y)   \!\!\!\!\pmod{2}.$$
 
 This means that the boundary conditions for $\delbar_\y^{\new}$ are of the kind needed to define the orientation space $o(\y)$, so  the operator $\delbar_\y^{\new}$ can play the role of $\delbar_\y$. 
Equation~\eqref{eq:indexdetdet} gives the desired isomorphism \eqref{eq:Dd}.
\end{proof}

This completes the description of the differential $\del$ on $\CF_*(L_0, L_1)$. To make sure that the sum in \eqref{eq:del} is finite, and that  $\del^2=0$, we need extra assumptions. The usual requirement is that the Lagrangians are monotone with Maslov numbers greater than $2$. However, this is not the case in Heegaard Floer theory; instead, some {\em ad hoc} arguments are used there. In particular, one shows that the contributions from disk and sphere bubbles to $\del^2$ are zero. (The signs of these  contributions are discussed in the next section.) Assuming we are in a situation where the bubble counts are zero, the only other place in the proof that $\del^2=0$ where signs play a role is in the fact that the orientations on $\Mhat(\phi)$ are compatible with gluing. For orientations coming from Pin structures as above, the proof of compatibility with gluing is as in \cite[Section 12f]{SeidelBook}. Thus, we obtain Lagrangian Floer homology groups, denoted $\HF_*(L_0, L_1)$. 

\begin{proposition}
Given Lagrangians $L_0, L_1 \subset M$ equipped with Pin structures and a coupled orientation (and satisfying the assumptions above), the Floer homology groups $\HF_*(L_0, L_1)$ are well-defined up to canonical isomorphism.
\end{proposition}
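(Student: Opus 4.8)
The plan is to show that the construction of $\HF_*(L_0, L_1)$ above is independent, up to canonical isomorphism, of the auxiliary choices made along the way: the almost complex structure $J$, the Hamiltonian perturbations needed for transversality, the trivializations of $u^*TM$ used in Lemma~\ref{lemma:Dd}, and — most importantly — the choice of caps used to define the orientation spaces $o(\x)$. The independence of $J$ and the perturbation data is the standard continuation map argument from Lagrangian Floer theory: a generic path of data produces a chain homotopy equivalence $\CF_*(L_0, L_1) \to \CF_*(L_0, L_1)$, and one checks using the gluing formalism of \cite[Section 12]{SeidelBook} that the orientations induced from the Pin structures make these maps chain maps and make the usual homotopies work. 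Since the orientation data on moduli spaces is entirely local (built from the $\det(\delbar_\rho)$ index bundles via Lemma~\ref{lemma:Dd}), none of these sign checks differ from the $\F_2$ case except in bookkeeping, and I would only indicate the modifications.

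The substantive point is canonicity of the identifications, i.e.\ that there is literally \emph{no} choice hidden in $o(\x)$ and in the isomorphism \eqref{eq:Dd}. This is exactly what Lemma~\ref{lem:nodep} and Proposition~\ref{prop:epsL} provide: $o(\x) = o(\Lambdad_0, \Lambdad_1, \gr(\x))$ depends only on the two abstract branes $\Lambdad_i \in \Grd(T_\x M)$ — which are determined by the Pin structures $P_i^\#$ — and on the mod-$2$ grading $\gr(\x)$, which is determined by the coupled orientation. So the Floer complex $\CF_*(L_0, L_1) = \bigoplus_\x o(\x)$ is a canonically defined $\Z$-module once the Pin structures and coupled orientation are fixed. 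First I would record this, then observe that Lemma~\ref{lemma:Dd} gives the differential without further choices: the isomorphism \eqref{eq:Dd} is assembled from the gluing isomorphism \eqref{eq:indexdetdet} together with the identification of $\delbar_\y^{\new}$ with a valid representative of $\delbar_\y$, and by Lemma~\ref{lem:nodep} the resulting map $\det(D\delbar)_u \to \det(\delbar_\x)^* \otimes \det(\delbar_\y)$ is independent of the trivialization of $u^*TM$ and of the chosen capping paths. Hence the signed count $\#\widehat{\M}(\phi)$ depends only on the fixed data.

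With canonicity in place, the invariance statement reduces to: (i) for two choices of $J$ and perturbation, exhibit the continuation chain homotopy equivalence and check it is canonical up to chain homotopy (again using that all orientation data is local and assembled by the gluing rules of \cite[Section 12f]{SeidelBook}); (ii) verify that composing continuation maps for $(J_0 \rightsquigarrow J_1)$ and $(J_1 \rightsquigarrow J_2)$ is chain homotopic to the one for $(J_0 \rightsquigarrow J_2)$, and that the constant path induces the identity — the usual triangle/identity axioms, whose sign-compatibility follows from associativity of gluing of determinant lines. This yields a system of maps on homology that is transitive and reflexive, so the groups $\HF_*(L_0, L_1)$ form a canonically-isomorphic family, which is the assertion.

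The main obstacle I expect is purely organizational rather than conceptual: carefully threading the orientation/determinant-line gluing identities through the continuation-map argument so that all the induced isomorphisms are genuinely canonical (no residual sign ambiguity) and compatible with composition. The ad hoc treatment of disk and sphere bubbles in the Heegaard Floer setting, flagged in the text before the statement, means one must also confirm that the bubbling contributions to the chain-homotopy relations vanish with signs — but this is deferred to the next section and can be cited rather than redone here.
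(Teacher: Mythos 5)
Your argument contains the paper's proof as its core, but you have interpreted the statement more broadly than the authors do, and most of your work is devoted to things the paper treats as standard and doesn't mention at all. The paper's entire proof is one sentence: Proposition~\ref{prop:epsL} shows that the orientation spaces $o(\x)$ are well-defined up to canonical isomorphism (independent of the lifts $\Lambdap_0, \Lambdap_1$ of the abstract branes), and these canonical isomorphisms commute with the differential. That is it. Your ``substantive point'' paragraph — that $o(\x) = o(\Lambdad_0, \Lambdad_1, \gr(\x))$ is canonically determined by the Pin structures and the coupled orientation, via Lemma~\ref{lem:nodep} and Proposition~\ref{prop:epsL} — is exactly the paper's argument, and you were right to flag it as the heart of the matter. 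The surrounding material on independence of $J$, Hamiltonian perturbation data, trivializations of $u^*TM$, and continuation maps is not wrong, but it is addressing invariance questions that the paper regards as the usual Floer-theoretic boilerplate (and implicitly delegates to \cite[Section 12]{SeidelBook}), whereas the proposition as the authors intend it is specifically about the absence of residual ambiguity in the sign/orientation data. You could tighten your proof to just the second paragraph and the single observation that the canonical isomorphisms of orientation spaces intertwine the differentials, and you'd reproduce the paper's argument; the rest, while sound, is answering a question the statement isn't asking.
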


\begin{proof}
 Proposition~\ref{prop:epsL} ensures that the orientation spaces $o(\x)$ are well-defined (up to canonical isomorphisms), and these isomorphisms commute with the differentials.
\end{proof}

\subsection{Bubbles}
\label{sec:bubbles}
When showing $\del^2=0$ in the Lagrangian Floer complex, we may encounter moduli spaces of stable disk bubbles, consisting of sphere bubbles attached to disks. For future reference, let us discuss their orientations.

For sphere bubbles, the linearization of the $\delbar$ operator is complex, so those moduli spaces are canonically oriented.

For disk bubbles, the ones relevant to $\del^2=0$ are those in relative homotopy classes $\phi \in \pi_2(M, L_i)$ ($i=0$ or $1$) with one fixed boundary point:
\begin{equation}
\label{eq:u}
 u: (D^2, \del D^2) \to (M, L_i), \ \ u(1) = \x  \in L_0 \cap L_1,
 \end{equation}
and with Maslov index $\mu(\phi) \leq 2$. In Heegaard-Floer theory, topological constraints preclude all disks of non-positive Maslov index, so only the case of Maslov index $2$ disks is relevant. Following the notation in \cite[Section 3.7]{HolDisk}, we let $\cN(\phi)$ be the moduli space of such $J$-holomorphic disks, and we let $\cNhat(\phi)$ be its quotient by the (two-dimensional) automorphism group $\Aut(D^2, 1)$. To define a signed count $\# \cNhat(\phi)$, we need the following fact.

\begin{proposition}
\label{prop:signdisk}
Fix $i \in \{0, 1\}$. The Pin structure on the Lagrangian $L_i$ induces a canonical orientation of $\cNhat(\phi)$.
\end{proposition}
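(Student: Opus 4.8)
The plan is to reduce the orientation of $\cNhat(\phi)$ to the orientation of a space of based disks with Lagrangian boundary, where the Pin structure applies exactly as in the construction of orientation spaces in Section~\ref{sec:orspaces}. First I would recall that a $J$-holomorphic disk $u$ as in \eqref{eq:u} sits inside $\Omega_i(\phi)$, the space of smooth maps $(D^2, \del D^2, 1) \to (M, L_i, \x)$ in the class $\phi$, and that the moduli space $\cN(\phi)$ is cut out by the $\delbar$ operator; its orientation is governed by the determinant index bundle $\det(D\delbar)$ over $\Omega_i(\phi)$. So it suffices to produce a canonical orientation of $\det(D\delbar)$, and then pass to $\cNhat(\phi) = \cN(\phi)/\Aut(D^2,1)$ using the standard orientation of $\Aut(D^2,1) \cong PSL(2,\R)_{\text{stab at }1}$ (which is $2$-dimensional and oriented once and for all).

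The key step is the linear orientation statement: for a capped half-disk (equivalently, a disk with one boundary marked point removed, which is conformally a half-strip) with moving Lagrangian boundary condition $\{T_{u(t)} L_i\}$ equipped with a Pin structure coming from $P_i^\#$, the determinant line of the associated $\delbar$ operator is canonically oriented. This is precisely the content of the orientation-space construction of de Silva / Fukaya--Oh--Ohta--Ono / Seidel that we recalled: a Pin structure on a Lagrangian path trivializes the (otherwise non-trivial, by Lemma~\ref{lem:nontrivial}) determinant local system over the space of such paths. Concretely, I would trivialize $u^*TM$ over $D^2$ (possible since $D^2$ is contractible), under which the boundary condition becomes a loop of Lagrangian subspaces of a fixed $V$ based at $T_\x L_i$; the Pin structure $P_i^\#|_{T_\x L_i}$ together with the given loop of Lagrangians lifts to a loop in $\Grd(V)$, and the arguments of Lemma~\ref{lem:nodep} and Proposition~\ref{prop:epsL} (parallel transport in the simply connected cover $\Grp(V)$ of the path space) then pin down an orientation of $\det(D\delbar)$ independent of all the auxiliary choices — trivialization of $u^*TM$, the path $\rho$, the deformation of Fredholm data. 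One must also check this is compatible with gluing so that it genuinely orients the (compactified) moduli space; this follows from the gluing compatibility in \cite[Section 12f]{SeidelBook}, exactly as in the proof of Lemma~\ref{lemma:Dd}.

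The main obstacle — or rather the point requiring the most care — is the bookkeeping of the conformal/automorphism factor. A disk bubble with one marked point has a $2$-dimensional, non-compact automorphism group $\Aut(D^2, 1)$, and to orient $\cNhat(\phi)$ we must orient this group consistently with the orientation of $\det(D\delbar)$ on $\cN(\phi)$ (the marked disk being stable only after quotienting, or: $\cN(\phi)$ itself being acted on freely by this group). I would fix a standard orientation of $\Aut(D^2,1)$ — for instance via the identification with an affine group of the boundary circle near $1$ — and use the splitting of tangent spaces $T\cN(\phi) \cong \mathfrak{aut}(D^2,1) \oplus T\cNhat(\phi)$ in that order, matching the conventions already used in the excerpt for $\Mhat(\phi) \cong \R \times \Mhat(\phi)$. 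A secondary point is that $\phi \in \pi_2(M, L_i)$ only records the boundary class, not a basepoint on the other Lagrangian, so the construction should depend only on $P_i^\#$ — this is automatic since only $L_i$ appears in the boundary condition — and the coupled orientation on $(L_0, L_1)$ plays no role here, unlike in Lemma~\ref{lemma:Dd}. Once these conventions are fixed, the proof is a direct application of Proposition~\ref{prop:epsL} and the gluing formalism, so I do not expect further essential difficulty.
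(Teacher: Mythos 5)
There is a genuine gap. Your framework is set up correctly (trivialize $u^*TM$ over $D^2$, obtain a loop $\rho$ of Lagrangians in a fixed $V$, lift to $\Grd(V)$ using $P_i^\#$), but the invocation of Lemma~\ref{lem:nodep} and Proposition~\ref{prop:epsL} does not do what you claim. Those results establish that the orientation space $o(\Lambdap_0,\Lambdap_1)$ associated to a cap is \emph{well-defined up to canonical isomorphism}---i.e., parallel transport in the determinant local system over the (simply connected, after passing to $\Grd(V)$) path space identifies the fibers for different paths. They do \emph{not} produce a preferred generator of that rank-one group. A trivial local system on a connected base still has two trivializations, and nothing in Lemma~\ref{lem:nodep}/Proposition~\ref{prop:epsL} selects one. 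Moreover, those results assume $\Lambda_0 \pitchfork \Lambda_1$, whereas the disk bubble corresponds to $\Lambda_0 = \Lambda_1 = T_\x L_i$, so the cap operator you describe is not even Fredholm in the sense used in Section~\ref{sec:orspaces} without further perturbation.

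The missing ingredient is an actual trivialization, which the paper obtains in two steps. First, Seidel's Lemma~11.17 (for the operator $\delbar_\rho$ on the \emph{unpunctured, unconstrained} disk with even Maslov index loop boundary condition) yields a Pin-dependent isomorphism $\det(\delbar_\rho)\cong \ltop(\rho(1))$; this is proved by reducing to the case of a constant loop, where there is an explicit trivialization. Note this is \emph{not} a canonical orientation of $\det(\delbar_\rho)$---the $\ltop(\rho(1))$ factor remains, and the Pin structure alone cannot orient a line without an orientation of $\rho(1)$. Second, the pinning constraint $u(1)=\x$ subtracts precisely $\rho(1)$ from the index, so $\det(D\delbar_u)\cong\det(\delbar_\rho)\otimes\ltop(\rho(1))^{-1}\cong\R$, and the unwanted factor cancels. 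Your proposal has no analogue of this cancellation; without it, you do not get an orientation but an element of a line bundle that is nontrivial until the marked-point constraint is imposed. Your handling of $\Aut(D^2,1)$ is fine in spirit, though the specific choice of ordered basis matters for the later comparison with strip gluing (Proposition~\ref{prop:bubbly}), so ``fix a standard orientation'' should be replaced by the concrete convention $(\del/\del s,\del/\del t)$ in the upper half-plane model.
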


\begin{proof}
To orient $\cN(\phi)$, we need to trivialize the determinant index bundle $\det(D\delbar)$ over the space of disks $u$ satisfying \eqref{eq:u}. Given such a disk, we trivialize $u^*TM$ over the disk $D^2$. The pullback $u^*TL_i$ gives a loop $\rho: S^1 \to \Gr(\C^n)$ of Lagrangian subspaces along $\del D^2$. Let us denote the Cauchy-Riemann operator on $D^2$, with these boundary conditions, by $\delbar_{\rho}$. This is studied in \cite{SeidelBook}, where it is denoted $D_{D,\rho}$. Since the Maslov index of the path $\rho$ is even (equal to $2$), Lemma 11.17 in \cite{SeidelBook} says that the Pin structure induces an isomorphism
$$ \det(\delbar_\rho) \cong \ltop(\rho(1)).$$

The actual operator we are interested in, $D\delbar_u$, differs from $\delbar_{\rho}$ in the fact that its value at $1$ is fixed to be zero (because $u(1)=\x$ is fixed). Thus, we have
$$\ind(D\delbar_u) = \ind(\delbar_\rho) - \rho(1)$$
as virtual vector spaces. It follows that 
$$\det(D\delbar_u) \cong \det(\delbar_\rho) \otimes \ltop(\rho(1))^{-1} \cong \R,$$
so the bundle $\det(D\delbar)$ is trivialized, as claimed.

The orientation on $\cN(\phi)$ induces one on the quotient $\cNhat(\phi)$ after fixing an orientation on the automorphism group $\Aut(D^2, 1)$, which we do as follows. We identify $D^2$ with the upper half-space $\Hs$ (preserving their orientations as subsets of $\C$), such that $1 \in \del D^2$ gets mapped to infinity. Then $\Aut(\Hs, \infty)$ is generated by translations by $t \in \R$ and dilations by $e^s, s \in \R$. We choose the orientation on its tangent space to be given by the ordered basis $(\del/\del s, \del /\del t).$ Finally, to relate the orientations on $\cNhat(\phi)$ and $\cN(\phi)$, we use the convention $ \cN(\phi) \cong  \Aut(D^2, 1) \times \cNhat(\phi)$.
\end{proof}

\begin{example}
\label{ex:disk}
Let $L$ be the unit circle in the plane $M=\R^2$, and let $\phi$ be the class of the unit disk. This has Maslov index two, and the moduli space $\cNhat(\phi)$ consists of a single point. Proposition~\ref{prop:signdisk} assigns to this point the positive orientation if $L$ is equipped with the Lie group Pin structure, and the negative one if it is equipped with the bounding Pin structure. This can be read from the proof of Lemma 11.17 in \cite{SeidelBook}, in which the study of orientations on $\det(\delbar_{\rho})$ is reduced to the case where the Lagrangian loop $\rho$ is constant. In that case, the trivialization rule is spelled explicitly, depending on the Pin structure as noted above.
\end{example}

Given a class $\phi \in \pi_2(M, L_i)$, there is a corresponding homotopy class of strips (still denoted $\phi$) in $\pi_2(\x, \x)$. If $\mu(\phi)=2$, then the moduli space of strips $\Mhat(\phi)$ is one-dimensional, and the moduli space of disk bubbles $\cNhat(\phi)$ is part of its (zero-dimensional) boundary.

In this case, observe that Lemma~\ref{lemma:Dd} provides a canonical orientation on $\Mhat(\phi)$, because when $\x=\y$, the tensor product $\det(\delbar_{\x}) \otimes \det(\delbar_{\y})$ is trivial. 

\begin{proposition}
\label{prop:bubbly}
For disk bubbles with boundary on $L_0$, the orientation on $\cNhat(\phi)$ from Proposition~\ref{prop:signdisk} agrees with the boundary orientation of $\Mhat(\phi)$ from Lemma~\ref{lemma:Dd}. For disk bubbles with boundary on $L_1$, the two orientations disagree.
\end{proposition}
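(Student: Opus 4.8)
### Proof proposal for Proposition~\ref{prop:bubbly}

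The plan is to reduce the statement to a direct local computation on a capped disk with a constant Lagrangian loop, where all the orientation identifications are explicit, and then to trace through the two gluing conventions (the one defining the boundary orientation of $\Mhat(\phi)$ via Lemma~\ref{lemma:Dd}, and the one defining the orientation of $\cNhat(\phi)$ via Proposition~\ref{prop:signdisk}). The asymmetry between $L_0$ and $L_1$ should emerge from the fact that a disk bubbling off on $L_0$ appears at one end of the strip while a disk bubbling off on $L_1$ appears at the other, and these two ends play different roles in the ordering conventions $\M(\phi)\cong\R\times\Mhat(\phi)$ and $\det(\delbar_\x)\otimes\det(D\delbar)\cong\det(\delbar_\y)$.

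First I would recall the gluing picture near a disk bubble: a nearby strip in $\Mhat(\phi)$ (with $\x=\y$) is obtained by gluing the constant strip at $\x$ to the bubble disk $u$ at the boundary point $u(1)=\x$, which sits on $L_i$. Concretely, $\det(D\delbar)$ over a neighborhood of the broken configuration is identified, via the gluing theorem of \cite[Section~12f]{SeidelBook}, with $\det(D\delbar_u)\otimes(\text{gluing parameter line})$, where the gluing parameter runs over a half-line $[0,\infty)$ (length of the neck). Since $\cNhat(\phi)$ is zero-dimensional, $\det(D\delbar)$ over $\Mhat(\phi)$ near the boundary is the product of the point-orientation of $\cNhat(\phi)$ (from Proposition~\ref{prop:signdisk}) with the outward/inward gluing direction; the boundary orientation convention then compares this with the canonical orientation of $\det(D\delbar)$ coming from $\det(\delbar_\x)\otimes\det(\delbar_\x)$ being canonically trivial (Lemma~\ref{lemma:Dd} with $\x=\y$).

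The key step is then to make the trivialization of $\det(D\delbar_u)$ in the proof of Proposition~\ref{prop:signdisk} — namely $\det(D\delbar_u)\cong\det(\delbar_\rho)\otimes\ltop(\rho(1))^{-1}\cong\R$ via Seidel's Lemma~11.17 — compatible with the trivialization of the same line that comes from the strip picture in Lemma~\ref{lemma:Dd}. Both ultimately reduce (by deforming the Lagrangian loop $\rho$ to a constant one and the strip's boundary paths to constants) to the model case: $M=\C^n$, $L_i=\R^n$ with the Lie group Pin structure, $\phi=$ standard Maslov-$2$ disk, where Example~\ref{ex:disk} pins down the sign on $\cNhat(\phi)$ and where the identification $\det(\delbar_\x)\otimes\det(\delbar_\x)\cong\R$ is the tautological one. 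In this model I would compute both orientations explicitly: the boundary orientation of $\Mhat(\phi)$ inherited from Lemma~\ref{lemma:Dd} and the one from Proposition~\ref{prop:signdisk}, keeping careful track of (i) the ordering $\M(\phi)\cong\R\times\Mhat(\phi)$ used to pass to the quotient by translation, (ii) the ordering $\cN(\phi)\cong\Aut(D^2,1)\times\cNhat(\phi)$ with $\Aut(\Hs,\infty)$ oriented by $(\del/\del s,\del/\del t)$, and (iii) which boundary point of the disk ($u(1)$, sitting on $L_0$ versus on $L_1$) is being fixed. The difference between the $L_0$ and $L_1$ cases should come down to a single transposition in these ordered bases — geometrically, whether the bubble sits at the $-\infty$ or $+\infty$ end of the strip, equivalently whether we are gluing $\delbar_\x$ on the left or $\delbar_\y$ on the right in the cap picture of Lemma~\ref{lemma:Dd}.

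The main obstacle I anticipate is the bookkeeping of orientation conventions across three separate gluing/quotient operations, each of which carries its own sign: the $\R$-translation quotient on strips, the $2$-dimensional $\Aut(D^2,1)$ quotient on disks, and the neck-length gluing parameter. Getting a clean statement requires fixing a single consistent model and checking that the isomorphism of Lemma~\ref{lemma:Dd}, when specialized to $\x=\y$ and composed with the gluing identification, really does send the positive orientation of $\cNhat(\phi)$ (Lie group Pin structure) to the positive boundary orientation of $\Mhat(\phi)$ for $L_0$ and to the negative one for $L_1$ — with the $(-1)$ traceable to exactly one order swap. A secondary subtlety is that the deformation from the general $(u, \rho, \text{boundary paths})$ to the constant model must be done through Fredholm operators of fixed index and must be compatible with the Pin structures; this is where I would cite the homotopy-invariance arguments already used in Lemmas~\ref{lem:nodep} and \ref{lemma:Dd} rather than redo them.
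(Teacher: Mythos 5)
Your framework — glue, then track the conventions $\M(\phi)\cong\R\times\Mhat(\phi)$ and $\cN(\phi)\cong\Aut(D^2,1)\times\cNhat(\phi)$ with the orientation $(\del/\del s,\del/\del t)$ on $\Aut(\Hs,\infty)$ — is the right one, but the geometric claim you hang the $L_0$/$L_1$ asymmetry on is wrong. A disk bubble attaches at a single boundary point of the strip, not at the $-\infty$ or $+\infty$ end, and which cylindrical end is irrelevant; likewise, the cap-gluing of Lemma~\ref{lemma:Dd} always composes $\delbar_{\x}$ with $D\delbar$ in the same order whether the bubble boundary lies on $L_0$ or $L_1$, so "gluing on the left versus on the right" is not where the sign comes from either. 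The actual source is the translation coordinate $t$ of $\Aut(D^2,1)$: after gluing the bubble to the constant trajectory at $\x$, the parameter $t$ becomes the $\R$-translation coordinate on $\M(\phi)$ for an $L_0$-bubble but its \emph{opposite} for an $L_1$-bubble, because the two boundary components of the strip are traversed with opposite induced orientations. Meanwhile the dilation coordinate $-s$ becomes the gluing parameter $T$. Since $(\del/\del s,\del/\del t)$ and $(\del/\del t,-\del/\del s)$ give the same orientation of $\Aut(D^2,1)$, matching the first factor to the $\R$-translation on $\M(\phi)$ and the second to the gluing parameter produces the two cases of the statement in one line. This is a short direct coordinate check on the moduli spaces; the reduction to a $\C^n$ model and the comparison of determinant-line trivializations of $\det(D\delbar_u)$ you propose are unnecessary machinery, and if guided by the $\pm\infty$-end picture would likely lead you to the wrong mechanism even if you eventually stumble on the right sign.
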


\begin{proof}
Since $\mu(\phi)=2$, the moduli space $\M(\phi)$ is one-dimensional, with some part of its boundary coming from disk bubbles in $\cN(\phi) \cong \Aut(D^2, 1) \times  \cNhat(\phi)$. Near that part, the strips in $\M(\phi)$ are obtained by gluing a disk bubble to the constant trajectory at $\x$. The gluing involves a parameter $T \to \infty$. Given a disk $u$ in $\cN(\phi)$, gluing it with the parameter $T$ is equivalent to gluing some dilation of $u$ by $e^s$ with parameter $1$, where $s \to -\infty$ as $T \to \infty$. Thus, 
the dilation coordinate $-s$  on $ \Aut(D^2, 1)\cong \Aut(\Hs, \infty)$ corresponds to the gluing parameter. On the other hand, the translation coordinate $t$ becomes (after gluing) either the $\R$-translation coordinate on $\M(\phi) \cong \R \times \Mhat(\phi)$, or its opposite, depending on whether we consider bubbles with boundary on $L_0$ or $L_1$. See Figure~\ref{fig:diskbubbles}. 

Since the $\R$ factor comes first in the identifications $\M(\phi) \cong \R \times \Mhat(\phi)$ and $\cN(\phi) \cong \Aut(D^2, 1) \times  \cNhat(\phi)$, the claim follows from the way we chose the orientation on $\Aut(D^2, 1)$: we used $(\del/\del s, \del /\del t)$, which has the same orientation as $(\del/\del t, -\del /\del s).$
\end{proof}

\begin{figure}
{
\fontsize{10pt}{11pt}\selectfont
   \def\svgwidth{4.5in}
   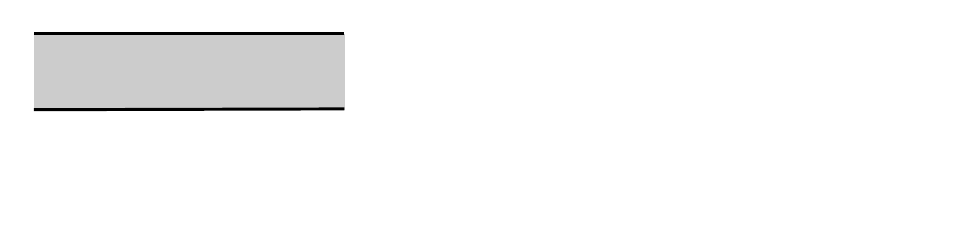
}
\caption{Orientations for gluing a disk bubble to a constant trajectory.}
\label{fig:diskbubbles}
\end{figure}

The count of disk bubbles with boundary on a Lagrangian $L$ appears in the calculation of the Floer homology of $L$ with itself, which can be formulated using  the pearl complex \cite{BiranCornea}. We state here the result when the count is zero.

\begin{proposition}
\label{prop:PSS}
Under the assumptions from Section~\ref{sec:LFH}, suppose $L_0=L_1$ (with the same Pin structure), and denote this Lagrangian by $L$. Pick the coupled orientation on $(L, L)$ to be the product of either orientation on $L$ with itself. Assume the count of stable disk bubbles vanishes. Then, we have a canonical isomorphism
$$ \HF_*(L, L) \cong H_*(L; |\ltop TL |),$$
where $|\ltop TL |$ denotes the orientation local system on $L$.
\end{proposition}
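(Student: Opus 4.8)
The plan is to compute $\HF_*(L,L)$ using a Hamiltonian perturbation and the Morse-theoretic (or pearl complex) model, and to track the signs carefully using the canonical orientations set up above. First I would perturb $L_1 = L$ to a nearby Lagrangian $L_1' = \phi_H^1(L)$ using the time-one flow of a small autonomous Hamiltonian $H = \epsilon f$, where $f : L \to \R$ is a Morse function. Then intersection points of $L$ with $L_1'$ correspond to critical points of $f$, and for $\epsilon$ small the only holomorphic strips of Maslov index $1$ are the ``thin'' ones, which correspond bijectively to gradient flowlines of $f$ (this is the standard Floer-as-Morse-theory argument, as in \cite{FOOO}; the hypothesis that disk and sphere bubbles do not contribute is used to rule out the other, ``thick'' strips that could enter $\del^2$ or the differential). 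This identifies the Floer complex additively with the Morse complex, and the task reduces to identifying the local system and checking that the signs match.

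Next I would address the local system. Each intersection point $\x$ carries the orientation space $o(\x) = o(\Lambdad_0, \Lambdad_1, \gr(\x))$ built from the Pin structures on $T_\x L_0$ and $T_\x L_1$. Since $L_0 = L_1 = L$ with the \emph{same} Pin structure, and since $L_1'$ is a graph over $L$, the linear Lagrangians $T_\x L_0$ and $T_\x L_1'$ are related by the Hessian of $f$ at the corresponding critical point; the Maslov/shift data is governed by the index of that critical point, which is exactly where the homological degree in $H_*(L)$ comes from. Using Proposition~\ref{prop:epsL} and the shift isomorphisms \eqref{eq:orshift}, \eqref{eq:orshift2}, together with Lemma~\ref{lem:pinLag}(1) giving the canonical identification $\tau_{L_0,L_1} : T_\x L_0 \to T_\x L_1$, I would identify $o(\x)$ canonically with $\ltop(T_\x L)$ up to the degree shift — i.e., the collection $\{o(\x)\}$ assembles into (a shift of) the orientation local system $|\ltop TL|$ pulled back over the critical points of $f$. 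The chosen coupled orientation on $(L,L)$ being the product of an orientation of $L$ with itself is exactly the normalization that makes this identification clean and consistent with the absolute $\Z/2$-grading $\gr(\x) \equiv \ind(\x) + n \pmod 2$ (or similar).

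Then I would compare differentials: the signed count $\#\Mhat(\phi)$ of a thin strip, computed via the gluing isomorphism of Lemma~\ref{lemma:Dd}, must be shown to equal the sign with which the corresponding gradient flowline is counted in the Morse complex with coefficients in $|\ltop TL|$. This is a linear-algebra computation at the level of determinant lines over the flowline, done in \cite[Section 12]{SeidelBook} in the transverse Lagrangian setting; here one must additionally check that the graph perturbation does not introduce extra signs and that the convention $\M(\phi) \cong \R \times \Mhat(\phi)$ is compatible. Having matched complexes, I would conclude $\HF_*(L,L) \cong H_*(L; |\ltop TL|)$ canonically, and finally argue the isomorphism is independent of the auxiliary Morse function $f$ and almost complex structure, using the continuation maps from Section~\ref{sec:LFH} (which are canonical by the same orientation-space formalism).

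The main obstacle I expect is the sign bookkeeping in identifying $o(\x)$ with a shift of $\ltop(T_\x L)$ and then checking that the Floer differential sign agrees with the Morse differential sign in the local system — this is where the precise conventions (the shuffled vs.\ concatenated orientation from Convention~\ref{conv0}, the clockwise ordering of boundary conditions from Convention~\ref{conv}, the orientation of $\Aut(D^2,1)$, and the meaning of ``the product of either orientation on $L$ with itself'' — one needs to verify the answer genuinely does not depend on which orientation) all have to be reconciled simultaneously. A secondary subtlety is making precise that the thin strips are regular and exhaust the Maslov-index-$1$ moduli space for small $\epsilon$ in the Heegaard Floer setting, where one relies on the \emph{ad hoc} compactness/bubbling arguments rather than monotonicity; but this is parallel to the already-cited analytic input and the hypothesis on vanishing bubble counts isolates exactly the piece that could go wrong.
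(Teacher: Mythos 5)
Your approach is correct but genuinely different from the paper's. The paper simply cites the PSS isomorphism (counting holomorphic caps joined to Morse trajectories, as in Seidel's Section (12e) and Zapolsky), which produces $\HF^*(L,L)\cong H^*(L;\Z)$ with Pin-induced orientations via an analogue of Lemma~\ref{lemma:Dd}, and then converts to Floer homology using Conventions~\ref{conv} and Poincar\'e duality for the possibly non-orientable $L$; the local system $|\ltop TL|$ enters precisely because $H^*(L;\Z)\cong H_{n-*}(L;|\ltop TL|)$. Your route instead runs through the Floer-as-Morse identification: perturb $L_1$ to a graph $\phi^1_{\epsilon f}(L)$, match index-one thin strips with gradient flowlines, and identify the orientation spaces $o(\x)$ with degree shifts of $|\ltop T_{\x}L|$ directly, so the local system appears at the chain level rather than via duality. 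Both are legitimate; the paper's route offloads essentially all the work (sign bookkeeping, regularity of caps, inverse-map gluing) to the cited references, while yours would require carrying out the determinant-line comparison along a gradient flowline (in the spirit of Seidel's Section~12 and \cite{FOOO}), which is more labor but gives a chain-level isomorphism without passing through cohomology. One remark on the worry you raise about ``either orientation'': it is resolved automatically, since the coupled orientation on $(L,L)$ is an orientation on $L\times L$, and reversing the chosen orientation on $L$ multiplies that by an even power of $-1$; thus the normalization in the statement is well-defined and your canonical identification of $o(\x)$ with $|\ltop T_{\x}L|$ (which goes through the intrinsic Lemma~\ref{lem:pinLag}(1) and Proposition~\ref{prop:epsL} rather than any orientation of $L$) does not secretly depend on that choice.
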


\begin{proof}
This follows from the PSS isomorphism \cite{PSS}, which is also discussed in \cite[Section (12e)]{SeidelBook} and \cite{Zapolsky}. The PSS isomorphism involves counts of configurations consisting of holomorphic caps joined to Morse trajectories, and these can be graded using Pin structures by an analogue of Lemma~\ref{lemma:Dd}. The isomorphism is usually stated in terms of Floer cohomology, saying that
$$ \HF^*(L, L) \cong H^*(L; \Z).$$
The statement for Floer homology can be deduced from this in view of our Conventions~\ref{conv} and Poincar\'e duality for $L$. 
\end{proof}

\begin{remark}
\label{rem:PSS}
Since $L$ is assumed to be orientable, the local system $|\ltop TL |$ is trivial, so Proposition~\ref{prop:PSS} implies the existence of an isomorphism $\HF_*(L, L) \cong H_*(L; \Z)$. However, this isomorphism is not canonical, because it depends on the choice of an orientation on $L$.
\end{remark}

\subsection{Decompositions}
\label{sec:decompose}
Inside the path space $\Path(L_0, L_1)$, we have the constant paths $c_\x$ for every $\x \in L_0 \cap L_1$. Recall that $\pi_2(\x, \y)$ denotes the set of relative homotopy classes of Whitney disks from $\x$ to $\y$, i.e., relative homotopy classes of paths from $c_\x$ to $c_\y$ in $\Path(L_0, L_1)$. There is a concatenation operation:
$$ *: \pi_2(\x, \y) \times \pi_2(\y, \z) \to \pi_2(\x, \z).$$

In case $\Path(L_0, L_1)$ is disconnected, some of the sets $\pi_2(\x, \y)$ could be empty. We define an equivalence relation on $L_0 \cap L_1$ by
\begin{equation}
\label{eq:equiv}
 \x \sim \y \iff \pi_2(\x, \y) \neq \emptyset.
 \end{equation}
We let $\Set$ be the set of equivalence classes. Then, the Lagrangian Floer complex splits into a direct sum
$$ \CF_*(L_0, L_1) = \bigoplus_{\s \in \Set} \CF_*(L_0, L_1, \s),$$
where 
$$\CF_*(L_0, L_1, \s) = \bigoplus_{\x \in \s} o(\x)$$
with the differential as in \eqref{eq:del}.

\subsection{Twisted coefficients}
\label{sec:twisted}
There is a well-known variant of Lagrangian Floer homology, which uses twisted coefficients. In the context of Heegaard Floer theory, this was introduced by Ozsv\'ath and Szab\'o in \cite[Section 8]{HolDiskTwo}. We present it here in a more general framework.

We keep the set-up from Sections~\ref{sec:LFH} and \ref{sec:decompose}. Fix $\s \in \Set$ and a base intersection point $\x_0 \in \s$. Let 
$$G := \pi_1( \Path(L_0, L_1), c_{\x_0}) = \pi_2(\x_0, \x_0).$$ 
We denote the elements of the group ring $\Z[G]$ by $e^\psi$, where $\psi \in G$. 

\begin{definition}
\label{def:complete}
A {\em complete set of paths for $\s$} is a choice of relative homotopy classes $\theta_\x \in \pi_2(\x_0, \x)$, one for each $\x \in \s$.
\end{definition}

Let $A$ be a $\Z[G]$-module, and fix a complete set of paths $\{\theta_\x\}$ for $\s$. Then, for any $\x, \y \in \s$, we get an identification 
\begin{equation}
\label{eq:idG}
\pi_2(\x, \y) \xrightarrow{\cong} G, \ \ \ \phi \mapsto \theta_\x * \phi *\theta_\y^{-1}.
\end{equation}
Using this identification, for every $\phi \in \pi_2(\x, \y)$, we can make sense of the action of an element $e^\phi \in \Z[\pi_2(\x, \y)]$ on $A$.

 We define the Lagrangian Floer complex with coefficients in $A$ by
$$\CFtw_*(L_0, L_1, \s; A) = \bigoplus_{\x \in \s} o(\x) \otimes_{\Z}  A,$$
with the differential
$$ \del (\omega_\x \otimes a)  = \sum_{\y \in \s} \sum_{\substack{\phi \in \pi_2(\x, \y)\\ \mu(\phi)=1}} \bigl( \# \Mhat(\phi) \bigr) \cdot \omega_\y \otimes (e^{\phi} \cdot a).$$
The moduli spaces $\Mhat(\phi)$ are oriented just as in Section~\ref{sec:LFH}. The resulting Floer homology with twisted coefficients is denoted $\HFtw_*(L_0, L_1, \s; A)$.

\begin{remark} \label{rem:canonical_twisted}
A more canonical way of defining Floer complexes with twisted coefficients in $A$, without choosing a complete set of paths (but still choosing the basepoint $\x_0$), is to set
$$\CFtwcan_*(L_0, L_1, \s; A) = \bigoplus_{\x \in \s} o(\x) \otimes_{\Z} \bigl( \Z[\pi_2(\x_0, \x)] \otimes_{\Z[G]} A\bigr),$$
where $ \Z[\pi_2(\x_0, \x)]$ is the free abelian group generated by $\pi_2(\x_0, \x)$. The group ring $\Z[G]=\Z[\pi_2(\x_0, \x_0)]$ acts on $\Z[\pi_2(\x_0, \x)]$ via concatenation of paths. We denote a typical generator of $\CFtwcan_*(L_0, L_1, \s; A)$ by $\omega_\x \otimes (e^\psi \otimes a)$, where $\omega_\x \in o(\x),$ $\psi \in \pi_2(\x_0, \x)$, and $a \in A.$ Then, the differential on $\CFtw_*(L_0, L_1, \s; A)$ is given by
$$ \del (\omega_\x \otimes (e^\psi \otimes a))  = \sum_{\y \in \s} \sum_{\substack{\phi \in \pi_2(\x, \y)\\ \mu(\phi)=1}} \bigl( \# \Mhat(\phi) \bigr) \cdot \omega_\y \otimes (e^{\psi * \phi} \otimes a).$$
The complete set of paths determines an isomorphism between $\Z[\pi_2(\x_0, \x)]$ and $\Z[G]$, and hence between $\CFtwcan_*(L_0, L_1, \s; A)$ and $\CFtw_*(L_0, L_1, \s; A)$. In particular, this shows that the complexes $\CFtw_*(L_0, L_1, \s; A)$, for different complete sets of paths, are all isomorphic. 
\end{remark}

\begin{remark}
When $A=\Z$ is the trivial $\Z[G]$-module, the complex $\CFtw_*(L_0, L_1, \s; \Z)$ becomes the usual Lagrangian Floer complex. On the other hand, $\CFtwcan_*(L_0, L_1, \s; \Z)$ is only non-canonically isomorphic to $\CF_*(L_0, L_1, \s; \Z)$; the isomorphism depends on the complete sets of paths. For our purposes, it is clearer to just choose such a complete set of paths from the very beginning, and work with the non-canonical complexes $ \CFtw_*(L_0, L_1, \s; A)$.
\end{remark}

We will mostly be interested in the following kind of modules $A$. Let
$$ (p_0, p_1): \Path(L_0, L_1) \to L_0 \times L_1$$
be the map taking a path to its endpoints. Given $\eta \in H^1(L_0; \Z/2)$ and $\zeta \in H^1(L_1; \Z/2)$, the pull-backs $p_0^*\eta$ and $p_1^*\zeta$ are elements of $H^1(\Path(L_0, L_1); \Z/2)$ or, equivalently, morphisms $G \to \Z/2$. We let $A_{\eta, \zeta}$ be the $\Z[G]$-module which is $\Z$ as an abelian group, and such that elements $e^\psi \in \Z[G]$ act on $A_{\eta, \zeta}$ by multiplication with 
\begin{equation}
\label{eq:Metazeta}
(-1)^{p_0^*\eta(\psi)} \cdot (-1)^{p_1^*\zeta(\psi)}\in \{\pm 1\}.
\end{equation}

We can define these modules for any $\s \in \Set$, so we could combine the resulting Floer complexes into one:
$$\CFtw_*(L_0, L_1; A_{\eta, \zeta}) = \bigoplus_{\s \in \Set}  \CFtw_*(L_0, L_1, \s; A_{\eta, \zeta}).$$
These twisted coefficients govern the changes in Pin structures on the Lagrangians.

\begin{proposition}
\label{prop:changePin}
Fix $\s \in \Set$. For $\eta \in H^1(L_0; \Z/2)$, let $L_0^\eta$ denote the Lagrangian $L_0$ with its Pin structure changed from $P_0^\#$ to $P_0^\# \otimes \eta.$ Similarly, for $\zeta \in H^1(L_1; \Z/2)$, let $L_1^\zeta$ denote $L_1$ with its Pin structure changed from $P_1^\#$ to $P_1^\# \otimes \zeta.$ Then, we have a canonical isomorphism of Floer complexes
$$ \CF_*(L_0^\eta, L_1^\zeta, \s) \cong \CFtw_*(L_0, L_1, \s; A_{\eta, \zeta}).$$
\end{proposition}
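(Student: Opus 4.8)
The plan is to build the isomorphism on underlying groups and then check that it intertwines the two differentials; the only substantive point will be a sign computation. First I would fix the basepoint $\x_0$, the complete set of paths $\{\theta_\x\}$ defining the right-hand side, and representatives of the Pin structures $P_0^\# \otimes \eta$ and $P_1^\# \otimes \zeta$ that are pointed at $\x_0$ (by the usual invariance arguments the complexes below depend on the latter choice only up to canonical isomorphism, so one convenient choice suffices). Propagating the trivialisations at $\x_0$ along the projections of the $\theta_\x$ to $L_0$ and $L_1$ trivialises, at each $\x \in \s$, the $\Z/2$-torsor by which $\otimes\eta$ (resp.\ $\otimes\zeta$) changes a Pin structure at a point; this identifies $(T_\x L_0, (P_0^\# \otimes \eta)|_{T_\x L_0})$ with $(T_\x L_0, P_0^\#|_{T_\x L_0})$ in $\Grd(T_\x M)$, and similarly for $L_1$, hence by Proposition~\ref{prop:epsL} gives a canonical isomorphism $o^\eta(\x) \cong o(\x)$ between the orientation space computed with the twisted Pin structures and the usual one. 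Since $\gr(\x)$ depends only on the coupled orientation it is unchanged, and since $A_{\eta,\zeta} = \Z$ as an abelian group these assemble into a grading-preserving isomorphism of groups
$$\CF_*(L_0^\eta, L_1^\zeta, \s) \;\cong\; \bigoplus_{\x\in\s} o(\x) \;=\; \CFtw_*(L_0, L_1, \s; A_{\eta,\zeta}).$$

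It then remains to compare differentials. For $\phi \in \pi_2(\x,\y)$ with $\mu(\phi)=1$ the moduli space $\Mhat(\phi)$ is literally the same in both theories (Pin structures do not enter the Cauchy--Riemann operators), so only its orientation can change; let $\epsilon(\phi) \in \{\pm1\}$ be the ratio of the orientation produced by Lemma~\ref{lemma:Dd} with the twisted Pin structures to the one produced with the untwisted Pin structures, compared through the isomorphisms above. Matching the differentials amounts precisely to the identity $\epsilon(\phi) = (-1)^{p_0^*\eta(\phi_G)}(-1)^{p_1^*\zeta(\phi_G)}$, where $\phi_G := \theta_\x * \phi * \theta_\y^{-1} \in G$ under the identification \eqref{eq:idG}; indeed the right-hand side is exactly the sign by which $e^{\phi_G}$ acts on $A_{\eta,\zeta}$, see \eqref{eq:Metazeta}.

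To prove this identity I would revisit the proof of Lemma~\ref{lemma:Dd}: there the orientation of $\Mhat(\phi)$ is obtained by gluing the cap operator $\delbar_\x$ to the strip operator $D\delbar_u$ and then recognising the glued operator $\delbar_{\y}^{\new}$ as a valid $\delbar_\y$ via the canonical isomorphism of orientation spaces (Lemma~\ref{lem:nodep}, Proposition~\ref{prop:epsL}); the Pin structures of $L_0, L_1$ enter only in that last identification, which compares the path of Pin structures running along the boundary of $\delbar_{\y}^{\new}$ with the one fixed for $\delbar_\y$. Replacing $P_0^\#$ by $P_0^\# \otimes \eta$ alters this path on the $L_0$-side by the loop of Pin structures whose class is $\eta$ evaluated on $(p_0)_*\phi_G \in \pi_1(L_0)$ times the generator of $\pi_1$ of the space of Pin structures on $L_0$ — it is here that the propagations of the trivialisations along $\theta_\x$ and $\theta_\y$ combine with the two boundary arcs of $u$ to produce exactly the loop $(p_0)_*\phi_G$ — and similarly on the $L_1$-side with $\zeta$. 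By Lemma~\ref{lem:nontrivial} the monodromy of the orientation local system around such a loop of Pin structures is $-1$, so taking the product over the two sides yields $\epsilon(\phi) = (-1)^{p_0^*\eta(\phi_G)}(-1)^{p_1^*\zeta(\phi_G)}$. Canonicity of the resulting isomorphism of complexes is then inherited from that of its pieces (cf.\ Remark~\ref{rem:canonical_twisted}).

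The hard part will be the bookkeeping in this last step: making precise that twisting the Pin structure of $L_0$ by $\eta$ inserts, along the $L_0$-boundary of the glued configuration, a loop of Pin structures of $\eta$-degree $(p_0)_*\phi_G$, and in particular that the contributions of the caps at $\x$ and $\y$ are absorbed by passing from $\phi$ to $\phi_G = \theta_\x * \phi * \theta_\y^{-1}$ rather than leaving residual signs. This is essentially a naturality statement for the isomorphism of Lemma~\ref{lemma:Dd} under a path of Pin structures, and it is where one genuinely uses the non-triviality of de Silva's orientation local system (Lemma~\ref{lem:nontrivial}); everything else is formal. Alternatively, the whole comparison can be organised as in \cite[Section 12]{SeidelBook}, viewing a change of brane structure as twisting by the associated local system.
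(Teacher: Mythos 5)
Your proposal is correct and takes essentially the same route as the paper's proof: fix an identification of the Pin structures at the basepoint $\x_0$, propagate it to each $\x$ via the complete set of paths $\theta_\x$ (the paper packages this as signs $\sigma_\x$ in a map $\Psi_\x$, which is the same content), reduce the sign comparison for a differential to the class $\psi=\theta_\x*\phi*\theta_\y^{-1}$, and appeal to Lemma~\ref{lem:nontrivial} for the monodromy of the orientation local system. The paper writes out only the $\zeta=0$ case and notes separately that the initial choice at $\x_0$ is immaterial because the space of such identifications is connected; you state the general case directly but the argument is the same.
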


\begin{proof}
For simplicity, let us just consider a change in the Pin structure on $L_0$; i.e., we assume $\zeta=0$, and we denote $A_{\eta, 0}$ by $A_\eta$. (The effect of a change on the Pin structure on $L_1$ can be analyzed in a similar way.)

Recall that we have a base point $\x_0 \in \s$. We fix an isomorphism between the restrictions of the Pin structures $P_0^\#$ and $P_0^\# \otimes \eta$ to $T_{\x_0}L_0$. 

Recall that
\begin{equation}
\label{eq:CFtwisted}
  \CFtw_*(L_0, L_1, \s) = \bigoplus_{\x \in L_0 \cap L_1} o(\x) \otimes_{\Z} A_{\eta}.
  \end{equation}
On the other hand, 
\begin{equation}
\label{eq:CFeta}
 \CF_*(L_0^\eta, L_1, \s) = \bigoplus_{\x \in L_0 \cap L_1} o_{\eta}(\x), 
 \end{equation}
where $o_{\eta}(\x)$ is the orientation space at $\x$ coming from the determinant index bundle $\det(\delbar_\x^\eta)$ associated to the new Pin structure $P_0^\# \otimes \eta.$ To construct an isomorphism from the chain complex \eqref{eq:CFtwisted} to \eqref{eq:CFeta}, we need isomorphisms
\begin{equation}
\label{eq:Psix}
 \Psi_\x : o(\x) \otimes_{\Z} A_{\eta} \to o_{\eta}(\x)
 \end{equation}
for all $\x \in \Set$. 

 Let $\omega_\x \in o(\x)$,  $a \in A_\eta$, and $\omega^\eta_\x \in o_{\eta}(\x)$. Note that $a$ is just an integer, although the elements of $\Z[G]$ act on it nontrivially. We set
\begin{equation}
\label{eq:Psixdef}
 \Psi_\x(\omega_\x \otimes a) = \sigma_\x \cdot a \cdot \omega^\eta_\x,
 \end{equation}
with the sign $\sigma_\x \in \{\pm 1\}$ is determined as follows. As part of the complete set of paths, we have a class $\theta_\x \in \pi_2(\x_0, \x)$. Let $D \delbar(\theta_\x)$ be the linearized Cauchy-Riemann operator for a path in that class. By Lemma~\ref{lemma:Dd}, we have isomorphisms: 
\begin{align}
\label{eq:ddd}
\det(\delbar_{\x_0}) \otimes \det(D \delbar(\theta_\x))  &\cong \det(\delbar_\x),  \\
 \det(\delbar_{\x_0}^\eta) \otimes \det(D \delbar(\theta_\x))  &\cong  \det(\delbar_\x^\eta). \notag
\end{align}
Our identification of the Pin structures on $T_{\x_0}L_0$ gives an isomorphism between $\det(\delbar_{\x_0})$ and $ \det(\delbar_{\x_0}^\eta)$. We obtain an isomorphism
\begin{equation}
\label{eq:delbarxx}
 \det(\delbar_\x) \cong \det(\delbar_\x^\eta).
 \end{equation}
We let the sign $\sigma_\x$ in \eqref{eq:Psixdef} be $+1$ if $\omega_\x$ gets taken to $\omega^\eta_\x$ under this isomorphism, and we let it be $-1$ otherwise.

Let us check that the  isomorphisms $\Psi_\x$ commute with the chain complex differentials. Consider the contribution of a generator $\omega_\y$ to $\del \omega_\x$ in  $\CFtw_*(L_0, L_1, \s)$ coming from moduli spaces in a class $\phi \in \pi_2(\x, \y)$, and compare it to the contribution of some $\omega^\eta_\y$ to $\del \omega^\eta_\x$ in $\CF_*(L_0^\eta, L_1, \s)$ from the same $\phi$. Let $\psi=\theta_\x * \phi *\theta_\y^{-1}  \in \pi_2(\x_0, \x_0)$ be the class that corresponds to $\phi$ under the identification \eqref{eq:idG}. Because the differential on  $\CFtw_*(L_0, L_1, \s)$ involves twisted coefficients, it picks up a sign of $(-1)^{p_0^*\eta(\psi)}$. Thus, what we need to check is that the diagram
$$
\xymatrixcolsep{3cm}
 \xymatrix{
\det(\delbar_\x)\otimes \det(D\delbar(\phi)) \ar[r]^\cong \ar[d]_\cong & \det(\delbar_\x^\eta) \otimes \det(D\delbar(\phi))\ar[d]^\cong \\
\det(\delbar_\y)  \ar[r]^\cong & \det(\delbar_\y^\eta) }$$
commutes up to the sign $(-1)^{p_0^*\eta(\psi)}$. In this diagram, the top horizontal isomorphism is the one in \eqref{eq:delbarxx}, using the class $\theta_\x$; the bottom horizontal isomorphism is its analogue using the class $\theta_\y$, tensored with the identity on the $ \det(D\delbar(\phi))$ factors;  the vertical isomorphisms are from Lemma~\ref{lemma:Dd} and use the class $\phi$.

Going back to the definition of the horizontal isomorphisms from \eqref{eq:ddd}, we see that we can consider instead the diagram
\begin{equation}
\label{eq:xym}
\xymatrixcolsep{3cm}
\xymatrix{
\det(\delbar_{\x_0})  \otimes \det(D\delbar(\psi))\ar[r]^\cong \ar[d]_\cong & \det(\delbar_{\x_0}^\eta) \otimes \det(D\delbar(\psi))\ar[d]^\cong\\
\det(\delbar_{\x_0}) \ar[r]^\cong &\det(\delbar_{\x_0}^\eta) }
\end{equation}
where the horizontal maps come from the identification of the Pin structures at $\x_0$, and the vertical isomorphisms use Lemma~\ref{lemma:Dd} for the class $\psi$. We are left to show that \eqref{eq:xym} commutes up to the sign $(-1)^{p_0^*\eta(\psi)}$.

The left vertical isomorphism in \eqref{eq:xym} is induced by the original Pin structures on the Lagrangians, and the right one by the new Pin structures (where the structure on $L_0$ is tensored with $\eta$). The construction of the isomorphisms in Lemma~\ref{lemma:Dd} is based on identifying the orientation spaces $o(\y)$ (where in our case $\y = \x_0$) coming from $\delbar_{\y}^{\new}$ and $\delbar_{\y}$. This relies on Proposition~\ref{prop:epsL}, which identifies orientation spaces associated to paths in  $\Grd(V)$ with fixed endpoints and fixed index. In our setting, we compare two such paths from $T_{\x_0} L_0$ to $T_{\x_0} L_1$. These differ by concatenating with a loop from $T_{\x_0} L_0$ to itself, and whether this loop is nontrivial in $\pi_1(\Grd(T_{\x_0}L_0)$ is determined by the sign $(-1)^{p_0^*\eta(\psi)}$. If the sign is positive, the two paths are homotopic and produce the same isomorphism, so the diagram \eqref{eq:xym} commutes. If the sign is negative, Lemma~\ref{lem:nontrivial} shows that the isomorphisms differ by $-1$, so the diagram \eqref{eq:xym} anti-commutes.

We have now shown that the formula \eqref{eq:Psix} produces an isomorphism of chain complexes. 
Recall that we started our construction by choosing an isomorphism between the restrictions of the Pin structures $P_0^\#$ and $P_0^\# \otimes \eta$ to $T_{\x_0}L_0$. There is a connected space of such isomorphisms, so this choice will not affect the canonical nature of the final isomorphism between chain complexes, which is a discrete object.
\end{proof}

\begin{corollary}
\label{cor:changePin}
Let $\s \in \Set$ and $\alpha \in H^1(M; \Z/2)$. Consider the inclusions $\iota_0: L_0 \hookrightarrow M$ and $\iota_1: L_1 \hookrightarrow M$. For $i \in \{0, 1\}$, let $L_i^\alpha$ denote the Lagrangian $L_i$ with its Pin structure changed by tensoring with $\iota_i^*\alpha$. Then, there is a canonical isomorphism:
$$ \CF_*(L_0, L_1, \s) \cong \CF_*(L_0^\alpha, L_1^\alpha, \s).$$
\end{corollary}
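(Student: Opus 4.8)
The plan is to reduce everything to Proposition~\ref{prop:changePin}. Setting $\eta = \iota_0^*\alpha \in H^1(L_0; \Z/2)$ and $\zeta = \iota_1^*\alpha \in H^1(L_1; \Z/2)$, that proposition already provides a canonical isomorphism
$$\CF_*(L_0^\alpha, L_1^\alpha, \s) \cong \CFtw_*(L_0, L_1, \s; A_{\iota_0^*\alpha, \iota_1^*\alpha}).$$
So the only remaining task is to show that the coefficient module $A_{\iota_0^*\alpha, \iota_1^*\alpha}$ is the trivial $\Z[G]$-module $\Z$; granting this, the right-hand side is just the untwisted complex $\CF_*(L_0, L_1, \s)$, since $\CFtw_*(L_0,L_1,\s;\Z)$ with $\Z$ carrying the trivial $G$-action is, by construction, the usual Lagrangian Floer complex.

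First I would unwind what the module $A_{\iota_0^*\alpha, \iota_1^*\alpha}$ is. By definition, $G = \pi_1(\Path(L_0,L_1), c_{\x_0}) = \pi_2(\x_0,\x_0)$ acts on $\Z$ through the sign \eqref{eq:Metazeta}, built from the two pulled-back classes $p_0^*(\iota_0^*\alpha)$ and $p_1^*(\iota_1^*\alpha)$ in $H^1(\Path(L_0,L_1);\Z/2)$. The key point is that these two classes coincide. Indeed, $\iota_0\circ p_0$ and $\iota_1\circ p_1$, regarded as maps $\Path(L_0,L_1)\to M$ taking a path to its initial, resp. terminal, endpoint, are homotopic via the obvious homotopy $\gamma\mapsto\gamma(t)$, $t\in[0,1]$; and this homotopy fixes the basepoint $c_{\x_0}$ throughout, because $c_{\x_0}(t)=\x_0$ for all $t$. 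Hence $(\iota_0\circ p_0)^*\alpha=(\iota_1\circ p_1)^*\alpha$, i.e. $p_0^*(\iota_0^*\alpha)=p_1^*(\iota_1^*\alpha)$ as homomorphisms $G\to\Z/2$.

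With this equality, for every $\psi\in G$ the sign in \eqref{eq:Metazeta} is $(-1)^{p_0^*(\iota_0^*\alpha)(\psi)}\cdot(-1)^{p_1^*(\iota_1^*\alpha)(\psi)}=(-1)^{2 p_0^*(\iota_0^*\alpha)(\psi)}=1$, so $G$ acts trivially and $A_{\iota_0^*\alpha, \iota_1^*\alpha}\cong\Z$ as a $\Z[G]$-module. Combining with Proposition~\ref{prop:changePin} then gives the canonical isomorphism $\CF_*(L_0^\alpha, L_1^\alpha, \s)\cong\CF_*(L_0,L_1,\s)$. I expect no real obstacle here; the one place to be careful is the basepoint-preservation of the homotopy $\gamma\mapsto\gamma(t)$, which is exactly what makes the equality $p_0^*(\iota_0^*\alpha)=p_1^*(\iota_1^*\alpha)$ hold on the nose rather than only up to conjugacy. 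For readers who prefer a hands-on check, one can instead note that a class $\psi\in\pi_2(\x_0,\x_0)$ is represented by a Whitney disk whose two boundary arcs $\gamma_0\subset L_0$ and $\gamma_1\subset L_1$ satisfy $[\iota_0\gamma_0]=[\iota_1\gamma_1]$ in $H_1(M;\Z/2)$ (their difference bounds the disk), so $\alpha$ pairs identically with both arcs and the two twisting signs cancel.
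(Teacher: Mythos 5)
Your proof is correct and matches the paper's own argument: both reduce the statement to Proposition~\ref{prop:changePin} with $\eta=\iota_0^*\alpha$, $\zeta=\iota_1^*\alpha$ and then observe that $p_0^*\eta=p_1^*\zeta$ makes the twisting module $A_{\eta,\zeta}$ trivial. You supply a fuller justification for that equality (the homotopy $\gamma\mapsto\gamma(t)$, or equivalently that the boundary arcs of a disk are homologous in $M$) where the paper simply asserts it; this is a welcome but inessential elaboration.
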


\begin{proof}
This is a consequence of Proposition~\ref{prop:changePin}, by taking $\eta = \iota_0^*\alpha$ and $\zeta =\iota_1^*\alpha$. Indeed, note that in the formula \eqref{eq:Metazeta} we have $\pi_0^*\eta(\psi)=\pi_1^*\zeta(\psi)$, so the module $A_{\eta, \zeta}$ is $\Z$ with the trivial $\Z[G]$-action.
\end{proof}

Corollary~\ref{cor:changePin} shows that orientations of the moduli spaces in Floer theory are induced by Pin structures on the Lagrangians modulo those coming from the ambient space; that is, by an affine space on the cokernel of the map
$$H^1(M; \Z/2) \to H^1(L_0; \Z/2) \oplus H^1(L_1; \Z/2).$$

\subsection{Polygon maps}
\label{sec:polygon}
Our discussion of orientations on moduli spaces extends to holomorphic polygons, much as in Seidel's book \cite{SeidelBook}. We state the results here, and leave the proofs to the reader.

For $m \geq 2$, suppose we have monotone Lagrangians $L_0, L_1, \dots, L_m \subset M$, equipped with Pin structures. We also assume that we are given coupled orientations on the pairs $(L_0, L_1)$, $(L_1, L_2)$, $\dots$, $(L_{m-1}, L_m).$ These induce a coupled orientation on $(L_0, L_m)$. We define a map
\begin{equation}
\label{eq:polygonmap}
f_{L_0, \dots, L_m}: \CF_*(L_0, L_1) \otimes \CF_*(L_1, L_2) \otimes \dots \otimes \CF_*(L_{m-1}, L_m) \to \CF_*(L_0, L_m)
\end{equation}
by
$$ f_{L_0, \dots, L_m}(\omega_{\x_1} \otimes \omega_{\x_2} \otimes \dots  \otimes \omega_{\x_m}) =  \sum_{\y \in L_0 \cap L_m} \sum_{\substack{\phi \in \pi_2(\x_1,\x_2, \dots, \x_m, \y)\\ \mu(\phi)=0}} \bigl( \# \M(\phi) \bigr) \cdot \omega_\y.$$
Here, $\x_i \in L_{i-1} \cap L_i$ for $i=1, \dots, m$, and $\y \in L_0 \cap L_m$. Also, $\omega_{\x_i} \in o(\x_i)$ and $\omega_\y \in o(\y)$ are orientations for the respective determinant lines. The notation $\pi_2(\x_1,\x_2, \dots, \x_m, \y)$ indicates the set of (relative) homotopy classes of $(k+1)$-gons in $M$ with boundaries on $L_0, L_1, \dots, L_m$ and vertices at $\x_1, \x_2, \dots, \x_m, \y$, in clockwise order. (See Conventions~\ref{conv}.) The moduli space $\M(\phi)$ consists of holomorphic polygon maps in the class $\phi$, of index $\mu(\phi)=0$. Its orientation is induced from the Pin structures and coupled orientations, using a straightforward generalization of  Lemma~\ref{lemma:Dd}. 

When $m=1$, we let $f_{L_0, L_1}$ denote the differential $\del$ on the complex $\CF_*(L_0, L_1)$.

Let us adjust the signs to define
$$ \tilde f_{L_0, \dots, L_m}(\omega_{\x_1} \otimes  \dots  \otimes \omega_{\x_m})= (-1)^{\gr(\x_m) + 2\gr(\x_{m-1}) + \dots + m\gr(\x_1)} f_{L_0, \dots, L_m}(\omega_{\x_1} \otimes  \dots  \otimes \omega_{\x_m}).$$
Then, these new polygon maps satisfy the $A_{\infty}$ relations:
$$ \sum_{0 \leq i < j \leq m} (-1)^{*} \tilde f_{L_0, \dots, L_i, L_j, \dots, L_m}\bigl(\omega_{\x_1} \otimes \dots \otimes \omega_{\x_i} \otimes \tilde f_{L_i, L_{i+1}, \dots, L_j} (\omega_{\x_{i+1}} \otimes \dots \otimes \omega_{\x_j}) \otimes \omega_{\x_{j+1}} \otimes \dots \otimes \omega_{\x_m}\bigr ) =0$$
where $* = \gr(\x_{j+1}) + \dots + \gr(\x_m) + (m-j)$. Note that the signs are slightly different from those in \cite{SeidelBook}, because we use homological rather than cohomological conventions.

When $m=1$, the $A_{\infty}$ relation says that $\del^2=0$ on $\CF_*(L_0, L_1)$. When $m=2$, it says that 
$$f_{L_0, L_1, L_2}: \CF_*(L_0, L_1) \otimes \CF_*(L_1, L_2) \to \CF_*(L_0, L_2)$$ 
is a chain map. Hence, it induces a product map on Floer homology:
\begin{equation}
\label{eq:productmap}
 F_{L_0, L_1, L_2} : \HF_*(L_0, L_1) \otimes \HF_*(L_1, L_2) \to \HF_*(L_0, L_2).
 \end{equation}

In Section~\ref{sec:decompose} we decomposed the Floer complexes according to equivalence classes of intersection points. Pick such equivalence classes $\s_i \subset  L_{i-1} \cap L_i$ for $i=1, \dots, m$, and $\t \subset L_0 \cap L_m$. We then define an equivalence relation on tuples $(\x_1, \dots, \x_m, \y, \phi)$ with $\x_i \in \s_i$, $\y \in \t$, and $\phi \in \pi_2(\x_1, \dots, \x_m, \y)$, where 
$$ (\x_1, \dots, \x_m, \y, \phi) \sim (\x'_1, \dots, \x'_m, \y', \phi')$$
if $\phi'$ differs from $\phi$ by concatenation with classes in $\pi_2(\x_1, \x_1'), \dots, \pi_2(\x_m, \x'_m)$ and $\pi_2(\y, \y')$. We let $\Set$ be the set of such equivalence classes. For every $\s \in \Set$, we have a polygon map
$$f^\s_{L_0, \dots, L_m}: \CF_*(L_0, L_1, \s_1)  \otimes \dots \otimes \CF_*(L_{m-1}, L_m, \s_m) \to \CF_*(L_0, L_m, \t)$$
which counts only polygons in classes $\phi$ such that $[(\x_1, \dots, \x_m, \y, \phi)] =\s \in \Set$. The map \eqref{eq:polygonmap} splits as the sum of these maps, over all $\s \in \Set$.

There is also a variant of the polygon maps with twisted coefficients; compare \cite[Section 8.2]{HolDiskTwo}.  It can be described using bimodule tensor products in the formalism of Remark \ref{rem:canonical_twisted}, but we present instead the explicit definition: Fix base intersection points $\x_{i, 0} \in \s_i$ for $i=1, \dots, m$, as well as $\y_0 \in \t$. 
Let 
$$ G_i = \pi_1(\Path(L_{i-1}, L_i), c_{\x_{i, 0}}) = \pi_2(\x_{i, 0}, \x_{i, 0}), \ i=1, \dots, m$$
and let also $$G = \pi_1(\Path(L_0, L_m), c_{\y_0}) = \pi_2(\y_0, \y_0).$$ Denote by $\bSet=\pi_2(\x_{1,0}, \dots, \x_{m,0}, \y_0)$. Observe that $\Set$ is the quotient of $\bSet$ by the actions of all the groups $G_i$ and $G$; for $\s \in \Set$, we let $\bSet(\s) \subset \bSet$ be its equivalence class.  

Fix also complete sets of paths for all $\s_i$ and $\t$, as in Definition~\ref{def:complete}. These induce identifications between $\bSet$ and $ \pi_2(\x_1, \dots, \x_m, \y)$, for all $\x_i \in \s_i$ and $\y \in \t$. Given $\phi \in \pi_2(\x_1, \dots, \x_m, \y)$, we let $\bs(\phi)$ denote the corresponding element in $\bSet$.

Fix $\s \in \Set$. Suppose we have modules $A_i$ over $\Z[G_i]$. These induce a module $A^\s$ over $\Z[G]$ given by
\begin{equation}
\label{eq:As}
 A^\s = \frac{\{ (a_1, \dots, a_m, \bs) \in A_1 \times \dots \times A_m \times \bSet(\s) \}}{ (a_1, \dots, a_m, \bs) \sim (e^{\psi_1}a_1, \dots, e^{\psi_m} a_m, ({\psi_1} \times \dots \times {\psi_m}) \bs) } 
 \end{equation}
where $\psi_i$ are arbitrary elements of $G_i$ for $i=1, \dots, m$. For $\psi \in G$, the element $e^\psi \in \Z[G]$ acts on $[(a_1,  \dots, a_m, \bs)]$ by taking it to $[(a_1,  \dots, a_m, \psi \bs)]$.

We get a polygon map
\begin{equation}
\label{eq:polytwisted}
\ftw^\s_{L_0, \dots, L_m}: \CFtw_*(L_0, L_1, \s_1; A_1) \otimes  \dots \otimes \CFtw_*(L_{m-1}, L_m, \s_m; A_m) \to \CFtw_*(L_0, L_m, \t; A^\s)
\end{equation}
given by
\begin{multline*}
\ftw^\s_{L_0, \dots, L_m} \bigl(  (\omega_{\x_1} \otimes   a_1\bigr) \otimes \dots \otimes  (\omega_{\x_m} \otimes a_m) \bigr) =\\
\sum_{\y \in \t} \sum_{\bs \in \bSet(\s)} \sum_{\substack{\phi \in \pi_2(\x_1,\x_2, \dots, \x_m, \y) \\ [(\x_1, \dots, \x_m, \y, \phi)] =\s \\ \mu(\phi)=0}} \bigl( \# \M(\phi) \bigr) \cdot \bigl(\omega_\y \otimes  [(a_1, \dots, a_m, \bs(\phi))]\bigr).
\end{multline*}

These polygon maps satisfy $A_{\infty}$ relations just like the untwisted ones. In particular, the triangle maps give chain maps and hence maps on the Floer homology with twisted coefficients.

As in Section~\ref{sec:twisted}, we are particularly interested in modules that govern the change in Pin structures. Pick elements $\eta_i \in H^1(L_i; \Z/2)$ for $i=0, \dots, m$. From these we obtain modules $A_i : = A_{\eta_{i-1}, \eta_i}$ over $\Z[G_i]$ for $i=1, \dots, m$, as well as $A_{\eta_0, \eta_m}$ over $\Z[G]$. For $\s \in \Set$, let $A^\s$ be as in \eqref{eq:As}. 

Pick any (not necessarily holomorphic) polygon in a class in $\bSet(\s) \subset \pi_2(\x_{1,0}, \dots, \x_{m,0}, \y_0)$, and let $\Gamma$ be its boundary. This consists of paths $\gamma_i$ on $L_i$ from $\x_{i,0}$ to $\x_{i+1, 0}$ for $i=1, \dots, m-1$, as well as $\gamma_0$ on $L_0$ from $\y_0$ to $\x_{1, 0}$ and $\gamma_m$ on $L_m$ from $\x_{m, 0}$ to $\y_0$. Since the class $\s$ is fixed, the homotopy class of $\Gamma$ (relative to its vertices) is fixed up to concatenation with boundaries of disks in $G_i = \pi_2(\x_{i, 0}, \x_{i, 0})$ and $G = \pi_2(\y_0, \y_0)$.

For $i=0, \dots, m$, restriction to the part of the boundary in $L_i$ produces maps $q_i$ from $\bSet=\pi_2(\x_{1,0}, \dots, \x_{m,0}, \y_0)$ to the set of relative homotopy classes of paths between the two relevant basepoints on $L_i$. The latter space can be identified with $\pi_1(L_i)$ by concatenating with the path $\gamma_i$. Viewing the class $\eta_i$ as a morphism $\pi_1(L_i) \to \Z/2$, we obtain a map
$$ \eta_i \circ q_i : \bSet \to \Z/2.$$ 

We now define a map
\begin{equation}
\label{eq:AsA}
 A^\s \to A_{\eta_0, \eta_m}, \ \ [(a_1, a_2, \dots, a_m, \bs)] \to a_1a_2 \cdots a_m \cdot (-1)^{\sum_{i=0}^m (\eta_i \circ q_i)(\bs)}.
 \end{equation}
 It is straightforward to check that this is well-defined, and a morphism of $\Z[G]$-modules. 
  
The morphism \eqref{eq:AsA} induces a map $\CFtw_*(L_0, L_m, \t; A^\s) \to \CFtw_*(L_0, L_m, \t; A_{\eta_0, \eta_m}).$ Composing the polygon maps in \eqref{eq:polytwisted} with this yields a map
\[ \ftw^\s_{L_0, \dots, L_m}: \CFtw_*(L_0, L_1, \s_1; A_{\eta_0, \eta_1}) \otimes  \dots \otimes \CFtw_*(L_{m-1}, L_m, \s_m; A_{\eta_{m-1}, \eta_m}) \to \CFtw_*(L_0, L_m, \t; A_{\eta_0, \eta_m}).
\]
\begin{proposition}
For $i=0, \dots, m$, let $L_i^{\eta_i}$ denote $L_i$ with its Pin structure changed by tensoring with $\eta_i$.  Under the identifications from Proposition~\ref{prop:changePin}, the map  $\ftw^\s_{L_0, \dots, L_m}$ corresponds to the ordinary polygon map
$$ f^\s_{L_0^{\eta_0}, \dots, L_m^{\eta_m}}: \CF_*(L_0^{\eta_0}, L_1^{\eta_1},\s_1) \otimes \dots \otimes \CF_*(L_{m-1}^{\eta_{m-1}}, L_m^{\eta_m}, \s_m) \to \CF_*(L_0^{\eta_0}, L_m^{\eta_m}, \t).$$
\end{proposition}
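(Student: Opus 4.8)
The plan is to treat this as the multi-input generalization of Proposition~\ref{prop:changePin}, whose $m=1$ case (the differential) is exactly the statement that $\ftw^\s_{L_0,L_1}$ corresponds to $\del$ on $\CF_*(L_0^{\eta_0},L_1^{\eta_1},\s)$. Recall that Proposition~\ref{prop:changePin} supplies, for each $i$ and each $\s_i$, an explicit chain isomorphism $\CFtw_*(L_{i-1},L_i,\s_i;A_{\eta_{i-1},\eta_i})\cong\CF_*(L_{i-1}^{\eta_{i-1}},L_i^{\eta_i},\s_i)$ assembled from signs $\sigma_\x$ that compare $\omega_\x$ with $\omega_\x^\eta$ under a canonical isomorphism $\det(\delbar_\x)\cong\det(\delbar_\x^\eta)$, and likewise for $\t$ and $A_{\eta_0,\eta_m}$. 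So the proposition reduces to checking that, term by term, a holomorphic $(m+1)$-gon $u$ in a class $\phi\in\pi_2(\x_1,\dots,\x_m,\y)$ with $[(\x_1,\dots,\x_m,\y,\phi)]=\s$ contributes the same signed count to $f^\s_{L_0^{\eta_0},\dots,L_m^{\eta_m}}$ as it does to $\ftw^\s_{L_0,\dots,L_m}$ after applying the module map \eqref{eq:AsA}.

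First I would fix, as in the proof of Proposition~\ref{prop:changePin}, the base isomorphisms of Pin structures at the basepoints $\x_{i,0}$ and $\y_0$, together with the complete sets of paths $\theta_{\x_i},\theta_\y$. The generalization of Lemma~\ref{lemma:Dd} to $(m+1)$-gons (the straightforward extension mentioned in Section~\ref{sec:polygon}, whose proof the excerpt leaves to the reader) gives a canonical isomorphism $\det(\delbar_{\x_1})\otimes\cdots\otimes\det(\delbar_{\x_m})\otimes\det(D\delbar_u)\cong\det(\delbar_\y)$ induced by the Pin structures and coupled orientations; the orientation of $\M(\phi)$ is by definition the one compatible with the chosen generators, so the sign with which $u$ enters $f^\s$ is read off from this isomorphism. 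Repeating with the Pin structures changed by the $\eta_i$ gives the analogous isomorphism for the $\det(\delbar^{\eta}_{\x_i})$ and $\det(\delbar_\y^{\eta})$. Composing with the $\Psi$-identifications at all $m+1$ ends, the comparison of the two contributions is encoded in a single square of determinant lines, commuting up to sign, exactly parallel to diagram \eqref{eq:xym}; the discrepancy sign in that square is what must be matched with the factor $(-1)^{\sum_{i=0}^{m}(\eta_i\circ q_i)(\bs(\phi))}$ produced by \eqref{eq:AsA}.

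Next I would compute this discrepancy sign. Each vertical isomorphism in the comparison square uses Lemma~\ref{lemma:Dd} for the relevant pieces of $\phi$, which in turn rests on Proposition~\ref{prop:epsL} and hence on comparing paths in $\Grd$ with fixed endpoints. Changing the Pin structure on $L_i$ by $\eta_i$ modifies the relevant path by concatenation with a loop of Pin structures on the tangent space along the boundary arc of $u$ lying on $L_i$, and whether that loop is essential in $\pi_1(\Grd)\cong\Z/2$ is detected precisely by $\eta_i$ evaluated on the homotopy class of that arc, i.e.\ by $(\eta_i\circ q_i)(\bs(\phi))$. By Lemma~\ref{lem:nontrivial}, an essential loop multiplies the orientation space by $-1$, so the total discrepancy over the $m+1$ boundary components is $(-1)^{\sum_{i=0}^{m}(\eta_i\circ q_i)(\bs(\phi))}$, matching \eqref{eq:AsA}. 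One also notes that the deformations and gluings that fuse $\delbar_{\x_1},\dots,\delbar_{\x_m},D\delbar_u$ into a single cap operator are carried out identically for either choice of Pin structure and hence introduce no further sign, and that the contractible space of base isomorphisms cannot affect a discrete identification of chain complexes.

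The main obstacle I expect is bookkeeping rather than conceptual: keeping track of which boundary arc of the $(m+1)$-gon lies on which $L_i$, of the order in which the $m$ input orientations and the single output orientation are fused by the iterated gluing, and of the reordering prefactors. In particular one must observe that the signs $(-1)^{\gr(\x_m)+2\gr(\x_{m-1})+\dots+m\gr(\x_1)}$ used to pass from $f$ to $\tilde f$, and the clockwise-ordering conventions of Conventions~\ref{conv}, depend only on the mod $2$ gradings $\gr(\x_i)$, which are unchanged by altering the Pin structures; hence they appear identically on the two sides and cancel. Once the $m=1$ case of Proposition~\ref{prop:changePin} and the multilinear form of Lemma~\ref{lemma:Dd} are granted, no individual step is difficult, and the argument is genuinely the same diagram chase repeated with more ends.
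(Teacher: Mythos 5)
Your proposal is correct and takes exactly the route the paper intends: the paper omits the proof with the remark that it is ``routine (along the same lines as that of Proposition~\ref{prop:changePin})'', and your argument is precisely the careful expansion of that remark, generalizing the $\Psi_\x$ isomorphisms and the diagram~\eqref{eq:xym} to the $(m+1)$-gon setting, invoking the polygon version of Lemma~\ref{lemma:Dd}, and matching the discrepancy sign against the factor $(-1)^{\sum_i (\eta_i\circ q_i)(\bs)}$ in~\eqref{eq:AsA} via Lemma~\ref{lem:nontrivial}. Your observation that the extra signs relating $f$ to $\tilde f$ depend only on the mod~$2$ gradings and hence cancel is a correct and worthwhile bookkeeping point.
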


The proof is routine (along the same lines as that of Proposition~\ref{prop:changePin}), so we omit it.

\subsection{Examples}
We give a few simple examples of sign counts of polygons, starting with the case when $M$ is a surface, and the Lagrangians are curves. Recall from Definition~\ref{def:S1Pin} that in this case, there are canonical Lie group Pin structures on the Lagrangians (not just up to isomorphism). 

If $V$ is a one-dimensional vector space, we denote by $|V|$ the abelian group generated by orientations $\omega$ on $V$, modulo the relation $\bar \omega = -\omega$.

\begin{lemma}
  \label{lem:dim2}
Let $M$ be a two-dimensional symplectic manifold, and $L_0$, $L_1 \subset M$ be curves equipped with their Lie group Pin structures and a coupled orientation. Let $\x \in L_0 \cap L_1$ be a transverse intersection point.

(a) If $\gr(\x) = 1 \in \Z/2$, then the orientation space $o(\x)$ from \eqref{eq:ox} has a canonical trivialization.

(b) If $\gr(\x)=0$, then $o(\x)$ is canonically identified with $|T_\x L_0|$.
\end{lemma}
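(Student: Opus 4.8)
\emph{Proof sketch.} Since $\dim M = 2$ we are in the case $n = 1$, where the Lagrangian Grassmannian of $T_{\x}M$ is $\RP^1$; the plan is to reduce both statements to explicit computations in the linear model $(T_{\x}M, T_{\x}L_0, T_{\x}L_1) \cong (\C, \R, i\R)$. First, by Definition~\ref{def:S1Pin} the Lie group Pin structure on $L_i$ restricts at $\x$ to the canonical (split) Pin structure on the line $T_{\x}L_i$, so the abstract branes $\Lambdad_i = (T_{\x}L_i, P_i^\#|_{T_{\x}L_i}) \in \Grd(T_{\x}M)$ are canonically determined; by \eqref{eq:ox} and Proposition~\ref{prop:epsL}, $o(\x) = o(\Lambdad_0, \Lambdad_1, \gr(\x))$ may be computed from the cap operator $\delbar_\rho$ attached to any lifts $\Lambdap_0, \Lambdap_1 \in \Grp(T_{\x}M)$ with $i(\Lambdap_0, \Lambdap_1) \equiv \gr(\x) \pmod 2$, and we are free to choose these lifts so that $i(\Lambdap_0, \Lambdap_1)$ equals any prescribed integer of the correct parity. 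By \eqref{eq:inddetrho} the operator $\delbar_\rho$ then has index $1 - i(\Lambdap_0, \Lambdap_1)$, and choosing $\rho$ suitably we may assume $\coker \delbar_\rho = 0$, so that $\det(\delbar_\rho) = \ltop(\ker \delbar_\rho)$ with $\dim_{\R}\ker \delbar_\rho = 1 - i(\Lambdap_0, \Lambdap_1)$.

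For part (a), I would take $\gr(\x) = 1$ and lifts with $i(\Lambdap_0, \Lambdap_1) = 1$, so that $\delbar_\rho$ is invertible and $\det(\delbar_\rho) = \R$ carries its standard orientation; this orients $o(\x)$. To see that the resulting trivialization is canonical, the argument follows \cite[Section 11]{SeidelBook}: the space of caps with these prescribed Pin structures on the ends is simply connected (this is the input to Lemma~\ref{lem:nodep}), the Fredholm problem in question is homotopic within it, through invertible operators, to a complex-linear one, whose determinant line is canonically oriented, and the shift operators of Proposition~\ref{prop:epsL} — which leave the path of Lagrangians unchanged — then transport this trivialization to all other admissible lifts with $i(\Lambdap_0, \Lambdap_1)$ odd.

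For part (b), I would take $\gr(\x) = 0$ and lifts with $i(\Lambdap_0, \Lambdap_1) = 0$, so that $\det(\delbar_\rho) = \ltop(\ker \delbar_\rho)$ with $\ker \delbar_\rho$ a real line; the claim is that there is a canonical isomorphism $\ker \delbar_\rho \cong T_{\x}L_0$, whence $o(\x) \cong |T_{\x}L_0|$. To produce it, I would work in the model: write down the one-dimensional space of holomorphic sections of the trivialized bundle over the cap obeying the Lagrangian boundary conditions and the decay condition at the infinite end, and check that the natural evaluation identifies this space with $\Lambda_0 = T_{\x}L_0$ — it is here that the clockwise boundary-ordering convention of Conventions~\ref{conv} selects $L_0$ rather than $L_1$. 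An equivalent route goes through Seidel's formalism: the shift $S$ of \cite[Section 11k]{SeidelBook} alters the Pin structure on $\Lambda_0$ by tensoring with $\ltop\Lambda_0$, and the isomorphism $o(S\Lambdap_0, \Lambdap_1) \cong o(\Lambdap_0, \Lambdap_1)$ (the analogue of \eqref{eq:orshift2}), combined with the part-(a) trivialization applied to the shifted brane pair, identifies $o(\x)$ with $|\ltop\Lambda_0|$ precisely because the real line $\ltop\Lambda_0$ has no canonical trivialization. Either way one must verify independence of $\rho$, of the choice of even-parity lifts, and of the cokernel-killing deformation, again using the simple-connectivity of the space of caps together with \eqref{eq:doubleshift} and the relations of Proposition~\ref{prop:epsL}.

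The main obstacle is part (b): making the isomorphism $o(\x) \cong |T_{\x}L_0|$ precise and checking its canonicity, since this requires carefully tracking the evaluation map (or the shift isomorphisms) against the parallel-transport identifications of Lemma~\ref{lem:nodep}, and it is in this step that the asymmetry between $L_0$ and $L_1$ must be pinned down. Part (a) is comparatively routine once the complex-linear (equivalently, invertible) representative is in hand.
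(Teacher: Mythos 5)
Your part (a) is essentially the paper's argument: the paper likewise reduces to an index-zero cap operator, taking the explicit boundary family $e^{i\theta}\R$, $\theta\in[0,\pi/2]$, which is bijective by \cite[Lemma 11.11]{SeidelBook}, so its determinant line is canonically $\R$; your ``choose $\rho$ to kill the cokernel'' phrasing is a mild abstraction of the same thing.

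For part (b) you take a genuinely different route. The paper simply observes that the cap for $\gr(\x)=0$ has boundary conditions given by the reversed path (from $i\R$ to $\R$) and cites \cite[Eq.~(11.27)]{SeidelBook} (cf.\ Remark~\ref{rem:conv}) for the statement that this changes the orientation space by tensoring with $|T_\x L_0|$. Of your two alternatives, the evaluation-map idea (identify $\ker\delbar_\rho\cong T_\x L_0$ by boundary evaluation) is plausible but you don't carry it out; the shift-operator route is closer to the machinery of Proposition~\ref{prop:epsL}. That route works but needs one more bookkeeping step than you spell out: after applying $S$ to $\Lambdap_0$ you land in the parity of part (a) as you say, but the resulting object has the twisted Pin structure $P_0^\#\otimes\ltop\Lambda_0$ rather than the split (Lie group) one which part (a) trivializes, so you cannot apply (a) directly. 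The two Pin structures on $T_\x L_0$ are isomorphic but not canonically — the set of isomorphisms is a torsor over $|\ltop\Lambda_0|$ — and composing the shift isomorphism \eqref{eq:orshift2} with this non-canonical identification and then the part-(a) trivialization is exactly where the factor $|T_\x L_0|$ arises. This is correct, and arguably more self-contained since it reuses the shift formalism already in play, but it is longer than the paper's one-line citation of Seidel's Eq.~(11.27); you should be explicit about the Pin-structure mismatch and the $\Z/2$-torsor of isomorphisms resolving it, since that is the entire content of the claimed identification.
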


\begin{proof}
(a) The condition $\gr(\x) = 1$ means that the orientation on $T_\x L_0 \oplus T_\x L_1$ agrees with the one on $T_\x M$. Up to isotopy, there is a unique linear isomorphism from $T_\x M$ to $\C$ taking $T_\x L_0$ to $\R$ and $T_\x L_1$ to $i\R$ (respecting the coupled orientation). In view of Remark~\ref{rem:conv}, the $\delbar$ operator on the cap $H$ with boundary conditions interpolating from $\R$ to $i\R$ via $e^{i\theta}\R$, $\theta \in [0, \pi/2]$ is equivalent to the one discussed in \cite[proof of Lemma 11.11]{SeidelBook}, where it is shown to be bijective.  Hence, its determinant index bundle can be trivialized by picking the positive generator. 

(b) If $\gr(\x)=0$, then the orientation on $T_\x L_0 \oplus T_\x L_1$ disagrees with the one on $T_\x M$. We can again identify $o(\x)$ with a standard orientation space, this time using Lagrangians interpolating from $i\R$ to $\R$. This is the reverse of the path considered in (a). By Equation (11.27) in \cite{SeidelBook}, the orientation space differs from the one in part (a) by tensoring with $|T_{\x}L_0|$.
\end{proof}

\begin{remark}
\label{rem:gensdim2}
In view of Lemma~\ref{lem:dim2}, when $\dim(M)=2$ and the Lagrangians have the Lie groups Pin structures, choosing an orientation on $L_0$ (or, equivalently, on $L_1$, in a way compatible with the coupled orientation) allows us to trivialize $o(\x)$ for all intersection points $\x \in L_0 \cap L_1$. We can then view the Lagrangian Floer complex $\CF_*(L_0, L_1)$ as generated by the intersection points: we identify each $\x$ with the positive generator of $o(\x)\cong \R$. 
\end{remark}

\begin{example}
\label{ex:circles}
We review Example 13.5 in \cite{SeidelBook}, using our Convention~\ref{conv}. Let $M \cong S^1 \times [0,1]$ be an open annulus, which we view as the complement of a small disk $D^2(\epsilon)$ in $\C$.  Let $\alpha$ be the unit circle, and $\beta$ a Hamiltonian translate of $\alpha$ intersecting it in two points, as in Figure~\ref{fig:circles}. Equip $\alpha$ and $\beta$ with the Lie group Pin structures. Pick also counterclockwise orientations on $\alpha$ and $\beta$ as in the figure. The resulting coupled orientation on $(\alpha, \beta)$ gives $\gr(\x) = 1$ and $\gr(\y) =0$.

Following Remark~\ref{rem:gensdim2}, the Floer complex $\CF_*(\alpha, \beta)$ is generated by the intersection points $\x$ and $\y$. We have
$$\del \x = \pm \y \pm \y,$$
where the two signs come from the bigons $A$ and $B$ in Figure~\ref{fig:circles}. 
By Proposition~\ref{prop:PSS}, we know that $\HF_*(\alpha, \beta) \cong H_*(S^1)$, so we must have $\del \x=0$. Thus, the two bigons $A$ and $B$ must come with opposite signs. In fact, it is computed in \cite[Example 13.5]{SeidelBook} that $A$ contributes $+1$ and $B$ contributes $-1$.
\end{example}

\begin{figure}
{
\fontsize{10pt}{11pt}\selectfont
   \def\svgwidth{2.3in}
\begingroup%
  \makeatletter%
  \providecommand\color[2][]{%
    \errmessage{(Inkscape) Color is used for the text in Inkscape, but the package 'color.sty' is not loaded}%
    \renewcommand\color[2][]{}%
  }%
  \providecommand\transparent[1]{%
    \errmessage{(Inkscape) Transparency is used (non-zero) for the text in Inkscape, but the package 'transparent.sty' is not loaded}%
    \renewcommand\transparent[1]{}%
  }%
  \providecommand\rotatebox[2]{#2}%
  \newcommand*\fsize{\dimexpr\f@size pt\relax}%
  \newcommand*\lineheight[1]{\fontsize{\fsize}{#1\fsize}\selectfont}%
  \ifx\svgwidth\undefined%
    \setlength{\unitlength}{152.70890123bp}%
    \ifx\svgscale\undefined%
      \relax%
    \else%
      \setlength{\unitlength}{\unitlength * \real{\svgscale}}%
    \fi%
  \else%
    \setlength{\unitlength}{\svgwidth}%
  \fi%
  \global\let\svgwidth\undefined%
  \global\let\svgscale\undefined%
  \makeatother%
  \begin{picture}(1,0.72403826)%
    \lineheight{1}%
    \setlength\tabcolsep{0pt}%
    \put(0,0){\includegraphics[width=\unitlength,page=1]{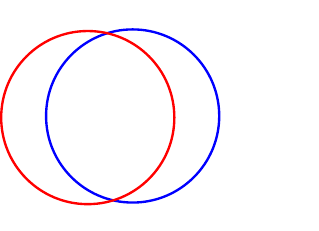}}%
    \put(0.67058197,0.5450462){\color[rgb]{0,0,1}\makebox(0,0)[lt]{\lineheight{1.25}\smash{\begin{tabular}[t]{l}$\beta$\end{tabular}}}}%
    \put(0.0040269,0.5450462){\color[rgb]{1,0,0}\makebox(0,0)[lt]{\lineheight{1.25}\smash{\begin{tabular}[t]{l}$\alpha$\end{tabular}}}}%
    \put(0,0){\includegraphics[width=\unitlength,page=2]{circles.pdf}}%
    \put(0.04474675,0.33157853){\makebox(0,0)[lt]{\lineheight{1.25}\smash{\begin{tabular}[t]{l}$A$\end{tabular}}}}%
    \put(0.57924927,0.33157853){\makebox(0,0)[lt]{\lineheight{1.25}\smash{\begin{tabular}[t]{l}$B$\end{tabular}}}}%
    \put(0.328033,0.65852876){\makebox(0,0)[lt]{\lineheight{1.25}\smash{\begin{tabular}[t]{l}$\x$\end{tabular}}}}%
    \put(0.3155611,0.01712554){\makebox(0,0)[lt]{\lineheight{1.25}\smash{\begin{tabular}[t]{l}$\y$\end{tabular}}}}%
    \put(0,0){\includegraphics[width=\unitlength,page=3]{circles.pdf}}%
  \end{picture}%
\endgroup%

}
\caption{Two circles in an annulus.}
\label{fig:circles}
\end{figure}

\begin{example}
\label{ex:abc}
Let $M$ be any surface, and consider three curves $\alpha$, $\beta$, $\gamma \subset M$, having a triple intersection point $\x$. Suppose $\alpha$, $\beta$, $\gamma$ appear in this cyclic order around $\x$, counterclockwise. We equip them with the Lie group Pin structures. There is a trivial $\alpha$-$\beta$-$\gamma$ triangle of index $0$ that contributes to the polygon map
$$ f_{\alpha, \beta, \gamma} : \CF_*(\alpha, \beta) \otimes \CF_*(\beta, \gamma) \to \CF_*(\alpha, \gamma).$$
Suppose we choose orientations on $\alpha$, $\beta$, $\gamma$ either as in Figure~\ref{fig:triangles} (a) or all three opposite to those shown there. This is equivalent to asking that the induced coupled orientations on each pair $(\alpha, \beta)$, $(\beta, \gamma)$ and $(\alpha, \gamma)$ are such that $\gr(\x)=1 \in \Z/2$ in all three Floer complexes. Then, the triangle map takes the form
$$ f_{\alpha, \beta, \gamma}(\x \otimes \x) = \pm \x + \cdots$$
where $\pm \x$ is the contribution of the trivial triangle. We claim that its sign is positive. This can be checked explicitly by following the definitions. Alternatively, we can determine the sign from a specific example: embed the picture into one with three circles in the annulus, all Hamiltonian translates of the other. Then, under the PSS isomorphism, the generators $\x$ become identified with $[S^1] \in H_*(S^1)$, and the triangle map becomes the intersection product on homology:
$$H_*(S^1) \otimes H_*(S^1) \to H_*(S^1).$$
See for example \cite{AbouzaidPlumbings} (where this is stated in terms of cohomology, with the triangle map becoming the cup product) or \cite{Zapolsky}. The intersection of $[S^1]$ and $[S^1]$ is $[S^1]$, with a positive sign.
\end{example}
\begin{figure}
{
\fontsize{10pt}{11pt}\selectfont
   \def\svgwidth{5.7in}
   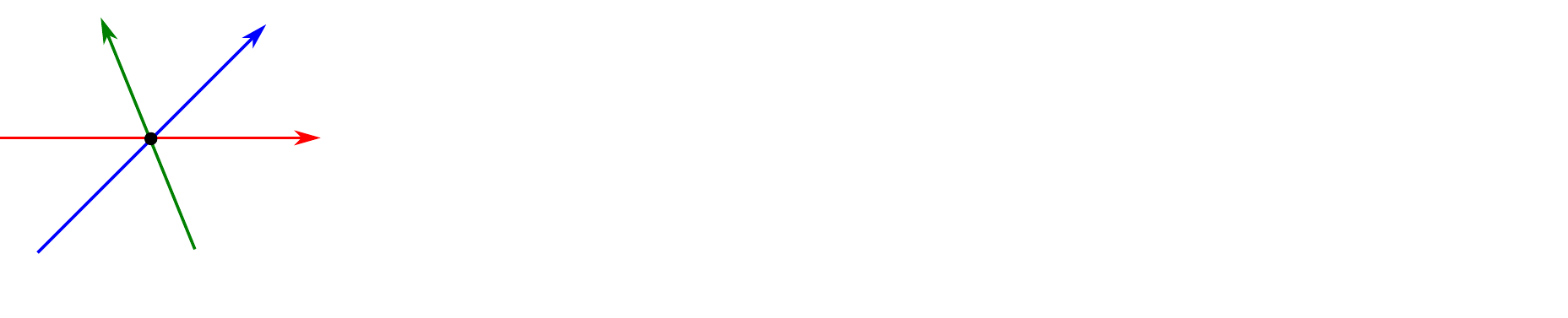
}
\caption{(a) A trivial triangle at a triple intersection. (b), (c): Its perturbations.}
\label{fig:triangles}
\end{figure}

\begin{example}
\label{ex:abcp}
If $\alpha$, $\beta$, $\gamma$ are as in Example~\ref{ex:abc}, we can slightly perturb them to get a triple $\alpha'$, $\beta'$, $\gamma'$ as in Figure~\ref{fig:triangles} (b) or (c). The small $\alpha'$-$\beta'$-$\gamma'$ triangle between the curves is a deformation of the trivial triangle from (a), so it also contributes positively to the polygon map. Note that the two triangles in Figure~\ref{fig:triangles}(b) and (c) are, in fact, isomorphic. They differ by a $180^\circ$ rotation, which reverses the orientations on the curves; but what matter are only the coupled orientations, which stay the same.
\end{example}

\begin{remark}
If $\alpha$, $\beta$, $\gamma$ are as in Example~\ref{ex:abc}, except in the cyclic order $\alpha$, $\gamma$, $\beta$, then the trivial $\alpha$-$\beta$-$\gamma$ triangle is obstructed, and thus does not contribute to the polygon map. Indeed, a slight perturbation of one of the curves produces a small $\alpha$-$\gamma$-$\beta$ triangle instead of an $\alpha$-$\beta$-$\gamma$ one.
\end{remark}

\begin{example}
\label{ex:product}
Suppose we have two symplectic manifolds $M$ and $M'$ (of dimensions $2n$ and $2n'$) and two pairs of Lagrangians as in Section~\ref{sec:LFH}: $(L_0, L_1)$ in $M$, and $(L_0', L_1')$ in $M'$. We obtain a product pair $(L_0 \times L_0', L_1 \times L_1')$ in $M \times M'$. 

Unfortunately, Pin structures do not behave well with respect to products; see Remark~\ref{rem:productpin}. We will use Spin structures instead. Assume $L_0, L_1, L_0', L_1'$ come equipped with Spin structures (and, in particular, are oriented). We consider the product Spin structures on $L_0 \times L_0'$ and $L_1 \times L_1'$. 
Note that Spin structures on a pair of Lagrangians produce both Pin structures and orientations, hence coupled orientations, so the Floer chain complex is well-defined.



With this in mind, we consider the Floer chain complexes $\CF_*(L_0, L_1)$, $\CF_*(L_0', L_1')$ and $\CF_*(L_0 \times L_0', L_1 \times L_1')$. Given intersection points $\x \in L_0 \cap L_1$ and $\x' \in L_0' \cap L_1'$, there is an intersection point $\x \times \x' \in (L_0 
\times L_0') \cap (L_1 \times L_1').$ Given index problems on the factors, their direct sum is the index problem on the product. Thus, we have identifications between the orientation spaces $o(\x \times \x') \cong o(\x) \otimes o(\x')$. We also have an identification between the moduli spaces $\M(\phi \times \phi') \cong \M(\phi) \times \M(\phi').$ It follows that $\CF_*(L_0 \times L_0', L_1 \times L_1')$ is the tensor product of the chain complexes $\CF_*(L_0, L_1)$ and $\CF_*(L_0', L_1')$. (See \cite{WWOri,Amorim2017}.)

More generally, if we have an $(m+1)$-tuple of Spin Lagrangians on $M$ and an $(m+1)$-tuple of Spin Lagrangians on $M'$, the moduli space of holomorphic polygons on the product is the fiber product of the moduli spaces of holomorphic polygons on the factors over the abstract moduli spaces of $(m+1)$-polygons. In particular, for $m=2$, it is simply a product.
\end{example}

\section{Three-manifold invariants}
\label{sec:3m}
We present here a construction of Heegaard Floer homology, following \cite{HolDisk}, but with two departures from the original. The first (which is our main point) is that we define the signs in the differential using canonical orientations instead of coherent orientations. The second is that we apply Perutz's work \cite{Perutz} to equip the symmetric product with a symplectic form so that the Heegaard tori are Lagrangians, rather than only totally real as in \cite{HolDisk}. This allows us to be in the setting of Section~\ref{sec:general}. An alternative would have been to extend the results from that section to the totally real case; this can indeed be done, because the totally real Grassmannian deformation retracts onto the Lagrangian one. However, Perutz's perspective will also be useful to us when we discuss handleslides in Section~\ref{sec:handleslide}. 

\subsection{Heegaard Floer homology: a preliminary version}
\label{sec:definition}
Let $Y$ be a closed, connected, oriented $3$-manifold and $z\in Y$ a basepoint. A (pointed) {\em Heegaard diagram} for $Y$ consists of the data
$$ \H = (\Sigma, \alphas=\{\alpha_1, \dots, \alpha_g\}, \betas=\{\beta_1, \dots, \beta_g\}, z)$$
where $\Sigma$ is a surface of genus $g$ containing $z$, and splitting $Y$ into two handlebodies $U_{\alpha}$ and $U_{\beta}$. Further, $\alphas$ is a collection of $g$ disjoint simple closed curves on $\Sigma$ that are attaching curves for $U_{\alpha}$, in the sense that we obtain $U_{\alpha}$ by attaching disks with boundaries on the curves $\alpha_i$, and then a $3$-ball. Similarly, $\betas$ is a collection of attaching curves for $U_{\beta}$. We assume that the alpha and beta curves intersect transversely, and are disjoint from $z$. Note that $U_{\alpha}$ and $U_{\beta}$ inherit orientations from $Y$. We orient $\Sigma$ as $\del U_{\alpha} = -\del U_{\beta}$.

We consider the symmetric product 
$$M=\Sym^g(\Sigma)= \Sigma^{\times g} / S_g,$$ where $S_g$ is the symmetric group. Inside $H^2(\Sym^g(\Sigma); \Z)$ we have a class $\eta$ Poincar\'e dual to $\{z\} \times \Sym^{g-1}(\Sigma)$. For each oriented simple closed curve $\gamma$ on $\Sigma$, we also have a class  $\langle \gamma \rangle$ in $H^1(\Sym^g(\Sigma); \Z)$, dual to $\gamma \times \Sym^{g-1}(\Sigma)$. If we pick curves $\gamma_1, \dots, \gamma_g$
and duals $\gamma_1^*, \dots, \gamma_g^*$ so that they produce a symplectic basis of $H_1(\Sigma)$, the class
$$\theta := \sum_{i=1}^g \langle \gamma_i \rangle  \langle \gamma_i ^* \rangle \in H^2(\Sym^{g-1}(\Sigma); \Z)$$
is invariant under the action of the mapping class group of $\Sigma$. 

For $\epsilon > 0$ sufficiently small, Perutz \cite{Perutz} constructs a symplectic form $\omega$ on $\Sigma$ in the class $\eta + \epsilon \theta$, such that the tori
$$\Ta = \alpha_1 \times \cdots \times  \alpha_g, \ \ \ \Tb=\beta_1 \times \cdots \times \beta_g$$
are Lagrangians with respect to $\omega$. As an aside, we note that $c_1(TM) = \eta - \theta$.

In  \cite[Section 10.4]{HolDiskTwo}, Ozsv\'ath and Szab\'o define an absolute grading on the Heegaard Floer groups and hence on the generators $\x\in \Ta \cap \Tb$. We will denote their grading by $\grHF(\x) \in \Z/2$.

 Let $\AA$ and $\BB$ be  the subspaces of $H_1(\Sigma; \R)$ spanned by the alpha and beta curves respectively. We have canonical isomorphisms
\begin{equation}
\label{eq:Aspace}
\AA= \ker(H_1(\Sigma; \R) \to H_1(U_{\alpha}; \R)) \cong H_1(\Ta; \R),
\end{equation}
\begin{equation}
\label{eq:Bspace}
\BB= \ker(H_1(\Sigma; \R) \to H_1(U_{\beta}; \R))\cong H_1(\Tb; \R).
 \end{equation}
Further, $\AA$ can be canonically identified with the tangent space to $\Ta$ at any point, and $\BB$ to the tangent space to $\Tb$ at any point. 

We define orientations on the moduli spaces $\Mhat(\phi)$ as in Section~\ref{sec:LFH}, by making the following choices:
\begin{itemize}
\item We choose Lie group Pin structures on the tori $\Ta$ and $\Tb$, as in Example~\ref{ex:TnPin}. To specify these, it suffices to choose them on the tangent spaces to $\Ta$ and $\Tb$ at arbitrary points; i.e., we need Pin structures $\Pa$ on $\AA$ and $\Pb$ on $\BB$. For now, we let $\Pa$ and $\Pb$ be arbitrary;
\item For the coupled orientation on $(\Ta, \Tb)$, we choose it so that the resulting $\Z/2$-grading $\gr$ on the Heegaard Floer complex (constructed as in Section~\ref{sec:LFH})  is given by the formula:
\begin{equation}
\label{eq:grHF}
\gr(\x) = \grHF(\x) + g + b_1(Y)  \pmod{2}.
\end{equation}
\end{itemize}

The intersection points $\x \in \Ta \cap \Tb$ decompose according to $\Spinc$ structures on the $3$-manifold $Y$. A subset of those, $\Set \subseteq  \Spinc(Y)$, is the set of equivalence classes described in Section~\ref{sec:decompose}. (Not all the $\Spinc$ structures have to be represented in a given  Heegaard diagram.) 

Fix $\s \in \Spinc(Y)$. We assume that the Heegaard diagram is strongly $\s$-admissible in the sense of \cite[Definition 4.10]{HolDisk}. There are several versions of the Heegaard Floer complex, differing in how we keep track of the basepoint $z$. A preliminary infinity version $\CFiprel(\H, \s) = \CFiprel(\Ta, \Tb, \s)$ is generated by pairs $[\omega_\x, i]$, where $\omega_\x$ is an orientation of $\det(\delbar_\x)$ for $\x \in \s \subset \Ta \cap \Tb$, and $i \in \Z$. The differential is given by
$$\del [\omega_\x, i] =\sum_{\y \in \s}  \sum_{\substack{\phi \in \pi_2(\x, \y)\\ \mu(\phi)=1}} \bigl( \# \Mhat(\phi) \bigr) \cdot [\omega_\y, i - n_z(\phi)].$$
Here, $n_z(\phi)$ denotes the intersection product of $\phi$ with the class of the divisor $\{z\} \times \Sym^{g-1}(\Sigma)$. There is a $\Z[U]$-action on the complex, given by $U\cdot [\omega_{\x}, i] = [\omega_\x, i-1]$.

Other Heegaard Floer complexes are: 
\begin{itemize}
\item $\CFmprel(\H, \s)$, the subcomplex of $\CFiprel(\H, \s)$ generated by $[\omega_\x, i]$ with $i < 0$; 
\item $\CFpprel(\H, \s)$, the corresponding quotient complex, generated by $[\omega_\x, i]$ with $i \geq 0$; 
\item $\CFhatprel(\H, \s)$, generated directly by $\omega_\x$ and counting only disks in classes $\phi$ with $n_z(\phi) = 0.$
\end{itemize}

To ensure that the differentials are finite, we need to impose a strong admissibility condition on the Heegaard diagram, as in \cite[Section 4.2]{HolDisk}. If we only care about $\CFpprel(\H, \s)$ and $\CFhatprel(\H,\s)$, a weaker admissibility condition is sufficient; see \cite[Theorem 4.15]{HolDisk}. 

To show that the differentials square to zero, Ozsv\'ath and Szab\'o prove that the contributions from disk and sphere bubbles are zero; see \cite[Section 3.7]{HolDisk}. Their proof uses degeneration arguments and the fact that the counts of $0$-dimensional moduli spaces of bubbles are constant in families. 
As such, the proof works just as well for any signed counts with this property; in particular, for those induced by Pin structures as in Section~\ref{sec:bubbles}.

From here, we obtain preliminary Heegaard Floer homology groups:
$$\HFiprel(\H, \s), \ \ \HFmprel(\H, \s), \ \ \HFpprel(\H, \s), \ \ \HFhatprel(\H, \s).$$

\subsection{The absolute $\Z/2$-grading} 
\label{sec:abs}
Before moving forward, let us make some comments about the absolute $\Z/2$-grading $\grHF$ used in \eqref{eq:grHF}. In \cite[Section 10.4]{HolDiskTwo}, this grading is defined by asking that a version of Heegaard Floer homology with twisted coefficients, $\HFtw^\infty$, is supported in even degrees. 

 To see that the argument is not circular, note that  it suffices to first define and compute $\HFtw^\infty$ without signs (i.e., using $\F_2$ instead of $\Z$). This still defines the absolute $\Z/2$-grading, and hence a sign on the intersection points $\Ta \cap \Tb$ and a coupled orientation on $(\Ta, \Tb)$. We then use it to upgrade the Floer complex to $\Z$ coefficients.   

 For the purpose of studying naturality, we shall however need a more concrete definition of the absolute $\Z/2$ grading $\grHF$, which makes no reference to the calculation of $\HFtw^\infty$; see \cite[Section 2.4]{FJR} and \cite[Section 3]{Petkova}.  An orientation of $\Ta$ is equivalent to one of $\AA$, and one of $\Tb$ to one of $\BB$. Thus, to define $\grHF$ it suffices to specify a coupled orientation on $(\AA,\BB)$. 

By associating to each alpha or beta curve the disk it bounds in the respective handlebody, and combining this with Lefschetz duality, we get canonical isomorphisms
\begin{align}
\label{eq:Aspace2}
\AA &\cong H_2(U_{\alpha}, \del U_{\alpha}; \R) \cong H^1(U_{\alpha}; \R),\\
\label{eq:Bspace2}
\BB &\cong H_2(U_{\beta}, \del U_{\beta}; \R)  \cong H^1(U_{\beta}; \R).
 \end{align}

Therefore, $\AA$ is the dual vector space to $H_1(U_{\alpha}; \R)$ and $\BB$ to $H_1(U_{\beta}; \R)$. We have a Mayer-Vietoris sequence
\begin{equation}
\label{eq:MVseq}
 0 \to H_2(Y; \R) \to H_1(\Sigma; \R) \to H_1(U_{\alpha}; \R) \oplus H_1(U_{\beta}; \R) \to H_1(Y; \R) \to 0.
 \end{equation}
An orientation on a vector space determines one on its dual. Since $H_2(Y; \R)$ and $H_1(Y; \R)$ are related by Poincar\'e duality, there is a coupled orientation on them. Furthermore, $H_1(\Sigma; \R)$ is a symplectic vector space and has a canonical orientation. Combining these, we obtain an orientation on $H_1(U_{\alpha}; \R) \oplus H_1(U_{\beta}; \R)$ and hence on $\AA \oplus \BB$. This produces a coupled orientation on $(\AA, \BB)$ using Lemma~\ref{lem:coupled-o} and Conventions~\ref{conv0}. (In \cite[Section 2.4]{FJR}, at the end they multiply the orientation on  $\AA \oplus \BB$ by $(-1)^{g(g-1)/2}$. This final sign change is the same as that needed to go from the concatenated to the shuffled orientation; see Section~\ref{sec:co}. In our context, the sign change is automatically implemented by the transition to a coupled orientation, by Convention~\ref{conv0}.)

It is not hard to see that this definition of $\gr_{\HF}$ can be rephrased more simply as follows. Since $\AA$ and $\BB$ are Lagrangian subspaces in the vector space $H_1(\Sigma; \R)$, equipping the latter with an inner product produces an isomorphism 
$$ \tau_{\AA, \BB} : \AA \to \BB$$
as in Lemma~\ref{lem:pinLag} (1). The coupled orientation on $(\AA, \BB)$ is obtained by choosing any orientation on $\AA$ and pairing it with its image under $\tau_{\AA, \BB}$. We will take this as the working definition of $\gr_{\HF}$ in our paper.

\begin{remark}
\label{rem:HFSW}
There is also an absolute $\Z/2$ grading on monopole Floer homology, defined by Kronheimer and Mrowka in \cite[Subsection 22.4]{KMBook}. This corresponds to $\grHF + b_1(Y)$, which is an expression that will appear naturally in the statement of Proposition~\ref{prop:canonical} below. 
\end{remark}

\subsection{Adjusting the definition}
\label{sec:adjust}
The isomorphism classes of the preliminary Heegaard Floer groups turn out to be three-manifold invariants. However, in the definition we had to choose Pin structures $\Pa$ on $\AA$ and $\Pb$ on $\BB$. To make the Heegaard Floer groups themselves into {\em canonical} invariants, we need to get rid of their dependence on this data. We will do this by tensoring  with certain lines (rank one free abelian groups). 

Let us make use of the concept of a coupled Spin structure defined in Section~\ref{sec:coupledspin}. We view the vector spaces $\AA$ and $\BB$ as vector bundles over a point. Since they are Lagrangian subspaces of $H_1(\Sigma; \R)$, Lemma~\ref{lem:pinLag} (2) gives a canonical coupled Spin structure on $(\AA, \BB)$; we denote this structure by $\Scan$.

In Section~\ref{sec:definition} we also introduced a coupled orientation on $(\AA, \BB)$: the one defining the grading $\grHF$. Given the description in Section~\ref{sec:abs}, we see that the coupled Spin structure $\Scan$ produces this coupled orientation on $(\AA, \BB)$, under the natural map $\Spin(g, g) \to \SO(g, g)$. 

As noted in Remark~\ref{rem:EF}, once we fix the coupled orientation, a pair of Pin structures $\Pa$ on $\AA$ and  $\Pb$ on $\BB$ gives rise to a coupled Spin structure on $(\AA, \BB)$, which we denote simply by $(\Pa, \Pb)$. The space of coupled Spin structures on $(\AA, \BB)$ compatible with the given coupled orientation is homotopy equivalent to $\RP^\infty$. Since $\pi_1(\RP^\infty) =\Z/2$, there are exactly two homotopy classes of paths from $(\Pa, \Pb)$ to $\Scan$. If these classes are denoted $\gamma_1$ and $\gamma_2$, we let $\ell(\Pa, \Pb)$ be the rank one abelian group generated by $\gamma_1$ and $\gamma_2$ modulo the relation $\gamma_1 = -\gamma_2$.

For $\circ \in \{\widehat{\phantom{u}}, +, -, \infty\}$, we define the chain complex
$$ \CFcirc(\H, \s) = \CFcircprel(\H, \s) \otimes \ell(\Pa, \Pb)$$
with homology
$$ \HFcirc(\H, \s) = \HFcircprel(\H, \s) \otimes \ell(\Pa, \Pb).$$

As we shall soon see, these groups are natural invariants of the three-manifold and the $\Spinc$ structure. We will often denote them by $\HFcirc(Y, \s)$.

For now, let us prove the first step towards naturality.
\begin{proposition}
\label{prop:nopin}
For $\circ \in \{\widehat{\phantom{u}}, +, -, \infty\}$, the homology groups $\HFcirc(\H, \s)$ are independent of the choice of Pin structures $\Pa$ and $\Pb$, up to canonical isomorphism.
\end{proposition}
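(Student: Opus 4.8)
The plan is to construct, for any two choices $(\Pa,\Pb)$ and $(\Pa',\Pb')$ of Pin structures, an explicit isomorphism $\CFcirc(\H,\s;\Pa,\Pb)\xrightarrow{\cong}\CFcirc(\H,\s;\Pa',\Pb')$ of chain complexes, and to check that it is independent of all auxiliary choices and coherent under composition; this is the precise content of ``canonical''. Throughout, the coupled orientation on $(\Ta,\Tb)$ (the one pinning down $\grHF$) stays fixed, so only the Pin structures vary. \emph{Step 1 (the preliminary complexes).} The spaces of Pin structures on $\AA$ and on $\BB$ are connected, each homotopy equivalent to $\RP^\infty$ (Example~\ref{rem:non-equivalent_iso}), so one may choose a path $\gamma_\alpha$ from $\Pa$ to $\Pa'$ and a path $\gamma_\beta$ from $\Pb$ to $\Pb'$, unique up to the $\Z/2$ ambiguity coming from $\pi_1(\RP^\infty)$. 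A path of Pin structures on $\AA$ gives a path of the induced Lie group Pin structures on $\Ta$, hence (Lemma~\ref{lem:homotopic}, via parallel transport) an isomorphism of the two Lie group Pin structures on $\Ta$; similarly for $\Tb$. These induce isomorphisms of the orientation spaces $o(\x)$ for all $\x\in\Ta\cap\Tb$, naturally in the Pin structures, and so assemble into an isomorphism $\Psi_{\gamma_\alpha,\gamma_\beta}\colon\CFcircprel(\H,\s;\Pa,\Pb)\to\CFcircprel(\H,\s;\Pa',\Pb')$. It commutes with $\del$ because the isomorphism of Lemma~\ref{lemma:Dd} is natural under isomorphisms of Pin structures, so the signed counts $\#\Mhat(\phi)$ agree; it commutes with the $\Z[U]$-action because $U$ only shifts the integer label. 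By Lemma~\ref{lem:nontrivial}, replacing $\gamma_\alpha$ by the other homotopy class multiplies $\Psi$ by $-1$, and likewise for $\gamma_\beta$.

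\emph{Step 2 (the correction lines).} Under the $\B\Z/2$-multiplication $\RP^\infty\times\RP^\infty\to\RP^\infty$ of Remark~\ref{rem:EF}, the pair $(\gamma_\alpha,\gamma_\beta)$ determines a path $\bar\gamma$ from $(\Pa,\Pb)$ to $(\Pa',\Pb')$ in the space of coupled Spin structures on $(\AA,\BB)$ compatible with the fixed coupled orientation. Concatenation with $\bar\gamma^{-1}$ carries a path from $(\Pa,\Pb)$ to $\Scan$ to a path from $(\Pa',\Pb')$ to $\Scan$, giving an isomorphism $\bar\gamma_*\colon\ell(\Pa,\Pb)\to\ell(\Pa',\Pb')$. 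On $\pi_1$ the multiplication is $(a,b)\mapsto a+b$, so replacing $\gamma_\alpha$ by the other homotopy class changes $\bar\gamma$ by the generator of $\pi_1$ of the space of coupled Spin structures; this swaps the two generators of $\ell(\Pa',\Pb')$ and hence multiplies $\bar\gamma_*$ by $-1$, and likewise for $\gamma_\beta$.

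\emph{Step 3 (assembling and coherence).} Set $\Phi:=\Psi_{\gamma_\alpha,\gamma_\beta}\otimes\bar\gamma_*\colon\CFcirc(\H,\s;\Pa,\Pb)\to\CFcirc(\H,\s;\Pa',\Pb')$. By Steps 1 and 2, replacing $\gamma_\alpha$ (or $\gamma_\beta$) by its other homotopy class multiplies \emph{each} tensor factor by $-1$, leaving $\Phi$ unchanged; thus $\Phi$ is independent of the chosen paths. It is a chain isomorphism commuting with $U$, so it descends to $\HFcirc(\H,\s;\Pa,\Pb)\cong\HFcirc(\H,\s;\Pa',\Pb')$. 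Taking constant paths when $(\Pa',\Pb')=(\Pa,\Pb)$ gives $\Phi=\id$, and concatenating paths shows that for three choices the composite of two of the $\Phi$'s equals the third; this makes the family $\{\Phi\}$ canonical.

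The main obstacle is exactly the sign bookkeeping in Step 3: one must verify that the $\Z/2\times\Z/2$ ambiguity in the choice of $\gamma_\alpha,\gamma_\beta$ acts \emph{identically} on the two tensor factors $\CFcircprel$ and $\ell(\Pa,\Pb)$, so that it cancels in $\Phi$. This requires matching the monodromy computation of Lemma~\ref{lem:nontrivial} (a $-1$ for each generator of $\pi_1$ of the space of Pin structures on $\AA$, resp.\ on $\BB$) against the behaviour of the correction line under the $\B\Z/2$-multiplication of Remark~\ref{rem:EF}; it is the ``$1+1=0$'' phenomenon of that remark, but applied one factor at a time and played off against the correction line instead of against itself. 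The remaining points---naturality of the Lemma~\ref{lemma:Dd} isomorphisms under isomorphisms of Pin structures, compatibility with $U$, and the concatenation argument for coherence---are routine.
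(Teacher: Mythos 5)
Your proposal is correct and takes essentially the same approach as the paper: you choose paths of Pin structures on $\AA$ and $\BB$, use them to define isomorphisms on the preliminary complexes (by transporting the orientation spaces $o(\x)$) and on the correction lines $\ell(\Pa,\Pb)$, and observe that the two $\Z/2$ ambiguities---tracked via Lemma~\ref{lem:nontrivial} on one tensor factor and via additivity on $\pi_1$ under the $\B\Z/2$-multiplication of Remark~\ref{rem:EF} on the other---cancel upon tensoring. Your explicit verification of coherence in Step~3 (constant paths give the identity; concatenation gives transitivity) is a worthwhile addition that the paper leaves implicit in the word ``canonical''.
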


\begin{proof}
We will prove this at the level of chain complexes. Let $(\Ra, \Rb)$ be another choice for the pair of Pin structures on $(\AA, \BB)$. We claim that the complexes $\CFcirc(\H, \s)$ defined from $(\Pa, \Pb)$ are canonically isomorphic to those from $(\Ra, \Rb)$. 

Pick paths of Pin structures $\gamma$ from $\Pa$ to $\Ra$ and $\eta$ from $\Pb$ to $\Rb$. They induce an isomorphism of orientation spaces $o(\x)$ from those defined using $\Pa$ and $\Pb$ to those defined using $\Ra$ and $\Rb$. The isomorphism is compatible with the differentials, so we get an isomorphism between the corresponding preliminary Heegaard Floer chain groups $\CFcircprel(\H, \s)$. Moreover, combining $\gamma$ and $\eta$ we obtain a path $(\gamma, \eta)$ of coupled Spin structures from $(\Pa, \Pb)$ to $(\Ra, \Rb)$. Using concatenation with the paths to $\Scan$, this gives an isomorphism of lines $\ell(\Pa, \Pb) \cong \ell(\Ra, \Rb)$. Altogether, we get the desired isomorphism between the respective $\CFcirc(\H, \s)$.

We need to show that this last isomorphism does not depend on the choices of $\gamma$ and $\eta$. Since the space of Pin structures has $\pi_1 = \Z/2$ (cf. Remark~\ref{rem:non-equivalent_iso}), up to homotopy there are only two possible choices for $\gamma$ and two for $\eta$. Changing such a choice results in a sign change for the isomorphism on orientation spaces and hence on $\CFcircprel(\H, \s)$ (see Lemma~\ref{lem:nontrivial}), but this is cancelled by another sign change for the isomorphism between $\ell(\Pa, \Pb)$ and $\ell(\Ra, \Rb)$.
\end{proof}

\begin{convention}
\label{conv:Spinc}
We will sometimes drop the $\Spinc$ structure $\s$ from notation and consider preliminary chain groups $\CFcircprel(\H) = \CFcircprel(\Ta, \Tb)$ with homology $\HFcircprel(\H) = \HFcircprel(\Ta, \Tb)$, as well as adjusted chain groups $\CFcirc(\H) = \CFcirc(\Ta, \Tb)$ with homology $\HFcirc(\H) = \HFcirc(\Ta, \Tb)$. By these we always mean the direct sum over the set $\Set$ of $\Spinc$ structures that are represented by intersection points in the diagram $\H$. Thus, $\HFcirc(\Ta, \Tb)$ may be different from $\HFcirc(Y)$, the latter being the direct sum of $\HFcirc(Y, \s)$ over {\em all} $\Spinc$ structures $\s$. 
\end{convention}

\subsection{More on Pin structures}
\label{sec:morepin}
Our definition of Heegaard Floer homology relies on Pin structures $\Pa$ and $\Pb$. Although in Proposition~\ref{prop:nopin} we showed that in the end different choices produce canonically isomorphic groups, in practice it is useful to work with concrete Pin structures. One way to produce them is as follows.

\begin{definition}
\label{def:chooseab}
Choose some additional data $a$, consisting of an ordering of the alpha curves together with an orientation of each. This trivializes the tangent space $T_\x \Ta$, and lets us define a trivial Spin structure $S^\#_a$ on $\Ta$. We can think of it as the product of the trivial Lie group Spin structures on each $\alpha_i$ (with the product taken in the given order). We let $P^\#_a$ be the Pin structure on $\AA$ induced from $S^\#_a$. 

Similarly, we let $P^\#_b$ be the Pin structure on $\BB$ obtained from data $b$ that consists of an ordering of the beta curves together with an orientation of each. 
\end{definition}

\begin{remark}
Data $a$ and $b$ as above also specifies orientations of $\AA$ and $\BB$, and hence a coupled orientation. It is convenient to choose $a$ and $b$ so that this agrees with the coupled orientation described in Section~\ref{sec:abs}. 
\end{remark}

Going back to the case of general Pin structures, it is worth emphasizing the following fact.
\begin{proposition}
\label{prop:onlycoupled}
 Let $(\Pa, \Pb)$ and $(\Ra, \Rb)$ be two pairs of Pin structures that represent the same coupled Spin structure on $(\AA, \BB)$. Then, for each $\x \in \Ta \cap \Tb$, the orientation lines $o(\x)$ constructed from $(\Pa, \Pb)$ and $(\Ra, \Rb)$ are canonically isomorphic. 
\end{proposition}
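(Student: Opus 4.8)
The plan is to show that, for each fixed $\x$, the orientation line $o(\x)$ is the fibre of a rank one local system over the space of coupled Spin structures on $(\AA,\BB)$, pulled back along the map that records the pair of Pin structures, and to extract the canonical isomorphism from parallel transport in this local system. Let $\mathcal{P}$ denote the space of pairs $(Q^\#_\alpha, Q^\#_\beta)$ of Pin structures on $(\AA,\BB)$ compatible with the fixed coupled orientation; this loses nothing, since two pairs representing the same coupled Spin structure are automatically compatible with the same coupled orientation (the latter being determined by the former). Each such pair extends to Lie group Pin structures on $\Ta$ and $\Tb$ and restricts at $\x$ to elements $\Lambdad_0, \Lambdad_1 \in \Grd(T_\x M)$, and by \eqref{eq:ox} we have $o(\x) = o(\Lambdad_0, \Lambdad_1, \gr(\x))$, where $\gr(\x)$ depends only on the coupled orientation and so is locally constant on $\mathcal{P}$. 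By Proposition~\ref{prop:epsL} and Lemma~\ref{lem:nontrivial}, these orientation spaces assemble into a rank one local system $\mathcal{L}_\x$ over $\mathcal{P}$. Since the space of Pin structures on $\AA$, and likewise on $\BB$, is homotopy equivalent to $\RP^\infty$ (Example~\ref{ex:TnPin}), we have $\pi_1(\mathcal{P}) \cong \Z/2\oplus\Z/2$, generated by the nontrivial loop of Pin structures on $\AA$ (with $\BB$ fixed) and that on $\BB$ (with $\AA$ fixed).

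First I would compute the monodromy of $\mathcal{L}_\x$. Transporting the Pin structure on $\AA$ around its nontrivial loop and extending by the torus action gives a nontrivial loop of Lie group Pin structures on $\Ta$, hence --- restricting at $\x$, which is a homotopy equivalence onto the space of Pin structures on $T_\x\Ta$ --- a nontrivial loop of Pin structures on the Lagrangian $T_\x\Ta$; by Lemma~\ref{lem:nontrivial} this acts on $o(\x)$ by $-1$. Symmetrically for $\BB$ and $T_\x\Tb$. So the monodromy representation $\pi_1(\mathcal{P}) = \Z/2\oplus\Z/2 \to \{\pm 1\}$ is $(a,b) \mapsto (-1)^{a+b}$, which factors through the addition homomorphism $\Z/2\oplus\Z/2 \to \Z/2$. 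As $\mathcal{P}$ and the space of coupled Spin structures are aspherical and the multiplication map $m$ from $\mathcal{P}$ to the latter induces exactly this addition on $\pi_1$ (Remark~\ref{rem:EF}), it follows that $\mathcal{L}_\x \cong m^\ast\mathcal{M}_\x$ for the (unique nontrivial) rank one local system $\mathcal{M}_\x$ on the space of coupled Spin structures. This is the crux, and it is the manifestation at the level of orientation lines of the quotient $\tSpin(g;g) \to \Spin(g;g)$ of Definition~\ref{def:coupled-tspin}: dividing by the diagonal $\Z/2$ kills precisely the loop $(1,1) \in \pi_1(\mathcal{P})$, which by the above acts trivially on $o(\x)$.

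I would then conclude as follows. The hypothesis that $(\Pa,\Pb)$ and $(\Ra,\Rb)$ represent the same coupled Spin structure furnishes an identification of $m(\Pa,\Pb)$ with $m(\Ra,\Rb)$, i.e.\ a distinguished homotopy class of path $p$ between them in the space of coupled Spin structures. Choose any path $\delta$ in $\mathcal{P}$ from $(\Pa,\Pb)$ to $(\Ra,\Rb)$, which exists as $\mathcal{P}$ is connected; since $m$ is $\pi_1$-surjective we may arrange $m\circ\delta \in p$. Parallel transport in $\mathcal{L}_\x$ along $\delta$ is then the sought isomorphism $o(\x)_{(\Pa,\Pb)} \xrightarrow{\ \cong\ } o(\x)_{(\Ra,\Rb)}$. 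Two admissible choices of $\delta$ differ by a loop mapping under $m$ to a null-homotopic loop, i.e.\ by a multiple of $(1,1) \in \ker(\pi_1(m))$, whose monodromy on $\mathcal{L}_\x$ is $(-1)(-1) = +1$; hence the isomorphism does not depend on $\delta$. (Equivalently, it is transport along $p$ in $\mathcal{M}_\x$, pulled back to $\mathcal{P}$.) The point requiring care is the precise meaning of ``representing the same coupled Spin structure'': one must verify that the hypothesis really does single out the homotopy class $p$ --- it is the constant loop when $m(\Pa,\Pb) = m(\Ra,\Rb)$ on the nose, and in the applications it arises from the paths to $\Scan$ used in Section~\ref{sec:adjust}. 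Granting this, the argument needs no analytic input beyond Proposition~\ref{prop:epsL} and Lemma~\ref{lem:nontrivial}, and runs parallel to Proposition~\ref{prop:nopin}.
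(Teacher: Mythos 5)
Your proof is correct, and the underlying argument --- the monodromy of the orientation line around the diagonal loop $(1,1)\in\pi_1(\mathcal P)\cong\Z/2\oplus\Z/2$ is trivial, because the total monodromy is the sum map to $\Z/2$ and so factors through $\pi_1$ of the space of coupled Spin structures --- is exactly the mechanism the paper relies on. The difference is purely in packaging: the paper's proof is a three-line deduction from Proposition~\ref{prop:nopin}, observing that $o(\x)\otimes\ell(\Pa,\Pb)$ is canonically isomorphic to $o(\x)\otimes\ell(\Ra,\Rb)$ for arbitrary pairs, and that the $\ell$-lines are themselves canonically isomorphic when the coupled Spin structures agree, so the $\ell$-lines cancel; you instead re-derive the content of Proposition~\ref{prop:nopin} from scratch as a descent statement for a rank-one local system along the map $m:\mathcal P\to\{\text{coupled Spin structures}\}$, bypassing the auxiliary lines $\ell(\cdot,\cdot)$. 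This is slightly more self-contained (no reference to $\Scan$ or to paths to it is needed) but is not a different proof. Your flag about what "represent the same coupled Spin structure" means --- that one needs a distinguished homotopy class of path $p$, not merely an isomorphism --- is a fair point; in the paper this is handled implicitly through the definition of the $\ell$-lines, since the lines are literally equal when $m(\Pa,\Pb)=m(\Ra,\Rb)$, which is the situation in all applications (e.g.\ Proposition~\ref{prop:canonical}).
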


\begin{proof}
In the proof of Proposition~\ref{prop:nopin} we argued that the groups $o(\x) \otimes \ell(\Pa, \Pb)$ are canonically isomorphic, for any choices of Pin structures. In the particular case when the Pin structures produce the same coupled Spin structure, the lines $\ell(\Pa, \Pb)$ are canonically isomorphic by definition. Thus, the same holds for $o(\x)$. 
\end{proof}

Since the isomorphisms in Proposition~\ref{prop:onlycoupled} are compatible with the differentials, it follows that the preliminary groups $\CFcircprel(\H, \s)$ constructed from those pairs are canonically isomorphic. Thus, what matters in the construction of $\CFcircprel(\H, \s)$ is only the coupled Spin structure, not its Pin representatives.

\subsection{Describing some generators}
Observe that the generators of $\CFcirc(\H, \s)$ are elements of the lines $o(\x) \otimes \ell(\Pa, \Pb)$ for $\x \in \Ta \cap \Tb$. The following result will be useful. 

\begin{proposition}
\label{prop:canonical}
If $\x \in \Ta \cap \Tb$ has $\gr(\x) = g \pmod{2}$, i.e. $\grHF(\x) = b_1(Y) \pmod{2}$, then the line $o(\x) \otimes \ell(\Pa, \Pb)$ admits a canonical trivialization.
\end{proposition}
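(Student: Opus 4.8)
The plan is to trace the line $o(\x)\otimes\ell(\Pa,\Pb)$ back to the canonical coupled Spin structure $\Scan$ and then apply, in dimension $g$, the bijectivity behind Lemma~\ref{lem:dim2}(a). First I would remove the dependence on the auxiliary Pin structures. By Proposition~\ref{prop:onlycoupled} the orientation line $o(\x)$ built from any pair of Pin structures representing $\Scan$ depends only on $\Scan$; write it $o_{\Scan}(\x)$. The argument in the proof of Proposition~\ref{prop:nopin} provides a canonical isomorphism $o(\x)\otimes\ell(\Pa,\Pb)\cong o_{\Scan}(\x)$, the factor $\ell(\Pa,\Pb)$ absorbing exactly the discrepancy between $\Scan$ and the coupled Spin structure of $(\Pa,\Pb)$. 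So it suffices to trivialize $o_{\Scan}(\x)$ canonically, and we may compute it from a pair of Pin structures representing $\Scan$; the resulting branes $\Lambdad_0$ over $T_\x\Ta$ and $\Lambdad_1$ over $T_\x\Tb$ then induce on the transverse Lagrangian pair $(T_\x\Ta,T_\x\Tb)\subset T_\x M$ precisely its canonical coupled Spin structure.

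Next I would unwind the hypothesis and build a canonical bijective model. By the definition of $\gr$ in Section~\ref{sec:LFH}, formula \eqref{eq:inddetrho}, and Proposition~\ref{prop:epsL}, the condition $\gr(\x)=g\pmod 2$ (recall $n=g$) says precisely that $o_{\Scan}(\x)$ may be computed by a cap operator $\delbar_\x$ of index $0$: its parity is $n-\gr(\x)=0$, and the shift $\Sigma$ of Proposition~\ref{prop:epsL} adjusts the index to $0$ without changing $o_{\Scan}(\x)$. It is then enough to exhibit a canonical index-zero model for $\delbar_\x$ that is bijective, since such an operator has $\det(\delbar_\x)$ canonically equal to $\R$. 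Here I would use the isomorphism $\tau=\tau_{T_\x\Ta,T_\x\Tb}$ of Lemma~\ref{lem:pinLag}(1) together with the description of the canonical coupled Spin structure as induced from the diagonal map $\O(g)\to\Spin(g,g)$ of Lemma~\ref{lem:EE}: after the standard, contractible identification $T_\x M\cong T_\x\Ta\oplus(T_\x\Ta)^{*}$ with $T_\x\Tb$ the second summand via $\tau$, and a further contractible straightening making the two Lagrangians orthogonal, $\Scan$ becomes the diagonal coupled Spin structure on $(\AA\oplus 0,\,0\oplus\AA)$ with $\AA\cong T_\x\Ta$, and the ``rotation'' path $\rho_t=\{(\cos\tfrac{\pi t}{2}v,\ \sin\tfrac{\pi t}{2}v):v\in\AA\}$ from $T_\x\Ta$ to $T_\x\Tb$ carrying the $t$-independent Pin structure is a canonical choice of brane path. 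Writing $\AA\cong\R^{g}$, the associated cap operator is a direct sum of $g$ copies of the one-dimensional rotation operator of \cite[proof of Lemma~11.11]{SeidelBook} appearing in Lemma~\ref{lem:dim2}(a), hence has index $0$ and vanishing kernel and cokernel. Transporting the positive orientation of its determinant line back along the canonical isomorphisms of Lemma~\ref{lem:nodep} and Proposition~\ref{prop:epsL} yields the desired trivialization of $o_{\Scan}(\x)$; and since every identification used is canonical up to contractible ambiguity, the resulting generator is independent of all choices.

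The delicate step, which I expect to be the main obstacle, is the last one: verifying that the rotation model genuinely represents $o_{\Scan}(\x)$ --- that its graded lifts $\Lambdap_i$, the negative-definite crossing-form condition at $t=1$, and above all its two endpoint Pin structures match $\Lambdad_0,\Lambdad_1$ with no stray sign. This is exactly where one must use that $\Scan$ is the \emph{diagonal} coupled Spin structure: then the asymmetry $\tau_{L_1,L_0}\circ\tau_{L_0,L_1}=-\id$ noted after Lemma~\ref{lem:pinLag}, and the difference between the $\tau$ attached to $(\AA,\BB)\subset H_1(\Sigma;\R)$ (which defines $\Scan$) and the one attached to $(T_\x\Ta,T_\x\Tb)\subset T_\x M$ (used in the model; the two are joined through a contractible family of auxiliary symplectic forms and inner products, hence induce the same transport of Pin structures), are both harmless, while the one-dimensional computations of Lemma~\ref{lem:dim2}(a) and Example~\ref{ex:disk} pin down the overall sign. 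Everything else is formal bookkeeping with the orientation-space calculus of Section~\ref{sec:orspaces}.
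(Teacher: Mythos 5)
Your approach is genuinely different from the paper's: you try to trivialize $o_{\Scan}(\x)$ abstractly, by deforming the cap operator to a direct sum of $g$ copies of the one-dimensional rotation model via the isomorphism $\tau_{T_\x\Ta,T_\x\Tb}$, whereas the paper decomposes $o(\x)\cong\bigotimes_i o(x_i)$ concretely using the intersection points $x_i\in\alpha_i\cap\beta_{\sigma(i)}$, applies Lemma~\ref{lem:dim2}(a) factor by factor, identifies $(P^\#_a,P^\#_b)$ with $\Scan$ via Remark~\ref{rem:cansum}, and then explicitly verifies that the resulting trivialization is independent of the data $a$ (orientation flips, and transpositions of the ordering via a $90^\circ$ rotation of Pin structures).

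The gap in your argument is exactly where you flag the ``delicate step''. You need the coupled Spin structure induced by $\Scan$ on $(T_\x\Ta,T_\x\Tb)$ to coincide \emph{with no sign ambiguity} with the canonical one attached to $\tau_{T_\x\Ta,T_\x\Tb}$ inside $T_\x M$. You justify this by asserting that the two $\tau$'s are ``joined through a contractible family of auxiliary symplectic forms and inner products''. But that space is not contractible: a symplectic form on $\AA\oplus\BB$ making both summands Lagrangian is equivalent to a nondegenerate pairing $\AA\times\BB\to\R$, i.e.\ a point of $\GL(g,\R)$; fixing the coupled orientation (which the hypothesis $\gr(\x)=g$ does guarantee, since the coupled orientation from any $\tau_{L_0,L_1}$ always matches the symplectic orientation) restricts to $\GL^+(g,\R)\simeq\SO(g)$, which is not simply connected for $g\geq 2$. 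The map from this space to the $\RP^\infty$ of coupled Spin structures has nontrivial monodromy on $\pi_1$, so a path between the two $\tau$'s does not by itself yield a canonical identification; different paths can disagree by $-1$. This $\pi_1(\SO(g))$-ambiguity is precisely what the paper's case analysis in the second half of the proof is built to kill: by fixing the discrete data $a$ and checking the generating moves (orientation change on a single $\alpha_i$ together with the corresponding $\beta_{\sigma(i)}$, and a transposition realized by a specific $90^\circ$ rotation of Pin structures), the paper pins down a preferred homotopy class of identification. Your write-up has no replacement for that check, so as it stands the trivialization is only canonical up to sign.

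A secondary point: you rely on the transfer of $\Scan$ from $(\AA,\BB)\subset H_1(\Sigma;\R)$ to $(T_\x\Ta,T_\x\Tb)\subset T_\x M$ as though the two ambient symplectic vector spaces were interchangeable; note that $\AA$ and $\BB$ need not even be transverse in $H_1(\Sigma;\R)$, so $\tau_{\AA,\BB}$ is built through the quotient construction of Lemma~\ref{lem:pinLag}, while $\tau_{T_\x\Ta,T_\x\Tb}$ sees transverse Lagrangians. Comparing them requires an actual argument, not just an appeal to contractibility of auxiliary data. If you want to salvage the abstract route, you would need to produce a preferred path of isomorphisms between the two $\tau$'s (e.g.\ by reducing to the concrete $g$-fold decomposition anyway, at which point you have essentially reproduced the paper's proof), or show that the particular pair of $\tau$'s you compare admits a distinguished homotopy class of path in $\GL^+(g,\R)$.
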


\begin{proof}
In view of the definition of $\gr(\x)$ in Section~\ref{sec:LFH}, the condition $\gr(\x) = g \pmod{2}$ means that the orientation on $T_\x \Ta \oplus T_\x \Tb$ (coming from the coupled orientation) agrees with the one on $T_\x M$. Suppose $\x = \{x_1, \dots, x_g\}$ with $x_i \in \alpha_i \cap \beta_{\sigma(i)}$, for some permutation $\sigma$. We view each pair $(\alpha_i, \beta_{\sigma(i)})$ as Lagrangians in $\Sigma$ and we choose the coupled orientation on them so that it agrees with the one on $\Sigma$; that is, so that $\gr(x_i)=1$ for all $i$. The product of these coupled orientations gives the coupled orientation on $(\Ta, \Tb)$. 

Let us now choose some additional data $a$ as in Definition~\ref{def:chooseab}, consisting of an ordering of the alpha curves together with an orientation of each. With our $\x$ fixed, the data $a$ produces a similar data $b$ on the beta curves, consisting of an ordering of them together with orientations. Indeed, for the ordering, we simply transplant the ordering of the alpha curves using the permutation $\sigma$ (determined by the generator $\x$). For the orientations, we choose them so that we get the coupled orientation on each $(\alpha_i, \beta_{\sigma(i)})$ that we already specified. As in Definition~\ref{def:chooseab}, let $P^\#_a$ and $P^\#_b$ be the Pin structures induced from $a$ and $b$. 

Let us first prove the lemma in the case where the groups are defined from Pin structures $\Pa = P^\#_a$ and $\Pb = P^\#_b$ for some data $a$ (and the induced data $b$). In that case, recall that the orientation space $o(\x)$ is associated to an index problem on the cap $H$, with boundary conditions interpolating between $T_\x \Ta$ and $T_\x \Tb$. This is the direct sum of the index problems corresponding to each $x_i$, and therefore $o(\x)$ can be canonically identified with the tensor product of the orientation spaces $o(x_i)$ for $i=1, \dots, g$; compare Example~\ref{ex:product}. (Since we trivialized the tangent spaces, we have Spin and not just Pin structures, so Example~\ref{ex:product} is applicable here.) Because $\gr(x_i)=1$, Lemma~\ref{lem:dim2} gives a canonical trivialization for each of $o(\x_i)$. Tensoring them together we get a trivialization of $o(\x)$. Furthermore, using Remark~\ref{rem:cansum} about the behavior of canonical Spin structures under direct sum, we deduce that $(P^\#_a, P^\#_b)$ represents the canonical coupled Spin structure on $(\AA, \BB)$. Thus, by choosing the constant path we trivialize the line $\ell(P^\#_a, P^\#_b)$.

Let us now consider the case where $\Pa$ and $\Pb$ are arbitrary. From the proof of Proposition~\ref{prop:nopin} we know that there is a canonical isomorphism between the line $o(\x) \otimes \ell(\Pa, \Pb)$ and the similar line defined using $(P^\#_a, P^\#_b)$. Since the latter line is trivialized, so is the former. However, we need to make sure that this trivialization does not depend on the chosen data $a$; that is, if we have two sets of data $a$ and $a'$, their canonical trivializations (defined in the previous paragraph) are related by the isomorphism from the proof of Proposition~\ref{prop:nopin}. 

If the change in $a$ comes from a change in the orientation of one of the curves $\alpha_i$, note that there must be a corresponding change in the orientation of the curve $\beta_{\sigma(i)}$. Overall, the coupled orientation and the coupled Spin structure on the pair $(\alpha_i, \beta_{\sigma(i)})$ are preserved. It follows that we are in the situation of Proposition~\ref{prop:onlycoupled}: The coupled Spin structure being fixed, the  spaces $o(x_i)$ (before and after we made the orientation changes) are canonically identified. The origin of these identifications is the same as that in the proof of Proposition~\ref{prop:nopin}, so everything is compatible. 

It remains to study the case of a change in the ordering of the alpha curves. It suffices to consider a transposition,  say changing the ordering $(\alpha_1, \alpha_2)$ to $(\alpha_2, \alpha_1)$, which must come in tandem with changing $(\beta_{\sigma(1)}, \beta_{\sigma(2)})$ to $(\beta_{\sigma(2)}, \beta_{\sigma(1)})$. This produces a genuine change in the Pin structures. We obtain a local problem near $\x$, and we can restrict attention to small neighborhoods of each $x_i= \alpha_i \cap \beta_{\sigma_i}$ in $\Sigma$, whose product is  a neighborhood of $\x$ in $\Sym^g(\Sigma)$, away from the diagonal. Furthermore, without loss of generality, we can assume that $\sigma$ is the identity, and that we are in two dimensions (that is, we focus on the neighborhoods of $x_1$ and $x_2$, as the rest is not affected by the ordering change).

The problem we are left with is the following. For $i=1,2$, we are given curves $\alpha_i$ and $\beta_i$ in $\C\cong \R^2$, intersecting transversely at $x_i$. We may as well assume $\alpha_i$ and $\beta_i$ are lines through $x_i=0$. We denote $\x = (0, 0) \in \C^2$, $\Ta =\alpha_1 \times \alpha_2$, $\Tb=\beta_1 \times \beta_2$. Because the coupled orientations are fixed, and  we are otherwise free to choose orientations on $\alpha_i$, we can assume that $(\alpha_1, \beta_1)$ and $(\alpha_2, \beta_2)$  are both positive bases of $\R^2$. Let $a$ and be the data consisting of the chosen orientations of the curves, together with the ordering $(\alpha_1, \alpha_2)$; let $a'$ be the similar data with the ordering $(\alpha_2, \alpha_1)$. These produce Pin structures $P^\#_a$ and  $P^\#_{a'}$ on $T_\x \Ta$. Similarly, the orderings $(\beta_1, \beta_2)$ and $(\beta_2, \beta_1)$ give data $b$ and $b'$ and produce Pin structures $P^\#_b$ and $P^\#_{b'}$ on $T_\x\Tb$. The lines $o(\x) \otimes \ell(P^\#_\alpha, P^\#_\beta)$ defined using $(P^\#_a, P^\#_{b})$ and $(P^\#_{a'}, P^\#_{b'})$ are related by two isomorphisms, and our problem is to show that these isomorphisms are the same. 

Specifically, one isomorphism comes from viewing $o(\x)$ as a tensor product in two different ways: $o(x_1) \otimes o(x_2) \cong o(x_2) \otimes o(x_1)$. We combine this with the identity on $\ell(P^\#_\alpha, P^\#_\beta)$, as the coupled Spin structure is the same for both orderings. 

The other isomorphism is described in the proof of Proposition~\ref{prop:nopin}, where we had to choose paths $\gamma$ from $P^\#_a$ to $P^\#_{a'}$ and $\eta$ from $P^\#_b$ to $P^\#_{b'}$. We let $\gamma$ be given by the Pin structures coming from the trivializations of 
$$T_\x \Ta \cong T_{x_1} \alpha_1 \oplus T_{x_2} \alpha_2 \cong \R^2$$ using the bases
$$((\cos t, \sin t), (-\sin t, \cos t)), \ \ t\in [0,\pi/2].$$
Similarly, we let $\eta$ come from the trivializations of 
$$T_\x \Tb \cong T_{x_1} \beta_1 \oplus T_{x_2} \beta_2 \cong \R^2$$ using the same bases as above. 
 The coupled Spin structure is kept constant throughout this process, so $\ell(\Pa, \Pb)$ is  trivialized with the constant path. Furthermore, following these paths we see decompositions of $o(\x)$ as products  of two lines, starting with $o(x_1) \otimes o(x_2)$ and ending with $o(x_2) \otimes o(x_1)$. In other words, we get the same isomorphism as the first one we described.
\end{proof}

\subsection{The homology action}
Following \cite[Section 4.2.5]{HolDisk}, we can equip our Heegaard Floer homology groups $\HFcirc(Y, \s)$ with an action of the exterior algebra $\Lambda^*(H_1(Y; \Z)/\text{Tors})$. We have an isomorphism
$$ H^1(\Path(\Ta, \Tb); \Z) \cong \Z \oplus (H^1(Y; \Z)/\text{Tors}).$$
Given a cocycle $\zeta \in Z^1(\Path(\Ta, \Tb); \Z)$, we define its action on the preliminary Floer complex $\CFiprel(\H, \s)$ by
$$\del [\omega_\x, i] =\sum_{\y \in \s}  \sum_{\substack{\phi \in \pi_2(\x, \y)\\ \mu(\phi)=1}} \zeta(\phi) \cdot \bigl( \# \Mhat(\phi) \bigr) \cdot [\omega_\y, i - n_z(\phi)].$$
The same proof as in \cite[Proposition 4.17]{HolDisk} shows that $A_{\gamma}$ is a chain map, depends only on the homology class of $\gamma$, and satisfies $A_{\gamma}^2=0$ on homology. Restricting to $[\gamma]$ in the  $H^1(Y; \Z)/\text{Tors}$ yields an action of $\Lambda^*(H_1(Y; \Z)/\text{Tors})$ on $\HFiprel(\H, \s)$, which we then tensor with the identity on $\ell(\Pa, \Pb)$ to get one on $\HFi(Y, \s)$. The actions on the other versions $\HFcirc(Y, \s)$ are constructed similarly.

\subsection{Twisted coefficients}
\label{sec:twistedHF}
There are yet more variants of Heegaard Floer homology that can be considered; e.g., with twisted coefficients, as in Section~\ref{sec:twisted}. In the Heegaard Floer setting, the group $G=\pi_2(\x_0, \x_0)$ maps into $\Z \oplus H^1(Y; \Z)$, and this map is an isomorphism for $g > 1$; see \cite[Proposition 2.15]{HolDisk}. The $\Z$ summand captures the quantity $n_z(\phi)$, and is not relevant, because in the differential we already keep track of this quantity. The more relevant group is $\hat{\pi}_2(\x_0, \x_0)$, which only includes classes $\phi$ such that $n_z(\phi)=0$. This group maps to $H^1(Y; \Z)$, and produces genuine twisted coefficients. Thus, we obtain groups
$$\HFitw(Y, \s; A), \ \ \HFmtw(Y, \s; A), \ \ \HFptw(Y, \s; A), \ \ \HFhattw(Y, \s; A)$$
for any module $A$ over $\Z[H^1(Y; \Z)]$. Compare \cite[Section 8]{HolDiskTwo}.

In particular, we can consider changes in the Pin structures on the Lagrangians, which correspond to twisting coefficients by modules of the form $A_{\eta, \zeta}$, as in Proposition~\ref{prop:changePin}. 
As explained at the end of Section~\ref{sec:twisted}, different systems of orientations on the moduli spaces are parametrized by an affine space on 
\begin{equation}
\label{eq:coker}
\coker \bigl( H^1(M; \Z/2) \to H^1(\Ta; \Z/2) \oplus H^1(\Tb; \Z/2) \bigr).
\end{equation}
Let $M' = \Sym^g(\Sigma -\{z\})$. We have $\pi_1(\Omega M')=\pi_2(M') = H_2(M') = 0$, and $\pi_1(M') = \pi_1(M) = H_1(\Sigma)=\Z^{2g}$. Let also $\Pathhat(\Ta, \Tb)$ be the space of homotopy classes of Whitney disks inside $M'$. This has $\pi_1(\Pathhat(\Ta, \Tb)) \cong  H^1(Y; \Z)$ for $g > 1$.

We have the short exact sequence of a fibration
$$0=\pi_1(\Omega M') \to \pi_1(\Pathhat(\Ta, \Tb)) \to \pi_1(\Ta \times \Tb) \to \pi_1(M') \to 0.$$
After we abelianize, the exact sequence splits, because $H_1(M)=\Z^{2g}$ is free. Therefore we can dualize it and get a short exact sequence
$$ 0\to H^1(M; \Z/2) \to H^1(\Ta \times \Tb; \Z/2) \to H^1(\Pathhat(\Ta, \Tb); \Z/2) \to 0$$
so the cokernel in \eqref{eq:coker} is (at least for $g > 1$)
$$ H^1(\Pathhat(\Ta, \Tb); \Z/2)\cong \Hom(\pi_1(\Pathhat(\Ta, \Tb)), \Z/2) \cong \Hom(H^1(Y; \Z), \Z/2).$$
This exactly corresponds to the different {\em coherent orientation systems} considered by Ozsv\'ath and Szab\'o in \cite[Section 4.2.4]{HolDisk}. It was observed there that  such systems form an affine space over $\Hom(H^1(Y; \Z), \Z/2).$ 

Thus, on a $3$-manifold $Y$, there are $2^{b_1(Y)}$ orientation systems. Our choice of coupled Spin structures picks up a particular one. In \cite[Theorem 10.12]{HolDiskTwo}, Ozsv\'ath and Szab\'o also choose a particular one, but in a different way, using their study of $\HFitw$ as a $\Z[H^1(Y; \Z)]$-module. We do not attempt to show that our orientation system corresponds to theirs in general. Rather, let us focus on the case $Y=\#^g(S^1 \times S^2)$, which will become relevant to us in Section~\ref{sec:naturality}.

For $\#^g(S^1 \times S^2)$, we have a Heegaard diagram with the $\beta$ curves isotopic to the $\alpha$ curves; see \cite[Lemma 9.1]{HolDisk}. There, they choose the orientation system so that
\begin{equation} 
\label{eq:S1S2}
\HFhat(\#^g(S^1 \times S^2)) \cong H_*(T^g; \Z).
\end{equation}
We check that our construction produces the same answer.

\begin{lemma}
\label{lem:S1S2}
Equation~\eqref{eq:S1S2} holds for the Heegaard Floer groups defined over $\Z$ in our setting, with $\HFhat(\#^g(S^1 \times S^2))$ equipped with the $\grHF$ grading.
\end{lemma}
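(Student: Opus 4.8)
The plan is to identify, for a well-chosen diagram, the chain complex $\CFhatprel(\#^g(S^1\times S^2))$ with the Lagrangian Floer complex of $\Ta$ against a Hamiltonian pushoff of itself, and then invoke Proposition~\ref{prop:PSS}.

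First I would fix the Heegaard diagram $\H$ of \cite[Lemma 9.1]{HolDisk}: a genus-$g$ diagram which is a boundary connected sum of $g$ copies of the standard genus-one diagram, so that each $\beta_i$ is a small perturbation of $\alpha_i$ meeting it in exactly two points and disjoint from the other $\alpha_j$; I would take the perturbation so that the two bigon regions cut out by $\alpha_i\cup\beta_i$ have equal area, and place $z$ as in that lemma. The diagram is weakly admissible, which suffices for $\HFhat$. Deleting $z$, the tori $\Ta,\Tb$ lie in $M'=\Sym^g(\Sigma\setminus\{z\})$, and since a Whitney disk in $\Sym^g(\Sigma)$ with $n_z=0$ is exactly a disk in $M'$, the complex $\CFhatprel(\Ta,\Tb)$ is literally the Lagrangian Floer complex of $(\Ta,\Tb)$ in $M'$. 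Because each $\beta_i$ is a Lagrangian pushoff of $\alpha_i$ inside $\Sigma\setminus\{z\}$ of vanishing flux (the equal-area condition), the product isotopy is a flux-zero Lagrangian isotopy, hence Hamiltonian, carrying $\Ta$ to $\Tb$ inside $M'$; a continuation map then identifies $\CFhatprel(\Ta,\Tb)$ with the Lagrangian Floer complex $\CF_*(\Ta,\Ta)$, compatibly with the Lie group Pin structures (these are unique up to homotopy on a torus by Example~\ref{ex:TnPin}, so Proposition~\ref{prop:nopin} provides the compatibility) and with the coupled orientations. Finally, $\pi_2(M')=H_2(M')=0$ by Section~\ref{sec:twistedHF}, so there are no sphere bubbles and no Maslov-two disk bubbles, and the proof of Proposition~\ref{prop:PSS} applies verbatim in $M'$.

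Next I would apply Proposition~\ref{prop:PSS} with $L=\Ta\cong T^g$. Since $T^g$ is orientable, $|\ltop T\Ta|$ is trivial, so $\HF_*(\Ta,\Ta)\cong H_*(T^g;\Z)$, and combining with the identifications of the previous step gives $\HFhat(\#^g(S^1\times S^2))\cong H_*(T^g;\Z)$. (For $g=1$ this recovers Example~\ref{ex:circles}, where the two bigons $A$ and $B$ contribute opposite signs so that $\del\x=0$.) It then remains to match the gradings. Here $b_1(Y)=g$, so by \eqref{eq:grHF} the $\Z/2$-grading $\gr$ entering the Lagrangian-Floer construction satisfies $\gr=\grHF+g+b_1(Y)=\grHF\pmod 2$; thus it suffices to check that the coupled orientation on $(\Ta,\Tb)$ defining $\gr$ agrees, under the canonical identification $\AA=\BB$ (legitimate here since $\alpha_i\simeq\beta_i$ makes these the same Lagrangian subspace of $H_1(\Sigma;\R)$), with the product coupled orientation required in Proposition~\ref{prop:PSS}, and that the resulting grading on $H_*(T^g)$ is its homological degree. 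By Section~\ref{sec:abs} the former is the coupled orientation produced by the Mayer--Vietoris sequence \eqref{eq:MVseq}: for $\#^g(S^1\times S^2)$ the middle map of \eqref{eq:MVseq} has image the diagonal of $H_1(U_\alpha;\R)\oplus H_1(U_\beta;\R)$, $H_2(Y;\R)$ is $\AA\cap\BB=\AA$, and $H_1(Y;\R)$ is the cokernel, and tracing the orientations through (using the symplectic orientation of $H_1(\Sigma;\R)$, Poincar\'e duality between $H_2(Y)$ and $H_1(Y)$, and Convention~\ref{conv0}) one checks the induced coupled orientation on $(\AA,\AA)$ is the product one, the shuffled/concatenated discrepancy $(-1)^{g(g-1)/2}$ being absorbed by Convention~\ref{conv0} (matching the $(-1)^{g(g-1)/2}$ normalization of \cite{FJR} recalled in Section~\ref{sec:abs}).

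The topological content---the Hamiltonian isotopy, the vanishing of bubbles, and Proposition~\ref{prop:PSS}---is routine; the main obstacle is the last paragraph's sign bookkeeping, namely keeping the several conventions (shuffled versus concatenated coupled orientations, the grading shift in \eqref{eq:grHF}, and the conventions of \cite{SeidelBook} inherited by Proposition~\ref{prop:PSS}) consistent, so as to confirm that no spurious overall degree shift is introduced and the isomorphism is grading-preserving.
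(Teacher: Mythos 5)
Your main argument---Hamiltonian isotopy carrying $\Ta$ to $\Tb$ preserving Lie group Pin structures, then Proposition~\ref{prop:PSS}---is exactly the paper's argument, just spelled out at greater length. (The extra care about vanishing flux and working in $\Sym^g(\Sigma \setminus \{z\})$ is reasonable but not strictly needed, since by Perutz the handleslide/isotopy already gives a Hamiltonian isotopy of tori in $\Sym^g(\Sigma)$.)

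Where you diverge from the paper is the last paragraph, on matching the absolute $\grHF$-grading. The paper dispatches this in one sentence: since the $\Z/2$-grading $\grHF$ is defined purely from the diagram and does not depend on coefficients, the absolutely graded identification over $\F_2$ proved in \cite{HolDiskTwo} pins down the absolute grading over $\Z$ once the relative-graded isomorphism is in hand. Your alternative---tracing the coupled orientation through the Mayer--Vietoris sequence \eqref{eq:MVseq}---is a legitimate idea in principle, but as written it has a genuine gap. You reduce to two claims: (a) the $\grHF$-defining coupled orientation on $(\AA,\AA)$ is the ``product'' one, and (b) with that coupled orientation the PSS isomorphism is grading-preserving onto $H_*(T^g)$ in its usual degree. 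For (a) you say ``one checks'' without doing the check, which is exactly the bookkeeping that needs to be done. More seriously, (b) is asserted and never argued: Proposition~\ref{prop:PSS} gives an isomorphism of \emph{relatively} graded groups, and nothing in its statement identifies the absolute $\gr$ of a generator with the Morse index modulo $2$. You would need to verify, e.g.\ for a standard Morse perturbation on $T^g$, that $\gr(\x) = n \bmod 2$ (agreement of orientations in the sense of Section~\ref{sec:LFH}) occurs precisely when the Morse index has the right parity. Without that, matching the coupled orientations does not yet force the absolute grading. The paper's appeal to the $\F_2$ result sidesteps both (a) and (b) cleanly, and I'd recommend either adopting it or actually carrying out the check you invoke.
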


\begin{proof}
In the genus $g$ diagram where the $\alpha_i$ curve is Hamiltonian isotopic to $\beta_i$, the Lagrangian tori $\Ta$ is Hamiltonian isotopic to $\Tb$. Further, this isotopy preserves the Lie group Pin structures. Thus, we are simply computing $\HFhat(\Ta, \Ta)$, which is  isomorphic to $H_*(\Ta; \Z)$ by Proposition~\ref{prop:PSS}. This proves the statement as relatively graded groups. The absolutely graded version also holds, because we know from \cite{HolDiskTwo} that it does so when we work with $\Z/2$ coefficients.
\end{proof}

\subsection{Handleslides}
\label{sec:handleslide}
In proving the invariance of Heegaard Floer homology, a key role will be played by the maps induced by handleslides, which we discuss here.

There are two kinds of handleslides: among the alpha curves, and among the beta curves. We will describe the maps associated to beta handleslides; those associated to alpha handleslides are similar. Furthermore, we focus on the minus version of $\HF$ for concreteness.

Following \cite[Section 9]{HolDisk}, a beta handleslide consists of replacing a curve collection $\betas=\{\beta_1, \dots, \beta_g\}$ by another collection $\gammas=\{\gamma_1, \dots, \gamma_g\}$ such that:
\begin{itemize}
\item The curve $\gamma_1$ is obtained from $\beta_1$ by sliding it over $\beta_2$;
\item For $i > 1$, the curves $\gamma_i$ is  obtained from $\beta_i$ by a small Hamiltonian isotopy;
\item For all $i$, the curve $\gamma_i$ intersects $\beta_i$ in two points, and does not intersect any other beta curve.
\end{itemize}
See Figure~\ref{fig:slide}. We let $\Tg=\gamma_1 \times \dots \times \gamma_g$ denote the resulting Lagrangian torus.
\begin{figure}
{
\fontsize{10pt}{11pt}\selectfont
   \def\svgwidth{4.8in}
   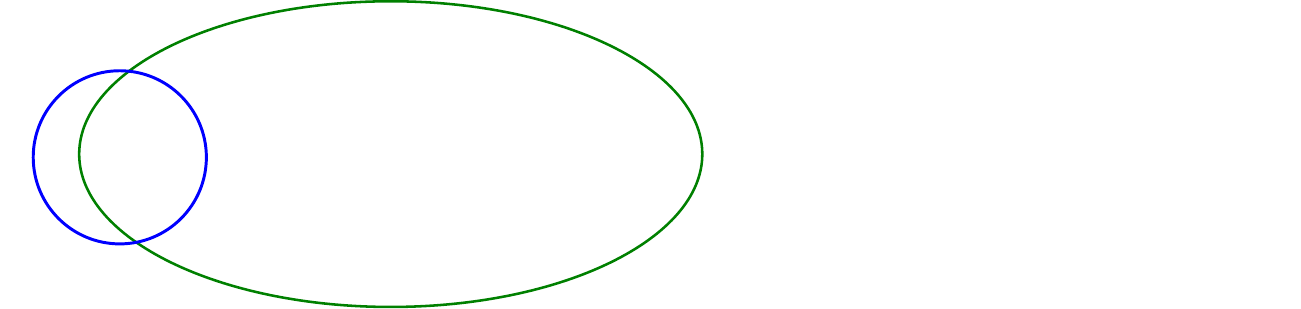
}
\caption{A handleslide. We draw the $\beta$ circles in blue and the $\gamma$ circles in green. The gray disks are feet of handles that can be connected to other parts of the diagram.}
\label{fig:slide}
\end{figure}

Recall that in the construction of the preliminary groups $\HFmprel(\Ta, \Tb)$ we used Lie group Pin structures on $\Ta$ and $\Tb$, coming from Pin structures $\Pa$ and $\Pb$ on the vector spaces $\AA$ and $\BB$ from \eqref{eq:Aspace} and \eqref{eq:Bspace}. Since the handleslide does not change the Heegaard splitting (only the diagram), the same vector space $\BB$ can be identified with the tangent space to $\Tg$, and we will use the same Pin structure $\Pb$ to get one on $\Tg$.

In \cite{Perutz}, Perutz shows that a handleslide induces a Hamiltonian isotopy $\psi: M \to M$ taking $\Tb$ to $\Tg$. The following lemma shows that our chosen Pin structures on the Lagrangian tori are preserved by $\psi$.

\begin{lemma}
\label{lemma:liepin}
The Hamiltonian isotopy $\psi$ induced by a handleslide takes the Lie group Pin structure on $\Tb$ from $\Pb$ on $\BB$ to the Lie group Pin structure on $\Tg$ coming from the same $\Pb$ (up to canonical homotopy of Pin structures).
\end{lemma}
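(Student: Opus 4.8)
The plan is to reduce the assertion to a statement about the two tori as Lie groups, and then to treat canonicity separately. Since $\psi$ is Hamiltonian, I would fix a symplectic isotopy $\{\psi_t\}_{t\in[0,1]}$ with $\psi_0=\id$ and $\psi_1=\psi$. The family of Lagrangian tori $\{\psi_t(\Tb)\}$, each equipped with the push-forward of the Lie group Pin structure on $\Tb$, is a Pin structure on the pullback of $T\Sym^g(\Sigma)$ over $\Tb\times[0,1]$ (identifying $\psi_t(\Tb)$ with $\Tb$ via $\psi_t$), hence a homotopy from the Lie group Pin structure on $\Tb$ to a Pin structure on $\Tg$. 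It then suffices to identify this terminal Pin structure on $\Tg$ with the Lie group Pin structure coming from $\Pb$, canonically.

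The key observation is that $\Tb$ and $\Tg$ are abstractly the same Lie group torus. The classes $[\beta_1],\dots,[\beta_g]$ and $[\gamma_1],\dots,[\gamma_g]$ span the same lattice $\Lambda\subset\BB$, since they differ by the unimodular change of basis $[\gamma_1]=[\beta_1]\pm[\beta_2]$, $[\gamma_i]=[\beta_i]$ for $i\ge 2$. Thus $\Tb\cong\BB/\Lambda\cong\Tg$, and under these identifications both Lie group Pin structures are the one induced from $\Pb$ on $\BB$ by equivariant extension (Example~\ref{ex:TnPin}). Because Perutz's $\psi$ sends $\beta_i$ to a curve isotopic to $\gamma_i$, the induced map on $H_1$ is the above change of basis, so $\psi\colon\Tb\to\Tg$ is homotopic to the linear automorphism $\bar A$ of $\BB/\Lambda$ induced by the shear $A\in\GL(\BB)$ with $A[\beta_1]=[\beta_1]\pm[\beta_2]$ and $A[\beta_i]=[\beta_i]$ for $i\ge 2$. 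Under $\bar A$ the Lie group Pin structure on $\Tb$ is carried to the Lie group Pin structure on $\Tg$ induced by $A_*\Pb$, so the one thing left is a \emph{canonical} homotopy $A_*\Pb\simeq\Pb$ of Pin structures on the vector space $\BB$. Such a homotopy exists because the space of Pin structures on $\BB$ is connected (homotopy equivalent to $\RP^\infty$; cf.\ Proposition~\ref{prop:pinspace}), and it is canonical here because $A$ is a shear: the path $\{A_s\}_{s\in[0,1]}$ of shears with $A_0=\id$, $A_1=A$ sweeps out a contractible one-parameter subgroup, and transporting $\Pb$ along it gives the required canonical path. (Equivalently, one can check that the difference class in $H^1(\Tg;\Z/2)$ of the two Pin structures on $\Tg$ vanishes, since they restrict to the unique Lie group Pin structure on every primitive loop, so that Lemma~\ref{lem:homotopic} and Example~\ref{ex:S1Pin} finish the existence part. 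For $i\ge 2$, where $\gamma_i$ is only a small Hamiltonian isotopy of $\beta_i$, the analogous statement is immediate.)

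The step I expect to be the main obstacle is canonicity against the choices made, since neither the isotopy $\{\psi_t\}$ nor the homotopy from $\psi$ to $\bar A$ is unique. Changing $\{\psi_t\}$ alters the resulting homotopy of Pin structures by the action of a loop of symplectomorphisms restricted to $\Tb$, which could a priori introduce the nontrivial element of $\pi_1$ of the space of Pin structures (cf.\ Example~\ref{rem:non-equivalent_iso}); to exclude this I would use Perutz's realization of the handleslide by an isotopy supported near the curves involved in the slide, reducing the monodromy computation to an explicit low-genus local model where it is trivial. The residual ambiguity coming from the homotopy between $\psi$ and $\bar A$ is by a translation of $\Tg$, which acts on the (translation-equivariant) Lie group Pin structure through canonical homotopies and is therefore harmless. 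Combining these pieces yields the canonical homotopy asserted in the lemma.
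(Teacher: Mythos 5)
Your approach is genuinely different from the paper's in emphasis, though both hinge on the same geometric input. You reduce the comparison to a linear-algebraic statement on the vector space $\BB$: after homotoping $\psi$ to the shear $\bar A$, you need a canonical homotopy $A_*\Pb\simeq\Pb$, which you supply via the contractible one-parameter family of shears $\{A_s\}$. The paper instead works pointwise on $\Tg$: using Perutz's description of $\psi$, it arranges for $\psi$ to be a literal product of small translations near a basepoint $p\in\Tb$, so that $d\psi$ identifies $T_p\Tb$ and $T_{\psi(p)}\Tg$ with $\BB$ in a way that sends $\Pb$ to $\Pb$ on the nose. The remaining step is that both Pin structures on $\Tg$ are translation-invariant --- the given one by definition, the pushed-forward one because the shear $L$ carries translations to translations --- so pointwise agreement at $\psi(p)$ propagates to all of $\Tg$ without any need to track a homotopy between $\psi$ and a linear map.

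The place where your proof has a real gap is exactly where you flag one: you observe that changing the isotopy $\{\psi_t\}$ or the homotopy $\psi\simeq\bar A$ could alter the resulting path in the space of Pin structures by the nontrivial element of $\pi_1\cong\Z/2$, and you say you \emph{would} exclude this by appealing to Perutz's local realization of the handleslide and a low-genus model, but you do not carry out that computation. That monodromy computation is not an afterthought --- it is the actual content of the lemma, since ``canonical'' here means precisely that the homotopy is independent of these choices up to $\pi_1$. The paper's proof is constructed so that the issue never arises: by normalizing $\psi$ to be a small translation near $p$, the comparison of Pin structures at $\psi(p)$ is tautological, and translation-invariance then gives global equality rather than merely a homotopy whose class has to be pinned down. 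If you fill your gap by the route you sketch, you would essentially be reconstructing the paper's argument; as written, the final step remains open.

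One smaller remark: your first paragraph builds a homotopy from a Pin structure on $\Tb$ to one on $\Tg$ over $\Tb\times[0,1]$ using the family $\{\psi_t(\Tb)\}$, but the lemma asks for a homotopy between two Pin structures on the fixed bundle $T\Tg$, and the pushforward $\psi_*$ of the Lie group Pin structure depends only on the time-one map $\psi$. That paragraph restates the problem rather than advancing it; the substantive part of your argument is the shear reduction in the second paragraph.
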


\begin{proof} Pick a point $p_i$ on each curve $\beta_i$, so that $p=p_1 \times \dots \times p_g$ is a basepoint on $\Tb$. From the description of $\psi$ in \cite{Perutz}, we can arrange so that in a neighborhood of each $p_i$, the curve $\gamma_i$ is a small translate of the corresponding $\beta_i$ (including for $i=1$); and the isotopy $\psi$  is locally given by the product of these translations. Thus, if we identify $T_p\Tb$ and $T_{\psi(p)}\Tg$ to  $\BB$, we see that the Pin structures on these two tangent spaces both come from $\Pb$. To obtain the Lie group Pin structures on the whole Lagrangians $\Tb$ and $\Tg$, we translate the ones at the given points. 

Let us identify each $\beta_i$ to a curve (still denoted $\beta_i$) on $\Tb$, namely the product of $\beta_i$ and the basepoints $\{z_j\}$ for $j \neq i$.  For $i=1, \dots, g$, let $\beta_i' \subset \Tg$ be the image of $\beta_i \subset \Tb$ under the Hamiltonian isotopy $\psi$. The collection of curves $\{\beta_i'\}$ differ from $\{\gamma_i\}$ by a linear shear $L$ on $\Tg$. The transformation $L$ takes translations on the torus to other translations, and the image of the Pin structure on $\Tb$ under $\psi$ is invariant under these translations, just like the given Pin structure on $\Tg$ (the one coming from $\Pb$). Since these two Pin structures on $\Tg$ agree at a point, they agree everywhere.
  \end{proof}
  
 \begin{remark}
The fact that we chose Lie group Pin structures on the alpha and beta curves was key in the proof of handleslide invariance. Consider for example the alternative of equipping the Lagrangian tori with the Pin structure induced by the product of bounding Spin structures on each curve. (Recall that we can take products of Spin but not Pin structures; see Remark~\ref{rem:productpin}.) This ``bounding Pin structure'' would not have been invariant under reparametrizations of the torus by linear maps $L \in \GL(n, \Z)$. For example, up to isomorphism, the bounding Pin structure on $T^2$ differs from the Lie group Pin structure (which we know is independent of reparametrization) by the class $(1, 1) \in H^1(T^2; \Z/2) = \Z/2 \oplus \Z/2$. Clearly, this class is not preserved by the action of $\GL(n, \Z)$; e.g. the shear $(x, y) \mapsto (x, x+y)$ takes it to $(1, 0)$.
\end{remark}

Since handleslides give Hamiltonian isotopies of the Lagrangian tori, we can associate continuation maps to them, which are isomorphisms on Floer homology. In our case, {\em a priori} we get isomorphisms on the preliminary Heegaard Floer homology groups from Section~\ref{sec:definition}. To get the adjusted groups from Section~\ref{sec:adjust}, we tensor the preliminary groups with $\ell(\Pa, \Pb)$. Since this line is the same after doing the handleslide (the space $\BB$ stays fixed), we can simply tensor the preliminary map with the identity on $\ell(\Pa, \Pb)$ and obtain an isomorphism
$$ \Gamma^{\alphas}_{\betas \to \gammas}: \HFm(\Ta, \Tb, \s) \xrightarrow{\cong} \HFm(\Ta, \Tg,\s).$$

Nevertheless, for the proofs in the rest of the paper, it will be helpful to make a closer connection to the current Heegaard Floer literature. Therefore, we will work with triangle maps instead of continuation maps. Triangle maps are used in the original proof of handleslide invariance, in \cite[Section 9]{HolDisk}. 

The treatment of handleslides in \cite[Section 9]{HolDisk} starts by showing that $$\HFm(\Tb, \Tg, \s_0) \cong \Z[U] \otimes H_*(T^g; \Z),$$ and picking a generator $\Theta_{\beta, \gamma}$ of the top degree part of this group. (Here, $\mathfrak{s}_0$ is the torsion $\Spinc$ structure. This is the only $\Spinc$ supported by the $\beta$-$\gamma$ diagram, so we can drop it from the notation, following Convention~\ref{conv:Spinc}.) Then, we consider the product map
$$ F_{\Ta, \Tb, \Tg}: \HFm(\Ta, \Tb, \s) \otimes \HFm(\Tb, \Tg) \to \HFm(\Ta, \Tg, \s)$$
as in \eqref{eq:productmap}. We define the following map associated to the handleslide:
\begin{equation}
\label{eq:handleslidemap}
 \Psi^{\alphas}_{\betas \to \gammas}: \HFm(\Ta, \Tb, \s) \to \HFm(\Ta, \Tg, \s), \ \ \x \mapsto F_{\Ta, \Tb, \Tg}(\x \otimes \Theta_{\beta, \gamma}).
 \end{equation}
It is shown in \cite[Section 9]{HolDisk} that this map is an isomorphism (for suitable coherent orientation systems). 

Let us construct $ \Psi^{\alphas}_{\betas \to \gammas}$ in our setting, with the signs coming from Lie group Pin structures. The work of Perutz  \cite{Perutz} shows that $\Tg$ is Hamiltonian isotopic to $\Tb$. Therefore, by Proposition~\ref{prop:PSS} (keeping track of the basepoint as in $\HFm$), we have canonical isomorphisms
\begin{equation}
\label{eq:PSShere0}
 \HFmprel(\Tb, \Tg) \cong \HFmprel(\Tb, \Tb) \cong \Z[U] \otimes H_*(\Tb; |\ltop T\Tb|) \cong \Z[U] \otimes H_*(T^g).
 \end{equation}
 To go from the preliminary to the adjusted Heegaard Floer groups, we tensor them with $\ell(\Pb, \Pb)$. Note that the pair $(\Pb, \Pb)$ gives exactly the canonical coupled Spin structure $\Scan$ on $(\BB, \BB)$. It follows that the line $\ell(\Pb, \Pb)$ is canonically trivialized (by the constant path), so we also have canonical isomorphisms
 \begin{equation}
\label{eq:PSShere}
 \HFm(\Tb, \Tg) \cong \HFm(\Tb, \Tb) \cong \Z[U] \otimes H_*(\Tb; |\ltop T\Tb|) \cong \Z[U] \otimes H_*(T^g).
 \end{equation}

Further, note that $(\Sigma, \betas, \gammas)$ is a Heegaard diagram for the manifold $\#^g(S^1 \times S^2)$ with $b_1=g$, and the absolute mod $2$ grading $\grHF$ on its Heegaard Floer homology coincides with that on $\Z[U] \otimes  H_*(T^g)$ under the above isomorphisms. (This follows from the definition of the mod $2$ grading in terms of $\HFitw$ in \cite[Section 10.4]{HolDiskTwo}.) Moreover, in this case formula \eqref{eq:grHF} implies $\gr=\grHF$.

The chain complex $\CFm_*(\Tb, \Tg)$ has $2^g$ generators, the same as its homology, so the differential must be zero. Consider the intersection point in the top homological degree:
$$\Theta_{\beta, \gamma} = \{\theta_1, \dots, \theta_g\}$$
with $\theta_i \in \beta_i \cap \gamma_i$ as in Figure~\ref{fig:slide}. Since $\gr(\Theta_{\beta, \gamma} )=g$, Proposition~\ref{prop:canonical}  tells us that the line 
$$o(\Theta_{\beta, \gamma}) \otimes \ell(\Pb, \Pb)$$ is canonically trivialized. Since we already know the same about $\ell(\Pb, \Pb)$, we get that $o(\Theta_{\beta, \gamma})$ is canonically trivialized. Therefore, we can identify $\Theta_{\beta, \gamma}$ with the positive generator of this orientation space, and thus view it as a generator of the Floer homology $\HFmprel(\Tb, \Tg)$. We pick this $\Theta_{\beta, \gamma} $ to define the handleslide map on preliminary groups as a triangle map, as in \eqref{eq:handleslidemap}. We then tensor it with the identity on $\ell(\Pa, \Pb)$ to get the true handleslide map
\begin{equation}
\label{eq:Hmap}
 \Psi^{\alphas}_{\betas \to \gammas}: \HFm(\Ta, \Tb, \s) \to \HFm(\Ta, \Tg, \s).
 \end{equation}

\begin{remark}
\label{rem:notriangle}
In general, triangle maps are defined only on preliminary Heegaard Floer groups (following the recipe in Section~\ref{sec:polygon}). To define them on the adjusted groups $\HFm$, we would need maps
$$\ell(\Pa, \Pb) \otimes \ell(\Pb, \Pg) \to \ell(\Pa, \Pg).$$
A natural definition of these maps only exists in certain cases, for example when $\Pb =\Pg$ as in our situation.
\end{remark}

\begin{convention}
By a slight abuse of notation, when discussing handleslides, from now on we will not focus on the distinction between preliminary and adjusted Heegaard Floer groups. Since $\ell(\Pb, \Pb)$ is canonically trivialized we will write $\Theta_{\beta, \gamma}$ for the generator of $\HFm(\Tb, \Tg)$ as well, and describe the map in \eqref{eq:Hmap} as
$$\Psi^{\alphas}_{\betas \to \gammas} = F_{\Ta, \Tb, \Tg} (\cdot \otimes \Theta_{\beta, \gamma}).$$
The fact that we define it like this on the preliminary groups and then tensor with the identity is implicit.
\end{convention}

\begin{proposition}
\label{prop:hslideiso}
The map $\Psi^{\alphas}_{\betas \to \gammas}$ from \eqref{eq:Hmap} is an isomorphism.
\end{proposition}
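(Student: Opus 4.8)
The plan is to reduce the statement about our $\Z$-coefficient Heegaard Floer groups (defined via Lie group Pin structures and coupled Spin structures) to the corresponding statement over $\F_2$, where it is already known from \cite[Section 9]{HolDisk}. The map $\Psi^{\alphas}_{\betas \to \gammas}$ is, by construction, the triangle map $F_{\Ta, \Tb, \Tg}(\cdot \otimes \Theta_{\beta, \gamma})$ on the preliminary groups, tensored with the identity on $\ell(\Pa, \Pb)$. Since tensoring with a rank-one free abelian group preserves the property of being an isomorphism, it suffices to show that the preliminary triangle map $F_{\Ta, \Tb, \Tg}(\cdot \otimes \Theta_{\beta, \gamma}) \colon \HFmprel(\Ta, \Tb, \s) \to \HFmprel(\Ta, \Tg, \s)$ is an isomorphism.

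First I would set up the comparison: the chain complex $\CFmprel(\Ta, \Tb, \s)$, which is a free $\Z[U]$-module on the orientation lines $o(\x)$, has $\CFmprel(\Ta, \Tb, \s) \otimes_{\Z} \F_2$ canonically identified with the Ozsv\'ath--Szab\'o complex over $\F_2$ (the orientation lines become one-dimensional $\F_2$-vector spaces, and the differential and triangle map agree with the $\F_2$-counts because they count the same moduli spaces). Next I would argue that the triangle map $F_{\Ta,\Tb,\Tg}(\cdot \otimes \Theta_{\beta,\gamma})$ is defined over $\Z[U]$ and reduces mod $2$ to the $\F_2$ handleslide map of \cite[Section 9]{HolDisk}. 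There is one point to check here: that the generator $\Theta_{\beta,\gamma}$ we selected using Proposition~\ref{prop:canonical} (the canonically trivialized positive generator of $o(\Theta_{\beta,\gamma})$) reduces mod $2$ to the generator of the top-degree part of $\HFmprel(\Tb, \Tg) \cong \Z[U] \otimes H_*(T^g)$ used by Ozsv\'ath--Szab\'o; this follows because $\Theta_{\beta,\gamma}$ is by definition a unit generator of a rank-one free summand, and we computed in \eqref{eq:PSShere0} and Lemma~\ref{lem:S1S2} that over $\Z$ the $\beta$-$\gamma$ complex has trivial differential with $\HFmprel(\Tb,\Tg) \cong \Z[U]\otimes H_*(T^g)$, matching the $\F_2$ computation, with the top-degree generator being the image of $[\Tb]$ under the PSS isomorphism of Proposition~\ref{prop:PSS}.

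Now I would invoke the algebraic lemma: if $f\colon C \to C'$ is a $\Z[U]$-module chain map between complexes that are free over $\Z[U]$, bounded in the relevant (homological) grading below, and if $f \otimes \F_2$ induces an isomorphism on homology, then (by the universal coefficient / five lemma argument applied to the mapping cone, using that $H_*(\mathrm{Cone}(f))$ is finitely generated over $\Z[U]$ in each grading and its mod-$2$ reduction vanishes) $f$ itself induces an isomorphism on homology. One has to be slightly careful that the relevant groups are finitely generated; for $\HFm$ with a fixed $\Spinc$ structure, strong $\s$-admissibility guarantees that each graded piece of the chain complex is finitely generated over $\Z$, so the mapping cone has finitely generated homology and the argument goes through. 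Since $F_{\Ta,\Tb,\Tg}(\cdot \otimes \Theta_{\beta,\gamma}) \otimes \F_2$ is the Ozsv\'ath--Szab\'o handleslide isomorphism, we conclude that the $\Z[U]$-map is an isomorphism, and hence so is $\Psi^{\alphas}_{\betas\to\gammas}$ after tensoring with $\ell(\Pa,\Pb)$.

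The main obstacle I anticipate is not the algebra but verifying that the $\Z$-coefficient triangle map genuinely reduces to the $\F_2$ one with the \emph{same} choice of $\Theta$-generator: one must make sure that Proposition~\ref{prop:canonical}'s canonical trivialization of $o(\Theta_{\beta,\gamma})$ picks out precisely the $\F_2$-generator appearing in \cite[Section 9]{HolDisk}, rather than differing by a unit that could a priori be $-1$ (which reduces to $1$ mod $2$ anyway, so in fact this is harmless)—and, more substantively, that the higher-degree generators of $\HFmprel(\Tb,\Tg)$ play no role, i.e. that only the top generator $\Theta_{\beta,\gamma}$ is used, exactly as in the $\F_2$ theory. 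An alternative, cleaner route that sidesteps the triangle-map bookkeeping is to use the continuation-map description: the handleslide induces a Hamiltonian isotopy $\psi$ (by Perutz), and by Lemma~\ref{lemma:liepin} this isotopy preserves the Lie group Pin structures up to canonical homotopy, so the associated continuation map is an isomorphism on preliminary groups directly, and one then only needs to know (from the $\F_2$ theory, or from a direct chain-homotopy argument) that the continuation map agrees with the triangle map $F_{\Ta,\Tb,\Tg}(\cdot\otimes\Theta_{\beta,\gamma})$. I would present the continuation-map argument as the primary proof and remark that it matches the triangle-map description used elsewhere in the paper.
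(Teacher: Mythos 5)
Your primary route has a genuine gap: the algebraic reduction lemma you invoke is false. Knowing that $f \otimes \F_2$ is a quasi-isomorphism does \emph{not} imply that $f$ is one over $\Z$, even for complexes that are bounded below and finitely generated over $\Z$ in each grading. The issue is odd torsion. If $K = \mathrm{Cone}(f)$ has $H_*(K\otimes\F_2)=0$, then the universal coefficient theorem tells you only that $H_*(K)\otimes\F_2=0$ and $\Tor_1(H_*(K),\F_2)=0$, i.e.\ that each graded piece of $H_*(K)$ is a finitely generated abelian group of odd order. Nothing rules out $H_*(K)\cong\Z/3$, say. (The mod-2 test would let you conclude the map is an isomorphism after localizing at $2$, but not integrally.) So the argument does not ``go through'' as claimed: reducing to $\F_2$ simply cannot detect the odd-torsion failure you are trying to exclude, and the phrase ``finitely generated over $\Z[U]$ in each grading'' does not help. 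To make a change-of-coefficients argument work you would need the corresponding statement over $\F_p$ for all primes $p$ (or rationally, plus torsion control), which is a different proof than you propose.

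Your alternative route via the continuation map is much closer to what the paper actually hints at (it notes in Section~3.7 that handleslides are Hamiltonian isotopies and hence the continuation maps are \emph{a priori} isomorphisms on preliminary groups, by Perutz and Lemma~\ref{lemma:liepin}). But as stated it is also incomplete: you still need to show that the continuation map agrees with the specific triangle map $F_{\Ta,\Tb,\Tg}(\cdot\otimes\Theta_{\beta,\gamma})$ \emph{over $\Z$}, and here again reducing to $\F_2$ is not enough (the two maps could differ by $-1$, which is fine, but also in principle by some odd integer unit after passing to homology, and you would need a direct chain-level argument to exclude this). The paper instead avoids the continuation/triangle comparison entirely: following Ozsv\'ath--Szab\'o's Theorem~9.5, it constructs an explicit inverse $\Psi^{\alphas}_{\gammas\to\deltas}$, reduces (via the $A_\infty$ associativity) to the single identity $F_{\Tb,\Tg,\Td}(\Theta_{\beta,\gamma}\otimes\Theta_{\gamma,\delta})=\Theta_{\beta,\delta}$, and then verifies that the unique index-zero triangle (the product of $g$ small triangles on $\Sigma$, one for each curve) is counted with sign $+1$ using the Lie-group Spin structure conventions, Example~\ref{ex:abcp}, and the product principle of Example~\ref{ex:product}. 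That direct sign computation is the genuinely new content over the $\F_2$ argument, and it is exactly what both of your proposed routes try to avoid but cannot.
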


\begin{proof}
This appears as Theorem 9.5 in \cite{HolDisk}. We do not repeat the proof, but rather explain what needs to be modified when we use canonical orientations instead of coherent orientations. To show that $\Psi^{\alphas}_{\betas \to \gammas}$ is an isomorphism, Ozsv\'ath and Szab\'o construct an inverse 
$$\Psi^{\alphas}_{\gammas \to \deltas}: \HFm(\Ta, \Tg, \s) \to \HFm(\Ta, \Td, \s), \ \ \x \mapsto F_{\Ta, \Tg, \Td}(\x \otimes \Theta_{\gamma, \delta}),$$
where $\deltas$ is a collection of curves Hamiltonian isotopic to $\betas$ (as in Figure~\ref{fig:slide2}), and $\HFm(\Ta, \Td)$ is identified with $\HFm(\Ta, \Td)$ using this isotopy. Showing that $\Psi^{\alphas}_{\gammas \to \deltas} \circ \Psi^{\alphas}_{\betas \to \gammas} = \id$ boils down to proving the relation
\begin{equation}
\label{eq:bgd}
 F_{\Tb, \Tg, \Td}(\Theta_{\beta, \gamma} \otimes \Theta_{\gamma, \delta}) = \Theta_{\beta, \delta}
 \end{equation}
which involves counting index zero holomorphic triangles in the diagram in Figure~\ref{fig:slide2}. There is only one such triangle (in the symmetric product), which is the product of the $g$ darkly shaded triangles on the Heegaard surface. 

Let us choose counterclockwise orientations on each of the $\beta$, $\gamma$, and $\delta$ curves in Figure~\ref{fig:slide2}. We also order the curves in each set with those indexed by $1$ and $2$ being the ones involved in the handleslide, again as shown in Figure~\ref{fig:slide2}. This equips the Lagrangians with orientations, in a way compatible with their  existing coupled orientations coming from the grading. In fact, we now have data that specifies a Pin structure (and even a Spin structure) on each Lagrangian, as in Definition~\ref{def:chooseab}. Furthermore, each curve becomes an oriented Lagrangian on the Heegaard surface, which we also endow with its Lie group Spin structure. Using the given ordering of the curves, we get product Spin structures on the tori. Then,  all the vertices of the darkly shaded triangles have $\gr = 1$. In the setting of coherent orientations, we know from Example~\ref{ex:abcp} that each of the darkly shaded triangles gets a sign of $+1$; see Figure~\ref{fig:triangles} (b), with $\alpha'$, $\beta'$ and $\gamma'$ replaced by $\beta_i$, $\gamma_i$, $\delta_i$, in this order. Since the three triangles come with a positive sign, so does their product; see Example~\ref{ex:product}. Relation \eqref{eq:bgd} follows from here.
\end{proof}

\begin{figure}
{
\fontsize{10pt}{11pt}\selectfont
   \def\svgwidth{4.8in}
   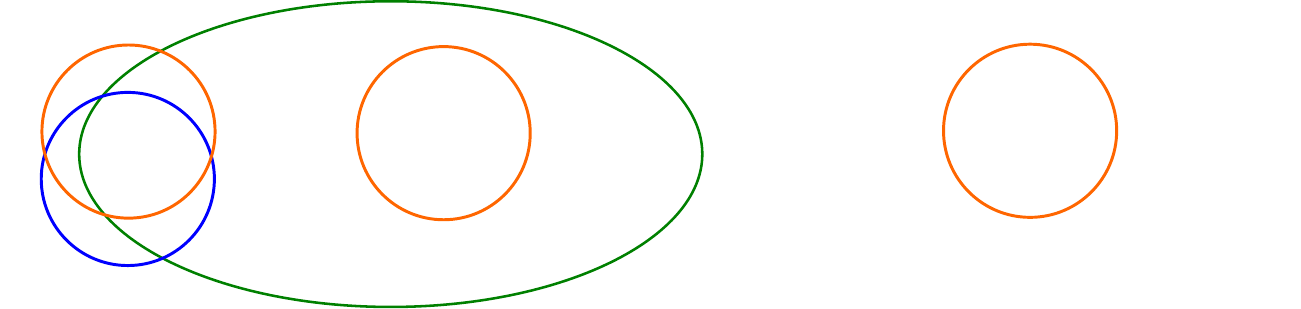
}
\caption{An index zero holomorphic triangle contributing $+1$.}
\label{fig:slide2}
\end{figure}

This concludes the description of the triangle map induced by a beta handleslide on $\HFm$.  The case of an alpha handleslide (changing $\alphas$ into a new curve collection $\gammas$) is similar, with the triangle map 
$$\Psi^{\alphas \to \gammas}_{\betas}:  \HFm(\Ta, \Tb, \s) \to \HFm(\Tg, \Tb, \s), \ \ \x \mapsto F_{\Tg, \Ta, \Tb}( \Theta_{\gamma, \alpha} \otimes \x)$$ involving the top degree generator $\Theta_{\gamma, \alpha} \in \HFm(\Tg, \Ta)$. 

The maps on the other versions of $\HF$ are defined similarly. In the case of $\HFi$ and $\HFhat$, in Equation~\ref{eq:PSShere} we need to replace $\Z[U]$ with $\Z[U, U^{-1}]$ and $\Z$, respectively, and use the corresponding $\Theta$ elements. In the case of $\HFp$, we still use the $\Theta$ element on $\HFm$ to construct the triangle map; see Theorem 8.12 and Section 9 in \cite{HolDisk}.

\subsection{Stabilizations}
\label{sec:stabs}
We now turn to maps induced by stabilizations. The stabilization move involves replacing a Heegaard diagram $\He=(\Sigma, \alphas, \betas)$ with $\He'=(\Sigma', \alphas', \betas')$, where $\Sigma'$ is the connected sum of $\Sigma$ with a (two-dimensional) torus $E$, and the curve collections $\alphas'$, $\betas'$ are obtained from $\alphas$ resp. $\betas$ by adding two new curves on $E$: $\alpha_E$ and $\beta_E$, intersecting in a single point $\x_E$. (Since here we work with embedded Heegaard diagrams, $E$ is a torus inside $Y$, disjoint from $\Sigma$, and we take their connect sum inside $Y$.) We equip the new curves with the coupled orientation induced from the one on $E\cong \alpha_E \times \beta_E$. We also equip them with the Lie group Pin structures $P^{\#}_{\alpha, E}$ and $P^{\#}_{\beta, E}$ (recall that canonical choices of Lie group Pin structures  exist in dimension $1$). Together, the coupled orientation and Pin structures produce the canonical coupled Spin structure $S^{\#}_{\can, E}$ on $(\alpha_E, \beta_E)$. It follows that the line $\ell(P^{\#}_{\alpha, E}, P^{\#}_{\beta, E})$ is canonically trivialized.

Starting from Pin structures $\Pa$ and $\Pb$ in the setting before stabilization, we can take the product of the coupled Spin structure $(\Pa, \Pb)$ with $S^{\#}_{\can, E}$ and obtain a coupled Spin structure $(P^{\#}_{\alpha'}, P^{\#}_{\beta'})$ after stabilization. Since canonical coupled Spin structures are preserved by products (cf. Remark~\ref{rem:cansum}), we have canonical isomorphisms
\begin{equation}
\label{eq:lll}
\ell(P^{\#}_{\alpha'}, P^{\#}_{\beta'}) \cong \ell(\Pa, \Pb) \otimes \ell(P^{\#}_{\alpha, E}, P^{\#}_{\beta, E}) \cong \ell(\Pa, \Pb).
\end{equation}

Recall from Proposition~\ref{prop:onlycoupled} that the orientation spaces $o(\x)$ depend on Pin structures only through their combined coupled Spin structure. Coupled Spin structures behave well with regard to products, and in fact we have isomorphisms
$$ o(\x) \otimes o(\x_E) \cong o(\x \times \x_E),$$
for all $\x \in \Ta \cap \Tb$. (Compare Example~\ref{ex:product}.)

The proof of stabilization invariance in \cite[Section 10]{HolDisk} is based on a neck stretching argument.  Ultimately, we identify the $J$-holomorphic strips on $\Sym^{g+1}(\Sigma')$ (contributing to the Heegaard Floer group defined from $\He'$) with products of those on $\Sym^g(\Sigma)$ (contributing to the Heegaard Floer group defined from $\He$) and on $E$ (with boundaries on $\alpha_E$ and $\beta_E$). We end up in the situation of  Example~\ref{ex:product} and obtain an isomorphism
\begin{equation}
\label{eq:lV}
 \HFmprel(\Tap, \Tbp ) \cong \HFmprel(\Ta, \Tb ) \otimes_{\Z[U]} \HFmprel(\alpha_E, \beta_E).
 \end{equation}
Strictly speaking, to be in the setting of Example~\ref{ex:product} one needs to lift the coupled Spin to Spin structures; but the resulting isomorphism does not depend on this lift.

Combining \eqref{eq:lV} with \eqref{eq:lll}, we also get an isomorphism
$$ \HFm(\Tap, \Tbp ) \cong \HFm(\Ta, \Tb ) \otimes_{\Z[U]} \HFm(\alpha_E, \beta_E).$$

Observe that $(E, \alpha_E, \beta_E)$ is a Heegaard diagram for $S^3$. The group $ \HFm(\alpha_E, \beta_E )\cong \HFm(S^3)$ is a rank one free module over $\Z[U]$ supported in grading $\grHF=0$, which by formula \eqref{eq:grHF} corresponds to $\gr=1$. Lemma~\ref{lem:dim2} (a) says that the line $o(\x_E) \otimes \ell(P^{\#}_{\alpha, E}, P^{\#}_{\beta, E})$ is canonically trivialized; so we can view $\HFm(\alpha_E, \beta_E )$ as being generated by $\x_E$. (Compare Proposition~\ref{prop:canonical}.) We obtain the desired stabilization isomorphism:
\begin{equation}
\label{eq:S}
 S=\cdot \otimes \x_E : \HFm(\Ta,\Tb) \xrightarrow{\cong} \HFm(\Tap, \Tbp).
 \end{equation}
 
The stabilization isomorphisms for the other variants of $\HF$ are constructed similarly.

\subsection{Invariance}
\label{sec:invariance}
We are now ready to prove a weak version of Theorem~\ref{thm:main}: we establish the existence of isomorphisms between Heegaard Floer homologies associated to different Heegaard diagrams for the same $3$-manifold. We leave the proof that this isomorphisms are canonical for the next section.

For simplicity, we will denote by $\HFcirc$ any of the variants of Heegaard Floer homology, $\circ \in \{\widehat{\phantom{u}}, +, -, \infty\}$.

\begin{proposition}
\label{prop:invariance}
Let $Y$ be a closed, oriented $3$-manifold equipped with a basepoint $z \in Y$ and a $\Spinc$ structure $\s$. Then, the isomorphism class of the Heegaard Floer homology $\HFcirc$ (defined using Pin structures on the Lagrangian tori) is an invariant of the pair $(Y, \s)$.
\end{proposition}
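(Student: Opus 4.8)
## Proof proposal for Proposition~\ref{prop:invariance}

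The plan is to follow the classical strategy of Ozsv\'ath--Szab\'o \cite{HolDisk}: two Heegaard diagrams for the same pointed $3$-manifold $(Y,z)$ are related by a finite sequence of Heegaard moves --- isotopies of the curves, handleslides, and (de)stabilizations --- together with isotopies of the Heegaard surface in $Y$. So it suffices to produce, for each elementary move, an isomorphism of the relevant Heegaard Floer groups that respects the $\Spinc$ decomposition, and this has already been done piecemeal in the preceding subsections. Concretely: isotopies of the alpha or beta curves induce Hamiltonian isotopies of the Lagrangian tori $\Ta$, $\Tb$ in $M=\Sym^g(\Sigma)$ (via Perutz's symplectic form), and these carry the Lie group Pin structures to Lie group Pin structures up to canonical homotopy (the same argument as Lemma~\ref{lemma:liepin}, in fact easier since no shear is involved); hence the continuation maps on the preliminary groups are isomorphisms, and tensoring with the identity on $\ell(\Pa,\Pb)$ (which does not change, as $\AA$ and $\BB$ are unaffected) gives isomorphisms of the adjusted groups. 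Handleslide invariance is Proposition~\ref{prop:hslideiso}: the triangle map $\Psi^{\alphas}_{\betas\to\gammas}$ of \eqref{eq:Hmap} is an isomorphism, and similarly for alpha handleslides. Stabilization invariance is the isomorphism $S$ of \eqref{eq:S}, obtained from the neck-stretching identification \eqref{eq:lV} together with \eqref{eq:lll}. In each case one checks that the map is $\Z[U]$-linear and preserves the splitting into $\Spinc$ structures, which is routine because all these constructions are local near the relevant region of the diagram and the identification of $\Spinc$ structures with equivalence classes of intersection points (Section~\ref{sec:decompose}) is move-independent.

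Two standing points need to be addressed before assembling these. First, one must know that the preliminary groups $\HFcircprel(\H,\s)$ are independent of the auxiliary choices made in their very definition --- the almost-complex structure $J$, the Perutz form $\omega$ (equivalently $\epsilon$), the admissible isotopy class of the diagram, and the choice of Pin structures $\Pa$, $\Pb$. Independence of $J$ and of the perturbation data is the standard Floer-theoretic continuation argument, which goes through verbatim with canonical orientations in place of coherent ones because the gluing compatibility of Pin-induced orientations was established in Section~\ref{sec:LFH} (following \cite[Section 12f]{SeidelBook}); independence of the Pin structures is exactly Proposition~\ref{prop:nopin}. Second, changing the basepoint $z$ within a connected region of $\Sigma\setminus(\alphas\cup\betas)$ does not affect anything, and moving $z$ across a curve can be realized by an isotopy of that curve; since we are only claiming invariance for a fixed $z\in Y$ here (naturality under moving $z$ is deferred to Section~\ref{sec:naturality}), this is a non-issue at the level of isomorphism classes.

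With these in hand, the proof is: given two diagrams $\H_0,\H_1$ for $(Y,\s,z)$, choose a sequence of elementary moves connecting them (this is the content of the Reidemeister--Singer-type theorem for Heegaard diagrams; over $\F_2$ it is used in \cite{HolDisk} and its refinement in \cite{JTZ}, and the combinatorial input is the same over $\Z$), compose the isomorphisms attached to the individual moves, and observe that the composite is an isomorphism $\HFcirc(\H_0,\s)\xrightarrow{\cong}\HFcirc(\H_1,\s)$ of $\Z[U]$-modules. This proves that the isomorphism class depends only on $(Y,\s)$.

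The main obstacle --- and the reason this is only a ``weak'' version of Theorem~\ref{thm:main} --- is not producing the isomorphisms but controlling them: different sequences of moves could a priori yield different isomorphisms, so the proposition as stated only delivers the isomorphism \emph{class}. Pinning down a canonical isomorphism requires the transitive-system machinery of \cite{JTZ}, i.e. checking that the various elementary maps commute appropriately and that the relations coming from loops of handleslides (described in \cite[Appendix A]{JTZ}) are satisfied with the correct signs; that is precisely what is postponed to Section~\ref{sec:naturality}. For the present statement, the only genuinely delicate point is verifying that each elementary map is well-defined independently of the small choices internal to that move (e.g. the precise Hamiltonian isotopy realizing a curve isotopy, or the choice of $\Theta$-element for a handleslide) --- but for handleslides this is handled by Proposition~\ref{prop:canonical}, which canonically trivializes $o(\Theta_{\beta,\gamma})\otimes\ell(\Pb,\Pb)$, and for isotopies and stabilizations it follows from the contractibility of the relevant spaces of choices.
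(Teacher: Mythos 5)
Your proposal is correct and follows essentially the same route as the paper's own proof: reduce to the three elementary pointed Heegaard moves, use continuation maps for Hamiltonian isotopies (noting Lie group Pin structures are preserved), Proposition~\ref{prop:hslideiso} for handleslides, and the stabilization map $S$ of \eqref{eq:S}, then invoke the Reidemeister--Singer-type theorem (the paper cites \cite[Proposition 7.1]{HolDisk}, which also gives independence of $z$). One small remark: your parenthetical worry that independence of $z$ is ``a non-issue'' because we work with a fixed $z$ slightly misreads the statement --- the proposition asserts invariance of the \emph{pair} $(Y,\s)$, so $z$-independence is part of the claim --- but, as you yourself note, moving $z$ is absorbed into isotopies and so the conclusion is unchanged.
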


\begin{proof}
Following \cite{HolDisk}, we have to prove invariance under three kinds of moves on Heegaard diagrams: Hamiltonian isotopies of the alpha and beta curves, handleslides of the alpha and beta curves, and stabilizations. 

Hamiltonian isotopies of the alpha and beta curves induce Hamiltonian isotopies of the Lagrangian tori, and these preserve the Lie group Pin structures. Then, as usual in Floer theory, continuation maps associated to Hamiltonian isotopies give rise to isomorphisms between the Floer homologies.

Handleslides were discussed in Section~\ref{sec:handleslide}. We associate to them triangle maps of the form $\Psi^{\alphas}_{\betas \to \gammas}$, and Proposition~\ref{prop:hslideiso} shows that these are isomorphisms. Similarly, to stabilizations we associate the isomorphisms $S$ defined in Section~\ref{sec:stabs}.

So far we have only discussed pointed Heegaard moves, i.e. those supported away from the basepoint $z$. It is shown in \cite[Proposition 7.1]{HolDisk} that any two Heegaard diagrams representing the same $Y$ (possibly with different basepoints) become diffeomorphic after a finite sequence of pointed Heegaard moves. 
It follows that the isomorphism class of $\HFcirc$ is independent of $z$.
\end{proof}

\begin{remark}
If we were only interested in Proposition~\ref{prop:invariance} (and not in the naturality results discussed in the next section), there would be no need of coupled Spin structures or of adjusting the preliminary Heegaard Floer groups as in Section~\ref{sec:adjust}. We could have chosen any Lie group Pin structures on the Lagrangian tori, and worked with the preliminary Heegaard Floer groups. Indeed, up to (non-canonical) isomorphism, these are the same as the adjusted groups.
\end{remark}


\section{Naturality}
\label{sec:naturality}
To complete the proof of Theorem~\ref{thm:main}, it remains to show that the isomorphisms induced by (pointed) Heegaard moves on $\HFcirc$ are natural. In other words, if we go from one Heegaard diagram $\H$ to another diagram $\H'$ by a sequence of moves, the resulting isomorphism between the Heegaard Floer homologies of $\H$ and $\H'$ does not depend on the sequence of moves we chose. 

In \cite{JTZ}, Juh\'asz, Thurston and Zemke gave a list of conditions that need to be checked to ensure that naturality holds for a $3$-manifold invariant defined from Heegaard diagrams. We review here some of their set-up. While they work with sutured manifolds, we will restrict here to based $3$-manifolds, for simplicity; this is the class $\mathcal{S}_{\operatorname{man}}$ in their notation.

\begin{remark}
When talking about naturality, it is important to fix the basepoint $z$. Even over $\F_2$, when one moves $z$ to another basepoint $z'$ by following a path on $Y$, there is an isomorphism between the Floer homologies that depends on this path. See \cite{ZemkeHat}, \cite{ZemkeHF}. 
\end{remark}

\subsection{Strong Heegaard invariants}
We define an {\em isotopy diagram} to be an equivalence class of Heegaard diagrams (for based $3$-manifolds), where two diagrams are equivalent if the underlying Heegaard surfaces are the same, their alpha curves differ by isotopies, and their beta curves differ by isotopies. Further, we say that two isotopy diagrams are {\em $\alpha$-equivalent} if they differ by a sequence of $\alpha$-handleslides. We define {\em $\beta$-equivalence} similarly. (Of course, all isotopies and handleslides here are supposed to avoid the basepoint.)

We let $\G$ be the oriented (multi-)graph whose vertices are all isotopy diagrams, and whose edges are associated to diagram moves: $\alpha$-equivalences, $\beta$-equivalences, stabilizations, destabilizations, and diffeomorphisms. If we restrict to only one kind of these moves, we denote the respective subgraphs (with the same vertices as $\G$) by $\G_\alpha$, $\G_{\beta}$, $\Gstab$ and $\Gdiff$.

\begin{definition}
\label{def:dr}
A {\em distinguished rectangle} in $\G$ is a subgraph
$$ \xymatrix{
\H_1 \ar[d]^f \ar[r]^e &\H_2\ar[d]^g\\
\H_3 \ar[r]^h &\H_4}$$
such that one of the following holds:
\begin{enumerate}
\item
Both $e$ and $h$ are $\alpha$-equivalences, whereas $f$ and $g$ are $\beta$-equivalences; 
\item
Both $e$ and $h$ are equivalences of the same type ($\alpha$ or $\beta$), whereas $f$ and $g$ are stabilizations;
\item
Both $e$ and $h$ are equivalences of the same type, while $f=g$ is a diffeomorphism.
\item
All of $e$, $f$, $g$ and $h$ are stabilizations, with $e$ and $h$ both consisting of replacing the same disk $D_1$ in the Heegaard surface with the same punctured torus $T_1$, and similarly $f$ and $g$ both consisting of replacing a disk $D_2$ with a punctured torus $T_2$. We require that $D_1 \cap D_2 = \emptyset$ and $T_1 \cap T_2 =\emptyset$;
\item Both $e$ and $h$ are stabilizations, whereas $f$ and $g$ are diffeomorphisms, taking one stabilization to the other.
\end{enumerate}
\end{definition}

We also need the notion of a {\em simple handleswap}, which is a triangle in $\G$ consisting of an $\alpha$-equivalence (a handleslide), a $\beta$-equivalence (another handleslide), and a diffeomorphism, all being the identity except on a punctured genus two surface, where they look like in Figure~\ref{fig:handleswap}. 

\begin{figure}
{
\fontsize{10pt}{11pt}\selectfont
   \def\svgwidth{5in}
   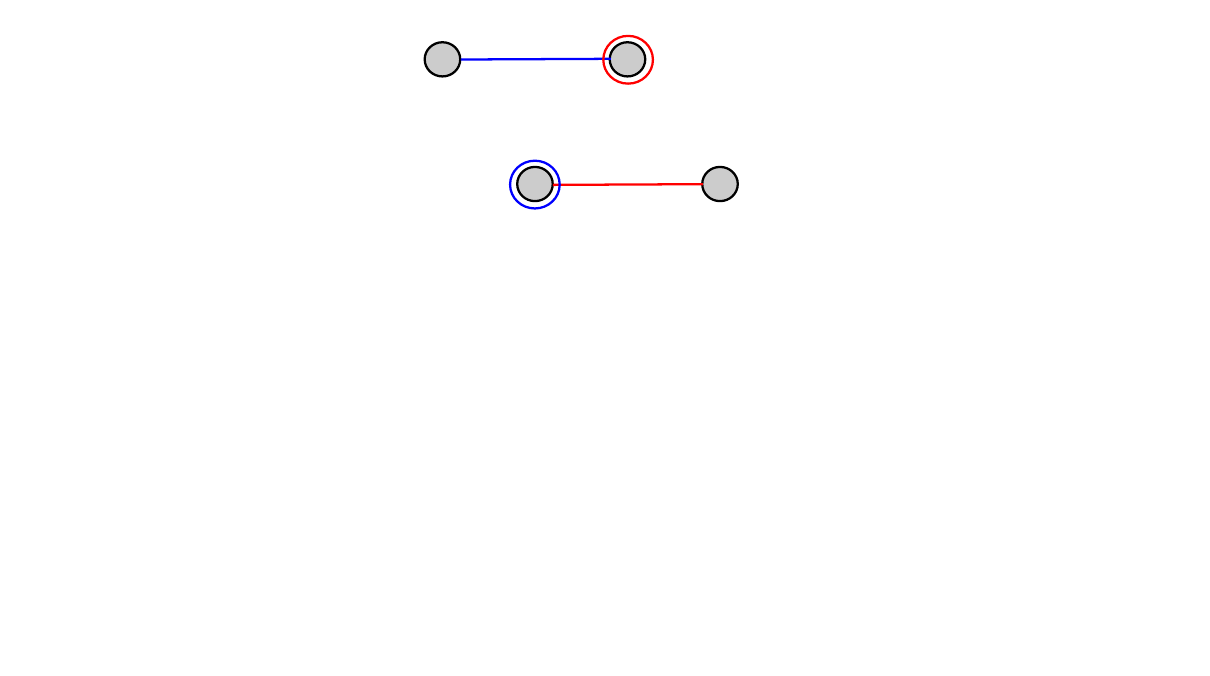
}
\caption{A simple handleswap. We reproduce here Figure 4 in \cite{JTZ}. The dashed curve is the boundary of a genus two surface obtained by identifying the boundaries of the grayed circles in pairs. (In each diagram, the circles at the same height are identified via reflection in a vertical axis). The $\alpha$ circles are in red and the $\beta$ circles in blue.}
\label{fig:handleswap}
\end{figure}

\begin{definition}
\label{def:shi}
Let $\Cat$ be a category. A {\em strong Heegaard invariant} is an assignment $F: \G \to \Cat$ that takes the vertices of $\G$ (isotopy diagrams) to objects in $\Cat$, and edges of $\G$ to isomorphisms in $\Cat$, with the following properties:
  \begin{enumerate}
  \item \label{functoriality} {\bf Functoriality:} The restrictions of $F$ to  $\G_\alpha$, $\G_{\beta}$, and $\Gdiff$ are functors. Further, if $e$ is a stabilization and $e'$ the corresponding destabilization, then $F(e') = F(e)^{-1}$.
  \item \label{commutativity} {\bf Commutativity}: For every distinguished rectangle in $\G$, applying $F$ yields a commutative diagram in $\Cat$;
  \item \label{continuity}{\bf Continuity:} If $e$ is an edge in $\G$ associated to a diffeomorphism $\psi$ from the same diagram to itself, and $\psi$ is isotopic to the identity, then $F(e)$ is the identity; 
  \item \label{handleswap} {\bf Handleswap invariance:}  For every simple handleswap 
  $$ \xymatrix{
\H_1 \ar[dr]^e & \\
\H_3 \ar[u]^g &\H_2 \ar[l]^f}$$
the composition $F(g) \circ F(f) \circ F(e)$ is the identity.
  \end{enumerate}
\end{definition}

Juh\'asz, Thurston and Zemke proved that strong Heegaard invariants are natural:
\begin{theorem}[Theorem 2.38 in \cite{JTZ}]
\label{thm:strongH}
Let $F: \G \to \Cat$ be a strong Heegaard invariant. If two isotopy diagrams $\H_1$ and $\H_2$ are related by a path of arrows in $\G$, then the isomorphism from $F(\H_1)$ to $F(\H_2)$ induced by composing the morphisms along the path is independent of the choice of this path. 
\end{theorem}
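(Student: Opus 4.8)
This is Theorem 2.38 in \cite{JTZ}, and the plan is to reproduce its argument, recast in groupoid terms. Since $F$ sends every edge of $\G$ to an isomorphism, restricts to a functor on each of $\G_\alpha$, $\G_\beta$ and $\Gdiff$, and satisfies $F(e')=F(e)^{-1}$ for a stabilization/destabilization pair, $F$ first extends to a functor from the path category of $\G$ to $\Cat$ carrying every arrow to an isomorphism. The graph $\G$ is connected by the pointed Reidemeister--Singer theorem, used in \cite[Proposition 7.1]{HolDisk}, so the asserted independence of path is equivalent to the statement that $F$ sends every based loop in $\G$ to the identity morphism. It therefore suffices to verify this on a set of loops that, up to basepoint change and homotopy, generates $\pi_1$ of $\G$.

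The main work is producing a manageable such set, and here I would import the combinatorial backbone from \cite{JTZ}: a refinement of Reidemeister--Singer showing that, modulo the relations supplied by distinguished rectangles, every loop in $\G$ is a concatenation of loops of four kinds --- (a) loops contained in a single $\G_\alpha$ or $\G_\beta$ (loops of handleslides of one type); (b) simple handleswaps; (c) loops contained in $\Gdiff$; (d) loops contained in $\Gstab$ --- together with a further decomposition of the loops of type (a), carried out in \cite[Appendix A]{JTZ}, expressing any loop of handleslides of a fixed type in terms of commutations of disjointly supported handleslides, triangles (a handleslide composed with its inverse), and one handleswap-type relation.

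Granting this structural input, the remainder is a formal verification with the four axioms of Definition~\ref{def:shi}. The commutativity axiom makes $F$ respect every distinguished-rectangle relation; in particular the commutation and triangle relations among handleslides are instances of distinguished rectangles of type (1) (combined with functoriality on $\G_\alpha$, $\G_\beta$), hence map to identities. The handleswap axiom disposes of simple handleswaps and of the handleswap-type relation among handleslides. Loops in $\Gstab$ carry equal numbers of stabilizations and destabilizations; using $F(e')=F(e)^{-1}$ and the distinguished rectangles of types (4) and (5) one cancels these in pairs. Loops in $\Gdiff$ are handled by the continuity axiom, after presenting the relevant pointed mapping class group by generators each realized by a diffeomorphism isotopic to the identity on the nose, with relations reducing to distinguished rectangles of types (3) and (5). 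Assembling these cases shows $F$ kills every loop, which is the theorem.

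The hard part is the structural input of the second paragraph, not the bookkeeping of the third: proving that loops in $\G$ decompose as stated is the refined Reidemeister--Singer analysis together with the classification of loops of handleslides, both of which rest on Cerf/Morse theory for one-parameter families of Heegaard splittings and a substantial amount of case-by-case verification. These are exactly the results we take from \cite{JTZ}, so in practice the proof consists of invoking them and running the formal diagram chase above.
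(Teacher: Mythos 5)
This statement is cited, not proved, in the paper: it is quoted verbatim as \cite[Theorem 2.38]{JTZ}, and the paper immediately moves on to use it (together with \cite[Theorem A.6]{JTZ}, stated here as Theorem~\ref{thm:shi2}) without reproducing any of the Cerf-theoretic argument. So there is no ``paper's own proof'' to compare against; I can only assess your sketch of the cited argument on its own terms.

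Your top-level reduction is correct: since $F$ sends edges to isomorphisms, path-independence is equivalent to $F$ annihilating every based loop, and it suffices to check this on a generating set of loops. But your description of that generating set conflates Theorem 2.38 with the refined Theorem A.6 of \cite{JTZ}. In the graph $\G$ used in Theorem 2.38, an $\alpha$-equivalence is a \emph{single} edge (there is a unique $\alpha$-equivalence morphism between any two $\alpha$-equivalent isotopy diagrams), so $\G_\alpha$ is a disjoint union of ``complete'' groupoids and the functoriality axiom alone forces $F$ to kill every loop inside $\G_\alpha$ or $\G_\beta$. No handleslide decomposition is needed or even meaningful at this level: there are no individual handleslide edges in $\G$. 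The Appendix~A machinery you invoke --- classifying loops of handleslides via slide triangles, commuting slide squares, pentagons, etc., as in Definition~\ref{def:hloops} --- is precisely what is needed for the \emph{other} theorem (Theorem~\ref{thm:shi2}/\cite[Theorem A.6]{JTZ}), whose hypotheses live on the finer graph $\G'$ with handleslide edges, and whose conclusion is that such an assignment extends to a strong Heegaard invariant on $\G$. You have imported the hard part of that other result into the proof of this one, where it does not belong. Similarly, ``commutation and triangle relations among handleslides are instances of distinguished rectangles of type~(1)'' is not right: type (1) rectangles relate an $\alpha$-equivalence to a $\beta$-equivalence, and relations among $\alpha$-handleslides alone are a feature of $\G'$, not $\G$. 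The remainder of your sketch --- distinguished rectangles, simple handleswaps, continuity handling $\Gdiff$, and stabilization/destabilization cancellation handling $\Gstab$ --- is the right picture of what the JTZ proof actually verifies, and the genuine content (that these loops generate $\pi_1$ of $\G$) is, as you say, the Cerf-theoretic heavy lifting in \cite{JTZ}.
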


In \cite[Theorem 2.33 (2)]{JTZ}, they further proved that the $\HFm$ for $\circ \in \{\widehat{\phantom{u}}, +, -, \infty\}$, defined without signs, are strong Heegaard invariants into the category of $\F_2[U]$-modules. We need to show the same thing with signs, i.e., that they are strong Heegaard invariants into the category of $\Z[U]$-modules. (A weaker version of this claim, in the projective category of $\Z[U]$-modules where morphisms are defined up to a sign, was proved by Gartner in \cite{Gartner}.)

\subsection{Loops of handleslides}
In our context it is helpful to decompose each $\alpha$- and $\beta$-equivalence $f$ into a sequence of handleslides, and define $F(f)$ as the composition of maps associated to those handleslides. In \cite[Appendix A]{JTZ}, Juh\'asz, Thurston and Zemke describe a set of conditions that produce a strong Heegaard invariant, using maps induced by handleslides instead of equivalences. More details for their proofs appear in the work of Qin \cite{Qin}.

First, if $f$ is an equivalence, to prove that the map $F(f)$ is well-defined (that is, it does not depend on how we present $f$ as a composition of handleslides), we would need to show that a loop of handleslides produces the identity. Definition A.3 and Proposition A.5 in \cite{JTZ} give a finite set of types of such loops that generate all others. To describe them, let us first define an {\em attaching set} on a based surface $(\Sigma, z)$ to be an isotopy class of an unordered collection of $g$ disjoint simple closed curves $\beta_1, \dots, \beta_g$ on $\Sigma \setminus \{z\}$ that are linearly independent in $H_1(\Sigma)$. For example, the collections $\alphas$ and $\betas$ in a Heegaard diagram are attaching sets. (Unlike in \cite{JTZ}, here we pick $\beta$ instead of $\alpha$ as the notation for a typical attaching set. This is because $\beta$-handleslides, where we fix the $\alpha$ curves, are more commonly considered in Heegaard Floer theory; e.g. in \cite{HolDisk} and in Section~\ref{sec:handleslide} of this paper. Of course, this convention is of little importance.) 

\begin{definition}
\label{def:hloops}
A {\em handleslide loop} is one of the following sequences of attaching sets on $\Sigma$ connected by handleslides:
\begin{enumerate}
\item \label{slide1} A {\em slide triangle}, formed of three attaching sets of the form
$$ \xymatrix{ \{\beta_1, \beta_2\} \cup \vbeta \ar@{-}[rr] \ar@{-}[rd] & & \{\beta_2, \beta_3\} \cup \vbeta \ar@{-}[ld]\\
&  \{ \beta_1, \beta_3 \} \cup \vbeta  & }$$
where  $\beta_1, \beta_2, \beta_3$ bound a pair-of-pants, and $\vbeta$ is a fixed collection of $g-2$ curves.

\item \label{slide2} A {\em commuting slide square}, formed of four attaching sets of the form
$$ \xymatrix{ \{\beta_1, \beta_2, \beta_3, \beta_4 \} \cup \vbeta \ar@{-}[r] \ar@{-}[d] & \{\beta_1', \beta_2, \beta_3, \beta_4 \} \cup \vbeta \ar@{-}[d]\\
\{\beta_1, \beta_2, \beta_3', \beta_4 \} \cup \vbeta \ar@{-}[r] & \{\beta_1', \beta_2, \beta_3', \beta_4 \} \cup \vbeta }$$
where $\beta_1'$ is obtained by sliding $\beta_1$ over $\beta_2$, and $\beta_3'$ is obtained by sliding $\beta_3$ over $\beta_4$. Here, $\vbeta$ is a fixed collection of $g-4$ curves. 

\item \label{slide3} A square of the form 
$$ \xymatrix{ \{\beta_1, \beta_2, \beta_3 \} \cup \vbeta \ar@{-}[r] \ar@{-}[d] & \{\beta_1', \beta_2, \beta_3 \} \cup \vbeta \ar@{-}[d]\\
\{\beta_1'', \beta_2, \beta_3 \} \cup \vbeta \ar@{-}[r] & \{\beta_1''', \beta_2, \beta_3', \beta_4 \} \cup \vbeta }$$
where $\beta_1'$ is obtained by sliding $\beta_1$ over $\beta_2$, whereas $\beta_1''$ is obtained by sliding $\beta_1$ over $\beta_3$, and $\beta_1'''$ is obtained by sliding $\beta_1'$ over $\beta_3$ (or, equivalently, sliding $\beta_1''$ over $\beta_2$). Here, $\vbeta$ is a fixed collection of $g-3$ curves.

\begin{figure}
{
\fontsize{10pt}{11pt}\selectfont
   \def\svgwidth{6in}
   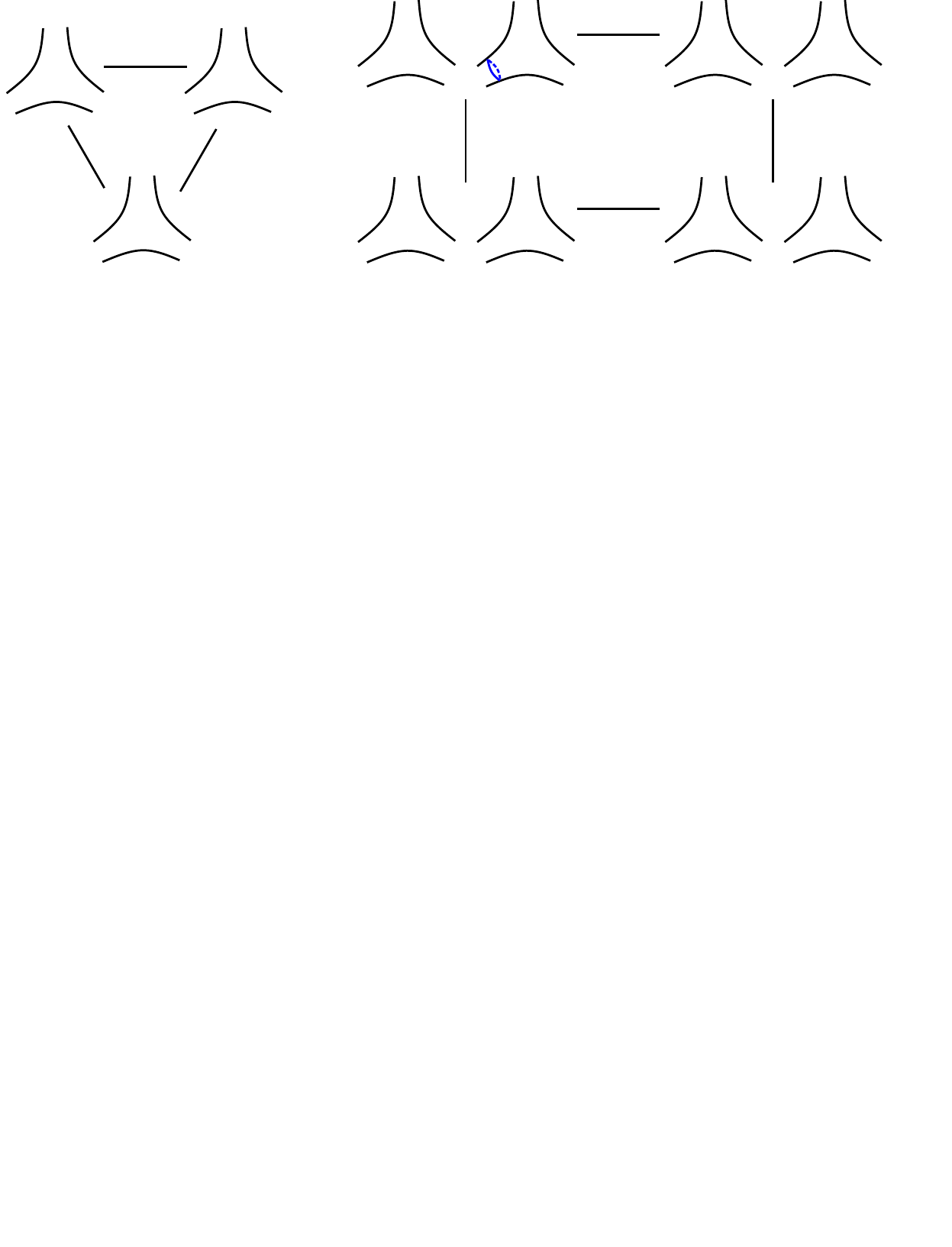
}
\caption{The six types of handleslide loops.}
\label{fig:loops}
\end{figure}

\item \label{slide4} A square of the form 
$$ \xymatrix{ \{\beta_1, \beta_2, \beta_3 \} \cup \vbeta \ar@{-}[r] \ar@{-}[d] & \{\beta_1', \beta_2, \beta_3  \} \cup \vbeta \ar@{-}[d]\\
\{\beta_1, \beta_2, \beta_3' \} \cup \vbeta \ar@{-}[r] & \{\beta_1', \beta_2, \beta_3' \} \cup \vbeta }$$
where $\beta_1'$ is obtained by sliding $\beta_1$ over $\beta_2$, and $\beta_3'$ is obtained by sliding $\beta_3$ over $\beta_2$, so that the two slides are happening along arcs that reach $\beta_2$ from opposite sides. Here, $\vbeta$ is a fixed collection of $g-3$ curves.

\item \label{slide5} A square of the form 
$$ \xymatrix{ \{\beta_1, \beta_2 \} \cup \vbeta \ar@{-}[r] \ar@{-}[d] & \{\beta_1', \beta_2  \} \cup \vbeta \ar@{-}[d]\\
\{\beta_1'', \beta_2 \} \cup \vbeta \ar@{-}[r] & \{\beta_1''', \beta_2 \} \cup \vbeta }$$
where $\beta_1'$ and $\beta_1''$ are both obtained by sliding $\beta_1$ over $\beta_2$, but from opposite sides; and $\beta_1'''$ is obtained from $\beta_1'$ by doing both of these slides over $\beta_2$. Here, $\vbeta$ is a fixed collection of $g-2$ curves.

\item \label{slide6} A pentagon of the form 
$$ 
\xymatrixcolsep{1mm}
\xymatrix{ &  \{\beta_1, \beta_2, \beta_3 \} \cup \vbeta \ar@{-}[dl] \ar@{-}[dr] &\\
 \{\beta_1', \beta_2, \beta_3  \} \cup \vbeta \ar@{-}[d] &  &\{\beta_1, \beta_2', \beta_3 \} \cup \vbeta \ar@{-}[d] \\
\{\beta_1'', \beta_2, \beta_3 \} \cup \vbeta  \ar@{-}[rr]&  & \{\beta_1'', \beta_2', \beta_3 \} \cup \vbeta 
}$$
where $\beta_1'$ is obtained from $\beta_1$ by sliding over $\beta_2$, whereas $\beta_2'$ is obtained from $\beta_2$ by sliding it over $\beta_3$, and $\beta_1''$ is obtained from $\beta_1'$ by sliding it over $\beta_3$ (or, equivalently, from $\beta_1$ by sliding it over $\beta_2'$). Here, $\vbeta$ is a fixed collection of $g-3$ curves.

\end{enumerate}
See Figure~\ref{fig:loops}, which is based on Figures 21 and 22 in \cite{JTZ}.
\end{definition}

We also need one other loop, which involves stabilizations in addition to handleslides. This is the stabilization slide from \cite[Definition 7.7]{JTZ}. It was described there as a triangle where two edges are stabilizations, and one is an equivalence. Since that equivalence is the composition of two handleslides, in our context the triangle becomes a square.

\begin{definition}
\label{def:sslide}
A {\em stabilization $\beta$-slide} is a square composed of four Heegaard diagrams that differ locally as in Figure~\ref{fig:sslide}. A {\em stabilization $\alpha$-slide} is similar, with the alpha and beta curves reversed.
\end{definition}

\begin{figure}
{
\fontsize{10pt}{11pt}\selectfont
   \def\svgwidth{5.4in}
\begingroup%
  \makeatletter%
  \providecommand\color[2][]{%
    \errmessage{(Inkscape) Color is used for the text in Inkscape, but the package 'color.sty' is not loaded}%
    \renewcommand\color[2][]{}%
  }%
  \providecommand\transparent[1]{%
    \errmessage{(Inkscape) Transparency is used (non-zero) for the text in Inkscape, but the package 'transparent.sty' is not loaded}%
    \renewcommand\transparent[1]{}%
  }%
  \providecommand\rotatebox[2]{#2}%
  \newcommand*\fsize{\dimexpr\f@size pt\relax}%
  \newcommand*\lineheight[1]{\fontsize{\fsize}{#1\fsize}\selectfont}%
  \ifx\svgwidth\undefined%
    \setlength{\unitlength}{569.20490415bp}%
    \ifx\svgscale\undefined%
      \relax%
    \else%
      \setlength{\unitlength}{\unitlength * \real{\svgscale}}%
    \fi%
  \else%
    \setlength{\unitlength}{\svgwidth}%
  \fi%
  \global\let\svgwidth\undefined%
  \global\let\svgscale\undefined%
  \makeatother%
  \begin{picture}(1,0.72333806)%
    \lineheight{1}%
    \setlength\tabcolsep{0pt}%
    \put(0.15774622,0.5315058){\makebox(0,0)[lt]{\lineheight{1.25}\smash{\begin{tabular}[t]{l}stabilization\end{tabular}}}}%
    \put(0.16242987,0.16627399){\makebox(0,0)[lt]{\lineheight{1.25}\smash{\begin{tabular}[t]{l}stabilization\end{tabular}}}}%
    \put(0.69456629,0.53815418){\makebox(0,0)[lt]{\lineheight{1.25}\smash{\begin{tabular}[t]{l}$\beta$-handleslide\end{tabular}}}}%
    \put(0.7069024,0.17184862){\makebox(0,0)[lt]{\lineheight{1.25}\smash{\begin{tabular}[t]{l}$\beta$-handleslide\end{tabular}}}}%
    \put(0,0){\includegraphics[width=\unitlength,page=1]{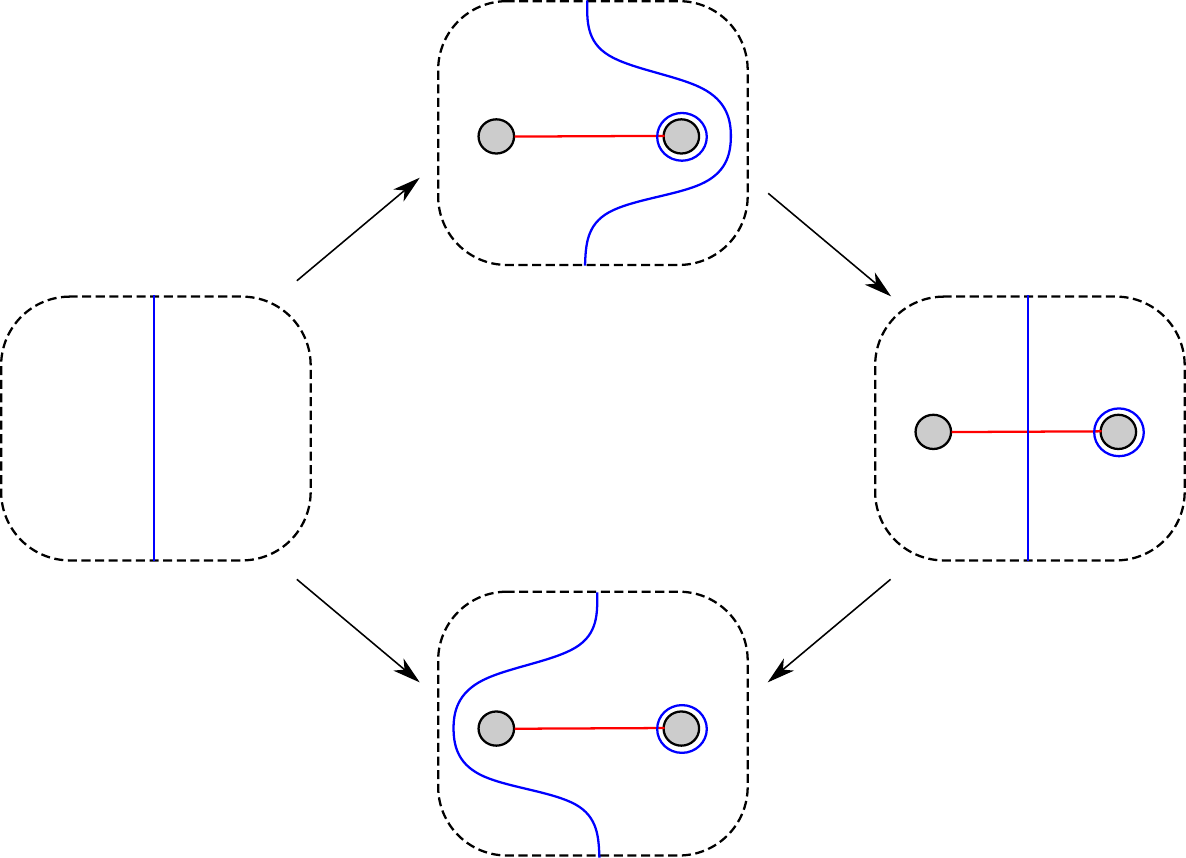}}%
  \end{picture}%
\endgroup%

}
\caption{A stabilization slide. The two handleslides are performed along the same $\beta$-curve, but the second is done through the handle.}
\label{fig:sslide}
\end{figure}

We let $\G'$ be the oriented (multi-)graph whose vertices are all isotopy diagrams, and whose edges are associated to diagram moves: $\alpha$-handleslides, $\beta$-handleslides, stabilizations, destabilizations, and diffeomorphisms. Note that $\G'$ is a subgraph of $\G$, with the same vertices but fewer edges. 

\begin{theorem}[Theorem A.6 in \cite{JTZ}; Theorem 1.3 in \cite{Qin}]
\label{thm:shi2}
Let $\Cat$ be a category. Suppose we are given an assignment $F: \G' \to \Cat$ that takes the vertices of $\G$ (isotopy diagrams) to objects in $\Cat$, and edges of $\G'$ to isomorphisms in $\Cat$, with the following properties:
  \begin{enumerate}
  \item {\bf Functoriality:} The restriction of $F$ to $\Gdiff$ (which is a subgraph of $\G'$) is a functor. Further, if $e$ is a stabilization and $e'$ the corresponding destabilization, then $F(e') = F(e)^{-1}$.
\item \label{cdr} {\bf Commutativity along distinguished rectangles}: For every distinguished rectangle in $\G'$ (i.e., such that the only equivalences involved are handleslides), applying $F$ yields a commutative diagram in $\Cat$;
 \item {\bf Continuity:} condition \eqref{continuity} in Definition~\ref{def:shi};
 \item {\bf Handleswap invariance:} condition \eqref{handleswap} in Definition~\ref{def:shi};
 \item {\bf Commutativity along handleslide loops:} $F'$ commutes along any of the handleslide loops in Definition~\ref{def:hloops};
 \item{\bf Commutativity along stabilization slides:}  $F'$ commutes along any stabilization slide (see Definition~\ref{def:sslide}).
 \end{enumerate}
 Then, $F'$ extends uniquely to a strong Heegaard invariant $F: \G \to \Cat$.
\end{theorem}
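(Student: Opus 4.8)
The plan is to follow the strategy of \cite[Appendix A]{JTZ}, as refined in \cite{Qin}: one builds the extension of $F$ to all of $\G$ by hand on the extra edges, and then checks that the resulting assignment satisfies the four axioms of a strong Heegaard invariant from Definition~\ref{def:shi}, which by Theorem~\ref{thm:strongH} is what we want.

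First I would define $F$ on an $\alpha$- or $\beta$-equivalence $f$. Every such equivalence is a composition of handleslides (isotopies do not appear as edges of $\G'$, since they are already absorbed into the notion of isotopy diagram), so I set $F(f)$ to be the composition of the corresponding isomorphisms given by the restriction of $F$ to $\G'$. The first point to check is that this is independent of the chosen handleslide decomposition. Two decompositions of the same equivalence differ by a finite sequence of elementary modifications, and by \cite[Proposition A.5]{JTZ} (together with its generalization in \cite{Qin}) the relations among handleslide decompositions are generated by the six types of handleslide loops of Definition~\ref{def:hloops}. Hence hypothesis~(5), commutativity along handleslide loops, makes $F$ well-defined on equivalences; here one also uses the part of the cited classification result which guarantees that any two handleslide decompositions of the same $\alpha$- (resp. $\beta$-) equivalence are connected within its equivalence class.

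Next I would verify the conditions of Definition~\ref{def:shi} for the extended $F$. Functoriality of $F$ on $\G_\alpha$ and $\G_{\beta}$ is immediate from the definition of $F$ on equivalences as a composition of handleslide maps; functoriality on $\Gdiff$ and the identity $F(e')=F(e)^{-1}$ for a stabilization $e$ and its destabilization $e'$ are hypotheses; continuity and handleswap invariance are hypotheses~(3) and~(4). The substantive work is commutativity along the five types of distinguished rectangles of Definition~\ref{def:dr}. For a type-(1) rectangle — two parallel $\alpha$-equivalences and two parallel $\beta$-equivalences — I would decompose the $\alpha$-equivalences into $\alpha$-handleslides and the $\beta$-equivalences into $\beta$-handleslides, producing a grid whose elementary cells are distinguished rectangles in $\G'$ built from one $\alpha$-handleslide and one $\beta$-handleslide; each such cell commutes by hypothesis~(2), and, using the well-definedness established above to arrange the two decompositions consistently along the two sides, the whole rectangle commutes. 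Type-(2), (3) and~(5) rectangles involve a stabilization together with an equivalence or a diffeomorphism: one decomposes the equivalence into handleslides and reduces each resulting cell to either a distinguished rectangle in $\G'$ or a stabilization slide (Definition~\ref{def:sslide}), invoking hypothesis~(2) or~(6); type-(4) is a statement purely about pairs of stabilizations on disjoint regions. Finally, uniqueness of the extension is clear, because every edge of $\G$ is a composition of edges of $\G'$ (in particular every equivalence is a composition of handleslides), so $F$ on $\G$ is forced by its restriction to $\G'$.

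I expect the main obstacle to be the bookkeeping involved in reducing an arbitrary distinguished rectangle of $\G$ to the elementary data encoded by hypotheses~(2), (5) and~(6): one must track how an equivalence breaks into handleslides and how two such decompositions interact across the opposite sides of a rectangle, which is exactly the part of \cite[Appendix A]{JTZ} whose proof required the careful classification of handleslide loops (and, in the sutured and link settings, the extension carried out by Qin in \cite{Qin}). Since that classification is available to us as a hypothesis, the remaining difficulty is organizational rather than conceptual, and the theorem follows.
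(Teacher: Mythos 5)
This theorem is not proved in the paper; it is cited verbatim as a combination of Theorem A.6 in \cite{JTZ} (for the singly-based $3$-manifold case) and Theorem 1.3 in \cite{Qin} (for the sutured and link generalizations), so there is no proof in the paper to compare your argument against. What I can say is that your outline is a faithful reconstruction of the strategy in those sources: extend $F$ to $\alpha$- and $\beta$-equivalences by decomposing them into handleslides, use the classification of handleslide loops (\cite[Proposition A.5]{JTZ}) together with hypothesis~(5) to make this well-defined, then verify the four axioms of a strong Heegaard invariant by reducing distinguished rectangles in $\G$ to grids of distinguished rectangles in $\G'$ and stabilization slides, and finally observe that uniqueness is automatic since every equivalence factors through edges of $\G'$.

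One small inaccuracy in your bookkeeping: you group types~(2), (3), and~(5) together as rectangles ``involving a stabilization together with an equivalence or a diffeomorphism,'' but by Definition~\ref{def:dr}, a type-(3) rectangle involves no stabilization (it is an equivalence paired with a diffeomorphism), and a type-(5) rectangle involves no equivalence at all (it is two stabilizations paired with two diffeomorphisms, hence is already a distinguished rectangle in $\G'$ and follows directly from hypothesis~(2)). This does not undermine the argument --- the reductions for each type are straightforward once separated correctly, with the stabilization-slide hypothesis~(6) only entering for type-(2), where a handleslide through the new handle cannot be pushed across the stabilization into a single $\G'$-rectangle. Since the substantive input (the classification of handleslide loops, which is a deep combinatorial result) is available to you as a hypothesis via the cited references, the rest is, as you say, organizational.
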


\begin{remark}
The original stabilization slide from \cite[Definition 7.7]{JTZ} is an example of a distinguished rectangle in $\G$, where one horizontal edge is trivial, the other is the composition of the two handleslides, and the vertical edges are the stabilizations. Since one edge is a composition of two handleslides rather than a single handleslide, the stabilization slide does not count as a distinguished rectangle in $\G'$. This is the reason for listing it separately from condition~\eqref{cdr}.
\end{remark}

\subsection{Heegaard Floer homology as a strong Heegaard invariant}

\begin{proof}[Proof of Theorem~\ref{thm:main}] We focus on the minus version. In view of Theorem~\ref{thm:strongH}, it suffices to show that $\HFm$ is a strong Heegaard invariant. Juh\'asz, Thurston and Zemke did this in \cite[Theorem 2.33 (2)]{JTZ} for $\HFm$ as an $\F_2[U]$-module, by directly checking the conditions in Definition~\ref{def:shi}. Here we work with $\Z[U]$-modules, and we will check the conditions in  Theorem~\ref{thm:shi2} instead. 

We view $\HFm$ as an assignment from $\G'$ to $\Z[U]$-modules. Notice that in Section~\ref{sec:definition}, $\HFm$ is defined on (admissible) Heegaard diagrams, rather than isotopy diagrams. We define it on an isotopy diagram as the colimit of the transitive system of Heegaard Floer homologies over all admissible Heegaard diagrams that produce the given isotopy diagram; see \cite[Definition 9.19]{JTZ}. To construct the colimit we use the continuation maps defined in \cite[Section 7.3]{HolDisk}, which we denote by $\Gamma^{\alphas}_{\betas\to \betas'}$ or $\Gamma_{\betas}^{ \alphas \to \alphas'}$. 

To have a well-defined colimit, we further need to ensure that the continuation maps relating Heegaard Floer homologies give rise to a transitive system in the sense of \cite[Definition 1.1]{JTZ}; i.e., that loops of isotopies induce the identity on Heegaard Floer homology. We can decompose such loops into smaller (triangular) loops such that the quadruple Heegaard diagram for each loop is admissible; this can be done as in the proofs of Lemmas 9.7, 9.10, 9.12 in \cite{JTZ}, which in turn are based on Lemma 9.5 in \cite{JTZ}. A typical such triangular loop is 
  $$ \xymatrix{
(\Sigma, \alphas, \betas, z) \ar[rr] &  & (\Sigma, \alphas, \betas', z) \ar[dl] \\
& (\Sigma, \alphas, \betas'', z) \ar[lu]& 
}$$
where the quadruple diagram $(\Sigma, \alphas, \betas, \betas', \betas'', z)$ is admissible. We claim that 
\begin{equation}
\label{eq:isotopycont}
\Gamma^{\alphas}_{\betas''\to \betas} \circ \Gamma^{\alphas}_{\betas'\to \betas''} \circ \Gamma^{\alphas}_{\betas\to \betas'} = \id.
 \end{equation}
For a punctured surface $(\Sigma, z)$ of genus $g > 0$, the identity component of its diffeomorphism group is contractible; see \cite{EarleEells}, \cite{EarleSchatz}. It follows that the composition of isotopies taking $\betas \to \betas'\to \betas'' \to \betas$ is homotopic to the identity. Admissibility ensures that we can associate a continuation map to this homotopy, which induces a chain homotopy between the maps on Heegaard Floer complexes and the identity; passing to homology, we obtain \eqref{eq:isotopycont}. 

The other kind of triangular loop, where the alpha curves vary, is entirely analogous. This completes the proof that we have a transitive system, and therefore $\HFm$ is well-defined on isotopy diagrams.

\begin{remark}
In \cite[Section 9.1]{JTZ}, to define $\HFm$ over  $\F_2[U]$ for isotopy diagrams, the authors proceeded differently: they assigned triangle maps to isotopies, and they also showed that these triangle maps  are the same as continuation maps. They proved that the triangle maps form a transitive system by making use of the uniqueness of top degree generators of the form $\Theta_{\beta,\beta'}$ or $\Theta_{\alpha,\alpha'}$ over $\F_2[U]$. Over $\Z[U]$, for arbitrary isotopies, it is unclear which triangle maps to consider, because there are two choices (differing by a sign) for the top degree generators.
\end{remark}

Next, we need to assign maps to the edges in $\G'$ (i.e. to moves between isotopy diagrams). For an $\alpha$- or $\beta$-handleslide, we use the maps $\Psi^{\alphas \to \gammas}_{\betas}$ and $\Psi^{\alphas}_{\betas \to \gammas}$, constructed in Section~\ref{sec:handleslide}.  To a diffeomorphism between diagrams we assign the obvious identification of Floer homologies. To a stabilization we assign the map $S$ from Equation~\eqref{eq:S}, and to a destabilization its inverse.

\begin{convention}
\label{conv:data}
Throughout this proof, when discussing Heegaard moves appearing in a multiple Heegaard diagram, we will draw pictures where each curve is oriented, and there is an ordering of the curves in each attaching set. The orientation choices will be made carefully, so as to be compatible with how the coupled orientations for handleslides and stabilizations were defined in Sections~\ref{sec:handleslide} and \ref{sec:stabs}. (Reversing all the orientations at the same time would produce an equally valid picture.) We will then have data to specify Spin structures on each Lagrangian, as in Definition~\ref{def:chooseab}. This will enable us to appeal to the product principle from Example~\ref{ex:product}, just as we did in the proof of Proposition~\ref{prop:hslideiso}.
\end{convention}

We now proceed to check the conditions in Theorem~\ref{thm:shi2}. 

\medskip
{\em(1)} {\bf Functoriality:} This is immediate from the definitions.

\medskip
{\em(2)} {\bf Commutativity along distinguished rectangles:} There are five types of such rectangles to check; see Definition~\ref{def:dr}. Type (1) is commutation between $\alpha$- and $\beta$-handleslides, i.e., a relation of the form
$$ \Psi^{\alphas \to \alphas'}_{\betas'} \circ \Psi^{\alphas}_{\betas\to \betas'} = \Psi^{\alphas'}_{\betas \to \betas'} \circ \Psi^{\alphas \to \alphas'}_{\betas}.$$
This follows from the $A_{\infty}$ relations for polygon maps; compare \cite[Proposition 9.10 (3)]{JTZ}. 
The proofs of commutativity along the other four types of distinguished rectangles are just as in \cite[Section 9.2]{JTZ}. The arguments are based on choosing specific almost complex structures suitable for diffeomorphisms and stabilizations, and do not involve the signs in an essential way. The only thing of note is for type (4), which is the commutation of two stabilizations; there, we use the fact that coupled Spin structures behave well with regard to direct sums (see Remark~\ref{rem:cansum}), and in particular that the result of two direct sums does not depend on their  order. 


\medskip
{\em (3)} {\bf Continuity:} The proof of this over $\F_2[U]$ in \cite[Proposition 9.27]{JTZ} is based on relating continuation maps to changes in the almost complex structure, and works just as well over $\Z[U]$.

\medskip
{\em(4)} {\bf Handleswap invariance:} The proof of invariance under a simple handleswap in \cite[Section 9.3]{JTZ} (over $\F_2[U]$) is based on degenerations (neck-stretching) to reduce it to calculations in the genus $2$ picture in Figure~\ref{fig:handleswap}.  The degenerations involve passing to Lipshitz's cylindrical reformulation of Heegaard Floer homology \cite{LipshitzCyl}. These degeneration arguments can be extended to the signed case (over $\Z[U]$); for an explanation, see Section~\ref{sec:cyl} below.

This reduces the problem to several curve counts in the genus $2$ picture. They involve understanding the effect of the two handleslides in Figure~\ref{fig:handleswap}. Let us discuss the $\alpha$-handleslide. (The $\beta$-handleslide is treated similarly.)  The  $\alpha$-handleslide is shown in Figure~\ref{fig:alphaswap}, which is based on Figure 59 in \cite{JTZ}. We have a triple pointed Heegaard diagram $(\Sigma_0, \alphas_0', \alphas_0, \betas_0, p_0)$, where $\Sigma_0$ is a surface of genus two with basepoint $p_0$ and three attaching sets: $$\alphas_0'=\{\alpha_1', \alpha_2'\}, \ \ \alphas_0=\{\alpha_1, \alpha_2\}, \ \ \betas_0 =\{\beta_1, \beta_2\}.$$
Let
$$ \aa = \{a_1, a_2\} = \T_{\alpha_0} \cap \T_{\beta_0}, \ \ \bb=\{b_1, b_2\} = \T_{\alpha'_0} \cap \T_{\beta_0}.$$ 
Let also $\Theta= \{\theta_1, \theta_2\} \in \T_{\alpha'_0} \cap \T_{\alpha_0}$ be the maximal degree generator, as shown in Figure~\ref{fig:alphaswap}.

We equip the curves on $\Sigma_0$ with orientations and orderings as shown in the figure, following Convention~\ref{conv:data}.

The triangle map for the $\alpha$-handleslide is computed (mod 2) in Proposition 9.31 of \cite{JTZ}. We claim that the same calculation holds over $\Z$. In the $\F_2$ calculation, there are three points at which explicit moduli spaces are counted (mod 2). We discuss them in turn, explaining how to refine the counts to get the answer in $\Z$.

\begin{figure}
{
\fontsize{10pt}{11pt}\selectfont
   \def\svgwidth{3in}
   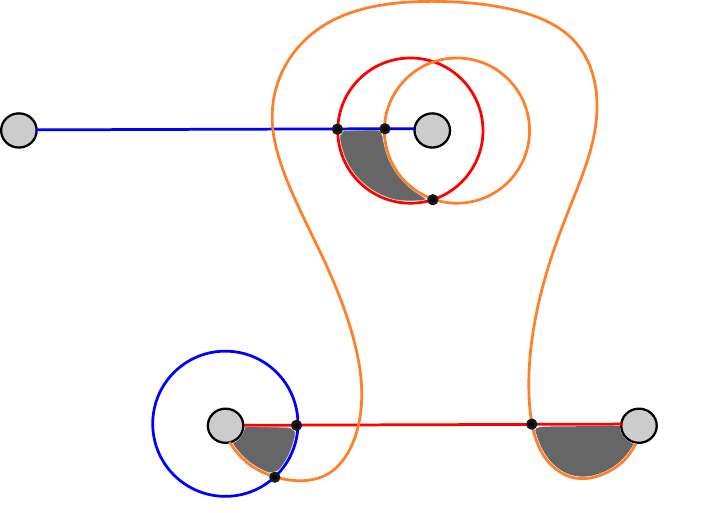
}
\caption{The $\alpha$-handleslide from Figure~\ref{fig:handleswap}. }
\label{fig:alphaswap}
\end{figure}

The first count appears in Lemma 9.52 in \cite{JTZ}. The lemma states that the differential on $\CFhat(\Sigma_0, \alphas_0', \alphas_0, p_0)$  vanishes. This is true in the signed case as well, because the rank of the Floer homology is $4$, according to Lemma~\ref{lem:S1S2}.

Another explicit count is in Lemma 9.53 in \cite{JTZ}, which states that the map
$$ \Psi_{\betas_0}^ {\alphas_0 \to \alphas_0'} : \CFhat(\Sigma_0, \alphas_0, \betas_0, p_0) \to \CFhat(\Sigma_0, \alphas_0', \betas_0, p_0)$$
satisfies $\Psi_{\betas_0}^ {\alphas_0 \to \alphas_0'} (\aa) = \bb.$ There is a unique holomorphic triangle (in the symmetric product) connecting $\Theta, \aa$ and $\bb$, shown in Figure~\ref{fig:alphaswap} as the product of the two darkly shaded triangles. Each of these two triangles has edges oriented as in  Figure~\ref{fig:triangles} (b) (with $\alpha_i'$, $\alpha_i$, $\beta_i$ playing the roles of $\alpha'$, $\beta'$, $\gamma$'), and therefore (according to Example~\ref{ex:abcp})  its sign is $+1$. It follows from Example~\ref{ex:product} that the product of the triangles has sign $+1$ as well, so we indeed have $\Psi_{\betas_0}^ {\alphas_0 \to \alphas_0'} (\aa) = \bb.$

One last count is at the very end of the proof of Lemma 9.58 in \cite{JTZ}. There, the authors compute that 
\begin{equation}
\label{eq:mad}
 \# \M_{(\aa, \aa)}(d) \equiv 1 \pmod{2},
 \end{equation}
where $\M_{(\aa, \aa)}(d)$ is the moduli space of Maslov index $2$ holomorphic curves $u$ on $(\Sigma_0, \alphas_0, \betas_0)$ satisfying $u(d)=p_0$, with $d \in [0,1] \times \R$ being any fixed point. In our setting, we claim that
\begin{equation}
\label{eq:mad2}
\# \M_{(\aa, \aa)}(d)= 1.
\end{equation}

 The proof of \eqref{eq:mad} in \cite{JTZ} is based on an argument from \cite[Appendix A]{LipshitzCyl}. We adapt it here to our purposes. We consider the Heegaard diagrams for $S^1 \times S^2$ shown in Figure~\ref{fig:doublestab}. On the left we have the class $\phi$ of a bigon, with $\Mhat(\phi)$ consisting of a single point; this can be positive or negative, depending on the coupled orientations and Pin structures on the curves there. On the right we have the double stabilization of $\phi$, which we denote by $\phi''$. Stabilization invariance (as discussed in the proof of Proposition~\ref{prop:invariance}) implies that
 $$ \# \Mhat(\phi'') = \# \Mhat(\phi).$$
 \begin{figure}
{
\fontsize{10pt}{11pt}\selectfont
   \def\svgwidth{4.5in}
\begingroup%
  \makeatletter%
  \providecommand\color[2][]{%
    \errmessage{(Inkscape) Color is used for the text in Inkscape, but the package 'color.sty' is not loaded}%
    \renewcommand\color[2][]{}%
  }%
  \providecommand\transparent[1]{%
    \errmessage{(Inkscape) Transparency is used (non-zero) for the text in Inkscape, but the package 'transparent.sty' is not loaded}%
    \renewcommand\transparent[1]{}%
  }%
  \providecommand\rotatebox[2]{#2}%
  \newcommand*\fsize{\dimexpr\f@size pt\relax}%
  \newcommand*\lineheight[1]{\fontsize{\fsize}{#1\fsize}\selectfont}%
  \ifx\svgwidth\undefined%
    \setlength{\unitlength}{493.222291bp}%
    \ifx\svgscale\undefined%
      \relax%
    \else%
      \setlength{\unitlength}{\unitlength * \real{\svgscale}}%
    \fi%
  \else%
    \setlength{\unitlength}{\svgwidth}%
  \fi%
  \global\let\svgwidth\undefined%
  \global\let\svgscale\undefined%
  \makeatother%
  \begin{picture}(1,0.44730102)%
    \lineheight{1}%
    \setlength\tabcolsep{0pt}%
    \put(0.15892671,0.00530232){\makebox(0,0)[lt]{\lineheight{1.25}\smash{\begin{tabular}[t]{l}$(a)$\end{tabular}}}}%
    \put(0.79111838,0.00530232){\makebox(0,0)[lt]{\lineheight{1.25}\smash{\begin{tabular}[t]{l}$(b)$\end{tabular}}}}%
    \put(0,0){\includegraphics[width=\unitlength,page=1]{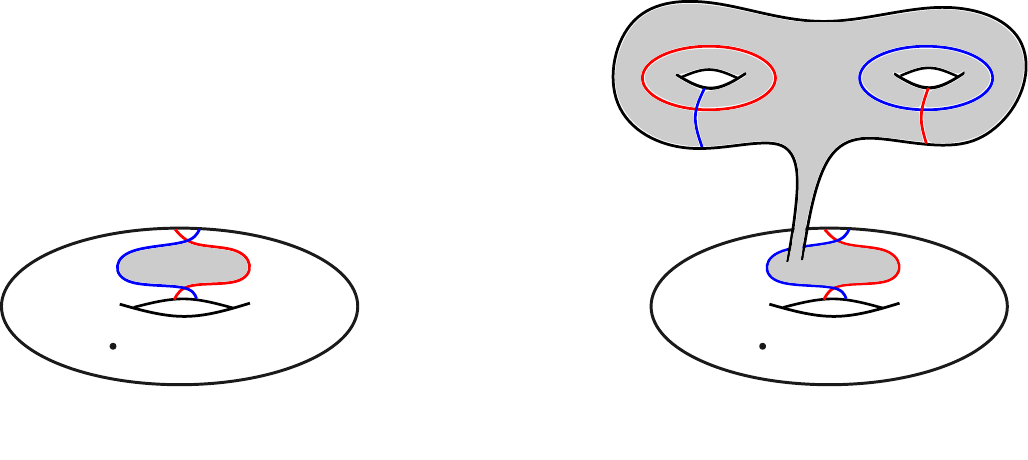}}%
  \end{picture}%
\endgroup%

}
\caption{(a) A bigon on a diagram for $S^1 \times S^2$. (b) Its double stabilization.}
\label{fig:doublestab}
\end{figure}
  On the other hand, by stretching the neck in the cylindrical reformulation, we see that for large neck length we must have
  $$  \Mhat(\phi'') =  \M_{(\aa, \aa)}(d)  \times \Mhat(\phi).$$
This proves \eqref{eq:mad2}. 

The rest of the arguments in the proof of handleswap invariance go through just as in \cite[Section 9.3]{JTZ}.

\medskip
{\em(5)} {\bf Commutativity along handleslide loops:} We need to check each of the six types of loops from Figure~\ref{fig:loops}. The first type is the most complicated, so we discuss the others first.

Consider a loop of type~\eqref{slide2}. We change the four attaching sets appearing in the commuting square by small isotopies, so that each curve intersects the curves obtained from it by isotopies or handleslides transversely at two points. After this change we re-label the attaching sets as
\begin{equation}
\label{eq:bgde}
 \xymatrix{ \betas \ar@{-}[r] \ar@{-}[d] & \gammas \ar@{-}[d]\\
\epsilons \ar@{-}[r] & \deltas }
\end{equation}
so that
$$\betas=\{\beta_1, \beta_2, \beta_3, \beta_4 \} \cup \vbeta,  \ \  \gammas \approx  \{\beta_1', \beta_2, \beta_3, \beta_4 \} \cup \vbeta $$
$$
\epsilons \approx \{\beta_1, \beta_2, \beta_3', \beta_4 \} \cup \vbeta, \ \ \deltas \approx \{\beta_1', \beta_2, \beta_3', \beta_4 \} \cup \vbeta,
$$
where $\approx$ denotes isotopy. Figure~\ref{fig:type2} shows the relevant part of the Heegaard surface with the curves in the systems $\betas$, $\gammas$, and $\deltas$. (We do not show $\epsilons$.)
\begin{figure}
{
\fontsize{10pt}{11pt}\selectfont
   \def\svgwidth{5.7in}
   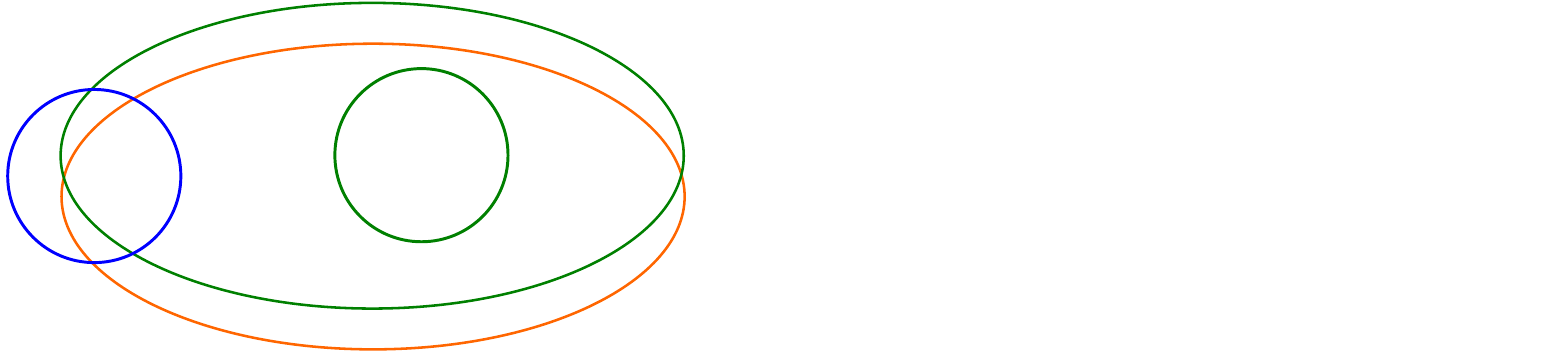
}
\caption{A triple diagram appearing in a handleslide loop of type~\eqref{slide2}. }
\label{fig:type2}
\end{figure}

There are top degree intersection points $\Theta_{\beta, \gamma}$,  $\Theta_{\beta, \epsilon}$, $\Theta_{\gamma, \delta}$ and $\Theta_{\epsilon, \delta}$, which are used to define the respective handleslide maps. The commutativity of the handleslide loop can be written as
$$ \Psi^{\alphas}_{\gammas \to \deltas} \circ \Psi^{\alphas}_{\betas \to \gammas} = \Psi^{\alphas}_{\epsilons \to \deltas} \circ \Psi^{\alphas}_{\betas \to \epsilons}.$$
Using the $A_\infty$ relations for polygon maps, this would follow if we can prove that
\begin{equation}
\label{eq:Fbgde}
F_{\Tb, \Tg, \Td}(\Theta_{\beta, \gamma} \otimes \Theta_{\gamma, \delta}) = F_{\Tb, \Te, \Td}(\Theta_{\beta, \epsilon} \otimes \Theta_{\epsilon, \delta}).
\end{equation}
 
Even though the attaching sets $\betas$ and $\deltas$ do not differ by a single handleslide, observe that we still have a unique top degree intersection point
$$ \Theta_{\beta, \delta} \in \Tb \cap \Td.$$
We claim that Equation~\eqref{eq:Fbgde} holds true because both sides are equal to $\Theta_{\beta, \delta}$. We will check this for the left hand side; the right hand side is similar.

To show that 
\begin{equation}
\label{eq:Fbgd}
F_{\Tb, \Tg, \Td}(\Theta_{\beta, \gamma} \otimes \Theta_{\gamma, \delta}) =\Theta_{\beta, \delta},
\end{equation}
 observe that there is a unique holomorphic triangle of index zero connecting the three intersection points. This is the product of $g$ triangles on the Heegaard surface, four of which are shown darkly shaded in Figure~\ref{fig:type2}. If we orient all the curves in the figure counterclockwise, we obtain the required coupled orientations in each handleslide. It follows from Examples~\ref{ex:abcp} and \ref{ex:product} that the triangle comes with a positive sign; see Figure~\ref{fig:triangles}(c), with $\alpha'$, $\beta'$, $\gamma'$ replaced by $\beta_i$, $\gamma_i$, $\delta_i$. Therefore, the relation ~\eqref{eq:Fbgd} holds.

The handleslide loops of types \eqref{slide3}, \eqref{slide4} and \eqref{slide5} are very similar to type \eqref{slide2}. We isotope and then re-label each of the four attaching sets as in \eqref{eq:bgde}, in the order given in Definition~\ref{def:hloops} for the respective type of loop. In each case we claim that the relation~\eqref{eq:Fbgde} holds, and we prove it by establishing \eqref{eq:Fbgd}. The latter relation follows by exhibiting a unique holomorphic triangle, which appears with sign $+1$ according to the analysis in Example~\ref{ex:abcp}. Figures~\ref{fig:type3}, \ref{fig:type4} and \ref{fig:type5} show the relevant triangles. Each darkly shaded triangle is oriented as in Figure~\ref{fig:triangles}, either (b) or (c).

\begin{figure}
{
\fontsize{10pt}{11pt}\selectfont
   \def\svgwidth{5.3in}
   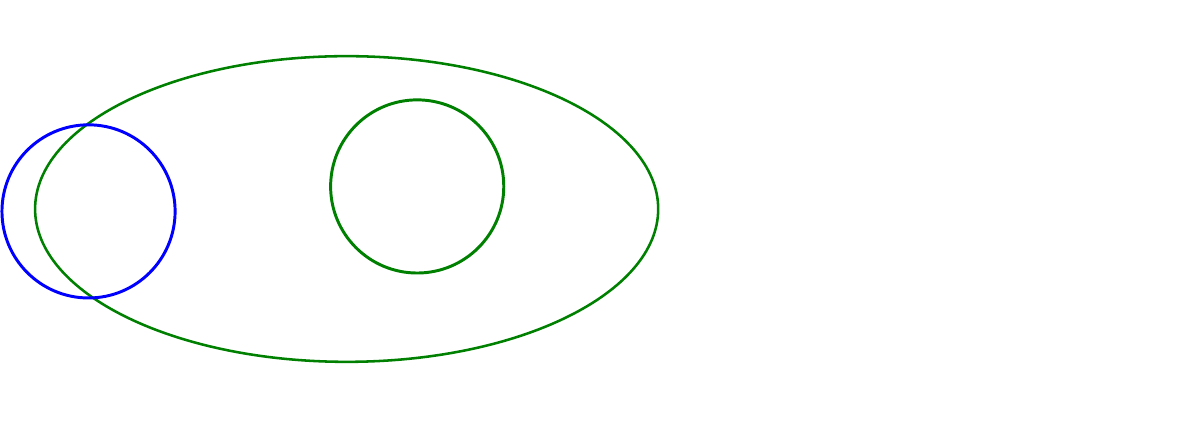
}
\caption{A triple diagram appearing in a handleslide loop of type~\eqref{slide3}. }
\label{fig:type3}
\end{figure}

\begin{figure}
{
\fontsize{10pt}{11pt}\selectfont
   \def\svgwidth{5.5in}
   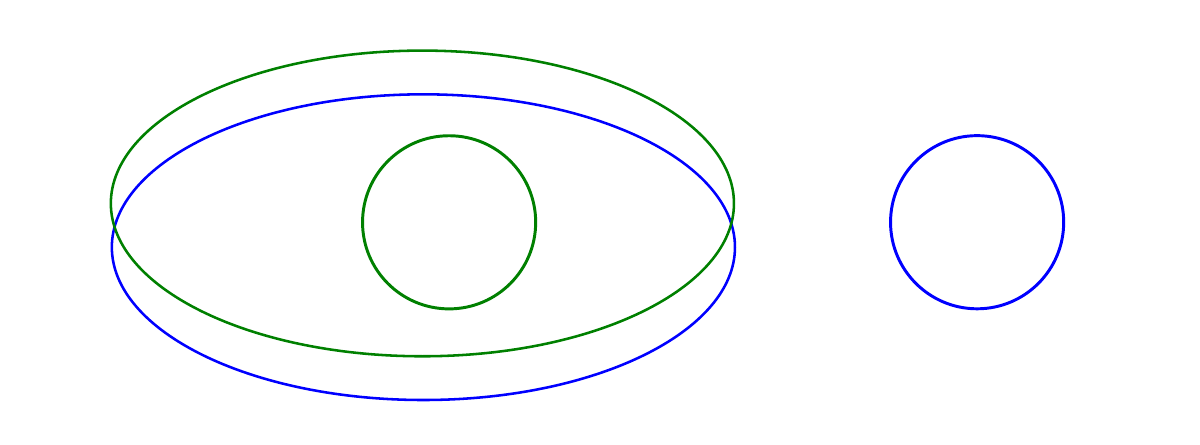
}
\caption{A triple diagram appearing in a handleslide loop of type~\eqref{slide4}.}
\label{fig:type4}
\end{figure}

\begin{figure}
{
\fontsize{10pt}{11pt}\selectfont
   \def\svgwidth{4.6in}
   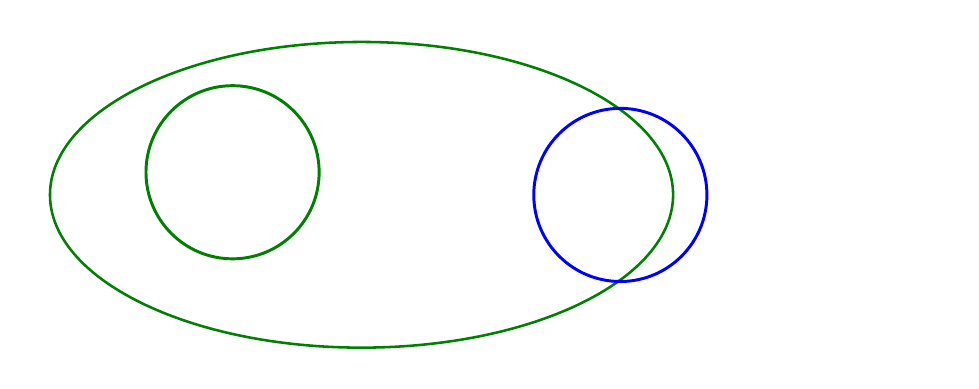
}
\caption{A triple diagram appearing in a handleslide loop of type~\eqref{slide5}. The two disks labeled A co-bound a handle.}
\label{fig:type5}
\end{figure}

The handleslide loop of type ~\eqref{slide6} is also somewhat similar. In this case we have a pentagon. We isotope and re-label the attaching sets from Definition~\ref{def:hloops} as
$$ 
\xymatrixrowsep{3mm}
\xymatrix{ & \betas \ar@{-}[dl] \ar@{-}[dr] &\\
\epsilons \ar@{-}[d] &  &\zetas \ar@{-}[d] \\
\gammas  \ar@{-}[rr]&  & \deltas }$$
We want to show that
\begin{equation}
\label{eq:bgdez}
\Psi^{\alphas}_{\gammas \to \deltas} \circ \Psi^{\alphas}_{ \epsilons \to \gammas} \circ \Psi^{\alphas}_{\betas \to \epsilons} = \Psi^{\alphas}_{ \zetas \to \deltas} \circ \Psi^{\alphas}_{\betas \to \zetas}.
\end{equation}
Even though the attaching set $\beta$ differs from $\gammas$ by two handleslides (rather than one), there is still a unique top degree intersection point $\Theta_{\beta, \gamma} \in \Tb \cap \Tg$, which defines a map $\Psi^{\alphas}_{\betas \to \gammas}$. Similarly, there is an intersection point $\Theta_{\beta, \delta} \in \Tb \cap \Td$ defining a map $\Psi^{\alphas}_{\betas \to \deltas}$. To prove \eqref{eq:bgdez}, we will show that
$$  \Psi^{\alphas}_{ \epsilons \to \gammas} \circ \Psi^{\alphas}_{\betas \to \epsilons} = \Psi^{\alphas}_{\betas \to \gammas}, \ \ \ \Psi^{\alphas}_{ \zetas \to \deltas} \circ \Psi^{\alphas}_{\betas \to \zetas} = \Psi^{\alphas}_{\betas \to \deltas}, \ \ \ \Psi^{\alphas}_{\gammas \to \deltas} \circ \Psi^{\alphas}_{\betas \to \gammas}=\Psi^{\alphas}_{\betas \to \deltas}.$$
which in turn follow from the relations:
\begin{align}
 F_{\Tb, \Te, \Tg}(\Theta_{\beta, \epsilon} \otimes \Theta_{\epsilon, \gamma}) &= \Theta_{\beta, \gamma}, \label{47} \\
 F_{\Tb, \Tz, \Td}(\Theta_{\beta, \zeta} \otimes \Theta_{\zeta, \delta}) &= \Theta_{\beta, \delta}, \label{48} \\
F_{\Tb, \Tg, \Td}(\Theta_{\beta, \gamma} \otimes \Theta_{\gamma, \delta}) &= \Theta_{\beta, \delta}. \label{49}
\end{align}
Relation~\eqref{47} is the same as the one proved in the study of a handleslide of type ~\eqref{slide3} (see Figure~\ref{fig:type3}), because it comes from a curve handlesliding over two other curves. Relations~\eqref{48} and \eqref{49} follow from investigating the unique holomorphic triangles in Figures~\ref{fig:type6z} and \ref{fig:type6}, which both come with positive signs.
\begin{figure}
{
\fontsize{10pt}{11pt}\selectfont
   \def\svgwidth{4.7in}
   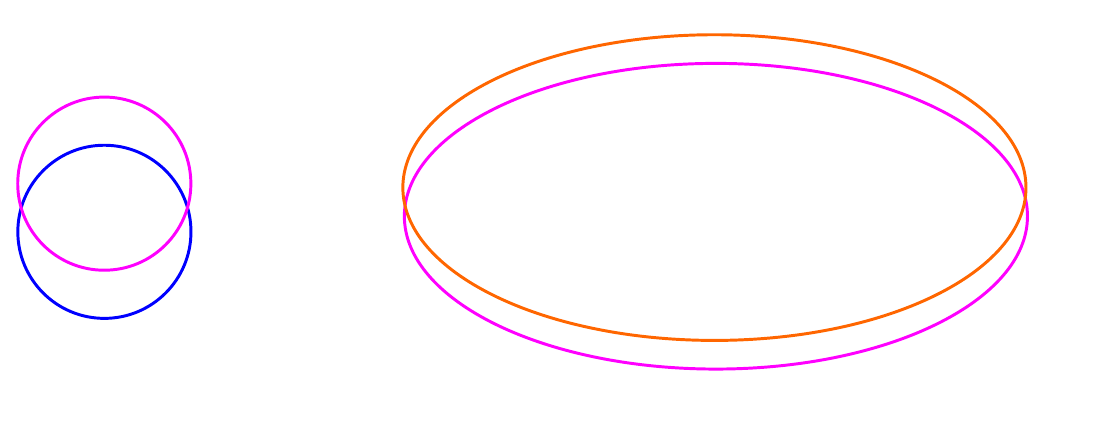
}
\caption{A triple diagram appearing in a handleslide loop of type~\eqref{slide6}.}
\label{fig:type6z}
\end{figure}

\begin{figure}
{
\fontsize{10pt}{11pt}\selectfont
   \def\svgwidth{4.7in}
   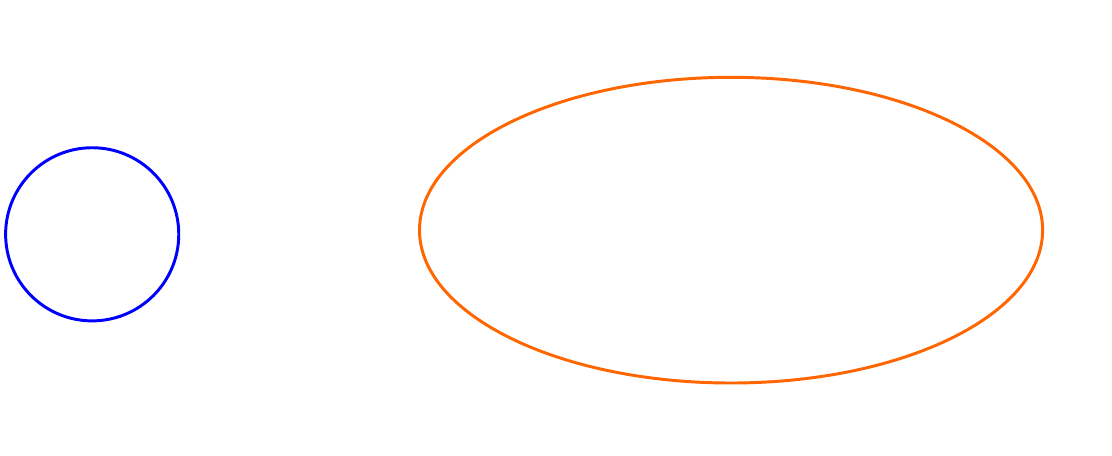
}
\caption{Another triple diagram appearing in a handleslide loop of type~\eqref{slide6}.}
\label{fig:type6}
\end{figure}

Finally, let us discuss handleslide loops of type~\eqref{slide1}, i.e., slide triangles. After an isotopy, we denote the three attaching sets appearing in the triangle
by
$$\xymatrix{ \betas \ar@{-}[rr] \ar@{-}[rd] & & \gammas \ar@{-}[ld]\\
&  \deltas  & }$$
Figure~\ref{fig:type1} shows the resulting triple Heegaard diagram. Note that, because of admissibility issues, we cannot arrange for all three attaching sets to be in the standard position for handleslides from Figure~\ref{fig:slide}. We did arrange this for the $(\betas, \gammas)$ and $(\betas, \deltas)$ pairs, but not for the $(\gammas, \deltas)$ pair. Nevertheless, there is a unique top degree intersection point in $\Tg \cap \Td$, which we denote by $\Theta_{\gamma, \delta}$; this induces a map 
\[ \Psi^{\alphas}_{\gammas \to \deltas}: \HFm(\Ta, \Tg) \to \HFm(\Ta, \Td). \]
\begin{figure}
{
\fontsize{10pt}{11pt}\selectfont
   \def\svgwidth{4.1in}
\begingroup%
  \makeatletter%
  \providecommand\color[2][]{%
    \errmessage{(Inkscape) Color is used for the text in Inkscape, but the package 'color.sty' is not loaded}%
    \renewcommand\color[2][]{}%
  }%
  \providecommand\transparent[1]{%
    \errmessage{(Inkscape) Transparency is used (non-zero) for the text in Inkscape, but the package 'transparent.sty' is not loaded}%
    \renewcommand\transparent[1]{}%
  }%
  \providecommand\rotatebox[2]{#2}%
  \newcommand*\fsize{\dimexpr\f@size pt\relax}%
  \newcommand*\lineheight[1]{\fontsize{\fsize}{#1\fsize}\selectfont}%
  \ifx\svgwidth\undefined%
    \setlength{\unitlength}{423.29154428bp}%
    \ifx\svgscale\undefined%
      \relax%
    \else%
      \setlength{\unitlength}{\unitlength * \real{\svgscale}}%
    \fi%
  \else%
    \setlength{\unitlength}{\svgwidth}%
  \fi%
  \global\let\svgwidth\undefined%
  \global\let\svgscale\undefined%
  \makeatother%
  \begin{picture}(1,0.46486979)%
    \lineheight{1}%
    \setlength\tabcolsep{0pt}%
    \put(0,0){\includegraphics[width=\unitlength,page=1]{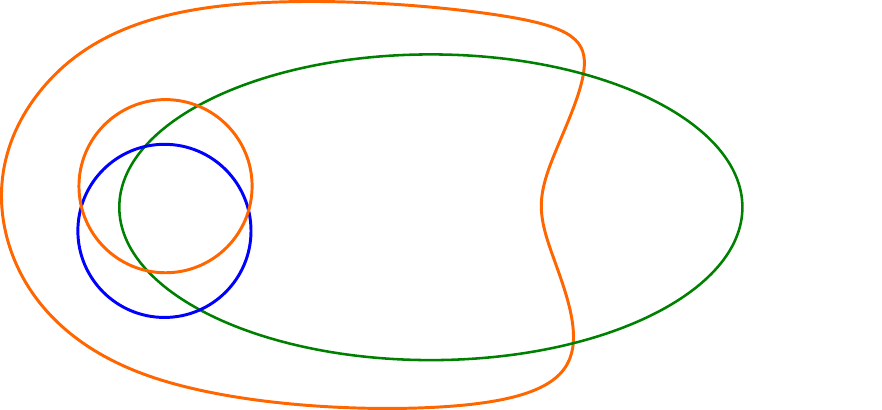}}%
    \put(0.27637656,0.13074034){\color[rgb]{0,0,1}\makebox(0,0)[lt]{\lineheight{1.25}\smash{\begin{tabular}[t]{l}$\beta_1$\end{tabular}}}}%
    \put(0.79431616,0.33506435){\color[rgb]{0,0.50196078,0}\makebox(0,0)[lt]{\lineheight{1.25}\smash{\begin{tabular}[t]{l}$\gamma_1$\end{tabular}}}}%
    \put(0.66443291,0.43073314){\color[rgb]{1,0.4,0}\makebox(0,0)[lt]{\lineheight{1.25}\smash{\begin{tabular}[t]{l}$\delta_2$\end{tabular}}}}%
    \put(0.28910778,0.30732196){\color[rgb]{1,0.4,0}\makebox(0,0)[lt]{\lineheight{1.25}\smash{\begin{tabular}[t]{l}$\delta_1$\end{tabular}}}}%
    \put(0.70673928,0.20711463){\color[rgb]{0,0.50196078,0}\makebox(0,0)[lt]{\lineheight{1.25}\smash{\begin{tabular}[t]{l}$\gamma_2$\end{tabular}}}}%
    \put(0,0){\includegraphics[width=\unitlength,page=2]{type1.pdf}}%
    \put(0.4174003,0.2666335){\color[rgb]{0,0,1}\makebox(0,0)[lt]{\lineheight{1.25}\smash{\begin{tabular}[t]{l}$\beta_2$\end{tabular}}}}%
    \put(0,0){\includegraphics[width=\unitlength,page=3]{type1.pdf}}%
  \end{picture}%
\endgroup%

}
\caption{The triple diagram from a handleslide loop of type~\eqref{slide1}.}
\label{fig:type1}
\end{figure}
The holomorphic triangle darkly shaded in Figure~\ref{fig:type1} comes with positive sign, showing that
$$ F_{\Tb, \Tg, \Td}(\Theta_{\beta, \gamma} \otimes \Theta_{\gamma, \delta}) = \Theta_{\beta, \delta}$$
and therefore
\begin{equation}
\label{eq:ppp}
 \Psi^{\alphas}_{\gammas \to \deltas} \circ \Psi^{\alphas}_{\betas \to \gammas}=\Psi^{\alphas}_{\betas \to \deltas}.
 \end{equation}

However, we are not done! The map $\Psi^{\alphas}_{\gammas \to \deltas}$ is not a handleslide map like those in Section~\ref{sec:handleslide}, where maps were defined from diagrams of the kind shown in Figure~\ref{fig:slide}. We can obtain a diagram of that kind by isotoping the curves $\gamma_2$ and $\delta_1$ as in  Figure~\ref{fig:gdisotopy}, so that they intersect each other and no longer intersect $\delta_2$ and $\gamma_1$.
\begin{figure}
{
\fontsize{10pt}{11pt}\selectfont
   \def\svgwidth{6in}
   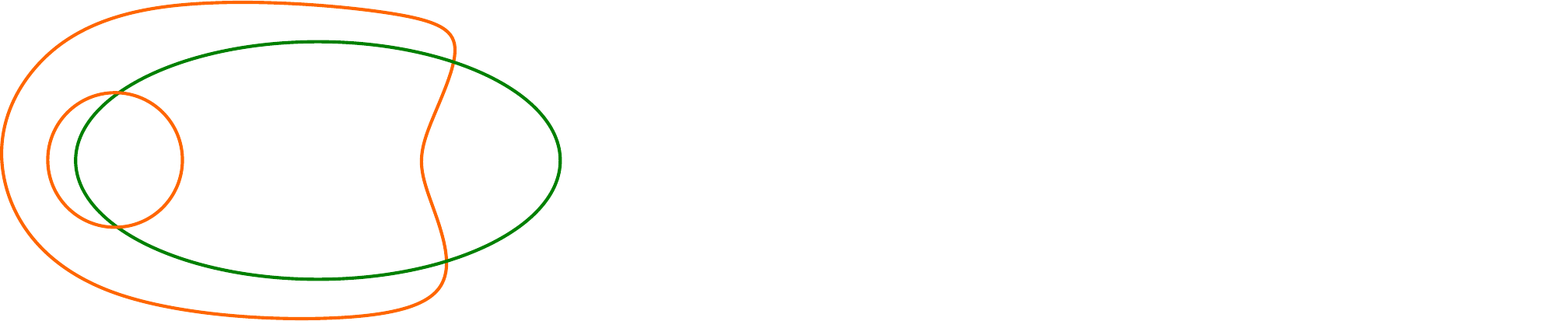
}
\caption{An isotopy to get a standard handlebody diagram.}
\label{fig:gdisotopy}
\end{figure}
We let $\gammas'$ and $\deltas'$ denote the new attaching sets, where:
\begin{itemize}
\item For $i\neq 2$, the curve $\gamma'_i$ is obtained from $\gamma_i$ by a small Hamiltonian isotopy so that $\gamma_i'$ intersects $\gamma_i$ in two points, and does not intersect $\gamma_j$ for any $j \neq i$;
\item For $i\neq 1$, the curve $\delta'_i$ is obtained from $\delta_i$ by a small Hamiltonian isotopy so that $\delta_i'$ intersects $\delta_i$ in two points, and does not intersect $\delta_j$ for any $j \neq i$;
\item The curves $\gamma_2'$ and $\delta_1'$ are as in Figure~\ref{fig:gdisotopy}.
 \end{itemize}
 
Even so, the orderings and orientations of the curves $\gammas'$ in Figure~\ref{fig:gdisotopy} differ from those chosen for a handleslide in Figure~\ref{fig:slide2}. To get to the standard picture, we should switch the ordering of $\gamma_1'$ and $\gamma_2'$, and also the orientation on $\gamma_2'$ (i.e., on the new $\gamma_1'$). The two changes have the combined effect of preserving orientation on the torus $\mathbb{T}_{\gamma'}$, and hence preserving the coupled orientation. They do change the Pin structure, since we are using two different sets of data in Definition~\ref{def:chooseab}.  The two Pin structures come from trivializations of $T\mathbb{T}_{\gamma'}$ that differ by replacing the ordered pair $(\gamma_1', \gamma_2')$ with $(\gamma_2', -\gamma_1')$. Let us relate them by the homotopy given by $90^\circ$ rotation in the $(\gamma_1', \gamma_2')$ plane. This gives a homotopy between the Pin structures, which allows us to identify the respective $\Theta$ generators. Therefore, let us implement these changes for $\gammas'$, and also make the corresponding changes for the set $\gamma$. The result consists in replacing  Figure~\ref{fig:gdisotopy} with Figure~\ref{fig:gdisotopy2}.
\begin{figure}
{
\fontsize{10pt}{11pt}\selectfont
   \def\svgwidth{6in}
   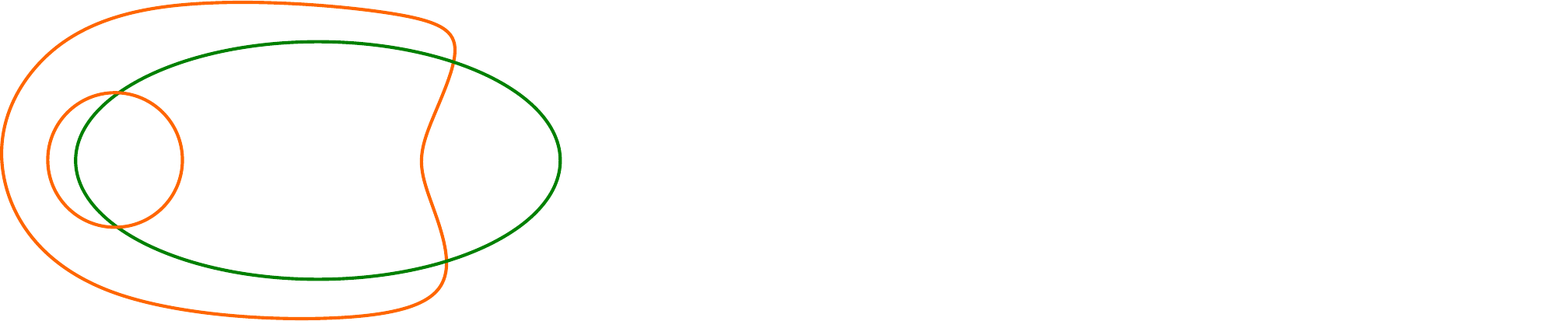
}
\caption{Figure~\ref{fig:gdisotopy} with new orderings and orientations.}
\label{fig:gdisotopy2}
\end{figure}

In the new picture (just as in the old), the attaching set $\gammas'$ represents the same isotopy diagram as $\gammas$, so there is a continuation map $\Gamma^{\alphas}_{\gammas \to \gammas'}.$ Similarly, $\deltas'$ represents the same isotopy diagram as $\deltas$, and we have a continuation map $\Gamma^{\alphas}_{\deltas' \to \deltas}.$ These continuation maps are used in the colimit that defines $\HFm$ for an isotopy diagram. Moreover, the handleslide map between our two isotopy diagrams was defined as $\Psi^{\alphas}_{\gammas' \to \deltas'}.$ Thus, to deduce the commutativity of the slide triangle from \eqref{eq:ppp}, it suffices to prove the following lemma.

\begin{lemma}
\label{lemma:pagd}
We have $\Psi^{\alphas}_{\gammas \to \deltas}= \Gamma^{\alphas}_{\deltas' \to \deltas} \circ \Psi^{\alphas}_{\gammas' \to \deltas'} \circ \Gamma^{\alphas}_{\gammas \to \gammas'}.$
\end{lemma}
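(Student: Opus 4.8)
The plan is to express every map occurring in the statement as a triangle map, and then deduce the identity from the $A_\infty$ relations of Section~\ref{sec:polygon} together with the now-familiar fact that a triangle product of top-degree generators recovers the top-degree generator. This is the same pattern that produced \eqref{eq:ppp}, applied twice.

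First I would record that each of $\Gamma^{\alphas}_{\gammas\to\gammas'}$ and $\Gamma^{\alphas}_{\deltas'\to\deltas}$ is itself a triangle map. By Perutz's construction the isotopies carrying $\gammas$ to $\gammas'$ and $\deltas'$ to $\deltas$ lift to Hamiltonian isotopies of the Lagrangian tori preserving the Lie group Pin structures (exactly as in Lemma~\ref{lemma:liepin}); following \cite[Section 7.3]{HolDisk} and \cite[Section 9.1]{JTZ}, the associated continuation maps agree with the triangle maps
$$\Gamma^{\alphas}_{\gammas\to\gammas'} = F_{\Ta,\Tg,\Tgp}(\ccdot \otimes \Theta_{\gamma,\gamma'}), \qquad \Gamma^{\alphas}_{\deltas'\to\deltas} = F_{\Ta,\Tdp,\Td}(\ccdot \otimes \Theta_{\delta',\delta}),$$
where $\Theta_{\gamma,\gamma'}$ and $\Theta_{\delta',\delta}$ are the unique top-degree intersection points. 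Since these have $\gr = g$, their orientation lines are canonically trivialized by Proposition~\ref{prop:canonical}, and it is this canonical generator for which the triangle map coincides with the continuation map. The remaining map $\Psi^{\alphas}_{\gammas'\to\deltas'} = F_{\Ta,\Tgp,\Tdp}(\ccdot\otimes\Theta_{\gamma',\delta'})$ is a triangle map by construction, as is the left-hand side $\Psi^{\alphas}_{\gammas\to\deltas} = F_{\Ta,\Tg,\Td}(\ccdot\otimes\Theta_{\gamma,\delta})$.

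I would then argue in two steps. After small isotopies making the intermediate diagrams admissible and putting consecutive attaching sets in standard position, the $A_\infty$ relations applied to the quadruple diagram $(\Sigma,\alphas,\gammas,\gammas',\deltas')$ give, on homology, $\Psi^{\alphas}_{\gammas'\to\deltas'}\circ\Gamma^{\alphas}_{\gammas\to\gammas'} = F_{\Ta,\Tg,\Tdp}\bigl(\ccdot\otimes F_{\Tg,\Tgp,\Tdp}(\Theta_{\gamma,\gamma'}\otimes\Theta_{\gamma',\delta'})\bigr)$. The triple diagram $(\gammas,\gammas',\deltas')$ contains a unique index-zero holomorphic triangle joining the three top-degree generators — a product of small triangles on $\Sigma$ oriented as in Figure~\ref{fig:triangles}(b) or (c) once the orderings and orientations of Figure~\ref{fig:gdisotopy2} and Convention~\ref{conv:data} are used — so by Examples~\ref{ex:abcp} and \ref{ex:product} it contributes $+1$, whence $F_{\Tg,\Tgp,\Tdp}(\Theta_{\gamma,\gamma'}\otimes\Theta_{\gamma',\delta'}) = \Theta_{\gamma,\delta'}$ and $\Psi^{\alphas}_{\gammas'\to\deltas'}\circ\Gamma^{\alphas}_{\gammas\to\gammas'} = F_{\Ta,\Tg,\Tdp}(\ccdot\otimes\Theta_{\gamma,\delta'})$. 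The identical argument applied to $(\Sigma,\alphas,\gammas,\deltas',\deltas)$ and the triple diagram $(\gammas,\deltas',\deltas)$ yields $F_{\Tg,\Tdp,\Td}(\Theta_{\gamma,\delta'}\otimes\Theta_{\delta',\delta}) = \Theta_{\gamma,\delta}$, hence $\Gamma^{\alphas}_{\deltas'\to\deltas}\circ F_{\Ta,\Tg,\Tdp}(\ccdot\otimes\Theta_{\gamma,\delta'}) = F_{\Ta,\Tg,\Td}(\ccdot\otimes\Theta_{\gamma,\delta}) = \Psi^{\alphas}_{\gammas\to\deltas}$. Composing the two steps gives the lemma.

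The main obstacle is the sign bookkeeping for the $\Theta$ generators across the several diagrams involved: one must check that, with the orderings and orientations carried over through the $90^\circ$-rotation homotopy of Pin structures used in passing from Figure~\ref{fig:gdisotopy} to Figure~\ref{fig:gdisotopy2}, every darkly shaded triangle in $(\gammas,\gammas',\deltas')$ and in $(\gammas,\deltas',\deltas)$ really sits in the configuration of Figure~\ref{fig:triangles}(b)/(c) for which Example~\ref{ex:abcp} gives $+1$ rather than $-1$, and that the canonical trivializations from Proposition~\ref{prop:canonical} of $o(\Theta_{\gamma,\gamma'})$ and $o(\Theta_{\delta',\delta})$ are exactly the ones for which the triangle maps equal the isotopy continuation maps. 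A secondary, routine issue is arranging admissibility of the intermediate quadruple diagrams by the usual small isotopies, as in \cite{JTZ}.
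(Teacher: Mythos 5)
Your decomposition matches the paper's exactly: you identify the continuation maps with triangle maps, introduce the intermediate generator $\Theta_{\gamma,\delta'}$, and reduce to the two relations $F_{\Tg,\Tgp,\Tdp}(\Theta_{\gamma,\gamma'}\otimes\Theta_{\gamma',\delta'})=\Theta_{\gamma,\delta'}$ and $F_{\Tg,\Tdp,\Td}(\Theta_{\gamma,\delta'}\otimes\Theta_{\delta',\delta})=\Theta_{\gamma,\delta}$. Your treatment of the first relation also matches the paper, which verifies it from a unique product of small triangles (Figure~\ref{fig:ggd}).

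The gap is in the second step, where you write ``the identical argument applied to $(\Sigma,\alphas,\gammas,\deltas',\deltas)$.'' That is not what happens, and the paper flags it explicitly. In the triple diagram $(\gammas,\deltas',\deltas)$ the index-zero class joining the top-degree generators is \emph{not} a product of small triangles on $\Sigma$: in the local picture (Figure~\ref{fig:gdd}), one factor is a \emph{hexagon} rather than a triangle. So you cannot appeal to Example~\ref{ex:abcp}; the count of holomorphic representatives in that hexagon class, with sign, requires a separate argument. The paper handles this by passing to Lipshitz's cylindrical reformulation, stretching the neck along a separating curve (Figure~\ref{fig:hexagon}), identifying the moduli space with a fiber product $\M(\psi_1)\times_\R\M(\psi_2)$ where $\M(\psi_2)$ is one-dimensional, and then degenerating further (Figure~\ref{fig:psitwo}) to reduce to a product of a small triangle and a bigon. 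The compatibility of the canonical orientations with all of these degenerations is itself nontrivial and is addressed in Section~\ref{sec:cyl}. You have therefore correctly located the algebraic skeleton of the proof, and correctly identified sign bookkeeping as a concern, but you have misdiagnosed where the real work lies: it is not in checking that each small triangle is in the configuration of Figure~\ref{fig:triangles}(b)/(c), but in establishing that a genuinely non-product moduli space (the hexagon) is counted by $+1$. Without an argument for that, the second half of your proof does not go through.

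A minor secondary point: your justification for $\Gamma^{\alphas}_{\gammas\to\gammas'}$ being a triangle map cites \cite{HolDisk} and \cite{JTZ}, but the identification used in the paper is the monogon interpolation of \cite[Proposition 11.4]{LipshitzCyl}, which is what makes sense in the cylindrical picture over $\Z$; the $\F_2$ argument of \cite[Section 9.1]{JTZ} relies on the uniqueness of $\Theta$ over $\F_2$, a fact that fails over $\Z$.
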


\begin{proof}
In the Heegaard diagram $(\Sigma, \gammas, \gammas', z)$, there is a unique top degree intersection point $\Theta_{\gamma, \gamma'}$, which produces a triangle map $\Psi^{\alphas}_{\gammas \to \gammas'}$. This map is the same as the continuation map $\Gamma^{\alphas}_{\gammas \to \gammas'}$, because at the chain level we can interpolate between the two maps by counting monogons as in \cite[Proposition 11.4]{LipshitzCyl}. 

Similarly, there is a top degree intersection point $\Theta_{\delta', \delta}$ that defines a triangle map $\Psi^{\alphas}_{\deltas' \to \deltas}=\Gamma^{\alphas}_{\deltas' \to \deltas}.$ We are left to show that
\begin{equation}
\label{eq:agde}
\Psi^{\alphas}_{\gammas \to \deltas}= \Psi^{\alphas}_{\deltas' \to \deltas} \circ \Psi^{\alphas}_{\gammas' \to \deltas'} \circ \Psi^{\alphas}_{\gammas \to \gammas'}.
\end{equation}

We can further find a unique top degree intersection point $\Theta_{\gamma, \delta'}$, defining a map $\Psi^{\alphas}_{\gammas \to \deltas'}$. We will deduce \eqref{eq:agde} from the relations
$$\Psi^{\alphas}_{\gammas \to \deltas'} =\Psi^{\alphas}_{\gammas' \to \deltas'} \circ \Psi^{\alphas}_{\gammas \to \gammas'}, \ \ \
\Psi^{\alphas}_{\gammas \to \deltas}= \Psi^{\alphas}_{\deltas' \to \deltas} \circ\Psi^{\alphas}_{\gammas \to \deltas'}.$$
In turn, to prove these it suffices to show that
\begin{equation}
\label{eq:ggd}
 F_{\Tg, \Tgp, \Tdp} (\Theta_{\gamma, \gamma'} \otimes \Theta_{\gamma', \delta'} ) = \Theta_{\gamma, \delta'}
 \end{equation}
 and
\begin{equation}
\label{eq:gdd}
F_{\Tg, \Tdp, \Td} (\Theta_{\gamma, \delta'} \otimes \Theta_{\delta', \delta} ) = \Theta_{\gamma, \delta}.
\end{equation}
Equation~\eqref{eq:ggd} follows from investigating the unique holomorphic triangle between the top degree generators in Figure~\ref{fig:ggd}; this comes with a positive sign. 
\begin{figure}
{
\fontsize{10pt}{11pt}\selectfont
   \def\svgwidth{4.1in}
   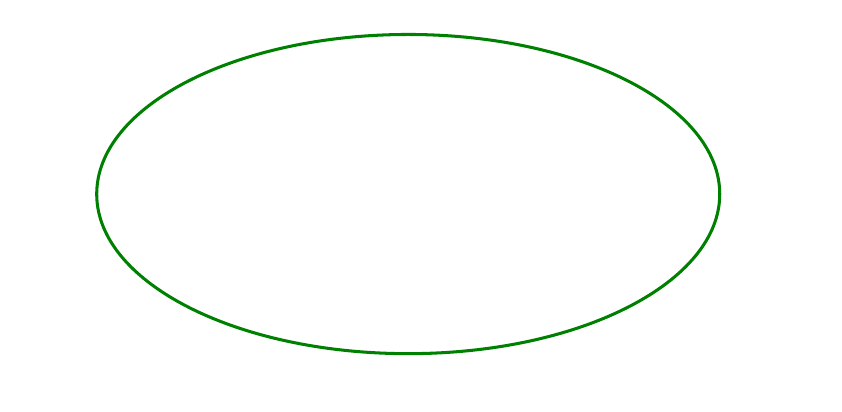
}
\caption{A triple diagram leading to Equation~\eqref{eq:ggd}.}
\label{fig:ggd}
\end{figure}

Equation~\eqref{eq:gdd} follows from a more involved analysis, based on Figure~\ref{fig:gdd}. There, the homotopy class of triangles between theta elements that we need to consider is not made of two bigons, but rather is the darkly shaded hexagon. This is still combined with $g-2$ other standard triangles in the rest of the triple diagram, not shown in the picture. The standard triangles come with a positive sign, so to deduce ~\eqref{eq:gdd}, it remains to prove that the count of holomorphic triangles in the hexagon class (in the second symmetric product) is $+1$. 
\begin{figure}
{
\fontsize{10pt}{11pt}\selectfont
   \def\svgwidth{4.1in}
\begingroup%
  \makeatletter%
  \providecommand\color[2][]{%
    \errmessage{(Inkscape) Color is used for the text in Inkscape, but the package 'color.sty' is not loaded}%
    \renewcommand\color[2][]{}%
  }%
  \providecommand\transparent[1]{%
    \errmessage{(Inkscape) Transparency is used (non-zero) for the text in Inkscape, but the package 'transparent.sty' is not loaded}%
    \renewcommand\transparent[1]{}%
  }%
  \providecommand\rotatebox[2]{#2}%
  \newcommand*\fsize{\dimexpr\f@size pt\relax}%
  \newcommand*\lineheight[1]{\fontsize{\fsize}{#1\fsize}\selectfont}%
  \ifx\svgwidth\undefined%
    \setlength{\unitlength}{423.45114881bp}%
    \ifx\svgscale\undefined%
      \relax%
    \else%
      \setlength{\unitlength}{\unitlength * \real{\svgscale}}%
    \fi%
  \else%
    \setlength{\unitlength}{\svgwidth}%
  \fi%
  \global\let\svgwidth\undefined%
  \global\let\svgscale\undefined%
  \makeatother%
  \begin{picture}(1,0.49199094)%
    \lineheight{1}%
    \setlength\tabcolsep{0pt}%
    \put(0,0){\includegraphics[width=\unitlength,page=1]{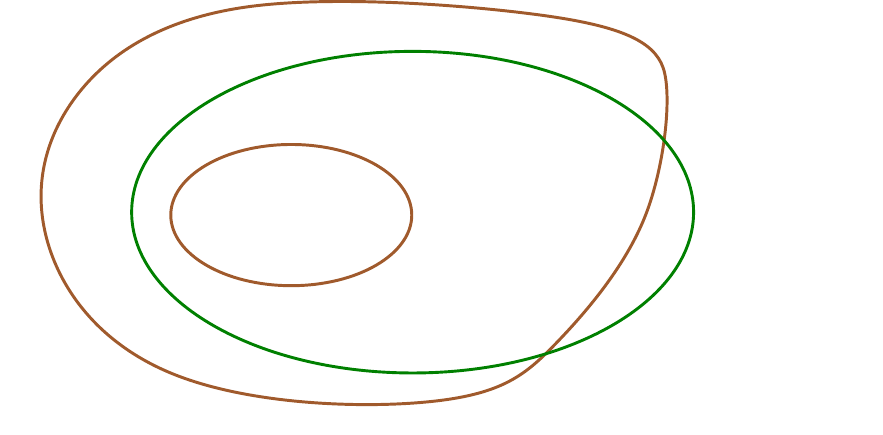}}%
    \put(0.79439358,0.26158412){\color[rgb]{0,0.50196078,0}\makebox(0,0)[lt]{\lineheight{1.25}\smash{\begin{tabular}[t]{l}$\gamma_2$\end{tabular}}}}%
    \put(0.74513004,0.4362155){\color[rgb]{0.62745098,0.35294118,0.17254902}\makebox(0,0)[lt]{\lineheight{1.25}\smash{\begin{tabular}[t]{l}$\delta'_2$\end{tabular}}}}%
    \put(0.35332541,0.13391455){\color[rgb]{0.62745098,0.35294118,0.17254902}\makebox(0,0)[lt]{\lineheight{1.25}\smash{\begin{tabular}[t]{l}$\delta'_1$\end{tabular}}}}%
    \put(0.54996414,0.1308856){\color[rgb]{0,0.50196078,0}\makebox(0,0)[lt]{\lineheight{1.25}\smash{\begin{tabular}[t]{l}$\gamma_1$\end{tabular}}}}%
    \put(0,0){\includegraphics[width=\unitlength,page=2]{gdd.pdf}}%
    \put(0.10829975,0.33718558){\color[rgb]{1,0.4,0}\makebox(0,0)[lt]{\lineheight{1.25}\smash{\begin{tabular}[t]{l}$\delta_1$\end{tabular}}}}%
    \put(0.04344588,0.05112985){\color[rgb]{1,0.4,0}\makebox(0,0)[lt]{\lineheight{1.25}\smash{\begin{tabular}[t]{l}$\delta_2$\end{tabular}}}}%
    \put(0,0){\includegraphics[width=\unitlength,page=3]{gdd.pdf}}%
  \end{picture}%
\endgroup%

}
\caption{A triple diagram with a darkly shaded hexagon leading to Equation~\eqref{eq:gdd}.}
\label{fig:gdd}
\end{figure}

Let us re-draw the hexagon by itself in Figure~\ref{fig:hexagon} (a). To count its holomorphic representatives, we use Lipshitz's cylindrical picture from \cite{LipshitzCyl} and stretch the almost complex structure along the dashed line in Figure~\ref{fig:hexagon}. This kind of deformation was analyzed in \cite[Appendix A]{LipshitzCyl} and used at various places in the literature; see for example \cite[Theorem 5.1]{Links}, \cite[Section 14]{ZemkeHat}, or \cite[Proposition 4.3]{ZemkeQuasi}. In our case, the result of the neck stretching is shown in Figure~\ref{fig:gdd}. (See Section~\ref{sec:cyl} below for more about signs in the cylindrical picture.)

\begin{figure}
{
\fontsize{10pt}{11pt}\selectfont
   \def\svgwidth{4.1in}
\begingroup%
  \makeatletter%
  \providecommand\color[2][]{%
    \errmessage{(Inkscape) Color is used for the text in Inkscape, but the package 'color.sty' is not loaded}%
    \renewcommand\color[2][]{}%
  }%
  \providecommand\transparent[1]{%
    \errmessage{(Inkscape) Transparency is used (non-zero) for the text in Inkscape, but the package 'transparent.sty' is not loaded}%
    \renewcommand\transparent[1]{}%
  }%
  \providecommand\rotatebox[2]{#2}%
  \newcommand*\fsize{\dimexpr\f@size pt\relax}%
  \newcommand*\lineheight[1]{\fontsize{\fsize}{#1\fsize}\selectfont}%
  \ifx\svgwidth\undefined%
    \setlength{\unitlength}{475.29169518bp}%
    \ifx\svgscale\undefined%
      \relax%
    \else%
      \setlength{\unitlength}{\unitlength * \real{\svgscale}}%
    \fi%
  \else%
    \setlength{\unitlength}{\svgwidth}%
  \fi%
  \global\let\svgwidth\undefined%
  \global\let\svgscale\undefined%
  \makeatother%
  \begin{picture}(1,0.4752163)%
    \lineheight{1}%
    \setlength\tabcolsep{0pt}%
    \put(0.18705662,0.00550238){\makebox(0,0)[lt]{\lineheight{1.25}\smash{\begin{tabular}[t]{l}$(a)$\end{tabular}}}}%
    \put(0.82766255,0.0055017){\makebox(0,0)[lt]{\lineheight{1.25}\smash{\begin{tabular}[t]{l}$(b)$\end{tabular}}}}%
    \put(0,0){\includegraphics[width=\unitlength,page=1]{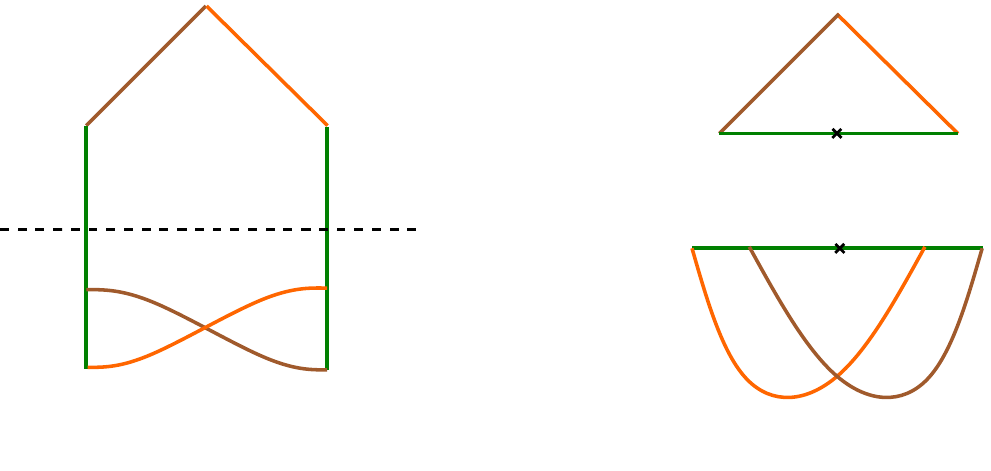}}%
    \put(0.843503,0.30854984){\makebox(0,0)[lt]{\lineheight{1.25}\smash{\begin{tabular}[t]{l}$p$\end{tabular}}}}%
    \put(0.85497519,0.24376501){\makebox(0,0)[lt]{\lineheight{1.25}\smash{\begin{tabular}[t]{l}$q$\end{tabular}}}}%
    \put(0,0){\includegraphics[width=\unitlength,page=2]{hexagon.pdf}}%
    \put(0.19660036,0.30629273){\makebox(0,0)[lt]{\lineheight{1.25}\smash{\begin{tabular}[t]{l}$\psi$\end{tabular}}}}%
    \put(0.82934803,0.38550548){\makebox(0,0)[lt]{\lineheight{1.25}\smash{\begin{tabular}[t]{l}$\psi_1$\end{tabular}}}}%
    \put(0.82934794,0.15550218){\makebox(0,0)[lt]{\lineheight{1.25}\smash{\begin{tabular}[t]{l}$\psi_2$\end{tabular}}}}%
  \end{picture}%
\endgroup%

}
\caption{(a) The hexagon from Figure~\ref{fig:gdd}. (b) Its degeneration.}
\label{fig:hexagon}
\end{figure}

Let $T$ be the standard $2$-simplex (triangle), where we remove neighborhoods of the vertices and attach three infinite half-strips instead.  In the cylindrical picture, we are counting holomorphic maps $u: S \to \Sigma \times T$, where $S$ is a branched cover of $\Delta$ (in our case, a hexagon). Let 
$$\pi_T: \Sigma \times T \to T, \ \ \ \pi_{\Sigma}: \Sigma \times T \to \Sigma$$
be the projections. We impose some boundary conditions on $u$: the edges of $S$ should map under $\pi_T \circ u$ to the edges of the triangle, and under $\pi_\Sigma \circ u$ to the edges of the hexagon shown in Figure~\ref{fig:hexagon}; further, vertices should map to the corresponding vertices. Let $\M(\psi)$ be the moduli space of holomorphic maps of this form, in the hexagon class $\psi$ from Figure~\ref{fig:hexagon}. 

Let us extend the dashed line in Figure~\ref{fig:hexagon} to a separating circle on $\Sigma$. After the degeneration, the surface $\Sigma$ turns into the wedge sum of two surfaces $\Sigma_1$ and $\Sigma_2$, glued to each other at points $p \in \Sigma_1$ and $q \in \Sigma_2$.  The class $\psi$ gets split into $\psi_1$ and $\psi_2$. For $i=1, 2$, we have moduli spaces $\M(\psi_i)$, consisting of holomorphic maps $u_i: T \to \Sigma_i \times T$ in the class $\psi_i$ (with the obvious boundary conditions). Consider the maps
$$ \rho_1: \M(\psi_1) \to \R, \ \ \rho_1=\pi_T ((\pi_{\Sigma} \circ u)^{-1}(p)),$$
$$ \rho_2: \M(\psi_1) \to \R, \ \ \rho_2=\pi_T ((\pi_{\Sigma} \circ u)^{-1}(q)).$$
(In principle, these maps should have image in $T$, but they land on the green edge of $\del T$, which we identify with $\R$ by orienting it using the boundary orientation from $T$.)

For large enough neck length, the space $\M(\psi)$ is identified with the fiber product
\begin{equation}
\label{eq:fiber}
\M(\psi_1) \times_\R \M(\psi_2) = \{(u_1, u_2) \in \M(\psi_1) \times \M(\psi_2) \mid \rho_1(u_1) + \rho_2(u_2) = 0\},
\end{equation}
and this identification preserves orientations. Since a triangle with three marked points on the boundary has a unique automorphism, we see that $\M(\psi_1)$ is a point, so its image under $\rho_1$ is also a fixed point in $ \R$. On the other hand, $\M(\psi_2)$ is one-dimensional, because when we apply the Riemann mapping theorem to construct a holomorphic representative, we can vary the length of a slit along either the orange or the brown curve. 

Thus, the fiber product in ~\eqref{eq:fiber} is simply the preimage $\rho_2^{-1}(t)$ for some $t \in \R$. To count its points (with sign), we can vary $t$ at will. In the limit $t \to +\infty$, the class $\psi_2$ splits into a triangle and a bigon, as in Figure~\ref{fig:psitwo}. The respective moduli spaces both consist of a point with positive sign; see Examples~\ref{ex:abcp} and \ref{ex:circles}. Therefore, the count of points in $\M(\psi) \cong \rho_2^{-1}(t)$ is $+1$.
\end{proof}
\begin{figure}
{
\fontsize{10pt}{11pt}\selectfont
   \def\svgwidth{4.1in}
\begingroup%
  \makeatletter%
  \providecommand\color[2][]{%
    \errmessage{(Inkscape) Color is used for the text in Inkscape, but the package 'color.sty' is not loaded}%
    \renewcommand\color[2][]{}%
  }%
  \providecommand\transparent[1]{%
    \errmessage{(Inkscape) Transparency is used (non-zero) for the text in Inkscape, but the package 'transparent.sty' is not loaded}%
    \renewcommand\transparent[1]{}%
  }%
  \providecommand\rotatebox[2]{#2}%
  \newcommand*\fsize{\dimexpr\f@size pt\relax}%
  \newcommand*\lineheight[1]{\fontsize{\fsize}{#1\fsize}\selectfont}%
  \ifx\svgwidth\undefined%
    \setlength{\unitlength}{449.36750757bp}%
    \ifx\svgscale\undefined%
      \relax%
    \else%
      \setlength{\unitlength}{\unitlength * \real{\svgscale}}%
    \fi%
  \else%
    \setlength{\unitlength}{\svgwidth}%
  \fi%
  \global\let\svgwidth\undefined%
  \global\let\svgscale\undefined%
  \makeatother%
  \begin{picture}(1,0.22449574)%
    \lineheight{1}%
    \setlength\tabcolsep{0pt}%
    \put(0,0){\includegraphics[width=\unitlength,page=1]{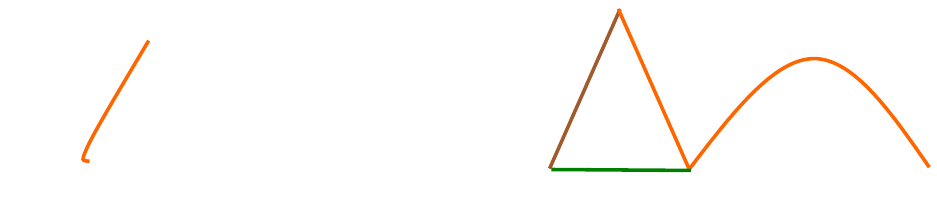}}%
    \put(0.90261768,0.0058198){\makebox(0,0)[lt]{\lineheight{1.25}\smash{\begin{tabular}[t]{l}$q$\end{tabular}}}}%
    \put(0.14963261,0.00816377){\makebox(0,0)[lt]{\lineheight{1.25}\smash{\begin{tabular}[t]{l}$q$\end{tabular}}}}%
    \put(0,0){\includegraphics[width=\unitlength,page=2]{psitwo.pdf}}%
  \end{picture}%
\endgroup%

}
\caption{The class $\psi_2$ from Figure~\ref{fig:hexagon}(b), in the limit when it splits along the $\delta_2$ curve. (Note that, after breaking the orange $\delta_2$ segment in two, the orientation on one half of the segment is reversed. This is as it should be: We want the same coupled orientation between the green and orange curves at the two sides of the breaking point.) }
\label{fig:psitwo}
\end{figure}

\begin{remark}
If we reverse the orientations of all the curves in Figures~\ref{fig:gdd}, \ref{fig:hexagon} and \ref{fig:psitwo}, the coupled Spin structures stay the same (cf. Convention~\ref{conv:data}), and the two triangles we consider still come with positive sign. However, the bigon on the right of Figure~\ref{fig:psitwo} looks like the bigon labeled $B$ (rather than $A$) in Figure~\ref{fig:circles}, so it comes with a negative sign. This sign is cancelled by the fact that the orientation of the green curve is reversed, which changes the orientation of the line $\R$ over which we take the fiber product in \eqref{eq:fiber}. Thus, the overall sign of the hexagon is still $+1$.

An alternate way to think about this in Lipshitz's cylindrical picture is that we orient the moduli space of hexagons $S$ that are double covers of the triangle $T$ branched at a single point. This moduli space is just the triangle $T$ itself, because the hexagon is determined by the position of the branch point $x \in T$. The degeneration in Figure~\ref{fig:hexagon} corresponds to sending $x$ to an edge, and the one in Figure~\ref{fig:psitwo} corresponds to further sending it to a vertex. In our discussion, we have conveniently chosen orientations of the moduli spaces compatible with these degenerations: the triangle $T$ is oriented as a subset of the plane, the green curve is oriented as part of its boundary, and $t \to +\infty$ is the positive end of $\R$.  
\end{remark}

Now that Lemma~\ref{lemma:pagd} is proved, we have finished checking invariance under handleslide loops. We continue with the last step in the proof of Theorem~\ref{thm:main}.

\medskip
{\em(6)} {\bf Commutativity along stabilization slides:} Let us denote the Heegaard diagrams that appear in Figure~\ref{fig:sslide} by
$$ 
\xymatrixrowsep{3mm}
\xymatrix{ & (\Sigma, \alphas, \betas)  \ar[dr]^{\Psi^{\alphas}_{\betas \to \gammas}} &\\
(\Sigma^u, \alphas^u, \betas^u) \ar[ur]^{S}  \ar[dr]_{S'} &  &(\Sigma, \alphas, \gammas)  \ar[dl]^{\Psi^{\alphas}_{\gammas \to \deltas}} \\
& (\Sigma, \alphas, \deltas)  & }
$$
with $\Sigma^u$ be the original (un-stabilized) surface, and $\Sigma$ its stabilization. The two handleslide maps on the right hand side are induced by top degree generators $\Theta_{\beta, \gamma}$ and $\Theta_{\gamma, \delta}$. 

Even though they differ by two handleslides rather than one, the attaching sets $\betas$ and $\deltas$ still produce a unique top degree generator $\Theta_{\beta, \delta}$, and a triangle map $\Psi^{\alphas}_{\betas \to \deltas}$. The holomorphic triangle darkly shaded in Figure~\ref{fig:sslide2} comes with positive sign, which implies that
$$ F_{\Tb, \Tg, \Td}(\Theta_{\beta, \gamma} \otimes \Theta_{\gamma, \delta}) = \Theta_{\beta, \delta}$$
and therefore $ \Psi^{\alphas}_{\gammas \to \deltas} \circ \Psi^{\alphas}_{\betas \to \gammas}=\Psi^{\alphas}_{\betas \to \deltas}.$
\begin{figure}
{
\fontsize{10pt}{11pt}\selectfont
   \def\svgwidth{2.8in}
   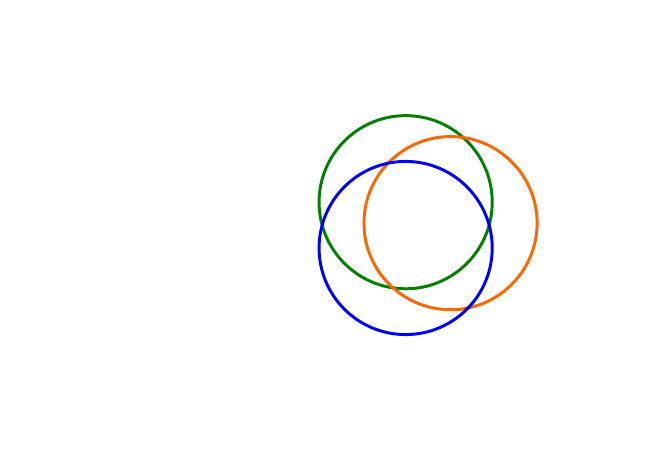
}
\caption{A triple diagram appearing in the stabilization slide.}
\label{fig:sslide2}
\end{figure}

It remains to show that 
$$\Psi^{\alphas}_{\betas \to \deltas} \circ S = S'.$$
This is a consequence of Theorem 10.4 in \cite{HolDiskTwo}, which says that triangle maps commute with stabilizations. In the un-stabilized diagram, the remaining $\beta$ and $\delta$ curves are just small Hamiltonian perturbations of the $\beta^u$ curves, so the respective triangle map is just the nearest point map (which we can think of as the identity, once we identify the corresponding curves by the small isotopy).

\medskip
By applying Theorems~\ref{thm:shi2} and \ref{thm:strongH}, we deduce that $\HFm$ is a natural invariant of based $3$-manifolds. The proof for the other versions is similar.
\end{proof}

\subsection{Signs in the cylindrical picture}
\label{sec:cyl}
In \cite{LipshitzCyl}, Lipshitz reinterprets Heegaard Floer homology as follows. Instead of strips $u: \R \times [0,1] \to \Sym^g(\Sigma)$ with boundaries on $\Ta$ and $\Tb$, he considers holomorphic curves 
$$ u: S \to  \Sigma \times \R \times [0,1]$$
where $S$ is a smooth surface with boundary and infinite ends (of any genus), and the boundary components of $\del S$ are mapped to Lagrangians of the form $\alpha_i \times \{1\} \times \R$ and $\beta_i \times \{1\} \times \R$. (For the exact conditions on these curves, we refer to \cite[Section 1]{LipshitzCyl}.) 

There is a tautological correspondence between the curves counted in Lipshitz's setup and the strips in the symmetric product that are counted in Ozsv\'ath and Szab\'o's original construction. This correspondence is compatible with the determinant index bundles of $D\delbar$ operators, which allows Lipshitz to show that his theory is isomorphic to the original one over $\Z$. (See Section 13 in \cite{LipshitzCyl}, particularly Proposition 13.7.) The cylindrical picture and the tautological correspondence can also be extended to polygon maps. 

In \cite{LipshitzCyl}, the signs for the differential come from coherent orientations, just as in the original  \cite{HolDisk}. In our context, the coupled Spin structure induces signs in the Ozsv\'ath-Szab\'o picture, and hence (via the tautological correspondence) in Lipshitz's cylindrical picture. 

In Heegaard Floer theory, it is common to move to the cylindrical picture when one studies various degenerations. See for example \cite[Section 5]{Links},  \cite[Section 9.3]{JTZ}, \cite[Section 6]{ZemkeHF}, or what we did in the proofs of handleswap invariance and in Lemma~\ref{lemma:pagd} above. In such cases, we need to know that the orientations of the moduli spaces of curves are compatible with these degenerations. Transplanting orientations from the symmetric product picture is not fully satisfactory, because the degeneration arguments are less clear there.

Instead, it suffices to note that one can also construct canonical orientations in Lipshitz's picture, and that these agree with those from the symmetric product. In all the situations we encounter (such as in Lemma~\ref{lemma:pagd}), we fix Spin structures as well as orderings on the alpha curves, as well as on the beta curves. By taking the product of these with the canonical Lie group Spin structure on $\R$, we get Spin structures on Lagrangians such as $\alpha_i \times \{1\} \times \R$ and $\beta_i \times \{1\} \times \R$, which give the boundary conditions in Lipshitz's picture. Thus, we can apply Seidel's work \cite{SeidelBook} and get orientations on the moduli spaces of curves there. In particular, we can construct orientation spaces $o(x)$ for every $x \in \alpha_i \cap \beta_j$, and set
$$ o(\x) = o(x_1) \otimes \dots \otimes o(x_g)$$
for each  $\x = \{x_1, \dots, x_g\} \in \Ta \cap \Tb$. Since the Spin structures on $\Ta$ and $\Tb$ are products of those on the alpha and beta curves, respectively, we find that the orientation spaces $o(\x)$ which generate the Floer complexes in the two pictures can be identified. Furthermore, following the identification of the index bundles in \cite[Section 13]{LipshitzCyl}, we see that the signs of differentials in the two pictures agree. A similar argument applies to the signs of higher polygons. 

Whenever we consider a degeneration of polygons in Lipshitz's picture, the index bundles are compatible with  gluing, and therefore so are the orientations induced by Spin structures. This justifies extending the degeneration arguments to $\Z$, provided the Spin structures before and after the degeneration are compatible. In fact, we always choose Lie group Spin structures on the curves, which are canonical once an orientation is fixed. Therefore, in each situation it suffices to describe orientations on the curves that are compatible with the degeneration. (An example is in Figure~\ref{fig:hexagon}.)


\section{The surgery exact triangle}
\label{sec:exact}

The surgery exact triangle is one of the most useful tools in Heegaard Floer theory. It was originally proved in \cite[Theorem 9.12]{HolDiskTwo}, using $\Z$ coefficients from coherent orientations. A different proof, using the triangle detection lemma from homological algebra, was given in \cite{BrCov}, with $\F_2$ coefficients. In this section we prove it over $\Z$ using our set-up with canonical orientations.

A similar exact triangle exists in Lagrangian Floer homology, due to Seidel \cite{SeidelExact}; see also \cite[Theorem 17.16]{SeidelBook}. This applies to the case where one Lagrangian is obtained from the other by a Dehn twist or, more generally, a fibered Dehn twist (\cite{MakWu}, \cite{WWExact}, \cite{WWOri}). Neither of these is quite the situation in Heegaard Floer homology but, nevertheless, one could imagine an extension of Seidel's exact triangle that applies to Heegaard Floer theory. With regard to orientations, an interesting point to note is that for Seidel's exact triangle, one needs to choose a bounding rather than a Lie group Pin structure on the circle; see \cite[Example 17.15]{SeidelBook}. (In his terminology, nontrivial means bounding.) In our situation, it is also the case that the simplest exact triangle that can be constructed would involve bounding Pin structures. Nevertheless, by using twisted coefficients, we can arrive at a triangle with orientations coming from Lie group Pin structures.

We state the triangle here in the same generality as in monopole theory; compare \cite[Theorem 2.4]{KMOS}.




\begin{theorem}
\label{thm:exact}
Let $Y$ be a compact oriented $3$-manifold with torus boundary, and let $\beta, \gamma, \delta$ be three oriented simple closed curves on $\del Y$, pairwise intersecting in a single point, with intersection numbers
$$ \beta \cdot \gamma = \gamma \cdot \delta = \delta \cdot \beta = -1.$$
Let $Y_\beta$, $Y_\gamma$, and $Y_\delta$ be the closed three-manifolds obtained from $Y$ by filling it with a solid torus $S^1 \times D^2$ such that $\{1 \} \times \del D^2$ is attached to the respective curve ($\beta$, $\gamma$ or $\delta$). Then, there is an exact sequence
$$\cdots \to  \HFp(Y_{\beta}) \to \HFp(Y_{\gamma}) \to \HFp(Y_{\delta}) \to \cdots$$
\end{theorem}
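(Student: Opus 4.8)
The strategy is the classical one from \cite[Section 9]{HolDiskTwo}, refined to $\Z$ coefficients using the canonical orientations set up in Sections~\ref{sec:general}--\ref{sec:naturality}. Fix a Heegaard diagram $(\Sigma, \alphas, \betas, z)$ for $Y_\beta$ adapted to the knot complement, so that $\betas = \{\beta_1, \dots, \beta_g\}$ with $\beta_g = \beta$ lying on $\del Y$, and let $\gammas = \{\beta_1, \dots, \beta_{g-1}, \gamma\}$ and $\deltas = \{\beta_1, \dots, \beta_{g-1}, \delta\}$, where we perturb the first $g-1$ curves in each collection by small Hamiltonian isotopies as in Section~\ref{sec:handleslide} so that all triple and quadruple diagrams in sight are admissible. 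Then $(\Sigma, \alphas, \gammas, z)$ and $(\Sigma, \alphas, \deltas, z)$ are Heegaard diagrams for $Y_\gamma$ and $Y_\delta$, and $(\Sigma, \betas, \gammas, z)$, $(\Sigma, \gammas, \deltas, z)$, $(\Sigma, \deltas, \betas, z)$ are diagrams for connected sums $\#^{g-1}(S^1\times S^2)$. The first task is to pin down the maps in the triangle: these are \emph{not} the plain triangle maps $F_{\Ta, \Tb, \Tg}(\,\cdot\, \otimes \Theta_{\beta,\gamma})$ coming from the canonical coupled Spin structures, because (as in Seidel's exact triangle, cf. \cite[Example 17.15]{SeidelBook}) the naive construction wants a bounding rather than a Lie group Pin structure on the last circle factor. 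Instead, I would use the twisted polygon maps from Section~\ref{sec:polygon}: choose classes $\eta_{\beta}, \eta_{\gamma}, \eta_{\delta} \in H^1(\cdot; \Z/2)$ on the respective tori which tensor the Lie group Pin structure on the last curve into the bounding one, and form the maps $\ftw^\s_{L_0,\dots}$ with coefficients in the modules $A_{\eta_\bullet,\eta_\bullet}$. By Proposition~\ref{prop:changePin} and the Proposition following \eqref{eq:AsA}, this is the same as the untwisted triangle map for the Lagrangians with adjusted Pin structures, and by Corollary~\ref{cor:changePin} the three Heegaard Floer groups $\HFp(Y_\beta), \HFp(Y_\gamma), \HFp(Y_\delta)$ themselves are unchanged (since $\eta_\bullet$ extends over the symmetric product, as the last curve bounds in the complement of $z$). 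This is the place where the remark in the introduction about ``using twisted coefficients for the maps without changing the groups'' gets cashed out.

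With the maps fixed, the proof proceeds through the standard homological-algebra mechanism. First I would establish the key geometric computation: in the triple diagram $(\Sigma, \betas, \gammas, \deltas, z)$ for $\#^{g-1}(S^1\times S^2)$ (in the torsion $\Spinc$ structure), the relevant composition of triangle maps $F_{\Tb,\Tg,\Td}(\Theta_{\beta,\gamma}\otimes \Theta_{\gamma,\delta})$ equals the top-degree generator $\Theta_{\beta,\delta}$ up to sign, together with its analogues cyclically. The sign here is the one extra ingredient over the $\F_2$ case; it is controlled by the same product principle (Example~\ref{ex:product}) and the elementary triangle count (Example~\ref{ex:abcp}) that were used throughout Section~\ref{sec:naturality}, applied to the local picture of the winding region where $\beta,\gamma,\delta$ meet, now with the bounding Pin structure inserted on the last factor — which changes the sign of the relevant bigon (Example~\ref{ex:circles}, bigon $B$ versus $A$) and is exactly what makes the $\Z$-coefficient exact triangle close up. Combined with an analysis of the quadruple diagram $(\Sigma,\betas,\gammas,\deltas, \betas', z)$ and the associated pentagon (index $-1$) maps, which give a chain homotopy between the composite $\betas\to\gammas\to\delta s$ map and the count of a small $\beta$-$\beta'$ bigon, one gets that at the chain level the composition $\CFp(\Ta,\Tb)\to\CFp(\Ta,\Tg)\to\CFp(\Ta,\Td)$ is chain homotopic to zero, and moreover the mapping cone of $\CFp(\Ta,\Tb)\to\CFp(\Ta,\Tg)$ is quasi-isomorphic to $\CFp(\Ta,\Td)$. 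The exactness of $\cdots\to\HFp(Y_\beta)\to\HFp(Y_\gamma)\to\HFp(Y_\delta)\to\cdots$ then follows formally from the algebraic lemma about composable maps with contractible double composites (as in \cite[Lemma 4.2]{HolDiskTwo} or the triangle detection lemma of \cite{BrCov}), using that an appropriate fourth composite is itself homotopic to a quasi-isomorphism. One must also take care of admissibility and of summing over $\Spinc$ structures, but these are handled exactly as in \cite[Section 9]{HolDiskTwo}; the $\HFp$ flavor (rather than $\HFm$) is chosen because the relevant filtration arguments bounding holomorphic polygons work there as in the original.

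\textbf{Main obstacle.} The genuinely new point, and the one I expect to require the most care, is the sign bookkeeping in the chain homotopies — i.e., verifying that all the polygon-count signs produced by the coupled Spin structures (and the bounding-Pin twist on the last curve) line up so that the double composite is homotopic to \emph{zero} on the nose rather than to $\pm$(something), and that the fourth composite is homotopic to $+\id$ rather than $-\id$. Over $\F_2$ this is invisible; over $\Z$ it is the entire content of "giving a new proof with $\Z$ coefficients." Concretely, this means: (i) tracking the $A_\infty$ sign conventions from Section~\ref{sec:polygon} (the $(-1)^{\gr(\x_{j+1})+\cdots}$ factors) through the degeneration of a square of Lagrangians into two triangles and into a triangle-plus-bigon; (ii) checking that the twisted-coefficient modules $A_{\eta_\bullet,\eta_\bullet}$ and the maps \eqref{eq:AsA} interact correctly with the gluing maps so that Proposition~\ref{prop:changePin}'s identification is compatible with the polygon-count degenerations; and (iii) handling the orientations of the one-dimensional pentagon moduli spaces and their ends in Lipshitz's cylindrical picture (Section~\ref{sec:cyl}), where the canonical Spin-structure orientations are compatible with the neck-stretching degenerations. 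Once those sign computations are in hand, the rest of the argument is a transcription of the $\F_2$ proof.
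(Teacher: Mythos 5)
The paper's proof also identifies the obstruction you flagged: with Lie-group Pin structures on all curves, the two holomorphic triangles in each relevant pair come with the \emph{same} sign (they differ by a $180^\circ$ rotation which preserves the translation-invariant Pin structures), so $F_{\Tb,\Tg,\Td}(\Theta_{\beta,\gamma}\otimes\Theta_{\gamma,\delta})\neq 0$. The paper remarks that the pairs cancel precisely when the bounding Pin structure is put on an odd number of the three curves $\beta,\gamma,\delta$; you choose to do this on all three, while the paper does it (in disguise) on exactly one. Specifically, the paper applies the ``half lives, half dies'' lemma to single out the one curve, say $\delta$, with $[\delta]=0\in H_1(Y;\Z/2)$, picks a point $p\in\delta$, and introduces a sign $(-1)^{\#(V\cap\del_\delta\phi)}$ into every polygon count with an edge on $\Td$, where $V=\delta_1\times\cdots\times\delta_{g-1}\times\{p\}$. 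The resulting twisted module over $\Z[H^1(\cdot;\Z)]$ is trivial because $[\delta]=0$, so the three groups are preserved and the $\Theta$-elements survive, while the map changes exactly as needed. Your approach---twisting all three of $\Tb,\Tg,\Td$ to the bounding Pin structure on the last circle factor---is a genuinely different and in some ways more symmetric implementation, which avoids singling out a preferred curve.

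There is a gap, however, in your justification that the groups $\HFp(Y_\beta),\HFp(Y_\gamma),\HFp(Y_\delta)$ are unchanged. Corollary~\ref{cor:changePin} applies when \emph{both} Lagrangians have their Pin structures changed by restrictions of the \emph{same} class $\alpha\in H^1(M;\Z/2)$, but you only alter the beta/gamma/delta torus and leave $\Ta$ untouched. The fact you actually need is that $\eta_\bullet\in H^1(\T_\bullet;\Z/2)$ extends to a class $\alpha\in H^1(\Sym^g(\Sigma);\Z/2)$ whose restriction to $\Ta$ is \emph{zero}; only then does Corollary~\ref{cor:changePin} say the groups are unchanged while $\Ta$'s Pin structure stays fixed. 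Your stated reason---``$\eta_\bullet$ extends over the symmetric product, as the last curve bounds in the complement of $z$''---is not the right one (extendability to the symmetric product is automatic and does not control the restriction to $\Ta$). The correct statement, in the language of Section~\ref{sec:twistedHF}, is that $(0,\eta_\bullet)$ lies in the image of $H^1(M;\Z/2)\to H^1(\Ta;\Z/2)\oplus H^1(\T_\bullet;\Z/2)$ if and only if the Poincar\'e dual class of the last curve vanishes in $H_1$ of the corresponding filled manifold, i.e. iff $[\beta]=0\in H_1(Y_\beta;\Z/2)$, etc. This is always true, since the gluing curve bounds a disk in the filling solid torus. (This is arguably cleaner than the paper's ``half lives, half dies'' route, since it applies to all three curves simultaneously rather than singling one out.) Once this is corrected, the rest of your plan---including the quadruple-diagram / triangle-detection argument and the sign bookkeeping you flag as the main obstacle---matches the standard structure and the paper's own framing.
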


Before proving the theorem, we recall the setup for the corresponding result in \cite[Section 9]{HolDiskTwo}, and explain why the naive strategy fails. We can choose an admissible pointed Heegaard multi-diagram $(\Sigma, \alphas, \betas, \gammas, \deltas, z)$ such that:
\begin{itemize}
\item The Heegaard diagrams $(\Sigma, \alphas, \betas)$, $(\Sigma, \alphas, \gammas)$ and $(\Sigma, \alphas, \deltas)$ represent $Y_\beta$, $Y_\gamma$, and $Y_\delta$ respectively;
\item For $i=1, \dots, g-1$, the curves $\beta_i$, $\gamma_i$ and $\delta_i$ are small isotopic translates of each other, each pairwise intersecting in a pair of transverse intersection points (with the isotopies being supported away from $z$);
\item We have $\beta_g = \beta$, $\gamma_g =\gamma$, and $\delta_g = \delta$.
\end{itemize}

Thus, we can re-write the desired exact triangle as
\begin{equation}
\label{eq:exactp}
 \cdots \to  \HFp(\Ta, \Tb) \xrightarrow{f_1} \HFp(\Ta, \Tg) \xrightarrow{f_2}  \HFp(\Ta, \Td) \xrightarrow{f_3}  \cdots
 \end{equation}

Note that, for the purposes of proving it, we do not need to worry about naturality issues. We choose Lie group Pin structures $\Pa$, $\Pb$ , $\Pg$, $\Pd$ on $\Ta$, $\Tb$, $\Tg$, $\Td$, and also choose arbitrary trivializations of the lines $\ell (\Pa, \Pb)$, $\ell (\Pa, \Pg)$, $\ell (\Pa, \Pd)$. This allows us to identify the Heegaard Floer groups with the preliminary ones from Section~\ref{sec:definition}, for which we have well-defined polygon maps. (Compare Remark~\ref{rem:notriangle}.) 

As a first guess, we could try to define the maps in \eqref{eq:exactp} by counting triangles:
\[ f_1(\cdot) = F_{\Ta, \Tb, \Tg} (\cdot \otimes \Theta_{\beta, \gamma}), \ \ f_2(\cdot) = F_{\Ta, \Tg, \Td} (\cdot \otimes \Theta_{ \gamma, \delta}), \ \ f_3(\cdot) = F_{\Ta, \Td, \Tb} (\cdot \otimes \Theta_{ \delta, \beta}).\]
Here, $\Theta_{\beta, \gamma}$ can be any of the two top-degree generators of the group $\HFp(\Tb, \Tg) \cong H_*(T^{g-1})\otimes \Z[U^{-1}]$; and similarly for $ \Theta_{ \gamma, \delta}$ and $ \Theta_{ \delta, \beta}$. It does not matter which generator we choose, because they differ by a sign, and changing one of the maps in an exact triangle by $-1$ keeps it exact.

However, defining the maps $f_i$ in this way does not make the double compositions be zero. Indeed, for example, the proof that $f_2 \circ f_1 = 0$ in \cite[Section 9]{HolDiskTwo} reduces to showing that
\begin{equation}
\label{eq:bgdtt}
 F_{\Tb, \Tg, \Td} (\Theta_{\beta, \gamma} \otimes \Theta_{ \gamma, \delta}) = 0.
 \end{equation}

Proposition 9.5 in \cite{HolDiskTwo} shows that the relevant (index zero) holomorphic triangles between $\Tb, \Tg$ and $\Td$ come in pairs, with the triangles in each pair having basepoint multiplicity $n_z = k(k-1)/2$ for $k \geq 1$. This can be established by a degeneration argument to reduce it to the genus $1$ case, where triangles can be counted explicitly on the diagram. The two triangles with $k=1$ (so $n_z=0$) are shown in Figure~\ref{fig:ttt}. The triangles for higher $k$ wrap around the torus more times.

\begin{figure}
{
\fontsize{11pt}{11pt}\selectfont
   \def\svgwidth{2.5in}
   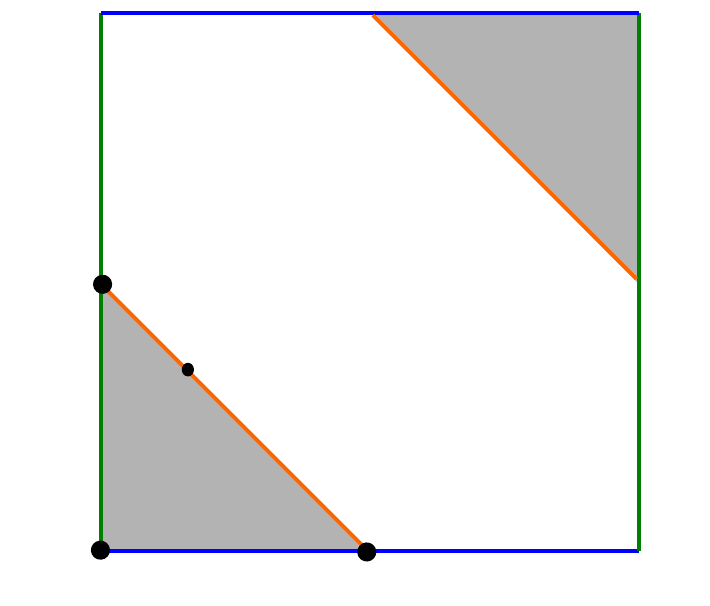
}
\caption{A pair of triangles on the torus. We use the point $p$ to define polygon maps with twisted coefficients.}
\label{fig:ttt}
\end{figure}

Observe that rotation by $180^\circ$ around $z$ takes the two triangles in a pair to each other. Furthermore,  the Lie group Pin structure on each circle is uniquely characterized as being preserved by translations, and this property is preserved by the $180^\circ$ rotation. It follows that the two triangles in a pair are related by a symplectomorphism of the torus that preserves all the underlying structures, and therefore are counted with the same sign. This means that they do not cancel out, so we cannot deduce Equation~\eqref{eq:bgdtt}. (They would cancel out if we had the bounding Pin structure on either one or three of the curves $\beta$, $\gamma$, $\delta$, and the Lie group Pin structure on the rest.)

We shall remedy this problem while keeping Lie group Pin structures on all Lagrangians, by introducing twisted coefficients:
\begin{proof}[Proof of Theorem \ref{thm:exact}] 
 Let $\iota: \del Y \to Y$ be the inclusion. By the ``half lives, half dies'' lemma, the kernel of the map
$$ \iota_* : H_1(\del Y; \Z/2) \to H_1(Y; \Z/2)$$
is one-dimensional. Since $\beta$, $\gamma$ and $\delta$ are simple closed curves on the torus $\del Y$, they represent non-trivial elements in $H_1(\del Y; \Z/2) \cong (\Z/2)^2$. By our hypotheses, their sum is zero (mod $2$) in homology. Hence, the three curves represent the three distinct non-zero elements in $H_1(\del Y; \Z/2)$. Exactly one of them must be in the kernel of $\iota_*$; without loss of generality,  assume this is $\delta$. 

We pick a point $p$ on $\delta$ (away from the other curves), and twist the Heegaard Floer groups and maps  based on boundary intersections with $p$, similarly to \cite[Section 9.3]{HolDiskTwo} but only using signs. Specifically, whenever we count some polygons in a class $\phi$ where one of the edges of the polygon is on $\Td$, we introduce a sign of  
\[ (-1)^{\#( V \cap \del_{\delta} \phi)}, \]
where $\del_{\delta}\phi$ is the part of the boundary of $\phi$ on $\Td$, and $V= \delta_1 \times \cdots \times \delta_{g-1} \times \{p\} \subset \Sym^g(\Td)$. 

At the level of the Heegaard Floer groups, this changs $\HFp(\Ta, \Td)$ into  $\HFptw(\Ta, \Td; A)$ where $A$ is a module over $\Z[H^1(Y; \Z)]$ as in Section~\ref{sec:twistedHF}. Precisely, $A$ is $\Z$ as an abelian group, and the action of an element $c \in H^1(Y; \Z)$ is given by $(-1)^{c([\delta])}$. Since $[\delta]=0 \in  H_1(Y; \Z)$, the action is actually trivial, and we obtain the same group $\HFp(\Ta, \Td)$. The same logic applies to the groups $\HFp(\Tg, \Td)$ and $\HFp(\Td, \Tb)$, so we can still keep the same elements $\Theta_{\gamma, \delta}$, $\Theta_{\delta, \beta}$ as before.

The groups $\HFp(\Ta, \Tb)$ and $\HFp(\Tb, \Tg)$ are also unchanged, as they do not involve the Lagrangian $\Td$. The only difference is with regard to the maps $F_{\Ta, \Tg, \Td}$ and $F_{\Ta, \Td, \Tb}$, which are now twisted, and produce new maps $f_2$, $f_3$ in \eqref{eq:exactp}. As in Section~\ref{sec:twisted}, the twisted maps still satisfy the $A_{\infty}$ polygon relations. Checking that double compositions such as $f_2 \circ f_1$ are zero in \eqref{eq:exactp} reduces to counting triangles between $\Tb$, $\Tg$ and $\Td$, but now the two triangles in a pair with the same $n_2$ come with the opposite signs, because one of the triangles has   $p$ with odd multiplicity on its boundary. 

It follows that with the new definitions, the sequence \eqref{eq:exactp} is a chain complex. To check exactness, either of the two proofs in the literature applies with no significant change. The proof in \cite[Section 9]{HolDiskTwo} uses energy filtrations, while the proof in \cite{BrCov} (stated there for $\HFhat$, but applicable to $\HFp$) uses that a count of holomorphic quadrilaterals is $\pm 1$; whether this count is $+1$ or $-1$ is not relevant for the argument.
\end{proof}




\begin{remark}
In monopole Floer homology, the analogous surgery exact triangle was proved over $\F_2$ in \cite{KMOS}. An extension to coefficients in $\Z[i]$ is developed in \cite{Freeman}, which is based on twisted coefficients. It is argued there that the same class of twistings (more limited than in Heegaard Floer theory) cannot produce an extension to $\Z$. 
\end{remark}


\section{Other invariants}
\label{sec:other}
We now explain how to generalize the canonical orientations in Heegaard Floer homology to other related Floer homologies.

\subsection{Sutured Floer homology}
\label{sec:sutured}
Juh\'asz \cite{Juhasz} developed a Floer homology theory for balanced sutured manifolds. Let us recall a few concepts.

\begin{definition}[Definition 2.2 in \cite{Juhasz}]
A {\em balanced sutured manifold} $(M, \gamma)$ is a compact oriented $3$-manifold $M$ together with a set $\gamma \subset \del M$ of pairwise disjoint annuli, and a decomposition of its complement $ \partial M \setminus \Int(\gamma)$ into two pieces $R_+$ and $R_-$, with the following properties:
\begin{itemize}
\item Each annulus in $\gamma$ has one boundary component in $R_+$ and one in $R_-$;
\item 
 The middle curve $S^1 \times \{0\} \subset S^1 \times [-1,1]$ in each annulus in $\gamma$ is called a {\em suture}, and is oriented as follows. We equip $R_+$ with its orientation induced from $M$, and $R_-$ with the opposite of the orientation induced from $M$. Then, we ask that any component of $\del R_+$ or $\del R_-$, with its induced boundary orientation, represents the same homology class in the respective annulus as the suture;

\item The manifold $M$ has no closed components, and the map $\pi_0(\gamma) \to \pi_0(\del M)$ is onto;

\item The Euler characteristics of $R_+$ and $R_-$ coincide.
\end{itemize}
\end{definition}

\begin{definition}[Definition 2.13 in \cite{JTZ}]
A {\em sutured Heegaard diagram} $(\Sigma, \alphas, \betas)$ for the balanced sutured manifold $(M, \gamma)$ consists of 
an oriented surface $\Sigma \subset M$ whose (oriented) boundary is the union of sutures, and sets of attaching  closed curves 
$$\alphas =\{\alpha_1, \dots, \alpha_n\},  \ \ \betas=\{\beta_1, \dots, \beta_n\},$$
such that:
\begin{itemize}
\item The components of $\alpha$ bound disks on the negative side of $\Sigma$, and compressing these disks yields a surface isotopic to $R_-$ relative to $\gamma$;
\item
The components of $\beta$ bound curves on the positive side of $\Sigma$, and compressing these disks yields a surface isotopic to $R_+$ relative to $\gamma$.
\end{itemize}
\end{definition}

Given a sutured Heegaard diagram, one considers the Lagrangians $\Ta=\alpha_1 \times \cdots \times \alpha_n$ and $\Tb = \beta_1 \times \cdots \times \beta_n \subset \Sym^n(\Sigma)$. Their Lagrangian Floer homology (taken over $\F_2$) is taken as the definition of the sutured Floer homology $\SFH(M,\gamma)$, which Juh\'asz proved to be an invariant of the balanced sutured manifold \cite{Juhasz}. 

This theory can be upgraded to $\Z$ coefficients along the same lines as we did with $\HFcirc$. We only point out the relevant differences.

First, the surface $\Sigma$ is no longer closed, and its genus $g$ does not have to be equal to the number $n$ of alpha (or beta) curves. Perutz's work \cite{Perutz} was originally phrased for closed surfaces and $g=n$. Nevertheless, we can fill in the boundary of $\Sigma$ with disks to make it closed, and only consider holomorphic disks that avoid the new disks. The construction of the symplectic form on $\Sym^n(\Sigma)$ in \cite[Section 7]{Perutz} is for any $n$. The argument about handleslides corresponding to Hamiltonian isotopies can be adapted to any $n$. 

Secondly, the surface $\Sigma$ splits $M$ not into handlebodies, but into two sutured compression bodies $U_{\alpha}$ and $U_{\beta}$. We can still define $\AA$ and $\BB$ as before, and we have the canonical isomorphisms \eqref{eq:Aspace} and \eqref{eq:Bspace}. However, $\AA$ and $\BB$ are not Lagrangian subspaces of $H_1(\Sigma; \R)$. 

In \cite{FJR}, Friedl, Juh\'asz and Rasmussen construct an absolute $\Z/2$ grading on sutured Floer homology, which is is determined by a {\em homology orientation} $\omega$ for the pair $(M, R_-)$: that is, by an orientation of the vector space 
$$H_*(M, R_-; \R) = H_1(M, R_-; \R) \oplus H_2(M, R_-; \R).$$ 
Observe that the alpha and beta curves correspond to $1$- and $2$-cells, respectively, in a relative cell decomposition for $(M, R_-)$. Thus, there is an exact sequence
\begin{equation}
\label{eq:00}
 0 \to H_2(M, R_-; \R) \to \BB \to \AA \to H_1(M, R_-; \R) \to 0.
 \end{equation}

It follows that fixing a homology orientation gives a coupled orientation on $(\AA, \BB)$, and this is how the absolute $\Z/2$-grading is defined in \cite{FJR}. For our purposes, we need more: a coupled Spin structure on the same pair $(\AA, \BB)$. To find one, apply Lemma~\ref{lem:CK} to the same exact sequence \eqref{eq:00}. 
We deduce that it suffices to specify a coupled Spin structure on $(H_1(M, R_-; \R), H_2(M, R_-; \R))$.  By  Lefschetz duality for triples, the space $H_2(M, R_-; \R)$ is dual to $H_1(M, R_+; \R)$, so we can identify them using an inner product.   This suggests the following definition. 

\begin{definition}
\label{def:homcoupled}
A {\em homological coupled Spin structure} on the sutured manifold $(M, \gamma)$ is a coupled Spin structure on the pair of vector spaces $(H_1(M, R_-; \R), H_1(M, R_+; \R))$.
\end{definition}

In general, there is a $\Z/2 \times \RP^\infty$ worth of homological coupled Spin structures, where the $\Z/2$ comes from the two choices of homology orientation. 

We have just seen that a homological coupled Spin structure $S$ determines a coupled Spin structure on the pair $(\AA, \BB)$. Given a $\Spinc$ structure on $(M, \gamma)$ as in \cite[Section 4]{Juhasz}, this allows us to define sutured Floer homology $\SFH(M,\gamma, \s, S)$ over $\Z$, in a manner similar to how we defined Heegaard Floer homology in Section~\ref{sec:3m}: We choose $\Pin$ structures $\Pa$ and $\Pb$ on the Lagrangians, define preliminary Floer complexes, and then tensor them with lines $\ell(\Pa, \Pb)$. 

\begin{proof}[Proof of Theorem~\ref{thm:main_sutured}]
The arguments are entirely similar to those for Heegaard Floer homology in Sections~\ref{sec:invariance} and \ref{sec:naturality}.  With regard to Theorem~\ref{thm:shi2}, the fact that the same loops of handleslides are sufficient in the sutured case follows from the work of Qin \cite{Qin}.
\end{proof}

\subsection{Heegaard Floer homology with multiple basepoints}
\label{sec:multiple}
Let us go back to a closed, connected, oriented three-manifold $Y$. In \cite[Section 4]{Links}, Ozsv\'ath and Szab\'o generalized the definition of Heegaard Floer homology to allow for multiple basepoints instead of just one. We follow the notation from that paper, letting $\ws =\{w_1, \dots, w_\ell\}$ be the collection of basepoints on $Y$. 

There is a notion of an $\ell$-pointed balanced Heegaard diagram for $(Y, \ws)$. This consists of a surface $\Sigma \subset Y$ of genus $g$ containing $\ws$, together with $g+\ell-1$ alpha curves and $g+\ell-1$ beta curves. We ask that:
\begin{itemize}
\item the surface $\Sigma$ splits $Y$ into two handlebodies $U_{\alpha}$ and $U_{\beta}$;
\item the alpha curves bound disks in $U_{\alpha}$, and $\Sigma \setminus \alphas$ consists of $\ell$ planar surfaces, each containing a basepoint;
\item the beta curves bound disks in $U_{\beta}$, and $\Sigma \setminus \betas$ consists of $\ell$ planar surfaces, each containing a basepoint.
\end{itemize}

In a certain sense, this set-up is a particular case of what we had in Section~\ref{sec:sutured}. Indeed, to $(Y, \ws)$ we can associate a sutured manifold $(M, \gamma)$ as follows. We pick small disjoint balls $B_1, \dots, B_\ell \subset Y$, with $B_i$ centered at the basepoint $w_i$. We let $\gamma_i$ be an annulus around the equator for $\del B_i$, and let $R_{+,i}$ and $R_{-.i}$ be the two components of $B_i \setminus \gamma_i$. We let 
$$M = Y \setminus (B_1 \cup \dots \cup B_\ell),$$
with $\gamma$ the union of $\gamma_i$'s, and $R_+$ resp. $R_-$ the union of all $R_{+,i}$ resp. $R_{-.i}$. (See Example 2.3 in \cite{Juhasz}.) Furthermore, from an $\ell$-pointed balanced Heegaard diagram for $Y$ we get a sutured diagram for $(M,\gamma)$ by deleting disks around each $w_i$.

Let $\s$ be a $\Spinc$ structure on $Y$. In \cite[Section 4]{Links}, Ozsv\'ath and Szab\'o define a version of Heegaard Floer homology $\HFm(Y, \ws, \s)$ with coefficients in the ring $\F_2[U_1, \dots, U_{\ell}]$. By setting $U_1 =\dots = U_{\ell}=0$ at the chain level, and then taking homology, they also get a hat version $\HFhat(Y, \ws, \s)$ over $\F_2$. Let us upgrade their constructions to $\Z$ instead of $\F_2$.

In the case of $\HFhat(Y, \ws, \s)$, this is just the sutured Floer homology $\SFH(M, \gamma, \s, S)$ which we constructed in Section~\ref{sec:sutured}, with a dependence on the homological coupled Spin structure $S$. In the case at hand, observe that both $H_1(M, R_-; \R)$ and $H_1(M, R_+; \R)$ are canonically identified with $H_1(Y, \ws; \R)$, by collapsing a hemisphere onto the center $w_i$ of the ball $B_i$. By Lemma~\ref{lem:EE}, there is a canonical coupled Spin structure on the pair 
$$ (H_1(Y, \ws; \R), H_1(Y, \ws; \R)).$$
This gives a canonical homological coupled Spin structure, and we use it to define $\HFhat(Y, \ws, \s)$ without any additional dependence. 

The homological coupled Spin structure gives a coupled Spin structure on the pair $(\AA, \BB)$, which we can also use to define the minus Floer homology $\HFm$, much as we did in the singly based case in Section~\ref{sec:3m}. When counting holomorphic strips in a class $\phi$, we keep track of their intersection $n_{w_i}(\phi)$ with $\{w_i\} \times \Sym^{g+\ell-2}(\Sigma)$ by a factor of $U_i^{n_{w_i}(\phi)}$. The one main difference is that we now have boundary degenerations (disk bubbles), of the kind discussed in Section~\ref{sec:bubbles}. In \cite[Theorem 5.5]{Links}, Ozsv\'ath and Szab\'o prove that for $\ell > 1$ the count of such bubbles with boundary on $\Ta$ is $1$ (mod $2$); and the same is true for those with boundary on $\Tb$. Here is the refinement of that result with $\Z$ coefficients.

\begin{proposition}
\label{prop:alphacount}
Let $\Sigma$ be a surface of genus $g$ equipped with a set $\alphas$ of $g+\ell-1$ attaching circles for a handlebody, where $\ell > 1$. Let $M=\Sym^{g+\ell-1}(\Sigma)$ and $\Ta \subset M$ be the product of the alpha curves, equipped with a Lie group Pin structure. Suppose $\x \in \Ta$ and $\phi\in \pi_2(M, \Ta)$ is a relative homotopy class of disks as in \eqref{eq:u}, with index $\mu(\phi)=2$ and such that the domain of $\phi$ on $\Sigma$ has only non-negative coefficients. Then the signed count of holomorphic disks in the class $\phi$ is $\# \cNhat(\phi)=1$.
\end{proposition}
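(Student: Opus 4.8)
The plan is to reduce the computation of $\#\cNhat(\phi)$ over $\Z$ to the known mod-$2$ count from \cite[Theorem 5.5]{Links}, by showing that the relevant moduli space carries a consistent orientation and that all its points contribute with the same sign. First I would recall the structure of the argument in \cite{Links}: one fixes a generic path of almost complex structures, and by a neck-stretching/degeneration argument (or by the model computation on the simplest diagram) one shows that, after possibly a homotopy of $\alphas$, the class $\phi$ of Maslov index $2$ with non-negative domain coefficients admits a unique ``small'' representative supported near one of the alpha curves, essentially the model disk of Example~\ref{ex:disk}. The point of that reduction is that the count $\#\cNhat(\phi)$ is a cobordism invariant of the family of Cauchy--Riemann problems (the $0$-dimensional moduli space is compact, since index-$2$ disk bubbles in Heegaard--Floer theory cannot degenerate further for topological reasons — any further breaking would produce a lower-index disk or sphere bubble, which is excluded by the non-negativity of coefficients and positivity of intersections), so it is unchanged under isotopies of $\alphas$ and generic deformations of $J$.

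Next I would invoke Proposition~\ref{prop:signdisk}: the Lie group Pin structure on $\Ta$ canonically orients $\cNhat(\phi)$, so $\#\cNhat(\phi)\in\Z$ is well-defined; and it agrees mod $2$ with the Ozsv\'ath--Szab\'o count, which is $1$. Thus it suffices to prove that the signed count is $\pm 1$ and positive — equivalently, that $\cNhat(\phi)$ is a single point with positive orientation. Singularity (one point) is exactly the content of the $\F_2$ computation of \cite[Theorem 5.5]{Links} together with the observation that positivity of intersections forbids more than one representative in the class $\phi$ once $J$ is chosen generically and the domain is as in the statement; I would cite this directly. Positivity of the sign is the place where the Lie group Pin structure enters: by the cobordism invariance just mentioned, I can isotope $\alphas$ and degenerate so that the relevant representative is modeled on the standard Maslov-index-$2$ disk in the plane bounded by a unit circle carrying its Lie group Pin structure, and by Example~\ref{ex:disk} that disk contributes $+1$ (whereas the bounding Pin structure would give $-1$). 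The compatibility of orientations under the degeneration follows from the gluing results in \cite[Section 12f]{SeidelBook} applied to the boundary conditions obtained by splitting $\Sigma$, exactly as in the cylindrical-picture discussion in Section~\ref{sec:cyl}: Spin/Pin-induced orientations on determinant index bundles are compatible with gluing, so a degeneration that does not change the Pin structures on the curves does not change the sign.

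The main obstacle is the sign-bookkeeping in the degeneration argument: one must choose the neck-stretching so that the split Pin structures on the pieces are the Lie group Pin structures used in Example~\ref{ex:disk}, and verify that the orientation of the limiting moduli space induced by gluing matches the orientation of the model disk. Concretely, the issue is that the unique holomorphic disk of \cite{Links} lives near a single $\alpha_i$ (after isotopy), but to invoke Example~\ref{ex:disk} cleanly one wants to see the ambient $\Sym^{g+\ell-1}(\Sigma)$-problem as a product of the planar model with a complex-linear (hence canonically oriented) complementary problem, in the spirit of Example~\ref{ex:product}; I would set this up by choosing the isotopy of $\alphas$ so that the domain of $\phi$ is supported in a standard annular neighborhood and the diagram locally looks like the diagram of Figure~\ref{fig:circles}, so that the disk count factors. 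The remaining complementary factor has no moduli and contributes a sign of $+1$ because its linearized operator is surjective with a canonical trivialization (the same mechanism as in Lemma~\ref{lem:dim2}(a)). Once this local product structure is in place, the positivity of the sign is immediate from Example~\ref{ex:disk}, completing the proof.
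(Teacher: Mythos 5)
Your proposal follows essentially the same route as the paper: degenerate/destabilize to the planar disk model whose boundary circle carries the Lie group Pin structure, read off the sign $+1$ from Lemma~11.17 of Seidel's book (equivalently Example~\ref{ex:disk}), and use the product Spin structure on $\Ta$ to assemble the global orientation. One small inaccuracy worth flagging: the domain of $\phi$ is an entire planar component of $\Sigma\setminus\alphas$, not a small representative supported near a single $\alpha_i$, though the destabilization to the marked disk is the correct reduction in either case.
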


\begin{proof}
As noted in the proof of \cite[Theorem 5.5]{Links}, the conditions in the theorem imply that $\phi$ must correspond to a domain $D$ on $\Sigma$ which is one of the connected components of $\Sigma \setminus \alphas$. This is a planar surface with boundary, and by de-stabilization and gluing arguments one can reduce the problem to the case where $D$ is a disk with one marked point. In our setting, we can choose orientations and orderings on the alpha curves (as in Definition~\ref{def:chooseab}) to view $\Ta$ as the product of $g+\ell-1$ circles, each of which equipped with the Lie group Spin structure. The arguments in the proof over $\F_2$ go through over $\Z$, and we are left with the moduli space for a disk $D$ whose boundary has the Lie group Spin structure. (In fact, once we reduced to a single circle we only care about Pin.) The moduli space $\cN(\phi)$ is just $\Aut(D^2, 1)$, and the quotient $\cNhat(\phi)$ is a point. The fact that this point is counted with a positive sign can be read off the proof of Lemma 11.17 in \cite{SeidelBook}, using the fact that the Pin structure on $\del D$ is the Lie group one (or ``trivial'', in the terminology of  \cite{SeidelBook}).
\end{proof} 

Of course, Proposition~\ref{prop:alphacount} applies equally well to the beta curves. When considering $\del^2$ for the minus Floer complex, we get contributions from both the alpha and the beta degenerations. By Proposition~\ref{prop:bubbly}, the beta degenerations actually come with negative sign. Overall, we get $\ell$ contributions of $+1$ from the components of $\Sigma \setminus \alphas$, and $\ell$ contributions of $-1$ from the components of $\Sigma \setminus \betas$. Each component contains a single basepoint $w_i$, so it gets multiplied by a factor $U_i$. The two contributions from the domains containing $w_i$ cancel out, implying that $\del^2=0$. We obtain a well-defined homology $\HFm(Y, \ws, \s)$, with coefficients in $\Z[U_1, \dots, U_{\ell}]$. 

\begin{proposition}
\label{prop:multiple}
Let $Y$ be a closed, connected, oriented $3$-manifold equipped with a set of basepoints $\ws \in Y$ and a $\Spinc$ structure $\s$. Then, the Heegaard Floer homologies $\HFhat$ and $\HFm$ are natural invariants of the triple $(Y, \ws, \s)$.
\end{proposition}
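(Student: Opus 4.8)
The plan is to run the same machinery used in the proof of Theorem~\ref{thm:main}, now in the multiple-basepoint setting, taking advantage of the fact that multiply-pointed Heegaard Floer homology is (as explained above) a special case of sutured Floer homology with a \emph{canonical} homological coupled Spin structure. For $\HFhat(Y,\ws,\s)$ there is essentially nothing new to do: it equals $\SFH(M,\gamma,\s,S)$ for the associated sutured manifold $(M,\gamma)$ with the canonical $S$ coming from Lemma~\ref{lem:EE}, so naturality is immediate from Theorem~\ref{thm:main_sutured} together with the observation that the naturality of the association $(Y,\ws)\mapsto (M,\gamma)$ (and the compatibility of sutured Heegaard diagrams for $(M,\gamma)$ with $\ell$-pointed balanced diagrams for $(Y,\ws)$) was already established over $\F_2$ in \cite{JTZ}; the canonical homological coupled Spin structure is preserved because its construction only uses the identifications $H_1(M,R_\pm;\R)\cong H_1(Y,\ws;\R)$, which are manifestly natural under the moves.

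For $\HFm(Y,\ws,\s)$, with coefficients in $\Z[U_1,\dots,U_\ell]$, I would first note that $\del^2=0$ has just been checked above (the $+1$ from each $\alpha$-degeneration and the $-1$ from the corresponding $\beta$-degeneration, both weighted by the same $U_i$, cancel in pairs, using Propositions~\ref{prop:alphacount} and \ref{prop:bubbly}). Then I would set up $\HFm$ as a strong Heegaard invariant exactly as in Section~\ref{sec:naturality}: define it on isotopy diagrams via a colimit over admissible $\ell$-pointed diagrams using continuation maps, assign triangle maps to handleslides using the top-degree generators $\Theta$, assign $S$ to stabilizations, and identifications to diffeomorphisms; then verify the hypotheses of Theorem~\ref{thm:shi2} (functoriality, commutativity along distinguished rectangles, continuity, handleswap invariance, commutativity along handleslide loops and stabilization slides). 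The point is that every one of these verifications was carried out in the proof of Theorem~\ref{thm:main} by reducing to local computations of signed counts of low-index polygons in small diagrams, and those counts are insensitive to the presence of extra basepoints (which only introduce bookkeeping via the $U_i$'s, never affecting the relevant sign). The fact that the same finite list of handleslide loops suffices in the multiply-pointed / sutured setting is exactly the content of Qin's work \cite{Qin}, already invoked in the proof of Theorem~\ref{thm:main_sutured}.

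The one genuinely new feature is the presence of boundary degenerations, so the main obstacle is to make sure that the handleswap-invariance and handleslide-loop arguments — several of which pass to Lipshitz's cylindrical picture and stretch necks, as in Lemma~\ref{lemma:pagd} — remain valid when disk bubbles can occur. The resolution is that in all the relevant degenerations the classes being counted have controlled basepoint multiplicities and positive domains, so by Proposition~\ref{prop:alphacount} any boundary-degeneration contribution is a single point counted with a \emph{sign} that is determined (as $+1$ for $\alpha$, $-1$ for $\beta$) by the Lie group Pin structures, independently of the almost complex structure; hence these contributions are constant in families and either cancel in pairs or are absorbed into the $U_i$-weighted bookkeeping exactly as in the proof that $\del^2=0$. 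Thus no cancellation needed over $\F_2$ is lost over $\Z$. I would also remark that the $U_i$-equivariance of all the maps is automatic since the $U_i$-action records $n_{w_i}(\phi)$, which is additive under gluing and unchanged by the diagram moves. With these points in place, Theorems~\ref{thm:shi2} and \ref{thm:strongH} give that $\HFm(Y,\ws,\s)$ is a natural invariant of $(Y,\ws,\s)$, completing the proof.
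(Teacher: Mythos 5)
Your proposal is correct and follows the same approach as the paper, which simply states that the proof is similar to those of Theorems~\ref{thm:main} and \ref{thm:main_sutured}. Your write-up is just a fuller articulation of that same strategy (reducing $\HFhat$ to the sutured case with the canonical homological coupled Spin structure, and rerunning the strong-Heegaard-invariant machinery for $\HFm$ while noting that the new disk-bubble contributions are controlled by Propositions~\ref{prop:alphacount} and \ref{prop:bubbly}).
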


\begin{proof}
Similar to that of Theorems~\ref{thm:main} and \ref{thm:main_sutured}. 
\end{proof}

In \cite[Theorem 4.4]{Links}, Ozsv\'ath and Szab\'o also investigate how the Heegaard Floer homology depends on the basepoints, up to (non-canonical) isomorphism. Here is the analogue of their result for $\Z$ coefficients.

\begin{proposition}
\label{prop:noncan}
Let $\ws=\{w_1, \dots, w_\ell\} \subset Y$. Then, the actions of multiplication by $U_i$ on $ \HFm(Y, \ws, \s)$ for $i=1, \dots, \ell$ are the same, giving it the structure of a $\Z[U]$-module (where $U$ is any $U_i$). Moreover, we have an isomorphism of $\Z[U]$-modules
$$ \HFm(Y, \ws, \s) \cong \HFm(Y, \s).$$
We also have an isomorphism of abelian groups
$$ \HFhat(Y, \ws, \s) \cong \HFhat(Y, \s) \otimes H_*(T^{\ell-1}).$$
\end{proposition}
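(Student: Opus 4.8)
The plan is to follow the strategy of \cite[Theorem 4.4]{Links}, reducing the multi-based invariant to the singly-based one by a sequence of index-one destabilizations, while checking that the signs coming from the canonical coupled Spin structures behave as expected at each step. First I would recall the basepoint-moving operations from \cite[Section 4]{Links}: one can move a basepoint $w_i$ along a path in $Y$, and one can create or cancel a pair of basepoints. The key topological input is that, given $\ws$, one may find an $\ell$-pointed balanced Heegaard diagram in which, near each extra basepoint $w_i$ ($i \geq 2$), there is a small region bounded by one $\alpha$ and one $\beta$ curve intersecting in exactly one point, so that an index-one destabilization removes both that pair of curves and the basepoint $w_i$, as in the inverse of the stabilization move of Section~\ref{sec:stabs}. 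The stabilization isomorphism \eqref{eq:S}, together with the fact that the line $\ell(\Pa,\Pb)$ is unchanged (and the canonical coupled Spin structure behaves well under products, Remark~\ref{rem:cansum}), produces a $\Z[U_i]$-module isomorphism $\HFm(Y, \ws, \s) \cong \HFm(Y, \ws \setminus \{w_i\}, \s)$. Iterating, we arrive at $\HFm(Y, \{w_1\}, \s) = \HFm(Y, \s)$.

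Next I would address the $\Z[U]$-module claim, i.e.\ that multiplication by $U_i$ agrees with multiplication by $U_j$ for all $i, j$. Over $\F_2$ this is \cite[Lemma 4.3 / proof of Theorem 4.4]{Links}, and the argument is a chain-homotopy argument: one exhibits, for a pair of basepoints connected by a path crossing no curves appropriately, a null-homotopy of $U_i - U_j$ built by counting holomorphic disks of Maslov index $1$ with a marked point constrained to an interpolating divisor. The only new content in our setting is that the relevant count is an honest integer, not a mod-$2$ count, and that the interpolating null-homotopy carries a consistent orientation. This follows from the general compatibility of the canonical orientations with degenerations established in Section~\ref{sec:cyl}: we pass to Lipshitz's cylindrical picture, fix Lie group Spin structures and orderings on the curves, and note that the index bundles glue, so the chain homotopy is defined over $\Z$. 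Once $U_i = U_j$ as operators, $\HFm(Y,\ws,\s)$ becomes a $\Z[U]$-module, and the destabilization isomorphisms above are automatically $\Z[U]$-linear.

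For the hat version, I would combine the $\HFm$ statement with the effect of the boundary degenerations discussed just above Proposition~\ref{prop:multiple}. Setting all $U_i = 0$, each of the $\ell$ basepoints contributes a factor; destabilizing down to one basepoint costs an $H_*(S^1)$ tensor factor at each step (the stabilization isomorphism for $\HFhat$ replaces $\Z[U]$ by $\Z$ but introduces the two generators of the $\beta_E$--$\alpha_E$ bigon picture as in Section~\ref{sec:stabs}, and in the multi-pointed setting the relevant model computation is $\HFhat$ of the genus-one diagram for $S^1 \times S^2$, which is $H_*(S^1;\Z)$ by Lemma~\ref{lem:S1S2}). Accounting for the $\ell-1$ extra basepoints gives $\HFhat(Y, \ws, \s) \cong \HFhat(Y, \s) \otimes H_*(T^{\ell-1})$. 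Alternatively, and perhaps more cleanly, one derives the hat statement directly from $\HFm(Y,\ws,\s) \cong \HFm(Y,\s)$ as a module over $\Z[U_1, \dots, U_\ell]$ by computing $\HFhat$ as the homology of the Koszul-type complex obtained by setting each $U_i$ to zero, using that $U_1 = \cdots = U_\ell = U$ acts on $\HFm(Y,\s)$; this is a purely homological-algebra computation giving the $H_*(T^{\ell-1})$ factor.

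The main obstacle I anticipate is not the topology of the moves but the bookkeeping of signs: one must verify that the destabilization isomorphisms, the basepoint-moving maps, and the chain homotopies witnessing $U_i = U_j$ are all compatible over $\Z$, i.e.\ that no stray sign obstructs the iteration. This is where the work of Section~\ref{sec:cyl} on canonical orientations in Lipshitz's cylindrical picture does the heavy lifting, since the cleanest way to see that all the relevant $0$- and $1$-dimensional moduli spaces are coherently oriented is to fix Lie group Spin structures and orderings on all curves involved and invoke the product principle of Example~\ref{ex:product}. Since the statement is only claimed up to (non-canonical) isomorphism, we are moreover free to make auxiliary orientation and ordering choices, which removes the need to track naturality here; so the argument should go through, with the sign verifications being routine once organized via the cylindrical model.
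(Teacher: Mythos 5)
There is a genuine gap in your argument: you have misidentified the destabilization move. The move of Section~\ref{sec:stabs} (and formula \eqref{eq:S}) is an index one/two stabilization, which connect-sums $\Sigma$ with a torus carrying two new curves meeting at a \emph{single} point $\x_E$. This move raises the genus and the number of curves by one each, but it does \emph{not} change the number of basepoints: a quick Euler-characteristic count shows the number of components of $\Sigma \setminus \alphas$ is preserved. So ``an index-one destabilization removes both that pair of curves and the basepoint $w_i$'' is topologically impossible, and your proposed reduction $\HFm(Y,\ws,\s) \cong \HFm(Y,\ws \setminus\{w_i\},\s)$ via \eqref{eq:S} does not exist. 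The correct local model, which is what \cite[Theorem 4.4]{Links} actually uses and what the paper's proof invokes, is an index zero/three (de)stabilization: a connected sum with a \emph{sphere} $(S,\alpha,\beta,v_1,v_2)$ whose two curves meet in \emph{two} points $x,y$, carried out at an existing basepoint. This genuinely trades a curve-pair-plus-basepoint for nothing, but the effect on the Floer complex is not an identity-type isomorphism: one shows $\CFm$ of the stabilized diagram is $\CFm(\Sigma,\alphas,\betas,\ws) \otimes \CFm(S,\alpha,\beta,v_1,v_2)$, and the second factor is the mapping cone of $U_1 - U_{\ell+1}$ (the two bigons from $x$ to $y$ contribute with \emph{opposite} signs by Example~\ref{ex:circles}, one weighted by $U_1$ and the other by $U_{\ell+1}$). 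Quotienting this Koszul factor identifies $U_1 = U_{\ell+1}$ and returns the smaller complex; this is how the $\Z[U]$-module isomorphism and the identification of the $U_i$-actions are actually obtained.

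This same confusion propagates to your treatment of $\HFhat$. Under the true index one/two stabilization, $\HFhat(\alpha_E,\beta_E)$ has rank one, so no $H_*(S^1)$ factor would appear and iterating would give the wrong rank. The $H_*(T^{\ell-1})$ factor in the statement comes precisely from the index zero/three picture: after setting all $U_i = 0$, the sphere factor $\CFhat(S,\alpha,\beta,v_1,v_2)$ has two generators with vanishing differential (the two bigons cancel in sign but also both carry a $U$, which is now zero), giving an $H_*(S^1)$ factor per extra basepoint. Your appeal to Lemma~\ref{lem:S1S2} lands on the right rank by accident, but is computing $\HFhat$ of a different diagram than the one created by your move. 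Your paragraph on compatibility of orientations via the cylindrical picture is fine in spirit, and the sign computation you would need is exactly the annulus-bigon sign from Example~\ref{ex:circles}, so that part is salvageable once the correct stabilization is used.
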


\begin{proof}
The proof of \cite[Theorem 4.4]{Links} proceeds by using Heegaard moves to reduce to the case of a simple index zero/three stabilization. There, we have a Heegaard diagram $(\Sigma, \alphas, \betas, \ws)$ and its connected sum 
$$(\Sigma', \alphas', \betas', \ws') = (\Sigma, \alphas, \betas, \ws) \# (S, \alpha, \beta, v_1, v_2),$$
where $S$ is a sphere with two great circles $\alpha$ and $\beta$ intersecting at two points $x$ and $y$, as in Figure~\ref{fig:03}. The connected sum is taken at a basepoint $w_1 \in \ws$ on $\Sigma$, which gets identified with $v_1$ on $S$. The point $v_2$ becomes a new basepoint $w_{\ell+1}$, so that $\ws'=\ws \cup \{w_{\ell+1}\}$.
\begin{figure}
{
\fontsize{10pt}{11pt}\selectfont
   \def\svgwidth{2in}
\begingroup%
  \makeatletter%
  \providecommand\color[2][]{%
    \errmessage{(Inkscape) Color is used for the text in Inkscape, but the package 'color.sty' is not loaded}%
    \renewcommand\color[2][]{}%
  }%
  \providecommand\transparent[1]{%
    \errmessage{(Inkscape) Transparency is used (non-zero) for the text in Inkscape, but the package 'transparent.sty' is not loaded}%
    \renewcommand\transparent[1]{}%
  }%
  \providecommand\rotatebox[2]{#2}%
  \newcommand*\fsize{\dimexpr\f@size pt\relax}%
  \newcommand*\lineheight[1]{\fontsize{\fsize}{#1\fsize}\selectfont}%
  \ifx\svgwidth\undefined%
    \setlength{\unitlength}{182.89189136bp}%
    \ifx\svgscale\undefined%
      \relax%
    \else%
      \setlength{\unitlength}{\unitlength * \real{\svgscale}}%
    \fi%
  \else%
    \setlength{\unitlength}{\svgwidth}%
  \fi%
  \global\let\svgwidth\undefined%
  \global\let\svgscale\undefined%
  \makeatother%
  \begin{picture}(1,0.60277882)%
    \lineheight{1}%
    \setlength\tabcolsep{0pt}%
    \put(0,0){\includegraphics[width=\unitlength,page=1]{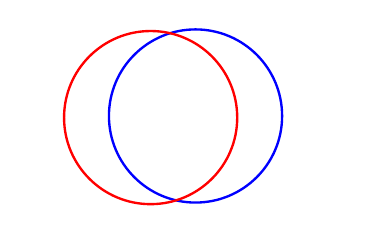}}%
    \put(0.72494647,0.45332615){\color[rgb]{0,0,1}\makebox(0,0)[lt]{\lineheight{1.25}\smash{\begin{tabular}[t]{l}$\beta$\end{tabular}}}}%
    \put(0.16839423,0.45332615){\color[rgb]{1,0,0}\makebox(0,0)[lt]{\lineheight{1.25}\smash{\begin{tabular}[t]{l}$\alpha$\end{tabular}}}}%
    \put(0,0){\includegraphics[width=\unitlength,page=2]{03.pdf}}%
    \put(0.4298868,0.2708797){\makebox(0,0)[lt]{\lineheight{1.25}\smash{\begin{tabular}[t]{l}$v_2$\end{tabular}}}}%
    \put(0.438929,0.54808048){\makebox(0,0)[lt]{\lineheight{1.25}\smash{\begin{tabular}[t]{l}$y$\end{tabular}}}}%
    \put(0.42851535,0.01252923){\makebox(0,0)[lt]{\lineheight{1.25}\smash{\begin{tabular}[t]{l}$x$\end{tabular}}}}%
    \put(0,0){\includegraphics[width=\unitlength,page=3]{03.pdf}}%
    \put(-0.00537961,0.20725628){\makebox(0,0)[lt]{\lineheight{1.25}\smash{\begin{tabular}[t]{l}$v_1$\end{tabular}}}}%
  \end{picture}%
\endgroup%

}
\caption{An index zero/three stabilization.}
\label{fig:03}
\end{figure}
Degeneration arguments show that we have an isomorphism of chain complexes
$$ \CFm(\Sigma', \alphas', \betas', \ws') \cong \CFm(\Sigma, \alphas, \betas, \ws) \otimes \CFm(S, \alpha, \beta, v_1, v_2)$$
where the tensor product is over $\F[U_1]$ and the second factor is the mapping cone of $U_1 - U_{\ell +1}$. In our setting, after we equip each curve with the Lie group Spin structure, the same arguments work over $\Z$. Indeed, $\CFm(S, \alpha, \beta, v_1, v_2)$ is generated by $x$ and $y$, and in its differential the two empty bigons from $y$ to $x$ come with opposite signs (as we have seen in Example~\ref{ex:circles}). The two bigons from $x$ to $y$ also come with opposite signs, and contribute $U_1 - U_{\ell +1}$. Thus, $  \CFm(\Sigma', \alphas', \betas', \ws')$ is isomorphic to the mapping cone
$$ \CFm(\Sigma, \alphas, \betas, \ws)[U_{\ell+1}] \xrightarrow{U_1 - U_{\ell +1}}  \CFm(\Sigma, \alphas, \betas, \ws)[U_{\ell+1}].$$
From here we deduce that the $\F[U_1]$-modules $\HFm$ for the two diagrams are isomorphic, whereas the hat versions $\HFhat$ differ by tensoring with a $H_*(S^1)$ factor. This suffices to establish the desired results. (To see that the $U_i$ actions on $\HFm$ are the same, we relate them to a singly based diagram by Heegaard moves, using any of the $U_i$ as the variable in the coefficient ring.) 
\end{proof}

\subsection{Link Floer homology} 
Link Floer homology is an invariant of links in three-manifolds developed by Ozsv\'ath and Szab\'o in \cite{Links}, with coefficients in $\F_2$. It generalizes the construction for knots in \cite{OS-knots}, \cite{RasmussenThesis}. There are several versions of link Floer homology. We start by reviewing $\HFLhat$ and $\HFLm$, borrowing some terminology and notation from \cite{Links}, \cite{MOS}, and \cite{ZemkeHFK}. 

\begin{definition}
Let $Y$ be a closed oriented $3$-manifold. A {\em multi-based link} in $Y$ is a triple $\Link=(L, \ws, \zs)$ where $L \subset Y$ is an embedded, oriented link, and $\ws$ and $\zs$ are disjoint collections of basepoints on $L$ with the following properties: 
\begin{itemize}
\item every component of $L$ has at least two basepoints, and
\item as we traverse any component, the basepoints alternate between those in $\ws$ and those in $\zs$.
\end{itemize}
\end{definition}

\begin{definition}
Fix $(Y , \Link)$ as above. A {\em multi-based Heegaard diagram} $(\Sigma, \alphas, \betas, \ws, \zs)$ representing $(Y, \Link)$ is a Heegaard diagram for $Y$ with $\ws, \zs \subset \Sigma \setminus (\alphas \cup \betas)$, such that after we attach disks to the alpha and beta curves in their respective handlebody, the handlebodies are split into balls; each ball should have exactly one $w$ and one $z$ basepoint on its boundary, and as we join these two basepoints by an arc inside the ball, the union of these arcs should be the link $L$. The orientation of $L$ should be such that the arcs go from the $w$ to the $z$ basepoints inside the alpha handlebody.
\end{definition}

Let $(Y, \Link)$ be a $3$-manifold with a multi-based link, and $\bs$ be a $\Spinc$ structure on $Y \setminus \text{nbhd}(L)$ relative its boundary, as in \cite[Section 3.2]{Links}. Suppose we have $m$ basepoints of type $w$ and $m$ of type $z$. Given a Heegaard diagram $(\Sigma, \alphas, \betas, \ws, \zs)$ representing $(Y, \Link)$, we define $\HFLm(Y, \Link, \s)$ as the homology of a Floer complex over $\F_2[U_1, \dots, U_m]$, where the Lagrangians are $\Ta$ and $\Tb$ as usual, and we only count holomorphic strips in classes $\phi$ such that
$$ n_{z_1}(\phi) = n_{z_2}(\phi) = \cdots = n_{z_m}(\phi)=0.$$
Further, when counting the strips we include a factor of 
$$ U_1^{n_{w_1}(\phi)} U_2^{n_{w_2}(\phi)} \cdots U_m^{n_{w_m}(\phi)} ,$$
just as we did in Section~\ref{sec:multiple}. 

For $\HFLhat(Y, \Link, \bs)$, we set all $U_i$ to be zero in the complex above and then take homology; that is, we only count strips that do not go over any basepoints. 

Let us upgrade these constructions to $\Z$ instead of $\F_2$. To imitate what we did for $\HFcirc$ in Section~\ref{sec:3m}, we need a coupled Spin structure on $(\AA, \BB)$, where $\AA$ and $\BB$ are the spans of the alpha and beta curves, respectively. Observe that to $(Y, \Link)$ we can associate a sutured manifold $(M, \gamma)$ as follows. We let $M$ be the complement $Y \subset \text{nbhd}(L)$. Take meridians $\mu(w_i), \mu(z_i) \subset \del M$ around the basepoints. Let the sutures $\gamma$ be disjoint annuli around each of these meridians in $\del M$, and let $R_-$ and $R_+$  be the remaining parts of $\del N$ that are in $U_{\alpha}$ and $U_{\beta}$, respectively. (Compare Example 2.4 in \cite{Juhasz}.) 

We can now appeal to the results from Section~\ref{sec:sutured}. In fact, notice that $\HFLhat(Y, \Link, \bs)$ is nothing else than the sutured Floer homology of $(M, \gamma)$. A coupled Spin structure on $(\AA, \BB)$ is determined by a homological coupled Spin structure on $(M, \gamma)$. In our setting, where the sutured manifold comes from a multi-based link, the pairs $(M, R_-)$ and $(M, R_+)$ are homotopy equivalent, by an isotopy supported in a neighborhood of $\del M$ which slides each annulus from $R_-$ into the next one from $R_+$ by following the orientation of the link. Hence, the spaces $H_1(M, R_-; \R)$ and $H_1(M, R_+; \R)$ are  canonically identified, and by Lemma~\ref{lem:EE} we have a preferred homological coupled Spin structure. 

Thus, we can define $\HFLhat(Y, \Link, \bs)$ without making any additional choices. The same goes for $\HFLm(Y, \Link, \bs)$, because the spaces $\AA$ and $\BB$ are the same. Note that for these versions of link Floer homology we do not have any contributions from disk bubbles, because their domains would go through the $z_i$ basepoints and hence they are not counted. The resulting theories are natural invariants of the multi-based link.

\begin{proof}[Proof of Theorem~\ref{thm:main_links}]
Similar to those of Theorem~\ref{thm:main}, Theorem~\ref{thm:main_sutured} and Proposition~\ref{prop:multiple}.
\end{proof}

We also have an analogue of Proposition~\ref{prop:noncan}, with a similar proof.
\begin{proposition}
\label{prop:noncan-links}
Let $(Y, \Link, \bs)$ be a $3$-manifold with a multi-based link and a relative $\Spinc$ structure. Then, 
 the actions of multiplication by $U_i$ on $ \HFLm(Y, \Link, \bs)$ are the same for all variables $U_i$ corresponding to basepoints on the same component of the link.  This gives $ \HFLm(Y, \Link, \bs)$ the structure of a module over a polynomial ring with one variable for each link component. As such, $  \HFLm(Y, \Link, \bs)$ is independent of the number and position of the base points, up to (non-canonical) isomorphism.

In the case of $\HFLm(Y, \Link, \bs)$, if $\Link= (L, \ws, \zs)$ and $\Link' = (L, \ws', \zs')$ are such that $|\ws|=|\zs|=m$ and $|\ws|=|\zs|=m' \leq m$, then we have a (non-canonical) isomorphism
$$ \HFLhat(Y, \Link, \bs) \cong \HFLhat(Y, \Link', \bs) \otimes H_*(T^{m-m'}).$$
\end{proposition}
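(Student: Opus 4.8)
The plan is to reduce everything to the singly‑stabilized picture, exactly as in the proof of Proposition~\ref{prop:noncan}, but keeping track of signs coming from the Lie group Pin structures on the curves. First I would recall that, by the naturality established in Theorem~\ref{thm:main_links}, the isomorphism class of $\HFLm(Y,\Link,\bs)$ depends only on the data listed, so it suffices to exhibit, for a suitably chosen Heegaard diagram, a chain‑level isomorphism relating $\CFLm$ for $\Link$ with $\CFLm$ for a link obtained by adding one pair of basepoints $(w,z)$ on a fixed component. The statement that the $U_i$ actions agree for basepoints on the same component, and that $\HFLm$ is a module over a polynomial ring with one variable per component, then follows by induction on the number of basepoints together with this chain‑level model. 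The key geometric input is that adding a $(w,z)$ pair on an arc of $L$ lying in a single ball corresponds to a connected sum of the Heegaard diagram with the genus‑zero index zero/three piece $(S,\alpha,\beta,v_1,v_2)$ of Figure~\ref{fig:03}, precisely as in Proposition~\ref{prop:noncan}.

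The main step is the chain‑level splitting. I would endow every alpha and beta curve (including the two new great circles on $S$) with its Lie group Spin structure, and choose orderings as in Definition~\ref{def:chooseab}, so that Example~\ref{ex:product} applies and the stretched complex factors as a tensor product over $\Z[U_w]$ (where $U_w$ is the new variable):
$$\CFLm(\Sigma',\alphas',\betas',\ws',\zs') \;\cong\; \CFLm(\Sigma,\alphas,\betas,\ws,\zs)\otimes \CFLm(S,\alpha,\beta,\{v_1\},\{v_2\}).$$
The second factor is generated by the two intersection points $x,y\in\alpha\cap\beta$; since we now impose $n_z=0$ for the new $z$ basepoint, one of the two bigons from $x$ to $y$ is excluded, but the two bigons from $y$ to $x$ remain, and by Example~\ref{ex:circles} they carry opposite signs (one looks like the bigon $A$, the other like $B$ of Figure~\ref{fig:circles}), so the differential on the second factor is zero. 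Hence the second factor is a free $\Z[U_w]$-module of rank two concentrated in two adjacent Maslov/Alexander gradings, and tensoring with it multiplies $\HFLm$ by $\Z[U_w]\oplus\Z[U_w]$ — but with $U_w$ acting the same way as the variable $U_{w'}$ already present on that component, because a further isotopy (or comparison through a singly‑based diagram, as in the parenthetical remark at the end of Proposition~\ref{prop:noncan}) identifies the two actions up to the sign ambiguity that is irrelevant on a rank‑one free module. In the hat version, setting all $U_i=0$ kills one of the two generators' contributions to $\partial$ only after the tensor, leaving the surviving differential with its opposite‑sign bigons, so the second factor contributes a copy of $H_*(S^1)$; iterating $m-m'$ times gives the $H_*(T^{m-m'})$ factor claimed.

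The final bookkeeping step is to organize the induction. Starting from a multi‑based link $\Link$ with $|\ws|=|\zs|=m$, one can reach the ``minimal'' configuration $\Link'$ with $m'$ pairs by a sequence of destabilizations of the above type, interspersed with Heegaard moves that are handled by Theorem~\ref{thm:main_links}; at each destabilization the above model shows the two $\HFLm$ differ by a free rank‑two $\Z[U]$-factor with the new and old variables acting identically, establishing both the equality of $U_i$ actions on a common component and the polynomial‑ring module structure. For $\HFLhat$ the same sequence shows the groups differ by tensoring with $H_*(S^1)$ at each step, hence by $H_*(T^{m-m'})$ overall. I expect the main obstacle to be the sign verification in the index zero/three piece: one must check carefully that, with the Lie group Pin structures and the chosen orderings, the two $y\to x$ bigons cancel in $\widehat{\CFL}$ and the two $x\to y$ bigons produce $U_w-U_{w'}$ rather than $U_w+U_{w'}$ in $\CFLm$; this is exactly the computation underlying Proposition~\ref{prop:noncan} and Example~\ref{ex:circles}, and everything else is routine once it is in place.
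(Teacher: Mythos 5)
The key step is where you analyze the second factor of the tensor decomposition, and this is where the argument breaks down for $\HFLm$. You correctly observe that imposing $n_z = 0$ for the new $z$-basepoint excludes one of the two $x \to y$ bigons on $S$, but then conclude that ``the differential on the second factor is zero''---and that does not follow, since the other $x \to y$ bigon, which passes over a $w$-basepoint, is still counted and contributes a term $\pm U y$ to $\partial x$. The differential on the second factor is therefore nonzero. Your downstream conclusion, that tensoring ``multiplies $\HFLm$ by $\Z[U_w]\oplus\Z[U_w]$,'' directly contradicts the claim of the proposition itself: $\HFLm$ is supposed to be \emph{independent} of the number of basepoints (as a module over the polynomial ring with one variable per component), not to double in rank with each new pair. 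There is also an internal inconsistency between this and your closing paragraph, where you correctly anticipate that the two $x \to y$ bigons should contribute $U_w - U_{w'}$, as in Proposition~\ref{prop:noncan}; that presupposes that \emph{neither} of those bigons is excluded, which is incompatible with what you wrote earlier.

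The correct analogue of Proposition~\ref{prop:noncan} keeps the mapping cone structure intact. In the stabilization appropriate to link Floer homology, the two $x \to y$ bigons pass over two $w$-basepoints (one identified with the existing $w$ on that component, the other being the new $w'$), while the new $z$-basepoint is placed so as not to lie in either of those bigon domains. The local complex is therefore $\partial y = 0$ and $\partial x = \pm(U_w - U_{w'}) y$, i.e.\ the mapping cone of $U_w - U_{w'}$. Tensoring with the old complex over $\Z[U_w]$, noting that $U_w - U_{w'}$ acts injectively on the resulting free module, and passing to the cokernel identifies $\HFLm$ before and after with $U_{w'}$ acting as $U_w$; this simultaneously proves the claim about the $U_i$-actions and the independence of basepoints. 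The $\HFLhat$ statement then follows by setting all $U$'s to zero, which kills the mapping cone differential and yields the $H_*(S^1)$ factor. That last piece is the part of your argument that is correct, but it is a special feature of the hat flavor rather than evidence of a vanishing differential on the minus complex.
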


One can also consider more general versions of link Floer complexes, where we allow strips that go over the $z$ basepoints. For example, in \cite{ZemkeHFK}, Zemke works with a curved chain complex $\CFLcurved(Y, \Link, \s)$ for $\s \in \Spinc(Y)$. This is a module over $\F_2[U_1, \dots, U_m, V_1, \dots, V_m]$, and its differential counts holomorphic strips in a class $\phi$ with a coefficient of
$$ U_1^{n_{w_1}(\phi)}  \cdots U_m^{n_{w_m}(\phi)} \cdot V_1^{n_{z_1}(\phi)}  \cdots V_m^{n_{z_m}(\phi)}.$$
To do this over $\Z[U_1, \dots, U_m, V_1, \dots, V_m]$, we use our preferred homological coupled Spin structure. We now have disk bubbles as in Section~\ref{sec:multiple}, and they no longer cancel. Rather, there are positive contributions to $\del^2$ from disks on $\Ta$, and negative ones from disks on $\Tb$. We get
$$\del^2 = \left( \sum_{i=1}^m (U_i V_{\sigma(i)} - V_i U_{\tau(i)}) \right) \cdot \id,$$
where $z_{\sigma(i)}$ is the basepoint following $w_i$ as we go around the respective component of $L$ (with the given orientation), and $w_{\tau(i)}$ is the basepoint following $\tau(i)$. The complex $\CFLcurved(Y, \Link, \s)$ is curved, so we cannot take its homology (unless we have exactly two basepoints per component, in which case $\del^2=0$). Nevertheless, a variant of Theorem~\ref{thm:main_links} still applies, saying that $\CFLcurved(Y, \Link, \s)$ form a transitive system of curved chain complexes in the sense of \cite[Definition 2.15]{ZemkeHFK}.

\begin{remark}
In \cite{SarkarSigns}, Sarkar constructs $2^{\ell -1}$ versions of link Floer homology over $\Z$, where the different choices correspond to whether, for each of the $\ell$ components of the link, it is the alpha or the beta degenerations with basepoints on that component which get counted with $+1$ (versus $-1$). Our canonical version corresponds to his where all the alpha degenerations are counted with $+1$. This agrees with the signs from grid homology \cite[Definition 4.1]{MOST}. To get the other choices in \cite{SarkarSigns}, one can use twisted coefficients, or equip one or both of the Lagrangians with a Pin structure different from the Lie group one. (Compare Section~\ref{sec:twistedHF}.) 
\end{remark}

\subsection{Involutive Heegaard Floer homology}
\label{sec:HFI}
In \cite{HFI}, Hendricks and the second author define an invariant of $3$-manifolds called involutive Heegaard Floer homology. Let us review that construction. Let $\H=(\Sigma, \alphas, \betas, z)$ be a based Heegaard diagram representing a $3$-manifold $Y$. For simplicity, we focus on $\Spinc$ structures $\s$ that are self-conjugate, i.e., $\s = \bar \s$. (This is the only interesting case.) Given such an $\s$, there is a canonical conjugation isomorphism between Heegaard Floer chain complexes: 
$$\eta : \CFcirc(\H, \s) \xrightarrow{\phantom{u} \cong \phantom{u}} \CFcirc(\bH, \s)$$
where $\bH$ is the Heegaard diagram $(-\Sigma,  \betas, \alphas, z)$ and $\circ \in  \{\widehat{\phantom{u}}, +, -, \infty\}$. Furthermore, since $\H$ and $\bH$ represent the same $3$-manifold, there is a chain homotopy equivalence induced by Heegaard moves
$$\Phi(\bH, \H) : \CFcirc(\bH, \s) \xrightarrow{\phantom{u} \sim \phantom{u}} \CFcirc(\H, \s).$$
We let
$$ \iota= \Phi(\bH, \H) \circ \eta  : \CFcirc(\H, \s) \to \CFcirc(\H,  \s).$$
The involutive Heegaard Floer homology $\HFIcirc(Y, \s)$ is defined as the homology of the mapping cone 
$$\CFcirc(\H, \s) \xrightarrow{\phantom{o} Q (1+\iota) \phantom{o}} Q \ccdot \CFcirc(\H, \s) [-1], $$
where $Q$ is a formal variable and $[-1]$ is a shift in degree. 

We can repeat the same construction over $\Z$, using the Floer complexes and maps from Section~\ref{sec:3m}. 
\begin{proposition}
The isomorphism class of the involutive Heegaard Floer homology $\HFIcirc(Y, \s)$, as a graded $\Z[Q,U] /(Q^2)$-module, is an invariant of the pair $(Y, \s)$.
\end{proposition}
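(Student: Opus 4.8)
The plan is to verify that the construction of $\HFIcirc$ over $\Z$ satisfies the same formal properties that were used in \cite{HFI} to prove invariance over $\F_2$, and to isolate the two places where the sign refinement enters: the conjugation isomorphism $\eta$, and the naturality of the maps $\Phi(\bH, \H)$. The latter is now available over $\Z$ by Theorem~\ref{thm:main}, so the main content is to check that $\eta$ is well-defined over $\Z$ and that it interacts with the natural isomorphisms in the expected way, namely that $\iota$ is well-defined up to chain homotopy and $\iota^2 \simeq \id$ at the level of chain complexes (equivalently, that $\iota$ squares to the identity on homology after the mapping-cone construction stabilizes the ambiguity).

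First I would set up the conjugation map. Going from $\H = (\Sigma, \alphas, \betas, z)$ to $\bH = (-\Sigma, \betas, \alphas, z)$ reverses the roles of the two handlebodies, and hence swaps the vector spaces $\AA$ and $\BB$. Under this swap, the canonical coupled Spin structure $\Scan$ on $(\AA, \BB)$ from Lemma~\ref{lem:pinLag}(2) is carried to the canonical coupled Spin structure on $(\BB, \AA)$; this is because $\tau_{\BB, \AA}$ is built from the same symplectic form and inner product on $H_1(\Sigma; \R)$ as $\tau_{\AA, \BB}$, compatibly with orientation reversal of $\Sigma$ (which flips the symplectic form and hence is absorbed by the passage from $(\AA, \BB)$ to $(\BB, \AA)$). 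Concretely, a choice of Pin structures $\Pa$ on $\AA$ and $\Pb$ on $\BB$ for $\H$ gives the pair $(\Pb, \Pa)$ for $\bH$, and by Proposition~\ref{prop:onlycoupled} the orientation lines $o(\x)$ for $\x \in \Ta \cap \Tb = \Tb \cap \Ta$ are canonically identified, because the coupled Spin structure is preserved. The identification of the lines $\ell(\Pa, \Pb) \cong \ell(\Pb, \Pa)$ is similarly canonical since both compare to the respective canonical coupled Spin structures, which correspond under the swap. The tautological correspondence on holomorphic strips (reflecting the strip and reversing orientation on $\Sigma$) is orientation-compatible by the same index-bundle bookkeeping used in Section~\ref{sec:cyl}, so $\eta$ is a well-defined chain isomorphism over $\Z$. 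I would also note that $\eta$ is an involution on the nose once one fixes compatible choices, or at worst an involution up to the kind of canonical identification that does not affect the mapping cone.

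Next I would assemble $\iota = \Phi(\bH, \H) \circ \eta$. By Theorem~\ref{thm:main}, $\HFcirc$ is a natural invariant, so the chain homotopy equivalence $\Phi(\bH, \H)$ is well-defined up to chain homotopy, independent of the sequence of Heegaard moves; this is exactly the input that was only available over $\F_2$ in \cite{HFI} and is now available over $\Z$. Then $\iota$ is well-defined up to chain homotopy, and the proof that $\iota^2 \simeq \id$ goes through verbatim: it uses naturality applied to the composite of moves $\H \to \bH \to \bbH = \H$ (the double conjugate being literally $\H$), together with the fact that $\eta^2 = \id$ on chains. The $\Z[U]$-equivariance of $\eta$ and of $\iota$ is immediate from the definitions since conjugation and the Heegaard-move maps commute with the $U$-action. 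Finally, the mapping-cone construction: one observes that the homotopy class of $\iota$, being the only ambiguity, does not affect the quasi-isomorphism type of $\operatorname{Cone}(Q(1+\iota))$, by the standard argument that homotopic maps have homotopy equivalent mapping cones (applied $Q$-equivariantly over $\Z[Q,U]/(Q^2)$); this gives that $\HFIcirc(Y,\s)$ is well-defined up to isomorphism as a graded $\Z[Q,U]/(Q^2)$-module, and invariance under change of Heegaard diagram follows formally. The grading is the one inherited from $\grHF$ together with the degree shift by $Q$.

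I expect the main obstacle to be the careful verification that the conjugation isomorphism $\eta$ is genuinely sign-compatible over $\Z$ --- that is, that orientation reversal of $\Sigma$ together with the swap $(\AA,\BB) \leftrightarrow (\BB,\AA)$ carries the canonical coupled Spin structure $\Scan$ to the canonical one, and that the induced map on orientation lines $o(\x)$ and on the lines $\ell(\Pa,\Pb)$ is exactly the canonical one (not off by a global sign that could fail to be an involution). This requires tracking the effect of orientation reversal on the symplectic form on $H_1(\Sigma;\R)$, on the Perutz symplectic form on the symmetric product, and on the cap operators $\delbar_\x$ defining $o(\x)$, and checking consistency with the shuffled-orientation conventions of Section~\ref{sec:co}. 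Once this is pinned down, everything else is a routine transcription of the $\F_2$ argument in \cite{HFI}, now legitimized over $\Z$ by Theorem~\ref{thm:main}.
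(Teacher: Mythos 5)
Your proposal is correct and takes essentially the same approach as the paper's proof: identify the naturality of $\Phi(\bH,\H)$ at the chain level (from Section~\ref{sec:naturality}) as the key input that was previously only available over $\F_2$, and then transcribe the invariance argument from \cite{HFI}. The paper's proof is a one-liner doing exactly that; you additionally spell out why the conjugation map $\eta$ is sign-compatible with the canonical coupled Spin structures under orientation reversal of $\Sigma$ and the swap $(\AA,\BB)\leftrightarrow(\BB,\AA)$, which is a reasonable and correct thing to verify, though the paper takes it as already implicit in the set-up of Section~\ref{sec:HFI}.
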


\begin{proof}
This is similar to the proof of the analogous Theorem 1.1 in \cite{HFI}. The key input is that the map $\Phi(\bH, \H)$ is invariant up to chain homotopy equivalence. This is a version of naturality at the chain level, which in our context follows from the proofs in Section~\ref{sec:naturality}. 
\end{proof}

There is also a naturality result for involutive Heegaard Floer homology (with $\F_2$ coefficients), proved in \cite{HFInatural}: It says that $\HFIcirc$ is a natural invariant of $(Y, \s, z, \xi)$, where $\xi$ is a framing of $T_zY$. While we expect the same statement with $\Z$ coefficients, proving it is beyond the scope of the current paper.


\bibliography{biblio}

\begin{thebibliography}{10}
\providecommand{\url}[1]{\texttt{#1}}
\providecommand{\urlprefix}{URL }
\expandafter\ifx\csname urlstyle\endcsname\relax
  \providecommand{\doi}[1]{doi:\discretionary{}{}{}#1}\else
  \providecommand{\doi}{doi:\discretionary{}{}{}\begingroup
  \urlstyle{rm}\Url}\fi
\providecommand{\eprint}[2][]{\url{#2}}

\bibitem{AbouzaidPlumbings}
M.~Abouzaid, \emph{A topological model for the {F}ukaya categories of
  plumbings}, J. Differential Geom., \textbf{87}(2011), no.~1, 1--80.

\bibitem{Amorim2017}
L.~Amorim, \emph{The {K}\"unneth theorem for the {F}ukaya algebra of a product
  of {L}agrangians}, Internat. J. Math., \textbf{28}(2017), no.~4, 1750026, 38.

\bibitem{BiranCornea}
P.~Biran and O.~Cornea, \emph{Quantum structures for {L}agrangian
  submanifolds}, preprint,
  \href{https://arxiv.org/pdf/0708.4221}{arXiv:0708.4221}, 2007.

\bibitem{EarleEells}
C.~J. Earle and J.~Eells, \emph{A fibre bundle description of {T}eichm\"{u}ller
  theory}, J. Differential Geometry, \textbf{3}(1969), 19--43.

\bibitem{EarleSchatz}
C.~J. Earle and A.~Schatz, \emph{Teichm\"{u}ller theory for surfaces with
  boundary}, J. Differential Geometry, \textbf{4}(1970), 169--185.

\bibitem{FloerHofer}
A.~Floer and H.~Hofer, \emph{Coherent orientations for periodic orbit problems
  in symplectic geometry}, Math. Z., \textbf{212}(1993), no.~1, 13--38.

\bibitem{Freeman}
J.~Freeman, \emph{The {S}urgery {E}xact {T}riangle in {M}onopole {F}loer
  {H}omology with $\mathbb{Z}[i]$ {C}oefficients}, ProQuest LLC, Ann Arbor, MI,
  thesis (Ph.D.)--Massachusetts Institute of Technology, 2021.

\bibitem{FJR}
S.~Friedl, A.~Juh\'{a}sz, and J.~Rasmussen, \emph{The decategorification of
  sutured {F}loer homology}, J. Topol., \textbf{4}(2011), no.~2, 431--478.

\bibitem{FOOO}
K.~Fukaya, Y.-G. Oh, H.~Ohta, and K.~Ono, \emph{Lagrangian intersection {F}loer
  theory: anomaly and obstruction. {P}art {I}}, volume~46 of \emph{AMS/IP
  Studies in Advanced Mathematics}, American Mathematical Society, Providence,
  RI; International Press, Somerville, MA, 2009.

\bibitem{FOOO2}
K.~Fukaya, Y.-G. Oh, H.~Ohta, and K.~Ono, \emph{Lagrangian intersection {F}loer
  theory: anomaly and obstruction. {P}art {II}}, volume~46 of \emph{AMS/IP
  Studies in Advanced Mathematics}, American Mathematical Society, Providence,
  RI; International Press, Somerville, MA, 2009.

\bibitem{Furuta}
M.~Furuta, \emph{Monopole equation and the {$\frac{11}8$}-conjecture}, Math.
  Res. Lett., \textbf{8}(2001), no.~3, 279--291.

\bibitem{Gartner}
M.~Gartner, \emph{Projective naturality in {H}eegaard {F}loer homology},
  Algebr. Geom. Topol., \textbf{23}(2023), no.~3, 963--1054.

\bibitem{HFInatural}
K.~Hendricks, J.~Hom, M.~Stoffregen, and I.~Zemke, \emph{Naturality and
  functoriality in involutive {H}eegaard {F}loer homology}, preprint,
  \href{https://arxiv.org/pdf/2201.12906}{arXiv:2201.12906}, 2022.

\bibitem{HFI}
K.~Hendricks and C.~Manolescu, \emph{Involutive {H}eegaard {F}loer homology},
  Duke Math. J., \textbf{166}(2017), no.~7, 1211--1299.

\bibitem{Juhasz}
A.~Juh\'{a}sz, \emph{Holomorphic discs and sutured manifolds}, Algebr. Geom.
  Topol., \textbf{6}(2006), 1429--1457.

\bibitem{JTZ}
A.~Juh\'{a}sz, D.~Thurston, and I.~Zemke, \emph{Naturality and mapping class
  groups in {H}eegard {F}loer homology}, Mem. Amer. Math. Soc.,
  \textbf{273}(2021), no. 1338.

\bibitem{KirbyBook}
R.~C. Kirby, \emph{The topology of {$4$}-manifolds}, volume 1374 of
  \emph{Lecture Notes in Mathematics}, Springer-Verlag, Berlin, 1989.

\bibitem{KirbyTaylorPin}
R.~C. Kirby and L.~R. Taylor, \emph{{${\rm Pin}$} structures on low-dimensional
  manifolds}, in \emph{Geometry of low-dimensional manifolds, 2 ({D}urham,
  1989)}, Cambridge Univ. Press, Cambridge, volume 151 of \emph{London Math.
  Soc. Lecture Note Ser.}, pp. 177--242, 1990.

\bibitem{KMBook}
P.~Kronheimer and T.~Mrowka, \emph{Monopoles and three-manifolds}, volume~10 of
  \emph{New Mathematical Monographs}, Cambridge University Press, Cambridge,
  2007.

\bibitem{KMOS}
P.~Kronheimer, T.~Mrowka, P.~Ozsv\'{a}th, and Z.~Szab\'{o}, \emph{Monopoles and
  lens space surgeries}, Ann. of Math. (2), \textbf{165}(2007), no.~2,
  457--546.

\bibitem{LipshitzCyl}
R.~Lipshitz, \emph{A cylindrical reformulation of {H}eegaard {F}loer homology},
  Geom. Topol., \textbf{10}(2006), 955--1096.

\bibitem{MakWu}
C.~Y. Mak and W.~Wu, \emph{Dehn twist exact sequences through {L}agrangian
  cobordism}, Compos. Math., \textbf{154}(2018), no.~12, 2485--2533.

\bibitem{beta}
C.~Manolescu, \emph{Pin(2)-equivariant {S}eiberg-{W}itten {F}loer homology and
  the triangulation conjecture}, J. Amer. Math. Soc., \textbf{29}(2016), no.~1,
  147--176.

\bibitem{MOS}
C.~Manolescu, P.~Ozsv\'{a}th, and S.~Sarkar, \emph{A combinatorial description
  of knot {F}loer homology}, Ann. of Math. (2), \textbf{169}(2009), no.~2,
  633--660.

\bibitem{MOST}
C.~Manolescu, P.~Ozsv\'{a}th, Z.~Szab\'{o}, and D.~Thurston, \emph{On
  combinatorial link {F}loer homology}, Geom. Topol., \textbf{11}(2007),
  2339--2412.

\bibitem{OS-knots}
P.~Ozsv\'{a}th and Z.~Szab\'{o}, \emph{Holomorphic disks and knot invariants},
  Adv. Math., \textbf{186}(2004), no.~1, 58--116.

\bibitem{BrCov}
P.~Ozsv\'{a}th and Z.~Szab\'{o}, \emph{On the {H}eegaard {F}loer homology of
  branched double-covers}, Adv. Math., \textbf{194}(2005), no.~1, 1--33.

\bibitem{Links}
P.~Ozsv\'{a}th and Z.~Szab\'{o}, \emph{Holomorphic disks, link invariants and
  the multi-variable {A}lexander polynomial}, Algebr. Geom. Topol.,
  \textbf{8}(2008), no.~2, 615--692.

\bibitem{HolDiskTwo}
P.~S. Ozsv{\'a}th and Z.~Szab{\'o}, \emph{Holomorphic disks and three-manifold
  invariants: properties and applications}, Ann. of Math. (2),
  \textbf{159}(2004), no.~3, 1159--1245.

\bibitem{HolDisk}
P.~S. Ozsv{\'a}th and Z.~Szab{\'o}, \emph{Holomorphic disks and topological
  invariants for closed three-manifolds}, Ann. of Math. (2),
  \textbf{159}(2004), no.~3, 1027--1158.

\bibitem{HolDiskFour}
P.~S. Ozsv{\'a}th and Z.~Szab{\'o}, \emph{Holomorphic triangles and invariants
  for smooth four-manifolds}, Adv. Math., \textbf{202}(2006), no.~2, 326--400.

\bibitem{Perutz}
T.~Perutz, \emph{Hamiltonian handleslides for {H}eegaard {F}loer homology}, in
  \emph{Proceedings of {G}\"{o}kova {G}eometry-{T}opology {C}onference 2007},
  G\"{o}kova Geometry/Topology Conference (GGT), G\"{o}kova, pp. 15--35, 2008.

\bibitem{Petkova}
I.~Petkova, \emph{An absolute {$\Bbb{Z}/2$} grading on bordered {H}eegaard
  {F}loer homology}, New York J. Math., \textbf{28}(2022), 90--116.

\bibitem{PSS}
S.~Piunikhin, D.~Salamon, and M.~Schwarz, \emph{Symplectic {F}loer-{D}onaldson
  theory and quantum cohomology}, in \emph{Contact and symplectic geometry
  ({C}ambridge, 1994)}, Cambridge Univ. Press, Cambridge, volume~8 of
  \emph{Publ. Newton Inst.}, pp. 171--200, 1996.

\bibitem{Qin}
Q.~Qin, \emph{Loops of handleslides for sutured diagrams}, preprint,
  \href{https://arxiv.org/pdf/2508.04042}{arXiv:2508.04042}, 2025.

\bibitem{RasmussenThesis}
J.~A. Rasmussen, \emph{Floer homology and knot complements}, ProQuest LLC, Ann
  Arbor, MI, thesis (Ph.D.)--Harvard University, 2003.

\bibitem{Rezchikov}
S.~Rezchikov, \emph{Floer homology via twisted loop spaces}, preprint,
  \href{https://arxiv.org/pdf/1909.01325}{arXiv:1909.01325}, 2019.

\bibitem{SarkarSigns}
S.~Sarkar, \emph{A note on sign conventions in link {F}loer homology}, Quantum
  Topol., \textbf{2}(2011), no.~3, 217--239.

\bibitem{SeidelExact}
P.~Seidel, \emph{A long exact sequence for symplectic {F}loer cohomology},
  Topology, \textbf{42}(2003), no.~5, 1003--1063.

\bibitem{SeidelBook}
P.~Seidel, \emph{Fukaya categories and {P}icard-{L}efschetz theory}, Zurich
  Lectures in Advanced Mathematics, European Mathematical Society (EMS),
  Z\"{u}rich, 2008.

\bibitem{DeSilva}
V.~D. Silva, \emph{Products in the symplectic {F}loer homology of {L}agrangian
  intersections}, {P}hD thesis, University of Oxford, 1998.

\bibitem{WWOri}
K.~Wehrheim and C.~Woodward, \emph{Orientations for pseudoholomorphic quilts},
  preprint, \href{https://arxiv.org/pdf/1503.07803}{arXiv:1503.07803}, 2015.

\bibitem{WWExact}
K.~Wehrheim and C.~T. Woodward, \emph{Exact triangle for fibered {D}ehn
  twists}, Res. Math. Sci., \textbf{3}(2016), Paper No. 17, 75.

\bibitem{Witten}
E.~Witten, \emph{Monopoles and four-manifolds}, Math. Res. Lett.,
  \textbf{1}(1994), no.~6, 769--796.

\bibitem{Zapolsky}
F.~Zapolsky, \emph{The {L}agrangian {F}loer-quantum-{PSS} package and canonical
  orientations in {F}loer theory}, preprint,
  \href{https://arxiv.org/pdf/1507.02253}{arXiv:1507.02253}, 2015.

\bibitem{ZemkeHF}
I.~Zemke, \emph{Graph cobordisms and {H}eegaard {F}loer homology}, preprint,
  \href{https://arxiv.org/pdf/1512.01184}{arXiv:1512.01184}, 2015.

\bibitem{ZemkeQuasi}
I.~Zemke, \emph{Quasistabilization and basepoint moving maps in link {F}loer
  homology}, Algebr. Geom. Topol., \textbf{17}(2017), no.~6, 3461--3518.

\bibitem{ZemkeHFK}
I.~Zemke, \emph{Link cobordisms and functoriality in link {F}loer homology}, J.
  Topol., \textbf{12}(2019), no.~1, 94--220.

\bibitem{ZemkeHat}
I.~Zemke, \emph{A graph {TQFT} for hat {H}eegaard {F}loer homology}, Quantum
  Topol., \textbf{12}(2021), no.~3, 439--460.

\end{thebibliography}
\bibliographystyle{custom}

\end{document}